\g@addto@macro\bfseries{\boldmath}
\let\orig@maketitle\maketitle
\renewcommand{\maketitle}{
  \begingroup
    \def\@addpunct##1{}
    \orig@maketitle
  \endgroup
}
\DeclarePairedDelimiter\abs{\lvert}{\rvert}%
\DeclarePairedDelimiter\norm{\lVert}{\rVert}%
\let\oldabs\abs
\def\abs{\@ifstar{\oldabs}{\oldabs*}}
\let\oldnorm\norm
\def\norm{\@ifstar{\oldnorm}{\oldnorm*}}
\def\A{\EuScript A}
\def\B{\EuScript B}
\def\F{\mathcal F}
\def\U{\mathcal U}
\newcommand{\C}{\mathcal{C}}
\def\IFF{\Leftrightarrow}
\renewcommand{\a}{\alpha}
\renewcommand{\b}{\beta}
\newcommand{\g}{\gamma}
\renewcommand{\d}{\delta}
\renewcommand{\k}{\kappa}
\renewcommand{\l}{\lambda}
\renewcommand{\phi}{\varphi}
\renewcommand{\o}{\omega}
\newcommand{\RR}{\mathbb{R}}
\newcommand{\GG}{\mathbb{G}}
\newcommand{\pre}[2]{\tensor[^{#1}]{{#2}}{}}
\newcommand{\seq}[2]{\langle #1 \mid #2 \rangle}
\newcommand{\Ord}{\mathrm{Ord}}
\newcommand{\Card}{\mathrm{Card}}
\newcommand{\dom}{\operatorname{dom}}
\newcommand{\ran}{\operatorname{ran}}
\newcommand{\cof}{\operatorname{cof}}
\newcommand{\id}{\operatorname{id}}
\newcommand{\leng}[1]{\operatorname{length}(#1)}
\newcommand{\weight}{\operatorname{w}}
\DeclareMathOperator{\conc}{\mathbin{ {}^\smallfrown {} }}
\newcommand{\markdef}[1]{\textbf{#1}}
\newcommand{\ZFC}{{\sf ZFC}}
\newtheorem{theorem}{Theorem}[section]
\newtheorem*{theorem*}{Theorem}
\newtheorem{lemma}[theorem]{Lemma}
\newtheorem{cor}[theorem]{Corollary}
\newtheorem{corollary}[theorem]{Corollary}
\newtheorem*{corollary*}{Corollary}
\newtheorem*{lemma*}{Lemma}
\newtheorem{prop}[theorem]{Proposition}
\newtheorem{proposition}[theorem]{Proposition}
\newtheorem{problem}{Problem}
\newtheorem{fact}[theorem]{Fact}
\newtheorem{claim}{Claim}[theorem]
\theoremstyle{definition}
\newtheorem{defin}[theorem]{Definition}
\newtheorem{definition}[theorem]{Definition}
\theoremstyle{remark}
\newtheorem{remark}[theorem]{Remark}
\newcommand{\Siii}[3]{{ #3 }\text{-}\boldsymbol{\Sigma}^{ #1 }_{ #2}}
\newcommand{\Piii}[3]{{ #3 }\text{-}\boldsymbol{\Pi}^{ #1 }_{ #2 }}
\newcommand{\Deee}[3]{{ #3 }\text{-}\boldsymbol{\Delta}^{ #1 }_{ #2}}
\newcommand{\Bor}[1]{{ #1 }\text{-}\mathbf{Bor}}
\numberwithin{equation}{section}
\DeclareMathOperator{\ord}{ord}
\DeclareMathOperator{\rank}{rk}
\DeclareMathOperator{\supp}{supp}
\DeclareMathOperator{\powset}{\EuScript{P}}
\let \succ \relax
\DeclareMathOperator{\succ}{succ}
\DeclareMathOperator{\cl}{cl}
\newcommand{\minus}{\setminus}
\newcommand{\bP}{\mathbb{P}}
\newcommand{\bQ}{\mathbb{Q}}
\newcommand{\name}[1]{\dot{#1}}
\newcommand{\crank}[2]{|#1|_{#2}}
\newcommand{\aforc}{{\mathbb{B}\mathbb{M}}}
\newcommand{\clopen}[1]{[#1]}
\DeclareMathSymbol{\mlq}{\mathord}{operators}{'134}
\DeclareMathSymbol{\mrq}{\mathord}{operators}{'42}
\let\oldenquote\enquote
\renewcommand{\enquote}[1]{\relax\ifmmode\mlq#1\mrq\else\oldenquote{#1}\fi}
\newenvironment{enumerate-(1)}{\begin{enumerate}[label={\upshape (\arabic*)}, leftmargin=2pc]}{\end{enumerate}}
\newenvironment{itemizenew}{\begin{itemize}[leftmargin=2pc]}{\end{itemize}}
\renewcommand{\subset}{\subseteq}
\begin{document}
\title{Generalized Borel sets}
\date{\today}

\author[C.\ Agostini]{Claudio Agostini}
\address[Claudio Agostini]
{HUN-REN Alfréd Rényi Institute of Mathematics, Reáltanoda utca 13-15, H-1053, Budapest}
\email{agostini.claudio@renyi.hu}

\author[N.\ Chapman]{Nick Chapman}
\address[Nick Chapman]
{Institut f\"ur Diskrete Mathematik und Geometrie, Technische Universit\"at Wien, Wiedner Hauptstra{\ss}e 8-10/104, 1040 Vienna, Austria}
\email{nick.chapman@tuwien.ac.at}

\author[L.\ Motto Ros]{Luca Motto Ros}
\address[Luca Motto Ros]{Dipartimento di matematica \guillemotleft{Giuseppe Peano}\guillemotright, Universit\`a di Torino, Via Carlo Alberto 10, 10121 Torino, Italy}
\email{luca.mottoros@unito.it}

\author[B.\ Pitton]{Beatrice Pitton}
\address[Beatrice Pitton]{
Université de Lausanne, Quartier UNIL-Chamberonne, Bâtiment Anthropole,
1015 Lausanne, Switzerland and Dipartimento di matematica \guillemotleft{Giuseppe Peano}\guillemotright, Universit\`a di Torino, Via Carlo Alberto 10, 10121 Torino, Italy}
\email{beatrice.pitton@unil.ch}

\begin{abstract}
Generalizing classical descriptive set theory opens foundational questions about the Borel hierarchy. 
In this paper we systematically study those questions, working in the general framework of Polish-like spaces relative to an uncountable cardinal $\kappa$, possibly singular, satisfying $2^{<\kappa}=\kappa$. 
We provide fundamental properties of the $\kappa^+$-Borel hierarchy of any regular Hausdorff space of weight at most $\kappa$, and establish sufficient conditions for its non-collapse. 
We highlight a unique phenomenon that arises in the case of singular cardinals, namely, the existence of a second, distinct Borel hierarchy, the $\kappa$-Borel hierarchy: we prove that it is strictly finer than the $\kappa^+$-Borel hierarchy, and then characterize the precise relationship between the two. Finally, for regular cardinals, we resolve three questions about the behavior of the $\kappa^+$-Borel hierarchy on subspaces of the generalized Baire space $\pre{\kappa}{\kappa}$, constructing various models via forcing where several nontrivial constellations for the length of the $\kappa^+$-Borel hierarchy on the space are realized.
\end{abstract}

\subjclass[2020]{Primary: 03E15; Secondary: 03E35, 03E47, 54H05, 54E99}
\keywords{Generalized descriptive set theory, higher Baire spaces, generalized Borel sets and functions, hierarchies of definable sets}

\thanks{
This research was funded by the Austrian Science Fund (FWF) P35655 and P35588, by the Italian PRIN 2022 ``Models, sets and classifications'' (prot.\ 2022TECZJA), by the Gruppo Nazionale per le Strutture Algebriche, Geometriche e le loro Applicazioni (GNSAGA) of the Istituto Nazionale di Alta Matematica (INdAM) of Italy, and by the European Union  Horizon 2020 research and innovation programme, under the Marie Sklodowska-Curie action 101210902 (TopAspOfGDST). \includegraphics[height=1em]{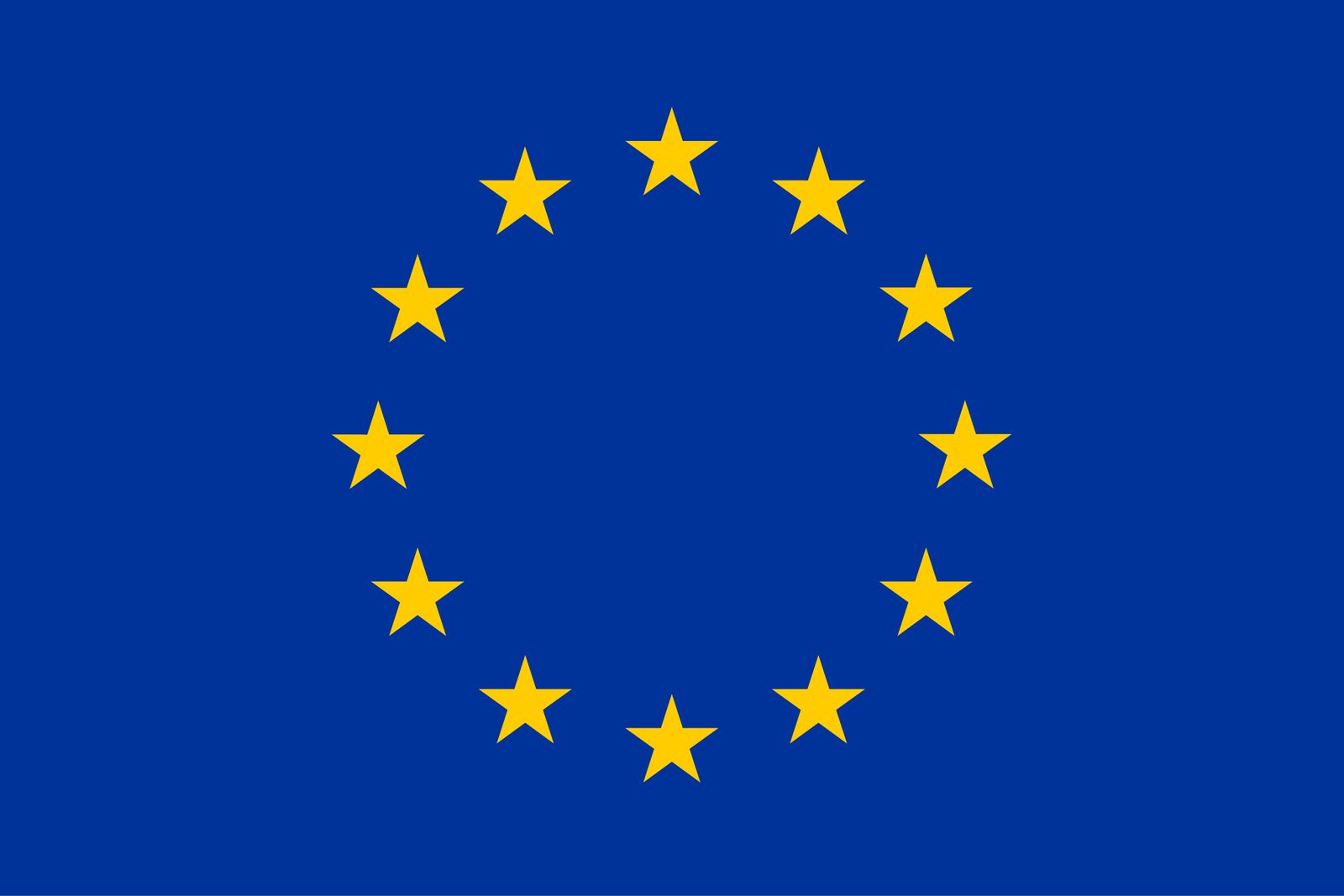} Views and opinions expressed are however those of the authors only and do not necessarily reflect those of the European Union or Horizon 2020 programme. Neither the European Union nor the granting authority can be held responsible for them. For open access purposes, the authors have applied a CC BY public copyright license to any author accepted manuscript version arising from this submission. Chapman would like to thank his advisor Martin Goldstern for continued support.
}

\maketitle

\tableofcontents
\section{Introduction}

Descriptive set theory is a branch of set theory that investigates definable (e.g., Borel) subsets of the Cantor space \(\pre{\omega}{2}\), the Baire space \(\pre{\omega}{\omega}\), and other Polish spaces. In generalized descriptive set theory, one extends this framework by replacing $\omega$ with an uncountable cardinal $\kappa$ or its cofinality $\cof(\kappa)$. 
In this context, the correct generalizations of the Cantor and Baire space are the generalized Cantor space $\pre{\kappa}{2}$ and the generalized Baire space $\pre{\cof(\kappa)}{\kappa}$, both equipped with the bounded topology (see Section~\ref{sec:setup}).
Under the assumption \(2^{< \kappa}=\kappa\), these generalized spaces exhibit a key property analogous to the classical framework, that is, they both have weight $\kappa$, mirroring how $\pre{\omega}{\omega}$ and $\pre{\omega}{2}$ are second countable in the classical setting.
Also, in generalized descriptive set theory one is naturally led to the study of $\kappa^+$-Borel sets as analogues of classical Borel sets.

The generalized Cantor and Baire spaces have been extensively studied for $\kappa$ a (necessarily regular) cardinal satisfying \( \kappa^{< \kappa} = \kappa\), yielding interesting results on their $\kappa^+$-Borel subsets
(see e.g.\ \cite{MV,friedman_generalized_2014}, among many others). However, two significant aspects remain largely unexplored: first, the study of the $\kappa^+$-Borel hierarchy for \textit{subspaces} of the generalized Baire space, or for more general Polish-like spaces; and second, the entire framework of generalized descriptive set theory at singular cardinals, 
a new direction that has only recently begun to attract attention (see~\cite{AMR19} for some initial results, and~\cite{DMR} for an extensive treatment of the countable cofinality case). 
In particular, the study of hierarchies of definable sets for subspaces of $\pre{\cof(\kappa)}{\kappa}$ when $\kappa$ is a singular cardinal of uncountable cofinality constitutes a completely novel area that needed systematic study. 
Our work addresses all these issues.

The first part of this paper, i.e.\ Sections~\ref{sec:preliminaries}--\ref{sec_alt}, develops the foundational aspects of the theory of generalized Borel sets,
while the second part of the paper, corresponding to Sections~\ref{sec: psp section}--\ref{sec: alpha-forcing}, is concerned with somewhat unexpected behaviors of the \( \kappa^+ \)-Borel hierarchy on the arguably nicest kind of Polish-like spaces, namely, closed subspaces of \( \pre{\cof(\kappa)}{\kappa} \).
We pursue maximal generality in two directions. First, we work with cardinals $\kappa$ satisfying only $2^{< \kappa}=\kappa$, dropping the commonly assumed regularity condition, and thus including singular cardinals. Second, we extend beyond subspaces of $\pre{\cof(\kappa)}{\kappa}$ to the broader setting of regular Hausdorff spaces of weight at most \( \kappa \).
This aligns with recent advances in generalized descriptive set theory, where many classical results have been successfully extended to various classes of Polish-like spaces, rather than restricting the attention solely to $\pre{\cof(\kappa)}{\kappa}$ or $\pre{\kappa}{2}$ (see \cite{CoskSchlMR3453772, Gal19, PhDThesisAgostini, AgosMottoSchlichtRegular}).
Actually, our analysis indicates that even imposing such weaker Polish-like properties onto the space is not necessary for the study of its \( \kappa^+ \)-Borel hierarchy, and many results hold under remarkably mild conditions.
In particular, one can safely work with arbitrary subspaces of \( \pre{\k}{2}\) and \( \pre{\cof(\k)}{\k} \) and forget about all the restrictions stated in the various results. Furthermore, one can even work with arbitrary \( T_0 \) spaces of weight at most \( \k \), if only the global behavior of the hierarchy and its infinite levels are under question (see Proposition~\ref{prop:embed_T_0_spaces_into_Cantor}).

Let us now describe the content of the paper in more detail.

In Section~\ref{sec:preliminaries} we introduce the \( \g \)-Borel hierarchy (see Section~\ref{subsec:gamma-Borelhierarchy}), a natural stratification of the \( \g \)-Borel sets that is a foundational tool in descriptive set theory 
and plays a central role in both the classical case ($\gamma = \omega_1$) and its generalization to uncountable cardinals ($\gamma = \kappa^+$).
Despite its widespread use, a systematic presentation of the 
\(\g\)-Borel hierarchy in full generality has been lacking.
We address this gap by providing a formal treatment of all relevant definitions, and establishing the key preliminary results necessary for the development of the theory.
This includes natural features of the \( \g \)-Borel hierarchy, like the property of being increasing (Section~\ref{subsec:inreasing}), the length of the hierarchy on a given space (Section~\ref{subsec:order}) and on its subspaces (Section~\ref{subsec:decomposition}), or the existence of universal sets (Section~\ref{subsec:universalsets}).
In particular, we introduce the crucial concept of order \(\ord_\gamma(X)\) of the $\g$-Borel hierarchy on a space $X$ (Definition~\ref{def:order}), which measures its length.

In Section~\ref{sec_borel_hierarchy} (and, partially, in Section~\ref{sec_alt}), we provide a comprehensive description of the \(\k^+\)-Borel hierarchy on an arbitrary regular Hausdorff space \( X \) of weight at most $\kappa$,  and we establish its fundamental properties.  In particular: 
\begin{itemizenew}
\item 
We determine the optimal closure properties of the pointclasses \( \Siii{0}{\a}{\k^+}(X)\), \( \Piii{0}{\a}{\k^+}(X)\), and \( \Deee{0}{\a}{\k^+}(X) \) appearing in the \( \k^+ \)-Borel hierarchy (Proposition~\ref{k+_hierarchy_closure}, Proposition~\ref{prop:optimalityregular}, and Corollary~\ref{cor_hier_clos}).
\item 
We prove that the \( \k^+ \)-Borel hierarchy is proper, in the sense that new \( \k^+ \)-Borel subsets of \( X \) appear at each level of the hierarchy up to \( \ord_{\k^+}(X) \) (Corollary~\ref{cor:proper_for_k+} and Corollary~\ref{cor:proper_for_k+_final_level}).
\item 
We study the existence of universal and complete sets for the various levels of interest (Proposition~\ref{universal_sigma_pi}).
\end{itemizenew}
Interestingly enough, while for a regular \( \k \) all these results can be proved without auxiliary tools, when \( \k \) is singular we need to pass through the study of an alternative hierarchy, discussed below. This explains why all the results for the regular case already appear in Section~\ref{sec_borel_hierarchy}, while in a few cases their counterparts for the singular case have to be postponed to Section~\ref{sec_alt}.

We also provide a sufficient condition for the \(\k^+\)-Borel hierarchy to be non-collapsing, i.e.\ to ensure that \( \ord_{\kappa^+}(X) \) attains the maximal value \( \kappa^+ \):

\begin{theorem*}[Theorem~\ref{thm:non-collpase_general_space}]
Let $X$ be a regular Hausdorff topological space of weight at most $\kappa$.
  If there is a \( \kappa^+\)-Borel embedding of \( \pre{\kappa}{2} \) into \( X \), then
the $\kappa^+$-Borel hierarchy on $X$ does not collapse.
\end{theorem*}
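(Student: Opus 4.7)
The plan is to first establish that the $\kappa^+$-Borel hierarchy on the generalized Cantor space $\pre{\kappa}{2}$ itself does not collapse, and then to transport this failure to $X$ along the assumed embedding. Since $2^{<\kappa}=\kappa$, the space $\pre{\kappa}{2}$ is a regular Hausdorff space of weight $\kappa$, so Proposition~\ref{universal_sigma_pi} yields, for every $1 \le \alpha < \kappa^+$, a set $U_\alpha \subseteq \pre{\kappa}{2} \times \pre{\kappa}{2}$ in $\Siii{0}{\alpha}{\kappa^+}$ which is universal for $\Siii{0}{\alpha}{\kappa^+}(\pre{\kappa}{2})$. A standard diagonal argument then applies: the set $D_\alpha = \{x \in \pre{\kappa}{2} : (x,x) \in U_\alpha\}$ is in $\Siii{0}{\alpha}{\kappa^+}(\pre{\kappa}{2})$, while its complement cannot be, since otherwise it would coincide with some vertical section of $U_\alpha$ and produce the usual contradiction at the corresponding diagonal point. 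Hence $\Siii{0}{\alpha}{\kappa^+}(\pre{\kappa}{2}) \neq \Piii{0}{\alpha}{\kappa^+}(\pre{\kappa}{2})$ for every $\alpha < \kappa^+$, which gives $\ord_{\kappa^+}(\pre{\kappa}{2}) = \kappa^+$.

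Next, let $e \colon \pre{\kappa}{2} \to X$ be the given $\kappa^+$-Borel embedding and let $Y = e[\pre{\kappa}{2}]$ be its image. By definition of embedding, $Y$ is a $\kappa^+$-Borel subspace of $X$ and $e$ is a $\kappa^+$-Borel isomorphism between $\pre{\kappa}{2}$ and $Y$. A straightforward induction on $\alpha < \kappa^+$ using the closure properties of the pointclasses recorded in Proposition~\ref{k+_hierarchy_closure} shows that any such isomorphism preserves each level of the hierarchy in both directions; in particular $\ord_{\kappa^+}(Y) = \ord_{\kappa^+}(\pre{\kappa}{2}) = \kappa^+$. Combining this with the subspace description of the hierarchy from Section~\ref{subsec:decomposition}, namely $\Siii{0}{\alpha}{\kappa^+}(Y) = \{A \cap Y : A \in \Siii{0}{\alpha}{\kappa^+}(X)\}$ and the analogous identity for $\Pi$, one concludes by contradiction: a collapse $\Siii{0}{\alpha}{\kappa^+}(X) = \Piii{0}{\alpha}{\kappa^+}(X)$ at some $\alpha < \kappa^+$ would immediately be inherited by $Y$, contradicting the non-collapse on $Y$ just established. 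Therefore $\ord_{\kappa^+}(X) = \kappa^+$.

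The conceptual heart of the argument is the transfer in the second paragraph. The diagonal step on $\pre{\kappa}{2}$ is essentially textbook once universal sets are in hand, and the contradiction drawn from the subspace description is immediate. The genuine technical content is the assertion that a $\kappa^+$-Borel embedding preserves the Borel hierarchy level by level: one needs that $Y$ is $\kappa^+$-Borel in $X$, that preimages and images of $\Siii{0}{\alpha}{\kappa^+}$ sets under $e$ and $e^{-1}$ remain in the same level, and that this persists uniformly through the limit stages $\alpha < \kappa^+$. This is the only place where the embedding hypothesis is really used and is the main step one should verify carefully, especially in the singular case, where the inductive bookkeeping may need to invoke the alternative hierarchy of Section~\ref{sec_alt} to cover the limit levels uniformly.
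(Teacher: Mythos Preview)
Your overall strategy matches the paper's: show the hierarchy does not collapse on $\pre{\kappa}{2}$, transfer this to the image $Y = e[\pre{\kappa}{2}]$, then use the hereditary nature of the pointclasses to conclude for $X$. Steps one and three are fine and essentially identical to the paper's argument (which packages them as Corollary~\ref{collapse_k+} and Proposition~\ref{prop:abstractpropertiesofcollapse}\ref{prop:abstractpropertiesofcollapse-1}).

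The gap is in your middle step. The claim that a $\kappa^+$-Borel isomorphism preserves each level $\Siii{0}{\alpha}{\kappa^+}$ of the hierarchy is false in general, and no ``straightforward induction'' establishes it: already the base case $\alpha = 1$ fails, because the preimage of an open set under a merely $\kappa^+$-Borel map need not be open, so there is nothing to start the induction with. What is true is weaker, and it is precisely what Corollary~\ref{cor:abstractpropertiesofcollapse} provides. Since both $\pre{\kappa}{2}$ and $Y$ have weight at most $\kappa < \kappa^+$ and $\kappa^+$ is regular, there is a single $\beta < \kappa^+$ such that $e$ and $e^{-1}$ are both $\Siii{0}{\beta}{\kappa^+}$-measurable; then Lemma~\ref{lem:change_topology_hierarchy} shows the two hierarchies agree from level $\beta \cdot \omega$ onward. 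This suffices to transfer \emph{non-collapse} (equivalently, $\ord_{\kappa^+} = \kappa^+$), though not the individual levels below $\beta \cdot \omega$. The paper runs this as a contradiction: if $\ord_{\kappa^+}(X) < \kappa^+$, then $\ord_{\kappa^+}(Y) < \kappa^+$ by the subspace property, hence $\ord_{\kappa^+}(\pre{\kappa}{2}) < \kappa^+$ by Corollary~\ref{cor:abstractpropertiesofcollapse}, against Corollary~\ref{collapse_k+}.

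Your closing remark about needing the alternative hierarchy of Section~\ref{sec_alt} to handle limit levels in the singular case is a red herring: the correct argument via Corollary~\ref{cor:abstractpropertiesofcollapse} works uniformly for all $\kappa$ and makes no use of that material.
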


When \(\k\) is regular, it is consistent that the converse of Theorem~\ref{thm:non-collpase_general_space} holds for all $\kappa^+$-Borel subspaces of $\pre{\kappa}{\kappa}$, and thus for all topological spaces of weight at most \( \kappa \)
that are $\kappa^+$-Borel isomorphic to a $\kappa^+$-Borel subset of $\pre{\kappa}{\kappa}$;\footnote{Such spaces have been referred to as standard Borel \( \kappa \)-spaces, see e.g.\ \cite[Section 3]{MR13}.} 
this is due to the fact that there are models of set theory in which the \( \kappa \)-Perfect Set Property holds for all $\kappa^+$-Borel subsets of $\pre{\kappa}{\kappa}$ (see~\cite{schlicht_perfect_2017}). This will be complemented in Section~\ref{sec: psp section}, where we show that, consistently, 
there can be even closed subsets of \( \pre{\kappa}{\kappa} \) which do not contain a \( \kappa^+ \)-Borel isomorphic copy of \( \pre{\kappa}{2} \), yet their \( \kappa^+ \)-Borel hierarchy does not collapse.

As anticipated, in Section~\ref{sec_alt} we also introduce and analyze an alternative hierarchy for \(\k^+\)-Borel sets that naturally arises when \(\k\) is a singular cardinal. Indeed, for such cardinals the notions of \(\k^+\)-algebra and \(\k\)-algebra coincide, allowing  the collection of \(\k^+\)-Borel subsets of a space to be equivalently described as the smallest \(\k\)-algebra containing all its open sets. This leads to a new hierarchy, called \(\k\)-Borel hierarchy, which is the focus of this section, along with its relationship to the \(\k^+\)-Borel hierarchy. 
One of our main results, Theorem~\ref{hierarchy_theorem}, precisely characterizes the connection between these two hierarchies for spaces whose open sets can be written as unions of \( \cof(\kappa) \)-many closed sets ---  
the latter is not a strong restriction, since such condition is naturally satisfied by most spaces studied in the literature on generalized descriptive set theory, including all subspaces of $\pre{\cof(\kappa)}{\kappa}$ and, more generally, all $(\omega,\cof(\kappa))$-Nagata--Smirnov spaces (see Fact~\ref{lem:NS_implies_increasing_hierarchy}).
It is remarkable that, even when the $\kappa^+$-Borel and $\k$-Borel sets coincide, the two families stratify in two different hierarchies, with the \(\k\)-Borel hierarchy being strictly finer than the \(\k^+\)-Borel hierarchy; thus, the \(\k\)-Borel hierarchy provides a more informative classification of $\k^+$-Borel sets when \( \k \) is singular.
Theorem~\ref{hierarchy_theorem} also allows us to extend various results from the $\k^+$-Borel hierarchy to the $\k$-Borel hierarchy. For example, it yields the bounds 
\[ 
\ord_{\kappa^+}(X) \leq \ord_{\kappa}(X) \leq 2 \cdot \ord_{\kappa^+}(X) 
\] 
relating the lengths of the two hierarchies (see Corollary~\ref{cor:ordersingular}), hence the $\k^+$-Borel hierarchy collapses if and only if the $\k$-Borel hierarchy does (Corollary~\ref{collapse_k}).
We also study additional fundamental properties of the $\k$-Borel hierarchy analogous to those listed above for the \( \k^+ \)-Borel hierarchy, including 
its optimal closure properties (Propositions~\ref{k_hierarchy_closure1} and~\ref{k_hierarchy_closure2}), its properness (Propositions~\ref{prop:proper_for_k} and~\ref{prop:proper_for_k_alpha=ord}), the conditions for its collapse (Proposition~\ref{prop:conditions_collapse_singular}), and the existence of universal and complete sets (Proposition~\ref{universal_sigma_pi_for_k-Borel}).
From these, we derive the counterparts for a singular cardinal \( \k \) of the missing results about the \( \k^+\)-Borel hierarchy obtained in Section~\ref{sec_borel_hierarchy} for the regular case (Corollary~\ref{cor_hier_clos}).

With Section~\ref{sec: psp section} and Section~\ref{sec: alpha-forcing}, we dive into the second part of the paper, where we return to the case of regular cardinals \( \kappa \), and we construct via forcing various models of set theory in which the $\kappa^+$-Borel hierarchy exhibits specific behaviors. 
Our aim is to address three central problems, each of which we resolve affirmatively.

The first problem is motivated by the fact that if \( X \subseteq \pre{\kappa}{\kappa} \) contains a \( \kappa \)-perfect subset, then \( \ord_{\kappa^+}(X) = \kappa^+ \); this is well-known in the classical setting \( \kappa = \omega \), and follows from our Theorem~\ref{thm:non-collpase_general_space} if \( \kappa \) is uncountable.

\begin{problem}  \label{q: no perfect but full order}
Can there be a $\kappa^+$-Borel (or even closed) set $X \subseteq \pre{\kappa}{\kappa}$ which contains no \( \kappa \)-perfect subset, yet $\ord_{\k^+}(X) = \kappa^+$?
\end{problem}

The reader might wonder which notion of \( \kappa \)-perfect set is involved in Problem~\ref{q: no perfect but full order}.
In fact, several reasonable notions of a \( \kappa \)-perfect set have been considered in generalized descriptive set theory, each with their own specific nuances, depending on the context. For the purposes of this paper, we adopt the following definitions. A set $X \subseteq \pre{\kappa}{\kappa}$ is \( \kappa \)-perfect if it is closed and homeomorphic to the generalized Cantor space $\pre{\kappa}{2}$, and it is \( \kappa \)-thin if it has no \( \kappa \)-perfect subset (Definition~\ref{def:perfect}).
This choice does not rule out the other possibilities that have been considered elsewhere:
in fact,
 Corollary~\ref{cor: thin borel set eq}
yields that for $\kappa^+$-Borel sets \( X \subseteq \pre{\k}{\k} \), our definition 
gives rise to the same notion of \( \kappa \)-Perfect Set Property given by 
most of the other definitions of \( \kappa \)-perfect set proposed in literature: indeed, $X$ has a \( \kappa \)-perfect subset if and only if there is $\kappa^+$-Borel injection from $\pre{\kappa}{2}$ into $X$.
     In particular, this aligns our formulation of Problem~\ref{q: no perfect but full order} with Theorem~\ref{thm:non-collpase_general_space}.

Problem~\ref{q: large and ord 2} is instead motivated by the easy observation that if \( |X| \leq \kappa \), then \( \ord_{\kappa^+}(X) \leq 2 \) because every subset of \( X \) can be written as a union of length at most \( \kappa \) of singletons.

\begin{problem} \label{q: large and ord 2}
Can there be a $\kappa^+$-Borel (or even closed) set $X \subseteq \pre{\kappa}{\kappa}$ such that $|X| > \kappa$, yet $\ord_{\kappa^+}(X) = 2$?
\end{problem}

In the classical setting \( \kappa = \omega \), the Perfect Set Property makes questions about the order of the Borel hierarchy on Borel sets \( X \) moot: either \( X \) is countable, and hence $\ord_{\omega_1}(X) \leq 2$, or it contains a copy of the whole Cantor space \( \pre{\omega}{2} \), and thus $\ord_{\omega_1}(X) = \omega_1$. In the uncountable setting, however, the fact that the \( \kappa \)-Perfect Set Property may consistently fail for \( \kappa^+ \)-Borel (and even closed) sets makes the next problem worth investigating. 

\begin{problem}\label{q: order in between}
Can there be a $\kappa^+$-Borel (or even closed) set $X \subseteq \pre{\kappa}{\kappa}$ with $2 < \ord_{\kappa^+}(X) < \kappa^+$?
\end{problem}

In Section~\ref{sec: psp section}, we investigate the interplay between the \( \kappa \)-Perfect Set Property and the order of the $\kappa^+$-Borel hierarchy on definable subspaces of the generalized Baire space.
We first deal with matters of definability, and lay the groundwork for connecting the \( \kappa \)-Perfect Set Property to the interpretation of a definable set in generic extensions. This is done via Theorem \ref{th: imperfect equiv}, due to L\"ucke \cite{luckeSigma11Definability2012}. We observe that we may pass to generic extensions by ${<}\kappa$-closed forcings without adding new elements to \( \kappa \)-thin $\kappa^+$-Borel sets; in other words, every code for a \( \kappa \)-thin $\kappa^+$-Borel set yields the same subset of $\pre{\kappa}{\kappa}$ when interpreted in a ${<}\kappa$-closed forcing extension. 
We also state Corollary~\ref{cor:closed_uncountable_order_2}, anticipating a positive answer to Problem~\ref{q: large and ord 2}.
We then turn our attention to combining the work of Hamkins, Kunen and Miller to prove Lemma~\ref{lem: small forcing preserves order}, in analogy with what has been shown about the Cohen real model in \cite[Theorem~14.3]{miller_descriptive_1995}. Together with Theorem~\ref{th: hamkins}, this shows (Corollary~\ref{cor: closed set with full order}) that the ground model's generalized Baire space \( \pre{\k}{\k} \cap V \) is a \( \kappa \)-thin closed set with order $\kappa^+$ in every forcing extension \( V[G] \) by a small forcing, giving a positive answer to Problem~\ref{q: no perfect but full order} and showing that the sufficient conditions provided in Theorem~\ref{thm:non-collpase_general_space} are not necessary in general, as already anticipated.

Constructing a space $X \subseteq \pre{\kappa}{\kappa}$ with $2 < \ord_{\kappa^+}(X) < \kappa^+$ to solve Problem~\ref{q: order in between} is significantly more difficult. In Section~\ref{sec: alpha-forcing}, we partially generalize work of Miller \cite{miller_length_1979} surrounding the modification of the order of the Borel hierarchy on a given space $X$. The primary tool here is Miller's \textit{$\alpha$-forcing}, the use of which allows for fine control over specific levels of the hierarchy. Following the proof of \cite[Theorem~34]{miller_length_1979}, we use of an iteration of $\alpha$-forcing to obtain:

\begin{theorem*}[Theorem~\ref{th: set order to n}]  Let $X \subseteq \pre{\kappa}{\kappa}$ be such that $|X| > \kappa$, and let $1 < n < \omega$. Then there exists a ${<}\kappa$-closed, $\kappa^+$-c.c.\ forcing extension $V[G]$ of \( V \) such that $V[G] \models \ord_{\kappa^+}(X) = n$. 
\end{theorem*}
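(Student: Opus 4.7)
The plan is to generalize Miller's \( n \)-forcing from \cite{miller_length_1979} to the uncountable setting and iterate it with bookkeeping. For the given \( X \subseteq \pre{\kappa}{\kappa} \) with \( |X| > \kappa \), I would first define a forcing \( \bQ_n = \bQ_n(X) \) whose conditions are ${<}\kappa$-sized tree-shaped partial approximations, designed so that the generic object naturally gives rise to a subset of \( X \) lying in \( \Siii{0}{n}{\kappa^+}(X) \setminus \Piii{0}{n}{\kappa^+}(X) \) in the extension. Then \( \bQ_n \) is ${<}\kappa$-closed by inspection, and \( \kappa^+ \)-c.c.\ by a standard \( \Delta \)-system argument using \( 2^{<\kappa} = \kappa \).

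Next, I would iterate this forcing for \( \kappa^{++} \) steps with ${<}\kappa$-support, and use a bookkeeping function to enumerate, at each stage \( \beta < \kappa^{++} \), a \( \bP_\beta \)-name \( \dot B_\beta \) for a potential $\kappa^+$-Borel subset of \( X \); at stage \( \beta \), force with a variant \( \bQ_n(\dot B_\beta) \) that ``absorbs'' \( \dot B_\beta \) into \( \Siii{0}{n}{\kappa^+}(X) \) as computed in \( V[G_\beta] \). Standard preservation theorems yield that the full iteration \( \bP \) is ${<}\kappa$-closed and \( \kappa^+ \)-c.c., so that all cardinals are preserved. By \( \kappa^+ \)-c.c.\ reflection, combined with the fact that $\kappa^+$-Borel codes have size at most \( \kappa \), every $\kappa^+$-Borel subset of \( X \) in the final extension \( V[G] \) already appears in some intermediate \( V[G_\beta] \) and is therefore handled by the bookkeeping.

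The main obstacle is establishing Miller's absorption property for \( \bQ_n \): that forcing with \( \bQ_n(\dot B_\beta) \) places \( \dot B_\beta \) at level \( n \) in the extension \emph{without} creating new subsets of \( X \) at strictly higher levels. This requires a careful analysis of how $\kappa^+$-Borel codes are transformed by \( {<}\kappa \)-closed, \( \kappa^+ \)-c.c.\ forcing, essentially showing that only the pointclass \( \Siii{0}{n}{\kappa^+}(X) \) gets enriched while higher levels remain under control. Granting this, the upper bound \( \ord_{\kappa^+}(X) \leq n \) in \( V[G] \) follows from the bookkeeping, while the lower bound \( \ord_{\kappa^+}(X) \geq n \) comes from the cofinally many generically added sets that sit at level exactly \( n \); since \( n > 1 \), the resulting hierarchy is nontrivial and we obtain \( \ord_{\kappa^+}(X) = n \) as required.
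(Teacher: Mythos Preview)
Your proposal has the right overall shape (iterate a Miller-style forcing with bookkeeping), but the lower bound argument contains a genuine gap, and you have misidentified the main obstacle.

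You write that ``the lower bound $\ord_{\kappa^+}(X) \geq n$ comes from the cofinally many generically added sets that sit at level exactly $n$''. This is circular: nothing guarantees that any of these sets sits at level exactly $n$ in the \emph{final} model. The fact that each $\bQ_n(\dot B_\beta)$ adds a $\Piii{0}{n}{\kappa^+}$-code for $\dot B_\beta$ does not prevent later iterands from also producing a $\Siii{0}{n-1}{\kappa^+}$-code for the same set. The paper's approach is different: at stage $0$ one forces with $\aforc_{n-1}(\emptyset,\emptyset,X)$ to add a single \emph{generic} $\Piii{0}{n-1}{\kappa^+}$ set $G^0_\emptyset$, and then all remaining iterands are of the form $\aforc_n(\dot B_\zeta, X \setminus \dot B_\zeta, X)$ to handle the bookkeeping. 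The heart of the argument is Theorem~\ref{th: switcheroo} (the ``switcheroo''), which shows that $G^0_\emptyset \notin \Siii{0}{n-1}{\kappa^+}(X)$ in the final model. This is \emph{not} an absorption statement and is not automatic; it requires a rank function $\crank{p}{H}$ on conditions, a notion of $(H,K)$-restrictable conditions, a projection lemma (Lemma~\ref{lem: projection}), and an inductive argument (Theorem~\ref{th: rank argument}) that lets one replace any condition forcing ``$\check x \notin \mathcal{I}^{\dot f}(t)$'' by a compatible condition of strictly smaller rank. Only then can one ``switch'' a point $y \in X \setminus H$ in and out of $G^0_\emptyset$ to derive a contradiction. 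Your ``absorption property'' (no new sets at strictly higher levels) is false as stated --- the iteration certainly adds new $\kappa^+$-Borel sets at every level --- and is in any case not what is needed.

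A secondary issue: a bare $\Delta$-system argument for the $\kappa^+$-c.c.\ of a single iterand does not automatically yield the $\kappa^+$-c.c.\ of a ${<}\kappa$-supported iteration. The paper establishes that each $\aforc_\alpha(A,B,X)$ is ${<}\kappa$-closed, well-met, and $\kappa$-linked (Lemmas~\ref{lem: <kappa closure} and~\ref{lem: kappa linked}), and then invokes Fact~\ref{fact: iteration cc}. You should make explicit which preservation theorem you are appealing to.
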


Finally, combining this theorem with the results in Section \ref{sec: psp section}, we derive the following:

\begin{theorem*}[Corollary~\ref{cor:final_7}]
Let $X \subseteq \pre{\kappa}{\kappa}$ be a \( \kappa \)-thin $\kappa^+$-Borel set with $|X| > \kappa$, and let $1 < n < \omega$. Then there is a ${<}\kappa$-closed, $\kappa^+$-c.c.\ forcing extension $V[G]$ of \( V \) such that $V[G] \models {X^{V[G]} = X} \wedge {\ord_{\kappa^+}(X) = n}$.
\end{theorem*}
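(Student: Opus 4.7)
The strategy is to combine Theorem~\ref{th: set order to n} with the preservation property for \( \k \)-thin \( \k^+ \)-Borel sets anticipated in Section~\ref{sec: psp section}. Since \( |X| > \k \) and \( 1 < n < \o \), Theorem~\ref{th: set order to n} directly supplies a \( {<}\k \)-closed, \( \k^+ \)-c.c.\ forcing \( \bP \in V \) and a \( V \)-generic filter \( G \) over \( \bP \) such that \( V[G] \models \ord_{\k^+}(X) = n \). Crucially, the conclusion and the existence of \( \bP \) in Theorem~\ref{th: set order to n} depend only on the hypotheses \( X \subseteq \pre{\k}{\k} \) and \( |X| > \k \), not on any \( \k^+ \)-Borel code for \( X \). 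We are therefore free to subsequently inspect how such a code behaves in \( V[G] \).

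What remains is to argue that, in this extension, the ground model set \( X \) agrees with its re-interpretation \( X^{V[G]} \) obtained by evaluating a \( \k^+ \)-Borel code for \( X \) in \( V[G] \). For this I would invoke the observation from Section~\ref{sec: psp section} that \( {<}\k \)-closed forcings do not add new elements to \( \k \)-thin \( \k^+ \)-Borel sets. Concretely, fix a \( \k^+ \)-Borel code for \( X \) in \( V \); that the same code produces the same subset of \( \pre{\k}{\k} \) when interpreted in \( V[G] \) rests on Corollary~\ref{cor: thin borel set eq}, which characterises \( \k \)-thinness of a \( \k^+ \)-Borel set \( X \) via the non-existence of a \( \k^+ \)-Borel injection \( \pre{\k}{2} \to X \), together with L\"ucke's Theorem~\ref{th: imperfect equiv}, which yields the absoluteness between \( V \) and \( V[G] \) of this non-existence statement under \( {<}\k \)-closed forcings.

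Putting the two steps together gives simultaneously \( X^{V[G]} = X \) and \( \ord_{\k^+}(X) = n \) in \( V[G] \), which is exactly the claim. The main substantive work has already been done in Theorem~\ref{th: set order to n} and in the preservation lemma of Section~\ref{sec: psp section}; the derivation here amounts to assembling them. The only potential obstacle --- reconciling the forcing needed for the order statement with the preservation of \( X \) itself --- disappears automatically, because Theorem~\ref{th: set order to n} already delivers a \( {<}\k \)-closed forcing, which is precisely the regime in which the preservation fact applies.
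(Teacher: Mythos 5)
Your argument is correct and is essentially the paper's own proof (which is only sketched in the paragraph preceding the corollary): apply Theorem~\ref{th: set order to n} to get a ${<}\kappa$-closed, $\kappa^+$-c.c.\ forcing with $V[G]\models \ord_{\kappa^+}(X)=n$, and then use Corollary~\ref{cor: thin borel set eq} to conclude that this forcing adds no new elements to the $\kappa$-thin $\kappa^+$-Borel set $X$, so that $X^{V[G]}=X$. One cosmetic remark: the mechanism you want is the direct implication of Corollary~\ref{cor: thin borel set eq} that a $\kappa^+$-Borel set with no $\kappa$-perfect subset gains no new elements in any ${<}\kappa$-closed extension (the negation of item~(3) implies the negation of item~(2)), rather than an ``absoluteness of the non-existence of a $\kappa^+$-Borel injection'' routed through Theorem~\ref{th: imperfect equiv}; the conclusion is the same, but the former is what the corollary literally provides.
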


In the above theorem, \enquote{\( X^{V} = X^{V[G]} \)} means that given a \( \kappa^+ \)-Borel code for \( X \) (in \( V \)), its re-interpretation in \( V[G] \) gives rise to exactly the same set \( X \). In particular, this means that if \( X \) was a closed subset of \( \pre{\kappa}{\kappa}\) in \( V \), then it stays closed also when moving to the generic extension \( V[G] \), and similarly for the other classes in the \( \kappa^+ \)-Borel hierarchy of \( \pre{\kappa}{\kappa}\). 
Thus we obtain that, consistently, there are closed (hence nice Polish-like) subspaces of \( \pre{\kappa}{\kappa}\) whose \( \kappa^+ \)-Borel hierarchy has length strictly between \( 2 \) and \( \kappa^+ \), in stark contrast with the situation in classical descriptive set theory.

The last two results can be extended to infinite ordinals as well, but the proof requires a more complicated treatment and will appear in a separate paper by the second author.
Other natural continuations of the present work include the study of hierarchies refining the Borel one, such as the difference hierarchy and the Wadge hierarchy, and the search for analogues of the results from Section~\ref{sec: psp section} and~\ref{sec: alpha-forcing} for singular cardinals of uncountable cofinality. The former line of research has already been pursued by the fourth author and her collaborators and will be published in a sequel to this paper, while the latter is left open for future research.

\section{Setup and preliminaries} \label{sec:setup}

A basic understanding of set theory and general topology is assumed, and standard notation and terminology from these fields is employed. For any undefined notion, readers are directed to \cite{jech_set_2003,ER}. We also refer to \cite{AMR19} for the generalized descriptive set theory notation adopted in this paper.

Throughout the paper, we work in \( \ZFC \) and we assume that 
\begin{center}
\emph{\(\k\) is an uncountable cardinal satisfying the condition \(2^{<\k}=\k\).}
\end{center}
We let \( \Ord \) be the class of all ordinals, and \( \Card \) be the subclass of cardinals.
Given any set \( X \), we denote by \( \id_X \) the identity function on \( X \). 
A function \( f \colon X \to Y \) between the topological spaces \( X \) and \( Y \) is an embedding if it is a homeomorphism onto its image \( f[X] \), where the latter is equipped with the subspace topology inherited from \( Y \).
The product of any two functions \( f \colon X \to Y \) and \( g \colon V \to W \) is the map \( f \times g \colon X \times V \to Y \times W \) sending \((x,v) \in X \times V \) to \( (f(x),g(v)) \). If all of \( X \), \( Y \), \( V \), and \( W \) are topological spaces, then \( f \times g \) is continuous if and only if so are \( f \) and \( g \).

The \markdef{weight} of a topological space \( X \) is the smallest cardinality of a basis for its topology, and is denoted by \( \weight(X) \), or \( \weight(X,\tau) \) when we want to specify the topology \( \tau \) on \( X \). Given \( \nu \in \Card \), we say that the space \( X \)
is \markdef{$\nu$-additive} if the intersection of any collection of fewer than \(\nu\)-many open subsets of \( X \) is still an open set. In particular, every topological space is \( \omega \)-additive, and it is easy to show that if a regular space \( X \) is \( \nu \)-additive for some \( \nu > \omega \), then \( X \) is also zero-dimensional, that is, it admits a basis consisting of clopen sets.
It is easy to check that if a Hausdorff space \( X \) is \( \nu \)-additive for \( \nu > \weight( X ) \), then \( X \) is discrete; if \( X \) is not discrete, instead, the set of infinite cardinals \( \nu \) such that \( X \) is \( \nu \)-additive has a maximum, which we call the \markdef{additivity of \( X \)}. 
 
As mentioned in the introduction, generalized descriptive set theory is primarily concerned with the study of the generalized Cantor and Baire spaces, that are constructed as follows.
Given a set $A$ and \(\gamma \in \Ord\),  the set $\pre{\gamma}{A}=\{ x \mid x \colon \gamma \to A \}$ is the Cartesian product of \(\gamma\)-many copies of \(A\), that is, \(\pre{\gamma}{A}=\prod_{\alpha<\gamma}A_i\) where \(A_\alpha=A\) for every \(\alpha<\gamma\). We also set \(\pre{<\gamma}{A}= \bigcup_{\b<\gamma}\pre{\b}{A}\). 
Let \( \mu \in \Card \).
Any set of the form $\pre{\mu}{A}$ is naturally equipped with the \markdef{bounded topology} $\tau_b$, i.e.\ the  smallest topology generated by the family $\{\clopen{s} \mid s \in \pre{<\mu}{A}\}$, where $\clopen{s} = \{ x \in \pre{\mu}{A} \mid s \subseteq x \}$. It is straightforward to see that each $\clopen{s}$ is clopen, making the topology \(\tau_b\) zero-dimensional. If \(\mu\) is infinite and \(A\) has at least two elements, then the space \((\pre{\mu}{A}, \tau_b)\) is regular Hausdorff and has weight \(|A|^{<\mu}\) and additivity \(\cof(\mu)\). 
The \markdef{generalized Baire space} and the \markdef{generalized Cantor space} are, respectively, \((\pre{\cof(\k)}{\k}, \tau_b)\) and  \((\pre{\k}{2}, \tau_b)\). The reference to \( \tau_b \) will be dropped in most cases.

Recent advances in generalized descriptive set theory have pushed its scope beyond the two spaces mentioned above, and several classes of Polish-like spaces that can be considered as higher analogues of classical Polish spaces have been isolated --- we refer the reader to \cite{PhDThesisAgostini,AgosMottoSchlichtRegular} for a thorough discussion on this. 
What matters for our purposes is that all such spaces are regular Hausdorff and have weight at most $\kappa$: this justifies our restriction to spaces with these features from Section~\ref{sec:preliminaries} onward. 
When \( \kappa \) is singular, there is another condition that is shared by most Polish-like spaces considered so far and that is relevant to us, namely, the property of being a \( (\omega,\cof(\kappa)) \)-Nagata--Smirnov space.\footnote{Let $X$ be a topological space.
A (regular Hausdorff) topological space is called a \( (\omega,\cof(\k)) \)-Nagata--Smirnov if it has a basis for the topology which is the union of $\cof(\kappa)$-many locally finite families, where a family $\F$ of subsets of $X$ is said to be locally finite if every point $x\in X$ has a neighborhood $U$ that intersects finitely many elements of $\F$.}
The class of $(\omega,\cof(\kappa))$-Nagata--Smirnov spaces is very wide and includes most spaces considered in previous literature in generalized descriptive set theory, like all subspaces of \(\ \pre{\k}{2}\) and \( \pre{\cof(\kappa)}{\kappa} \), all $\GG$-metrizable spaces for a totally ordered group $\GG$ of degree $\cof(\k)$, and all spaces of weight at most $\cof(\kappa)$. Thus, assuming that the space is \( (\omega,\cof(\kappa)) \)-Nagata--Smirnov is not particularly restrictive either.
In any case, the only thing that we will use about such spaces is that, as shown in~\cite[Lemma 2.2.48]{PhDThesisAgostini}: 

\begin{fact}\label{lem:NS_implies_increasing_hierarchy}
Let \(X\) be a regular Hausdorff $(\omega,\cof(\kappa))$-Nagata--Smirnov space. Then every open subset of \( X \) can be written as a union of \( \cof(\kappa) \)-many closed sets.
\end{fact}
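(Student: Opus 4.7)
The plan is to mimic the classical proof (the $\omega$ case) of the fact that in a regular space with a $\sigma$-locally finite basis, every open set is an $F_\sigma$, adapting it by replacing $\sigma$ with $\cof(\kappa)$. Fix a regular Hausdorff $(\omega,\cof(\kappa))$-Nagata--Smirnov space $X$ and a basis $\mathcal B = \bigcup_{i < \cof(\kappa)} \mathcal B_i$ with each $\mathcal B_i$ locally finite. Let $U \subseteq X$ be open; the goal is to exhibit $U$ as a union of $\cof(\kappa)$-many closed sets.

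First I would use regularity to rewrite $U$ as a union of closures of basic open sets contained in $U$. Precisely: for each $x \in U$, pick an open $V$ with $x \in V \subseteq \overline V \subseteq U$ (using that $X$ is regular), and then a basic open set $B \in \mathcal B$ with $x \in B \subseteq V$, so that $\overline B \subseteq U$. This gives
\[
U \;=\; \bigcup \bigl\{ B \in \mathcal B \mid \overline B \subseteq U \bigr\} \;=\; \bigcup_{i < \cof(\kappa)} \bigcup \bigl\{ B \in \mathcal B_i \mid \overline B \subseteq U \bigr\}.
\]

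For each $i < \cof(\kappa)$, set
\[
F_i \;:=\; \bigcup \bigl\{ \overline B \mid B \in \mathcal B_i,\ \overline B \subseteq U \bigr\}.
\]
Clearly $F_i \subseteq U$, and the previous displayed equality shows $U = \bigcup_{i < \cof(\kappa)} F_i$. The crux of the proof is then to check that each $F_i$ is closed. The relevant standard fact is that if $\mathcal F$ is a locally finite family of subsets of a topological space, then $\overline{\bigcup \mathcal F} = \bigcup_{A \in \mathcal F} \overline A$; in particular, a locally finite union of closed sets is closed. Since $\mathcal B_i$ is locally finite, so is the family $\{\overline B \mid B \in \mathcal B_i\}$ (any neighbourhood meeting $\overline B$ meets $B$), and hence so is its subfamily $\{\overline B \mid B \in \mathcal B_i,\ \overline B \subseteq U\}$. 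Thus $F_i$ is a locally finite union of closed sets, and is therefore closed.

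The only step with any subtlety is the local-finiteness preservation under taking closures, together with the locally-finite-union-is-closed lemma; both are entirely standard, so there is no real obstacle. Hence $U$ is a union of $\cof(\kappa)$-many closed sets, as required.
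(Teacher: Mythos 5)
Your proof is correct and is exactly the standard Nagata--Smirnov-type argument (regularity to shrink to basic sets with closure inside $U$, then use that a locally finite union of closed sets is closed, applied to each piece $\mathcal B_i$ of the basis); the paper does not prove this fact itself but cites \cite[Lemma 2.2.48]{PhDThesisAgostini}, which runs along the same lines. No gaps.
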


The latter property is the one that will be explicitly stated in our results, when needed.
In contrast, it is interesting to notice that any further ``Polish-like requirement'' that one can add to the spaces under considerations, like being (completely) $\cof(\kappa)$-metrizable or being spherically complete, does not have a direct impact on the theory of generalized Borel sets that we are going to develop.

A \markdef{pointclass} \(\boldsymbol\Gamma\) is a class-function assigning to every nonempty topological space $X$ a nonempty family \(\boldsymbol\Gamma(X) \subseteq \powset(X)\). 
The dual of \( \boldsymbol{\Gamma}\) is the pointclass \( \check{\boldsymbol\Gamma} \) defined by \(\check{\boldsymbol\Gamma}(X)=\left\{ X \setminus A \mid A \in \boldsymbol\Gamma(X)\right\} \), while the ambiguous 
pointclass \( \boldsymbol{\Delta_{\Gamma}} \) associated to \(\boldsymbol\Gamma\) is obtained by setting \(\boldsymbol{\Delta_{\Gamma}}(X)= \boldsymbol\Gamma(X) \cap \check{\boldsymbol\Gamma}(X)\).
A pointclass \(\boldsymbol{\Gamma}\) is said to be \markdef{boldface} if it is closed under continuous preimages,
that is: \(f^{-1}(B)\in \boldsymbol{\Gamma}(X)\) whenever \(B \in \boldsymbol{\Gamma}(Y)\) and \(f \colon X \to Y \) is continuous. Obviously, if \( \boldsymbol{\Gamma} \) is boldface, then so are \( \check{\boldsymbol{\Gamma}}\) and \( \boldsymbol{\Delta_{\Gamma}} \).
 A pointclass \( \boldsymbol{\Gamma} \) is \markdef{hereditary} if \( \boldsymbol{\Gamma}(Y) = \{ A \cap Y \mid A \in \boldsymbol{\Gamma}(X) \} \) for every \( Y \subseteq X \). Notice that if \( \boldsymbol{\Gamma}\) is a boldface pointclass, then it is hereditary if and only if for every \( Y \subseteq X \) and \( B \in \boldsymbol{\Gamma}(Y) \) there is \( A \in \boldsymbol{\Gamma}(X) \) such that \( B = A \cap Y \). Finally, 
a boldface pointclass \(\boldsymbol{\Gamma}\) is called \markdef{nonselfdual on $X$} if \(\boldsymbol{\Gamma}(X)\neq \check{\boldsymbol{\Gamma}}(X)\), and it is called \markdef{selfdual on $X$} otherwise. 
 A function \( f \colon X \to Y \) between two topological spaces is \markdef{\( \boldsymbol{\Gamma}\)-measurable} if \( f^{-1}(U) \in \boldsymbol{\Gamma}(X) \) for every open \( U \subseteq Y \).

Let $X,Y$ be topological spaces, and let \(\boldsymbol{\Gamma}\) be a boldface pointclass.
 A set \(A \subseteq X\) is \markdef{\( \kappa \)-hard for \(\boldsymbol{\Gamma}\)} if for all \( B \in \boldsymbol{\Gamma}(\pre{\k}{2})\) there is a continuous function \( f \colon \pre{\kappa}{2} \to X \) such that \(B = f^{-1} (A) \); if moreover \( A \in \boldsymbol{\Gamma}(X)\), then we say that \( A \) is \markdef{\( \kappa \)-complete for \(\boldsymbol{\Gamma}\)}. 
A set \(\U \subseteq Y \times X \) is \markdef{$Y$-universal for \(\boldsymbol{\Gamma}(X)\)} if \( \U \in \boldsymbol{\Gamma}(Y \times X) \) and \(\ \boldsymbol{\Gamma}(X)=\left\{ \U_y \mid y \in Y\right \} \), where \(\U_y= \left\{ x \in X \mid (y,x) \in \U \right\} \) is the vertical section of \(\U\) at \( y \). It is clear that if \(\U\) is $Y$-universal for \(\boldsymbol{\Gamma}(X)\), then its complement \(\U^c = (Y \times X) \setminus \U\) is $Y$-universal for \(\check{\boldsymbol{\Gamma}}(X)\).
Note that in order to show that  \(\U \in \boldsymbol{\Gamma}(Y \times X) \) is Y-universal for \(\boldsymbol{\Gamma}(X)\), it is enough to check that \(\U \in \boldsymbol{\Gamma}(Y \times X) \) and that for every \(A \in \boldsymbol{\Gamma}(X) \) there is \( y \in Y \) such that \(A=\U_y\): indeed, the remaining condition \enquote{\(\U_y \in \boldsymbol{\Gamma}(X)\) for all \(y \in Y\)} already follows from the fact that each \(\U_y\) is the preimage of \(\U\) via the continuous map \(x \mapsto (y,x)\).

We report three technical lemmas concerning the existence of universal and complete sets. They are basically folklore, but since we could not trace them back in the literature in the form we need, we give full proofs for the reader's convenience.
 
\begin{lemma}\label{lemma_universal}
Let $\boldsymbol\Gamma$ be a hereditary
boldface pointclass, and let \(X \), \( Y \) and \( Z\) be topological spaces such that there is a topological embedding
 $f \colon Z \to Y$. If there is a $Z$-universal set \( \U' \) for $\boldsymbol\Gamma(X)$, then there is a \(Y\)-universal set \( \U \) for \(\boldsymbol\Gamma(X)\). 
 \end{lemma}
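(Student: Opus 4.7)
The plan is to push the universal set $\U'$ through the embedding (in a trivial way on the second coordinate), and then use the hereditariness of $\boldsymbol{\Gamma}$ to extend it to a set on the full product $Y \times X$. First I would consider the product map $g = f \times \id_X \colon Z \times X \to Y \times X$. Since $f$ is a topological embedding and $\id_X$ is a homeomorphism, $g$ is also a topological embedding, and in particular its corestriction $g' \colon Z \times X \to f[Z] \times X$ is a homeomorphism, where $f[Z] \times X$ carries the subspace topology from $Y \times X$.

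Next, I would set $\U'' = g[\U'] \subseteq f[Z] \times X$. Since $\boldsymbol{\Gamma}$ is boldface, it is closed under homeomorphic images (apply the definition to $(g')^{-1}$), so from $\U' \in \boldsymbol{\Gamma}(Z \times X)$ we obtain $\U'' = (g'^{-1})^{-1}(\U') \in \boldsymbol{\Gamma}(f[Z] \times X)$. Using that $\boldsymbol{\Gamma}$ is hereditary, there exists $\U \in \boldsymbol{\Gamma}(Y \times X)$ such that
\[
\U \cap (f[Z] \times X) = \U''.
\]

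It remains to verify that $\U$ is $Y$-universal for $\boldsymbol{\Gamma}(X)$. By the note preceding the statement of the lemma, it suffices to check that every $A \in \boldsymbol{\Gamma}(X)$ is a vertical section $\U_y$ for some $y \in Y$. Fix such $A$; by $Z$-universality of $\U'$, there is $z \in Z$ with $A = \U'_z$. Set $y = f(z)$. Then for every $x \in X$ we have $(y,x) \in f[Z] \times X$, and since $f$ is injective,
\[
(y,x) \in \U \iff (y,x) \in \U'' \iff (z,x) \in \U' \iff x \in A,
\]
so $\U_y = A$, as required.

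The only mildly delicate point is the extension step: one must know that $\boldsymbol{\Gamma}$ being hereditary allows one to pass from a set in $\boldsymbol{\Gamma}(f[Z] \times X)$ to a set in $\boldsymbol{\Gamma}(Y \times X)$ whose intersection with the subspace recovers the original. This is exactly the content of the characterization of hereditariness for boldface pointclasses recalled earlier in the paper, so no real obstacle arises; the rest is a direct verification using the definitions.
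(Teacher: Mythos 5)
Your proof is correct and follows essentially the same route as the paper's: push $\U'$ through the embedding $f \times \id_X$ (using boldfaceness to see the image lies in $\boldsymbol{\Gamma}(f[Z]\times X)$), extend to $Y \times X$ by hereditariness, and verify sections via $y = f(z)$. The paper's version is just terser, leaving the final verification implicit.
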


\begin{proof}
The map \( g = f \times \id_X\) is an embedding of \( Z \times X \) into \( Y \times X \), therefore \( g[\U'] \in \boldsymbol{\Gamma}(f[Z] \times X )\) because \( \boldsymbol{\Gamma} \) is boldface. Since it is also hereditary, there is \( \U \in \boldsymbol{\Gamma}(Y \times X) \) such that \( \U \cap (f[Z] \times X) = g[\U'] \). Such a \( \U \) is as desired.
   \end{proof}

\begin{lemma}\label{univ_compl} 
Let $\boldsymbol\Gamma$ be a hereditary boldface pointclass, and let \(X , Y \) be topological spaces. Suppose that there is an embedding $f \colon \pre{\k}{2} \to X$. If \( \U \subseteq Y \times X \) is \( Y \)-universal for \( \boldsymbol{\Gamma}(X) \), then \( \U \) is \(\k\)-complete for \( \boldsymbol{\Gamma} \). Furthermore, if \( Y \times X \) is homeomorphic to \( X \), then there is a subset of \( X \) which is \(\k\)-complete for \( \boldsymbol{\Gamma} \). 
\end{lemma}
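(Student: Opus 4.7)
My plan is to reduce the completeness problem to the universality of $\U$, using the embedding $f$ and the hereditarity of $\boldsymbol{\Gamma}$ as the bridge. The main issue is that $B$ lives in $\pre{\kappa}{2}$, while the universal set $\U$ parametrises sets in $X$; the embedding lets us transfer $B$ into $X$, and hereditarity lets us recover it as a section of $\U$.

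For the first assertion, fix $B \in \boldsymbol{\Gamma}(\pre{\kappa}{2})$. Since $f \colon \pre{\kappa}{2} \to X$ is an embedding, it is a homeomorphism onto $f[\pre{\kappa}{2}]$, and since $\boldsymbol{\Gamma}$ is boldface, $f[B] \in \boldsymbol{\Gamma}(f[\pre{\kappa}{2}])$. Hereditarity then yields $A \in \boldsymbol{\Gamma}(X)$ with $A \cap f[\pre{\kappa}{2}] = f[B]$, and in particular $f^{-1}(A) = B$. By $Y$-universality of $\U$, pick $y \in Y$ with $\U_y = A$, and define the continuous map
\[
g \colon \pre{\kappa}{2} \to Y \times X, \qquad g(x) = (y, f(x)).
\]
Then $g^{-1}(\U) = \{x \in \pre{\kappa}{2} \mid f(x) \in \U_y\} = f^{-1}(A) = B$, so $\U$ reduces $B$, witnessing that $\U$ is $\kappa$-hard. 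Combined with $\U \in \boldsymbol{\Gamma}(Y \times X)$ (which is part of being $Y$-universal), this gives $\kappa$-completeness.

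For the second assertion, suppose $h \colon Y \times X \to X$ is a homeomorphism. Then $h[\U] \in \boldsymbol{\Gamma}(X)$ because $\boldsymbol{\Gamma}$ is boldface (apply it to $h^{-1}$, which is continuous). Given any $B \in \boldsymbol{\Gamma}(\pre{\kappa}{2})$, take the $g$ produced above; the composition $h \circ g \colon \pre{\kappa}{2} \to X$ is continuous and satisfies $(h \circ g)^{-1}(h[\U]) = g^{-1}(\U) = B$. Hence $h[\U] \subseteq X$ is $\kappa$-complete for $\boldsymbol{\Gamma}$.

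The whole argument is largely a bookkeeping exercise once the roles of \emph{boldface} (to pull $B$ across $f$) and \emph{hereditary} (to extend from $f[\pre{\kappa}{2}]$ to all of $X$) are correctly identified; the only subtlety I would double-check is that the definition of $\kappa$-hardness in the paper requires the reducing map to have codomain the ambient space of the hard set (here $Y \times X$, respectively $X$), which is exactly what $g$ and $h \circ g$ provide.
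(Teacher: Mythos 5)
Your proof is correct and follows essentially the same route as the paper's: push $B$ forward along the embedding, use hereditarity to extend $f[B]$ to a $\boldsymbol{\Gamma}(X)$ set, realize that set as a section $\U_y$, and reduce via $x \mapsto (y,f(x))$, composing with the homeomorphism $h$ for the second claim. The extra care you take in noting that $\U \in \boldsymbol{\Gamma}(Y\times X)$ is needed for completeness (not just hardness) and that $h[\U]\in\boldsymbol{\Gamma}(X)$ via the continuity of $h^{-1}$ is exactly what the paper leaves implicit.
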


\begin{proof}
Fix any \( A \in \boldsymbol{\Gamma}(\pre{\kappa}{2}) \). Then \( f[A] \in \boldsymbol{\Gamma}(f[\pre{\kappa}{2}]) \), hence there is \( B \in \boldsymbol{\Gamma}(X)\) such that \( B \cap f[\pre{\kappa}{2}] = f[A] \). Let \( \bar y \in Y \) be such that \( B = \U_{\bar y} \): then the continuous map \( g \colon \pre{\kappa}{2} \to Y \times X\) defined by \( g(x) = (\bar y,f(x))\) is  such that \( g^{-1}(\U) = A \). This shows that \( \U \) is \(\k\)-complete for \( \boldsymbol{\Gamma}\). If moreover, \( h \colon Y \times X \to X \) is a homeomorphism, then \( h \circ g \colon \pre{\kappa}{2} \to X \) is continuous and such that \( (h \circ g)^{-1}(h(\U)) = A \). It easily follows that \( h(\U) \subseteq X \) is \(\k\)-complete for \( \boldsymbol{\Gamma} \) as well.
\end{proof}

\begin{lemma}\label{universal_selfdual}
Let \(X\) be a topological space, and let \(\boldsymbol{\Gamma} \) be a boldface pointclass that is selfdual on \( X \).  Then there is no \(X\)-universal set for \(\boldsymbol{\Gamma}(X) \). 	
\end{lemma}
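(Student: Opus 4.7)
The plan is the standard Cantor-style diagonal argument. Suppose, toward a contradiction, that $\U \subseteq X \times X$ is $X$-universal for $\boldsymbol{\Gamma}(X)$, so in particular $\U \in \boldsymbol{\Gamma}(X \times X)$. Since $\boldsymbol{\Gamma}$ is boldface, so is its dual $\check{\boldsymbol{\Gamma}}$, and we have $\U^c = (X \times X) \setminus \U \in \check{\boldsymbol{\Gamma}}(X \times X)$.

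Next I would consider the diagonal map $d \colon X \to X \times X$ defined by $d(x) = (x,x)$, which is continuous because both of its coordinate projections are the identity. Applying the boldface closure of $\check{\boldsymbol{\Gamma}}$ under continuous preimages, the set
\[ D = d^{-1}(\U^c) = \left\{ x \in X \mid (x,x) \notin \U \right\} \]
belongs to $\check{\boldsymbol{\Gamma}}(X)$. Here is the only place the hypothesis is used: since $\boldsymbol{\Gamma}$ is selfdual on $X$, we have $\check{\boldsymbol{\Gamma}}(X) = \boldsymbol{\Gamma}(X)$, hence $D \in \boldsymbol{\Gamma}(X)$.

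By $X$-universality of $\U$, there exists $\bar{x} \in X$ such that $D = \U_{\bar{x}}$. But then unraveling the definitions yields
\[ \bar{x} \in D \iff \bar{x} \in \U_{\bar{x}} \iff (\bar{x},\bar{x}) \in \U \iff \bar{x} \notin D, \]
a contradiction. There is no real obstacle here: the argument is entirely routine once one has the definitions in place, and the only delicate point is to verify that the hypothesis of selfduality is used in exactly the right spot, namely to transfer $D$ from $\check{\boldsymbol{\Gamma}}(X)$ back into $\boldsymbol{\Gamma}(X)$ so that universality can be invoked.
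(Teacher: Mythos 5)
Your proposal is correct and is essentially the same diagonalization the paper uses: both pull back $\U$ (or its complement) along the continuous diagonal map, use selfduality to place the resulting set $D$ in $\boldsymbol{\Gamma}(X)$, and derive the contradiction $\bar{x} \in D \iff \bar{x} \notin D$ from universality. No issues.
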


\begin{proof}
Towards a contradiction, suppose that there exists a X-universal set \(\U\) for \(\boldsymbol{\Gamma}(X) \). Let \(f \colon X \rightarrow X \times X \) be such that \(f(x)=(x,x)\), and let \(D=\left\lbrace x \in X \mid f(x)\notin \U \right\rbrace \). Note that \(D= X \setminus f^{-1}(\U)\).  Since \(\boldsymbol{\Gamma} \) is boldface and \(\boldsymbol{\Gamma}(X)= \check{\boldsymbol{\Gamma}}(X) \), then \(D \in \boldsymbol{\Gamma}(X)\). Hence there is \(y_0 \in X\) such that \(\U_{y_0}=D\)  by universality of \(\U\). But then 
\[ 
(y_0,y_0) \in \U \IFF y_0 \in \U_{y_0} \IFF y_0 \in D \IFF f(y_0)\notin \U \IFF (y_0,y_0) \notin \U, \] 
a contradiction.
\end{proof}

For the results of Sections~\ref{sec: psp section} and \ref{sec: alpha-forcing}, we assume the reader to be familiar and comfortable with the theory of iterated forcing.
Our notation for forcing is standard. The statement $q \leq p$ means \enquote{$q$ is stronger than $p$}; in addition, we strive to follow Goldstern's alphabet convention, i.e., stronger conditions should come later in the alphabet. If $\varphi$ is a statement in the forcing language, we say that a condition $p$ decides $\varphi$ if either $p \Vdash \varphi$ or $p \Vdash \neg \varphi$ holds. Likewise, if $\name{\tau}$ is name for a ground model object, we say that $p$ decides $\name{\tau}$ if there exists an $x \in V$ with $p \Vdash \name{\tau} = \check{x}$, where $\check{x}$ is the standard (check) name for a ground model object. We write $\bP \Vdash \varphi$ if the statement $\varphi$ is forced by every condition in $\bP$. As usual, $\parallel$ and $\perp$ denote the compatibility and incompatibility relation, respectively.

\begin{definition} \label{def:forcinproperties}
We say a forcing notion $\bP$ is
\begin{itemizenew}
\item 
\markdef{${<}\kappa$-closed} 
if for every decreasing sequence $\seq{p_i}{i < \delta}$ with $\delta < \kappa$ we can find  $q \in \bP$ such that $q \leq p_i$ for all $i < \delta$; 
\item 
\markdef{$\kappa^+$-c.c.} 
if every antichain in $\bP$ has size at most $\kappa$;
\item 
\markdef{$\kappa$-linked} 
if there are $\seq{\bP_i}{i<\kappa}$ such that $\bP = \bigcup_{i < \kappa} \bP_i$ and for each $i < \kappa$, any $p,q \in \bP_i$ are compatible;
\item 
\markdef{well-met} 
if for any two compatible conditions $p, q \in \bP$ there exists a greatest lower bound $r = p \wedge q$, that is, $r \leq p,q$ and for every $r'$ with $r' \leq p,q$ we have $r' \leq r$.
\end{itemizenew}
\end{definition}

\begin{fact}[Folklore] \label{fact: iteration cc}
Let $\seq{\bP_\gamma, \name{\bQ}_\gamma}{\gamma < \gamma^*}$ be a ${<}\kappa$-supported iteration such that for every $\gamma < \gamma^*$, $\name{\bQ}_\gamma$ is forced to be ${<}\kappa$-closed, well-met, and $\kappa$-linked. Then $\bP_{\gamma^*}$ satisfies the $\kappa^+$-c.c.\ and is ${<}\kappa$-closed.
\end{fact}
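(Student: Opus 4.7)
The plan is to prove both conclusions by induction on $\gamma^*$. For the ${<}\kappa$-closure, the successor step is routine (two-step iterations of ${<}\kappa$-closed forcings remain ${<}\kappa$-closed), and at a limit I would build a lower bound of a decreasing sequence $\seq{p_i}{i<\delta}$, $\delta<\kappa$, coordinatewise: its support is $\bigcup_{i<\delta}\supp(p_i)$, of size less than $\kappa$ by the regularity of $\kappa$, and at each coordinate $\gamma$ in this union I define the $\bP_\gamma$-name $q(\gamma)$ recursively by invoking the forced ${<}\kappa$-closure of $\name{\bQ}_\gamma$ on the decreasing sequence $\seq{p_i(\gamma)}{i<\delta}$.

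For the $\kappa^+$-c.c., recall that $2^{<\kappa}=\kappa$ together with the regularity of $\kappa$ yields $\kappa^{<\kappa}=\kappa$, so the $\Delta$-system lemma applies to any family of $\kappa^+$ sets of size less than $\kappa$. Given $\seq{p_\alpha}{\alpha<\kappa^+}$ in $\bP_{\gamma^*}$, I would first thin the family so that $\seq{\supp(p_\alpha)}{\alpha<\kappa^+}$ forms a $\Delta$-system with root $R$, $|R|<\kappa$. Fixing $\bP_\gamma$-names $\seq{\name{\bQ}^i_\gamma}{i<\kappa}$ for the pieces witnessing the forced $\kappa$-linkedness of $\name{\bQ}_\gamma$ at each $\gamma\in R$, I would then enumerate $R$ in order type less than $\kappa$ and recursively strengthen each $p_\alpha$ to some $q_\alpha\leq p_\alpha$ deciding, for every $\gamma\in R$, an ordinal $c_\alpha(\gamma)<\kappa$ with $q_\alpha\upharpoonright\gamma\Vdash q_\alpha(\gamma)\in\name{\bQ}^{c_\alpha(\gamma)}_\gamma$. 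This recursion runs through the $|R|<\kappa$ coordinates of $R$ and succeeds because the inductive hypothesis grants ${<}\kappa$-closure at each $\bP_\gamma$ with $\gamma<\gamma^*$. Since there are at most $\kappa^{|R|}=\kappa$ possible color functions $c_\alpha\colon R\to\kappa$, pigeonhole extracts $\kappa^+$-many $q_\alpha$ sharing a common one.

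Any two such $q_\alpha,q_\beta$ with common color function are then compatible: outside the (possibly re-thinned) root their supports are disjoint, and on the root their coordinates are forced to lie in a common piece of the linked partition. The well-met assumption on each $\name{\bQ}_\gamma$ then supplies canonical $\bP_\gamma$-names for the pointwise meets $q_\alpha(\gamma)\wedge q_\beta(\gamma)$, which paste together into an honest common lower bound in $\bP_{\gamma^*}$. The main obstacle I anticipate is the bookkeeping during the color-deciding recursion: strengthening $p_\alpha$ can enlarge its support beyond the original $\Delta$-system, so the $\Delta$-system structure has to be re-established, possibly by iterating thinning and pigeonhole (each round preserves $\kappa^+$ candidates out of $\kappa^+$, so convergence is not an issue, but it requires care). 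Once this bookkeeping is in place, the argument goes through uniformly for both successor and limit $\gamma^*$.
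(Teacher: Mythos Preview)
Your proposal is correct and is precisely the standard argument the paper has in mind: the paper does not give a proof at all but simply cites \cite[Lemma~V.5.14]{kunen2013} (stated there for $\kappa=\omega_1$) and notes that it transfers verbatim under $\kappa^{<\kappa}=\kappa$. Your sketch is exactly that transfer.

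One minor comment on the bookkeeping obstacle you flag: the cleanest way to avoid the back-and-forth between thinning and colour-deciding is to reverse the order of the two steps. Using the ${<}\kappa$-closure (which you have already established), first replace each $p_\alpha$ by a \emph{determined} $q_\alpha\leq p_\alpha$ such that for every $\gamma\in\supp(q_\alpha)$ the restriction $q_\alpha\restriction\gamma$ already decides an index $c_\alpha(\gamma)<\kappa$ with $q_\alpha\restriction\gamma\Vdash q_\alpha(\gamma)\in\name{\bQ}_\gamma^{c_\alpha(\gamma)}$; this is an internal recursion of length ${<}\kappa$ (possibly with $\omega$ rounds of support-growth, handled by closure). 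Only then apply the $\Delta$-system lemma to the supports of the $q_\alpha$, and finally pigeonhole on $c_\alpha\restriction R$. With this ordering no re-thinning is needed: outside the root the supports are genuinely disjoint, and on the root the well-met hypothesis yields the canonical meets exactly as you describe.
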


\begin{proof}
It is well-known that such iterations are ${<}\kappa$-closed. For the $\kappa^+$-c.c., see for example \cite[Lemma~V.5.14]{kunen2013}; the proof is formulated for the case $\kappa = \omega_1$, but transfers seamlessly to all uncountable $\kappa = \kappa^{<\kappa}$.
\end{proof}

\section{\( \gamma \)-Borel sets, and their hierarchy} \label{sec:preliminaries}

Although we will mostly be concerned with \( \gamma \)-Borel sets for \( \gamma = \kappa^+\) (Sections~\ref{sec_borel_hierarchy}, \ref{sec: psp section}, and~\ref{sec: alpha-forcing}) or, when \( \kappa \) is singular, for \( \gamma = \kappa \) (Section~\ref{sec_alt}), in this section we develop the theory in full generality for an arbitrary ordinal \( \g > \o \).

\subsection{The \( \g \)-Borel hierarchy} \label{subsec:gamma-Borelhierarchy}
Let $\g \in \Ord$. A \markdef{\( \g \)-algebra} on a set $X$ is a family of subsets of $X$ which is closed under the operations of complementation and well-ordered unions of length less than \( \g\).
When \(X\) is a topological space, the \( \g \)-algebra generated by the topology of \( X \), denoted by \( \Bor{\gamma}(X) \), is the smallest \( \gamma \)-algebra on \( X \) containing all its open sets; its element are called \markdef{\( \g \)-Borel} sets. 
Equivalently, \( \Bor{\gamma}(X) \) is the smallest collection of subsets of \( X \) containing all open and closed sets and closed under intersections and unions of length less than \( \gamma \).
When needed, we might add a reference to the topology \( \tau \) of \( X \) in the notation, and write e.g.\ \( \Bor{\gamma}(X,\tau) \). Letting vary \( X \) over all nonempty topological spaces we get the pointclass \( \Bor{\gamma}\), which is easily seen to be boldface and hereditary.

A function \( f \colon X \to Y \) between two topological spaces \( X \) and \( Y \) is \markdef{\( \g \)-Borel} if it is \( \Bor{\gamma} \)-measurable; this is the same as requiring that \( f^{-1}(A) \in \Bor{\gamma}(X) \) for every \( A \in \Bor{\gamma}(Y)\). 
A \markdef{\(\g \)-Borel isomorphism} between $X$  and $Y$ is a bijection $f \colon X \to Y$ such that both $f$ and $f^{-1}$ are \(\g \)-Borel; $X$ and $Y$ are then \markdef{\(\g\)-Borel isomorphic} if there is a \(\g\)-Borel isomorphism between them. A \markdef{\(\g \)-Borel embedding} $f \colon X \to Y $ is an injective function which is a \(\g \)-Borel isomorphism as a function from $X$ to $f[X]$.

\begin{lemma}\label{graf_closed}
Let \(f \colon X \to Y \) be a function between two topological spaces \( X \) and \( Y \), with \( Y \) Hausdorff. If $f$ is \(\g\)-Borel, then its graph 
\(
\mathrm{Gr}(f)=\{ (x,y) \in X \times Y \mid f(x)=y\}
\)
is a \(\g\)-Borel subset of \( X \times Y \). If $f$ is continuous, then \( \mathrm{Gr}(f)\) is closed.
\end{lemma}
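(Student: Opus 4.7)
The plan is to prove the closed-graph claim for continuous \( f \) first, and then derive the \( \g \)-Borel claim via a basis decomposition of \( Y \). For the continuous case, observe that since \( Y \) is Hausdorff, the diagonal \( \Delta_Y = \{(y,y) \mid y \in Y\} \) is closed in \( Y \times Y \). The map \( g \colon X \times Y \to Y \times Y \) defined by \( g(x,y) = (f(x),y) \) equals \( f \times \id_Y \) and is thus continuous. Since \( \mathrm{Gr}(f) = g^{-1}(\Delta_Y) \), the graph is closed in \( X \times Y \).

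For the general \( \g \)-Borel case, fix a basis \( \{U_\a \mid \a < \l\} \) of \( Y \) with \( \l = \weight(Y) \leq \k \). Hausdorffness of \( Y \) ensures that any two distinct points are separated by some basic open containing exactly one of them, whence
\[
(X \times Y) \setminus \mathrm{Gr}(f) = \bigcup_{\a < \l} \Big[ \big( f^{-1}(U_\a) \times (Y \setminus U_\a) \big) \cup \big( f^{-1}(Y \setminus U_\a) \times U_\a \big) \Big].
\]
Each \( f^{-1}(U_\a) \) and its complement in \( X \) lies in \( \Bor{\g}(X) \) by \( \g \)-Borel measurability of \( f \), and \( U_\a, Y \setminus U_\a \in \Bor{\g}(Y) \) trivially. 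Since products of \( \g \)-Borel sets are \( \g \)-Borel in the product space (writing \( A \times B = \pi_X^{-1}(A) \cap \pi_Y^{-1}(B) \) and using that \( \Bor{\g} \) is boldface and closed under finite intersections), each term in the union belongs to \( \Bor{\g}(X \times Y) \). Closure of \( \Bor{\g} \) under well-ordered unions of length less than \( \g \), combined with \( \l \leq \k < \g \) in the principal applications, then yields that the complement of the graph is \( \g \)-Borel, and so is \( \mathrm{Gr}(f) \) itself.

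The main subtlety is the comparison \( \l < \g \): since the paper's standing hypothesis requires all spaces to have weight at most \( \k \) and the main applications take \( \g = \k^+ \) (or \( \g = \k \) with \( \k \) singular, still exceeding the weight of \( Y \) in the relevant examples), this holds automatically. No deeper obstacle arises; the argument is a direct generalization of the classical second-countable proof, with a basis of size at most \( \k \) replacing a countable one.
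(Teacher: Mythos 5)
Your proof is correct, but it takes a different route from the paper's. The paper argues in one line for both cases at once: \( \mathrm{Gr}(f) = (f \times \id_Y)^{-1}(\Delta) \) where \( \Delta \) is the diagonal of \( Y \), which is closed by Hausdorffness, and \( f \times \id_Y \) is \( \g \)-Borel (resp.\ continuous) whenever \( f \) is. You instead prove the continuous case this way but handle the \( \g \)-Borel case by explicitly decomposing the complement of the graph over a basis of \( Y \). Your decomposition is valid (indeed the second family of terms \( f^{-1}(Y\setminus U_\a)\times U_\a \) is redundant: Hausdorffness already gives, for \( f(x)\neq y \), a basic \( U_\a \) containing \( f(x) \) but not \( y \), so the first family alone covers the complement), and the step "products of \( \g \)-Borel sets are \( \g \)-Borel" via \( \pi_X^{-1}(A)\cap\pi_Y^{-1}(B) \) is exactly right. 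What your route buys is transparency about a cardinality condition: the union has length \( \weight(Y) \), so one needs \( \weight(Y) < \g \) (or, when \( \g=\k \) is singular and \( \weight(Y)=\k \), the fact noted at the start of Section~\ref{sec_alt} that \( \Bor{\k} \) is still closed under \( \k \)-length unions — your phrase "still exceeding the weight of \( Y \)" is not quite the right justification in that borderline case). The paper's one-liner quietly depends on the same condition, since verifying that \( f\times\id_Y \) is \( \g \)-Borel — or even just that the preimage of the open set \( (Y\times Y)\setminus\Delta \) is \( \g \)-Borel — requires writing that open set as a union of at most \( \weight(Y) \)-many open rectangles. So your argument is essentially the expanded, honest version of the paper's; the trade-off is brevity versus making the implicit weight hypothesis visible.
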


\begin{proof}
The diagonal $\Delta=\{(y,y) \in Y \times Y \}$ of \( Y \) is closed in $Y\times Y$ because \( Y \) is Hausdorff. 
The function $f \times \id_Y \colon X\times Y\to Y\times Y$ is $\g$-Borel (respectively, continuous) if and only if $f$ is $\g$-Borel (respectively, continuous). Since \( \mathrm{Gr}(f)=(f \times \id_Y)^{-1}(\Delta) \), the result follows. 
\end{proof}

Given a set \( X \), a family $\A \subseteq \powset(X)$, and \(\mu \in \Card\), we let
\begin{align*}
( \A )_{\sigma_{<\mu}} &= \left\{ \bigcup\nolimits_{\alpha < \beta} A_\alpha \mid \beta < \mu, A_\alpha \in \A \right\},
\quad \text{and} \\
( \A )_{\delta_{<\mu}} &= \left\{ \bigcap\nolimits_{\alpha < \beta} A_\alpha \mid \beta < \mu, A_\alpha \in \A \right\}. 
\end{align*}
To simplify the notation, we also set \( ( \A )_{\sigma_\mu}  = (\A)_{\sigma_{<\mu^+}}  \) and	\( ( \A )_{\delta_\mu}  = (\A)_{\delta_{<\mu^+}}  \).

In the classical case \(\k=\o\), the collection of Borel subsets of a topological space \( X \) is stratified in a hierarchy formed by the classes \(\boldsymbol{\Sigma}^0_\alpha(X)\), \(\boldsymbol{\Pi}^0_\alpha(X)\), and \( \boldsymbol{\Delta}^0_\alpha(X) \), where \( \alpha \) ranges over nonzero ordinals. 
Following~\cite[Section 2.4]{AMR19},
a similar construction can be carried out for the collection of $\gamma$-Borel sets, for any \(\gamma>\o\): we call it the \markdef{$\g$-Borel hierarchy}.    
\begin{defin}\label{def_hierarchy_k+}
For every topological space \( X \), the following classes are defined by recursion on the ordinal \(\a\geq 1\):
\begin{align*}
\Siii{0}{1}{\g}(X) &= \left\{  U \subseteq X \mid U \text{ is open} \right\}  &&&
\Piii{0}{1}{\g}(X)&= \left\{  C \subseteq X \mid C \text{ is closed} \right\}  \\
\Siii{0}{\a}{\g}(X)  &=  \left( \bigcup\nolimits_{1 \leq \beta < \alpha}  \Piii{0}{\b}{\g}(X) \right)_{\sigma_{<\gamma}} &&&
\Piii{0}{\a}{\g}(X)  &= \left\{ X \setminus A  \mid A \in  \Siii{0}{\a}{\g}(X) \right\}.
\end{align*}
We also set \(  \Deee{0}{\a}{\g}(X) = \Siii{0}{\a}{\g}(X)   \cap \Piii{0}{\a}{\g}(X)   \).
\end{defin}
Notice that
\( \Piii{0}{\a}{\g}(X)  = \left( \bigcup_{1 \leq \beta < \alpha}  \Siii{0}{\b}{\g}(X)  \right)_{\d_{<\gamma}}\).
When it is important to specify the topology \( \tau \) of \(X\), we might add a reference to it in the notation and write e.g.\ \( \Siii{0}{\alpha}{\g}(X,\tau) \).

Letting \( X \) vary over all nonempty topological spaces, we get the boldface pointclass \( \Siii{0}{\a}{\g} \), its dual \( \Piii{0}{\a}{\g} \), and the associated ambiguous pointclass \( \Deee{0}{\a}{\g} \). The pointclasses \( \Siii{0}{\a}{\g} \) and \( \Piii{0}{\a}{\g} \) are also hereditary, while in general \( \Deee{0}{\a}{\g} \) fails to have such property.

Let \( \g^* \) be the smallest regular cardinal such that \( \g^* \geq \g \), that is, \( \g^* = \g \) if \( \g \) is already a regular cardinal, and \( \g^* = |\g|^+ \) otherwise.
Then it is easy to check that
\begin{equation} \label{upperboundonlength1}
\Bor{\g}(X) = \bigcup_{1 \leq \alpha < \g^*} \Siii{0}{\a}{\g}(X)   = \bigcup_{1 \leq \alpha < \g^*} \Piii{0}{\a}{\g}(X)   = \bigcup_{1 \leq \alpha < \g^*} \Deee{0}{\a}{\g}(X).  
\end{equation}

\begin{remark}\label{rmk:refining}
Let $\tau \subseteq \tau'$ be two topologies on a set $X$,
and let $\a,\g, \g' \in \Ord$ be such that $\a \geq 1$ and \(\g \leq \g'\).
Then $\Siii{0}{\a}{\g}(X,\tau)\subseteq \Siii{0}{\a}{\g'}(X,\tau')$, and hence also $\Piii{0}{\a}{\g}(X,\tau)\subseteq \Piii{0}{\a}{\g'}(X,\tau')$, $\Deee{0}{\a}{\g}(X,\tau)\subseteq \Deee{0}{\a}{\g'}(X,\tau')$, and $\Bor{\g}(X,\tau)\subseteq\Bor{\g'}(X,\tau')$.

Moreover, when \( \gamma \) is not a cardinal, then \( \Bor{\g}(X,\tau)  = \Bor{|\g|^+}(X,\tau) \) and the \( \gamma \)-Borel hierarchy coincides, level by level, with the \( |\g|^+\)-Borel hierarchy. 
\end{remark}

Thus it would not be restrictive to assume that \( \gamma \) is always cardinal, and the only relevant distinction is whether such cardinal is regular or singular. Moreover, we get the following easy fact.

\begin{remark} \label{rmk:easyclosure}
Let \( X \) be any topological space, and \( \alpha \geq 1\).
\begin{itemizenew}
 \item
If \( \gamma \) is a regular cardinal, then \( \Siii{0}{\a}{\g}(X)\) is closed under unions of length less than \( \gamma \).
\item 
If \( \gamma \) is a singular cardinal, then \( \Siii{0}{\a}{\g}(X)\) is closed under unions of length less than \( \cof(\gamma) \).
\item 
If \( \gamma \) is not a cardinal, then \( \Siii{0}{\a}{\g}(X)\) is closed under unions of length less than \( |\g|^+ \), and in particular under \( |\g| \)-sized unions.
\end{itemizenew}
Dually, we obtain closure properties of \( \Piii{0}{\a}{\g}(X)\) under intersections.
\end{remark}

\begin{lemma}\label{lem:change_topology_hierarchy}
Suppose that $f \colon X \to Y$ is a $\gamma$-Borel isomorphism between the topological spaces \( X \) and \( Y \), and let \( \beta \geq 1 \) be such that both $f$ and $f^{-1}$ are $\Siii{0}{\b}{\g}$-measurable. Then if%
\footnote{An ordinal \( \delta \) is additively closed if \( \delta_0+\delta_1 < \delta \) for all \( \delta_0,\delta_1 < \delta \).
Then \( \beta \cdot \omega \) is the smallest additively closed ordinal greater than \( \b \).}
$\alpha\geq \beta\cdot\omega$ we have 
\[
A\in \Siii{0}{\a}{\g}(X) \IFF f[A]\in\Siii{0}{\a}{\g}(Y),
\]
and similarly for \( \Siii{0}{\a}{\g} \) replaced by \( \Piii{0}{\a}{\g} \) and \( \Deee{0}{\a}{\g} \).

Moreover, 
\[
A \in \bigcup_{1 \leq \a < \beta \cdot \omega} \Siii{0}{\a}{\g}(X) \IFF f[A] \in \bigcup_{1 \leq \a < \beta \cdot \omega} \Siii{0}{\a}{\g}(Y).
\]
\end{lemma}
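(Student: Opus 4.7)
My plan is to first establish the following auxiliary claim by transfinite induction on $\a \geq 1$: any $\Siii{0}{\b}{\g}$-measurable map $g \colon X' \to Y'$ satisfies
\[
g^{-1}[\Siii{0}{\a}{\g}(Y')] \subseteq \Siii{0}{\b+\a}{\g}(X'),
\]
and the analogous statement with $\Pi$ in place of $\Sigma$. The base case $\a = 1$ is immediate from the definition of $\Siii{0}{\b}{\g}$-measurability, together with the trivial inclusion $\Siii{0}{\b}{\g}(X') \subseteq \Siii{0}{\b+1}{\g}(X')$ read straight from Definition~\ref{def_hierarchy_k+}; the $\Pi$ version is dual. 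For the successor/limit step, I would unfold $A \in \Siii{0}{\a}{\g}(Y')$ as $\bigcup_{i<\d} A_i$ with $A_i \in \Piii{0}{\a_i}{\g}(Y')$, $\a_i < \a$, $\d < \g$, apply the inductive hypothesis to each $A_i$, and use that $\b + \a_i < \b + \a$.

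I then apply the claim to both $f$ and $f^{-1}$ (each $\Siii{0}{\b}{\g}$-measurable by hypothesis), exploiting that $f[A] = (f^{-1})^{-1}(A)$, to obtain for every $\a \geq 1$:
\[
A \in \Siii{0}{\a}{\g}(X) \IMP f[A] \in \Siii{0}{\b+\a}{\g}(Y), \qquad f[A] \in \Siii{0}{\a}{\g}(Y) \IMP A \in \Siii{0}{\b+\a}{\g}(X),
\]
and symmetrically for $\Pi$, hence also for $\Delta = \Sigma \cap \Pi$. The key ordinal-arithmetic fact is that $\b \cdot \o$ is the least $\a$ with $\b + \a = \a$, and that this equality then persists for \emph{every} $\a \geq \b \cdot \o$. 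So for such $\a$ the two implications above collapse to the desired biconditional $A \in \Siii{0}{\a}{\g}(X) \IFF f[A] \in \Siii{0}{\a}{\g}(Y)$.

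For the \enquote{moreover} clause: if $\a_0 < \b \cdot \o$, pick $n < \o$ with $\a_0 < \b \cdot n$; then $\b + \a_0 \leq \b + \b \cdot n = \b \cdot (n+1) < \b \cdot \o$. Thus the shift $\a_0 \mapsto \b + \a_0$ preserves being strictly below $\b \cdot \o$, so both directions of the biconditional applied to the auxiliary claim keep $A$ and $f[A]$ inside $\bigcup_{1 \leq \a < \b \cdot \o} \Siii{0}{\a}{\g}$.

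The main obstacle is essentially ordinal-arithmetic bookkeeping: identifying the correct \enquote{shift} $\a \mapsto \b + \a$ so that the induction closes at successor and limit stages simultaneously, and checking the two identities above ($\b + \a = \a$ precisely for $\a \geq \b \cdot \o$, and preservation of $\a_0 < \b \cdot \o$ under the shift). Once these are in place, the induction itself, together with the symmetric application to $f$ and $f^{-1}$, is entirely routine.
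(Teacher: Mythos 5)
Your overall strategy --- an induction establishing a fixed ordinal shift for preimages under a $\Siii{0}{\b}{\g}$-measurable map, applied symmetrically to $f$ and $f^{-1}$, followed by the observation that $\b+\a=\a$ exactly when $\a\geq\b\cdot\o$ --- is the same as the paper's, and your ordinal arithmetic (including the preservation of $\a_0<\b\cdot\o$ under the shift, which is what the \enquote{moreover} clause needs) is correct. There is, however, a genuine gap in the base case of your auxiliary claim when $\b=1$. You assert that the inclusion $\Siii{0}{\b}{\g}(X')\subseteq\Siii{0}{\b+1}{\g}(X')$ is \enquote{read straight from Definition~\ref{def_hierarchy_k+}}. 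That is true for $\b\geq 2$, where both classes are unions of length less than $\g$ over a growing index set of $\Pi$-classes; but for $\b=1$ it is precisely the statement $\Siii{0}{1}{\g}(X')\subseteq\Siii{0}{2}{\g}(X')$, i.e.\ that every open set is a union of fewer than $\g$ closed sets. The paper is at pains to point out (Lemma~\ref{lem:hierarchy_increasing_above_2} and Lemma~\ref{lem:increasing}) that this can fail and only holds under extra hypotheses such as regularity and small weight --- and the present lemma is stated for \emph{arbitrary} topological spaces $X$ and $Y$. So for $\b=1$ your base case $g^{-1}[\Siii{0}{1}{\g}(Y')]\subseteq\Siii{0}{2}{\g}(X')$ is not available (already for $g=\id_{X'}$ it amounts to the unproved inclusion itself), and the same problem occurs dually for the $\Pi$ version.

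The fix is cheap, and worth recording because it explains the paper's slightly odd-looking indexing. Either dispose of $\b=1$ separately --- then $f$ is a homeomorphism, so every level is preserved exactly and both conclusions are immediate --- or recalibrate the shift as the paper does: write $\a=1+\a'$ and prove by induction on $\a'$ that $A\in\Siii{0}{1+\a'}{\g}(X)$ implies $f[A]\in\Siii{0}{\b+\a'}{\g}(Y)$. With this indexing the base case $\a'=0$ is literally the definition of $\Siii{0}{\b}{\g}$-measurability of $f^{-1}$, no inter-level inclusion at level $1$ is ever invoked, and the inductive step only needs sets in $\Piii{0}{\delta}{\g}$ with $\b\leq\delta<\b+\a'$ to feed into a union of length less than $\g$, which is exactly what Definition~\ref{def_hierarchy_k+} provides. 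Since $\b+\a'=\b+\a$ for all infinite $\a$, the rest of your argument --- the symmetric application to $f$ and $f^{-1}$, the identity $\b+\a=\a$ for $\a\geq\b\cdot\o$, and the closure of $\{\a\mid \a<\b\cdot\o\}$ under the shift --- goes through verbatim.
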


\begin{proof}
Fix any \( \a \geq 1 \), and 
let \( \alpha' \in \Ord \) be such that \( \a = 1+\a'\). Arguing by induction on \( \a' \), one can easily prove that $f[A] \in \Siii{0}{\beta + \alpha'}{\g}(Y) $ for each $A \in \Siii{0}{\a}{\g}(X) $ because \( f^{-1} \) is $\Siii{0}{\b}{\g}$-measurable. If instead $\alpha \geq \beta \cdot \omega \geq \omega$, then  $\beta + \alpha' = \beta+\alpha = \alpha$, and thus we get $A\in \Siii{0}{\a}{\g}(X) \Rightarrow f[A]\in\Siii{0}{\a}{\g}(Y)$. 
If \( \a < \beta \cdot \omega \), then  \( \b+ \a' < \beta \cdot \omega \) and we get 
\[
A \in \bigcup_{1 \leq \a < \beta \cdot \omega} \Siii{0}{\a}{\g}(X) \Rightarrow f[A] \in \bigcup_{1 \leq \a < \beta \cdot \omega} \Siii{0}{\a}{\g}(Y).
\]
 The reverse implications can be proved by repeating the same argument for $f^{-1}$.
\end{proof}

\subsection{When the hierarchy is increasing} \label{subsec:inreasing}

It is desirable that the  $\gamma$-Borel hierarchy is increasing (as it happens, for example, in the countable case for all Polish spaces), in the following sense.

\begin{defin}
We say that the $\gamma$-Borel hierarchy on a topological space on \( X \) is \markdef{increasing} (respectively, \markdef{increasing above level} \( \delta \in \Ord \)) if 
\( \Siii{0}{\a}{\g}(X) \subseteq \Siii{0}{\b}{\g}(X) \) holds for every \( \beta > \alpha \geq 1 \) (respectively, for every \( \beta > \alpha\geq \delta \)).
\end{defin}

In this respect, the only problematic case, as already noticed in~\cite[Lemma 2.2]{AMR19}, is when \( \alpha = 1 \) and \( \beta = 2 \).

\begin{lemma}\label{lem:hierarchy_increasing_above_2}
Let \( X \) be any topological space. Then the $\gamma$-Borel hierarchy on \( X \) is increasing above level $2$, and \( \Siii{0}{1}{\g}(X)\subseteq\Piii{0}{2}{\g}(X)\subseteq \Siii{0}{3}{\g}(X) \). In particular, the \( \gamma \)-Borel hierarchy on \( X \) is increasing if and only if $\Siii{0}{1}{\g}(X)\subseteq\Siii{0}{2}{\g}(X)$. 
\end{lemma}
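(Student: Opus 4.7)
The plan is to proceed essentially by unwinding the recursive definition at every step, since the assertions all amount to very short combinatorial observations. The only subtlety is that level $1$ is defined directly (open or closed sets) rather than recursively, so we must isolate it.

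First I would verify the two containments $\Siii{0}{1}{\g}(X)\subseteq\Piii{0}{2}{\g}(X)\subseteq \Siii{0}{3}{\g}(X)$. For the first, note that by definition
\[
\Piii{0}{2}{\g}(X)=\Bigl(\bigcup\nolimits_{1\le\b<2}\Siii{0}{\b}{\g}(X)\Bigr)_{\d_{<\g}}=\bigl(\Siii{0}{1}{\g}(X)\bigr)_{\d_{<\g}},
\]
so every open set $U$, being the singleton intersection of itself (a legitimate intersection of length $1<\g$ since $\g>\o$), lies in $\Piii{0}{2}{\g}(X)$. The inclusion $\Piii{0}{2}{\g}(X)\subseteq\Siii{0}{3}{\g}(X)$ is immediate because $\Siii{0}{3}{\g}(X)$ is by definition closed under unions of length $<\g$ of $\Piii{0}{\b}{\g}(X)$ sets for $1\le\b<3$, and any $C\in\Piii{0}{2}{\g}(X)$ is such a union of length~$1$.

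Next I would prove that the hierarchy is increasing above level $2$. Fix $\b>\a\ge 2$ and $A\in\Siii{0}{\a}{\g}(X)$. By definition, $A=\bigcup_{i<\d}A_i$ for some $\d<\g$ and some $A_i\in\Piii{0}{\b_i}{\g}(X)$ with $1\le\b_i<\a$. Since $\b_i<\a<\b$, the very same decomposition witnesses $A\in\Siii{0}{\b}{\g}(X)$. This handles all $\a\ge 2$.

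Finally, for the \emph{in particular} clause: one direction is trivial, so assume $\Siii{0}{1}{\g}(X)\subseteq\Siii{0}{2}{\g}(X)$; I need to derive $\Siii{0}{\a}{\g}(X)\subseteq\Siii{0}{\b}{\g}(X)$ for every $\b>\a\ge 1$. The cases $\a\ge 2$ are already done. For $\a=1$ and $\b=2$ this is the assumption, and for $\a=1$ and $\b>2$ I chain $\Siii{0}{1}{\g}(X)\subseteq\Siii{0}{2}{\g}(X)\subseteq\Siii{0}{\b}{\g}(X)$ using the previous step with $\a=2$.

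I do not expect any genuine obstacle: all the work is in the bookkeeping of the recursive definition, and the only conceptual point is noticing that level $1$ is exceptional precisely because it is not itself a $<\g$-union of lower levels, which is exactly what the last statement formalizes.
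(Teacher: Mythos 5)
Your proof is correct and is exactly the routine unwinding of Definition~\ref{def_hierarchy_k+} that the paper has in mind (the lemma is stated there without proof, as an immediate consequence of the definitions). All three steps — the length-one intersection/union trick for $\Siii{0}{1}{\g}(X)\subseteq\Piii{0}{2}{\g}(X)\subseteq\Siii{0}{3}{\g}(X)$, the observation that a witnessing decomposition for $\Siii{0}{\a}{\g}(X)$ with $\a\geq 2$ also witnesses membership in $\Siii{0}{\b}{\g}(X)$ for $\b>\a$, and the chaining argument for the final equivalence — are sound.
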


\begin{remark}\label{rmk:always_true_inclusions}
Note however that certain inclusions hold regardless of whether the $\g$-Borel hierarchy is increasing  or not.
In particular, 
for all ordinals $\alpha\leq \beta$ we have \( \Siii{0}{\a}{\g}(X)\cup \Piii{0}{\a}{\g}(X)\subseteq \Siii{0}{\b}{\g}(X)\cup  \Piii{0}{\b}{\g}(X) \) and \( \Deee{0}{\a}{\g}(X)\subseteq \Deee{0}{\b}{\g}(X) \).  \end{remark}

Since the boldface pointclasses \( \Siii{0}{\a}{\g} \) are hereditary, we get that if the \( \g \)-Borel hierarchy on \( X \) is increasing and \( Y \subseteq X\), then the \( \g \)-Borel hierarchy on \( Y \) is increasing, too. 
The property of being increasing is responsible for the \( \gamma \)-Borel hierarchy behaving as expected with respect to inclusion. Indeed, by Lemma~\ref{lem:hierarchy_increasing_above_2} we have that for every \( \beta > \alpha \geq 2 \)
\begin{equation} \label{eq:inclusionswhenincreasing}
\Siii{0}{\a}{\g}(X), \Piii{0}{\a}{\g}(X) \subseteq \Deee{0}{\b}{\g}(X) \subseteq \Siii{0}{\b}{\g}(X), \Piii{0}{\b}{\g}(X) 
\end{equation}
and
\begin{equation} \label{eq:simplification}
\Siii{0}{\b+1}{\g}(X) = \left( \Piii{0}{\b}{\g}(X) \right)_{\sigma_{<\g}} ,
\end{equation}
but when the \( \gamma \)-Borel hierarchy on \( X \) is increasing, then both~\eqref{eq:inclusionswhenincreasing} and~\eqref{eq:simplification} hold unconditionally at all levels of the \( \g \)-Borel hierarchy.
All these observations will often be used implicitly in proofs.

\begin{remark} \label{rmk:closureunderfinite operations}
It is easy to check that if the \( \g \)-Borel hierarchy on \( X \) is increasing, then for every $\alpha\geq 1$ we have that \( \Siii{0}{\a}{\g}(X) \) is closed under finite intersections and \( \Piii{0}{\a}{\g}(X) \) is closed under finite unions. Together with Remark~\ref{rmk:easyclosure}, this implies that \( \Deee{0}{\a}{\g}(X) \) is an \( \omega \)-algebra.
\end{remark}

In light of the discussion above, it is natural to ask for which topological spaces \( X \) we have $\Siii{0}{1}{\g}(X)\subseteq\Siii{0}{2}{\g}(X)$, so that the corresponding \( \gamma \)-Borel hierarchy is increasing. Notably, it turns out that when \( \gamma = \kappa^+\) such condition is satisfied in virtually all topological spaces that are relevant to generalized descriptive set theory, since they all have weight at most \( \kappa \).

\begin{lemma}\label{lem:increasing}
Let \(X\) be a regular space with \( \weight(X) < \gamma \). Then $\Siii{0}{1}{\g}(X)\subseteq\Siii{0}{2}{\g}(X)$.
\end{lemma}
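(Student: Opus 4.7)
The plan is to show that every open set $U \subseteq X$ can be written as a union of strictly fewer than $\gamma$ many closed sets; by the very definition
\[
\Siii{0}{2}{\g}(X) = \left(\Piii{0}{1}{\g}(X)\right)_{\sigma_{<\gamma}},
\]
this immediately yields $U \in \Siii{0}{2}{\g}(X)$, which is what is required.

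First, I would fix a basis $\mathcal{B}$ for the topology on $X$ with $|\mathcal{B}| < \gamma$, and an arbitrary open set $U \subseteq X$. I would then consider the subfamily
\[
\mathcal{B}' = \{ B \in \mathcal{B} \mid \overline{B} \subseteq U \},
\]
and claim that $U = \bigcup_{B \in \mathcal{B}'} \overline{B}$. The inclusion $\supseteq$ is immediate from the definition of $\mathcal{B}'$. For the reverse inclusion, pick any $x \in U$: since $X$ is regular, there is an open set $V$ with $x \in V \subseteq \overline{V} \subseteq U$, and since $\mathcal{B}$ is a basis, there is $B \in \mathcal{B}$ with $x \in B \subseteq V$, so $\overline{B} \subseteq \overline{V} \subseteq U$. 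Hence $B \in \mathcal{B}'$ and $x \in \overline{B}$, as desired.

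Since $|\mathcal{B}'| \leq |\mathcal{B}| < \gamma$, this expresses $U$ as a union of fewer than $\gamma$ many closed sets, so $U \in \Siii{0}{2}{\g}(X)$. The argument is entirely routine and generalizes the classical observation that every open subset of a regular second-countable space is $F_\sigma$. No genuine obstacle is anticipated; the only subtlety worth noting is that the definition of $\Siii{0}{2}{\g}$ accommodates unions indexed by ordinals strictly below $\gamma$, and the \emph{strict} inequality $\weight(X) < \gamma$ in the hypothesis is precisely what guarantees that $|\mathcal{B}|$ fits within this bound.
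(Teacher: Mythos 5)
Your proof is correct and is essentially identical to the paper's: both fix a basis of size less than $\gamma$ and write an open set $U$ as the union of the closures of those basic sets whose closure is contained in $U$, using regularity for the nontrivial inclusion. You simply spell out the regularity step in more detail.
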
 

\begin{proof}
Let $\B$ be a basis for the topology of \( X \) such that $|\B| < \g$. 
 For any open set $U \subseteq X$, consider the family of closed sets 
$\C_U=\{\cl(B)\mid B\in \B, \cl(B)\subseteq U\}$, which has size at most \( |\B| \). 
Then $\bigcup \C_U\subseteq U$ by definition, while \( U \subseteq \bigcup \C_U \) by regularity of the topology of \( X \). Thus $U\in \Siii{0}{2}{\g}(X)$, and we are done.
\end{proof}

Since from Section~\ref{sec_borel_hierarchy} onward all spaces will be assumed to be regular Hausdorff and of weight at most \( \kappa \), this means that the \( \kappa^+\)-Borel hierarchy on each of them will always be increasing.%
\footnote{For the readers interested in non-regular spaces or spaces of weight greater than \(\k\), we mention that in any case most results of the paper about the \( \k^+\)-Borel hierarchy are still valid if we drop the topological requirements on the space and we explicitly assume $ \Siii{0}{1}{\k^+}(X)\subseteq\Siii{0}{2}{\k^+}(X)$ in order to guarantee that such hierarchy is increasing.}
As for the \( \gamma \)-Borel hierarchy when \( \gamma = \kappa \) is a singular cardinal, the condition $\Siii{0}{1}{\k}(X)\subseteq\Siii{0}{2}{\k}(X)$ is no longer automatically true if \( X \) has weight \( \kappa \), and that is one reason why (a slightly stronger form of) it will explicitly appear as an hypothesis in almost all statements of Section~\ref{sec_alt}. However, notice that since we consider this hierarchy only when \( \cof(\k) < \k \), then by Fact~\ref{lem:NS_implies_increasing_hierarchy} such conditions are verified in most spaces of interest, including all subspaces of \( \pre{\k}{2}\) or \(\pre{\cof(\k)}{\k}\). In the latter cases, one can additionally show that every open set can be written as a union of \( \cof(\k)\)-many \emph{clopen} sets (see the proof of~\cite[Lemma 3.5]{MRP25}).

\begin{fact}\label{fact:open=unionclopen}
If $X\subseteq \pre{\k}{2}$, then any open subset of \(X\) can be written as a union of \(\cof(\k)\)-many clopen sets. 
\end{fact}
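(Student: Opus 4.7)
The plan is to reduce the statement to showing that every open subset $V$ of the ambient space $\pre{\k}{2}$ itself can be written as a union of $\cof(\k)$-many \emph{clopen} subsets of $\pre{\k}{2}$. Once this is done, for $U \subseteq X$ open we simply pick an open $V \subseteq \pre{\k}{2}$ with $U = V \cap X$, decompose $V = \bigcup_{i<\cof(\k)} W_i$ with each $W_i$ clopen in $\pre{\k}{2}$, and observe that each $W_i \cap X$ is clopen in $X$ and $U = \bigcup_{i<\cof(\k)}(W_i \cap X)$.

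To produce the $W_i$, I would fix once and for all a strictly increasing sequence $(\mu_i)_{i<\cof(\k)}$ of infinite ordinals cofinal in $\k$ (if $\k$ is regular, this is just any enumeration of $\k\setminus\omega$ of length $\k$). The key geometric observation is that for each individual $i$, the family $\{[s] \mid s\in\pre{\mu_i}{2}\}$ is a partition of $\pre{\k}{2}$ into basic clopen sets (each $x \in \pre{\k}{2}$ lies in $[x\restriction\mu_i]$ and in no other). Then I would define
\[
W_i := \bigcup\{[s] \mid s\in\pre{\mu_i}{2},\ [s]\subseteq V\}.
\]
Because $W_i$ is the union of the pieces of this clopen partition that are contained in $V$, its complement in $\pre{\k}{2}$ is the union of the remaining pieces of the same partition; both $W_i$ and its complement are therefore open, so $W_i$ is clopen.

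It remains to verify $V = \bigcup_{i<\cof(\k)} W_i$. The inclusion $\supseteq$ is immediate from the definition. For $\subseteq$, take any $x \in V$ and pick some $t \in \pre{<\k}{2}$ with $x \in [t] \subseteq V$; by cofinality of $(\mu_i)_i$, there is $i < \cof(\k)$ with $|t| \leq \mu_i$, whence $s := x\restriction\mu_i$ extends $t$, so $[s] \subseteq [t] \subseteq V$ and thus $x \in [s] \subseteq W_i$.

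There is really no main obstacle here: the whole argument turns on the single observation that each level of the tree $\pre{<\k}{2}$ induces a clopen partition of $\pre{\k}{2}$, which allows the clopen sets $W_i$ to be constructed directly rather than approximating open sets by closed ones. The cofinality-length of the decomposition is dictated exactly by the choice of the sequence $(\mu_i)_{i<\cof(\k)}$, and the argument works uniformly for regular and singular $\k$.
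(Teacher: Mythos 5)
Your proof is correct, and it is essentially the standard argument that the paper itself defers to (it cites the proof of \cite[Lemma 3.5]{MRP25} rather than spelling it out): the observation that each level $\pre{\mu_i}{2}$ of the tree induces a clopen partition of $\pre{\k}{2}$, so that the union of those partition pieces contained in a given open set is clopen, and cofinally many levels suffice to exhaust the open set. The reduction to the ambient space and the restriction back to $X$ are both handled correctly, so there is nothing to add.
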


\subsection{When the hierarchy collapses} \label{subsec:order}
 A basic parameter measuring the behavior of the \( \gamma \)-Borel hierarchy is its length.
A standard result in classical descriptive set theory is that if $X$ is an uncountable Polish space, then its (\( \omega_1\)-)Borel hierarchy has maximal length, that is, \(\boldsymbol{\Sigma}^0_\a(X) \subsetneq \boldsymbol{\Sigma}^0_\beta(X)\) for all \(1 \leq \a<\b<\o_1\), and thus the length of the hierarchy is \( \omega_1 \). 
When moving to the general case of $\gamma$-Borel hierarchies, we know from~\eqref{upperboundonlength1} that an upper bound for its length is \( \g^* \), the smallest regular cardinal above \( \gamma \). However, for some specific topological space \( X \) it might happen that the \( \gamma \)-Borel hierarchy on \( X \) is shorter than that, in which case the following notion becomes relevant. 

\begin{definition}\label{def:order}
Let $X$ be a topological space. The \markdef{order} of the $\g$-Borel hierarchy on $X$ is
\[
\ord_\gamma(X) = \min\left\{\alpha \in \Ord \mid \Siii{0}{\a}{\g}(X) =\Bor{\g}(X) \right\}.
\]
\end{definition}

As usual, a reference to the topology \( \tau \) of \( X \) will be added to the notation when needed.
Notice that in the definition of \( \ord_\gamma(X) \) we can equivalently replace \( \Siii{0}{\a}{\g}(X) \) with \( \Piii{0}{\a}{\g}(X) \) or \( \Deee{0}{\a}{\g}(X) \), and that if \( \ord_\g(X) = \a \), then \( \Siii{0}{\a}{\g}(X) = \Piii{0}{\a}{\g}(X) \). However, the latter equality is not sufficient to ensure \( \ord_\g(X) \leq \a \): if e.g.\ \( \g = \k \) is a singular cardinal, it is possible that the \( \k \)-Borel hierarchy on a space \( X \) does not collapse, yet \( \Siii{0}{\a}{\k}(X) = \Piii{0}{\a}{\k}(X) \) for many \( 1 \leq \a < \kappa^+ \) (see Theorem~\ref{hierarchy_theorem}).

By the previous discussion, \( \ord_\gamma(X) \leq \gamma^*\) always holds. When \( \ord_\gamma(X) < \gamma^* \) we say that the \( \gamma \)-Borel hierarchy on \( X \) \markdef{collapses}.
   The fact that the \( \g \)-Borel hierarchy collapses automatically transfer to subspaces, and it is independent of the chosen topology if the space has small enough weight.

\begin{proposition} \label{prop:abstractpropertiesofcollapse}
Let \( (X,\tau) \) be a topological space such that \( \ord_\g(X,\tau) < \g^* \).
\begin{enumerate-(1)}
\item \label{prop:abstractpropertiesofcollapse-1}
If \( Y \subseteq X \), then \( \ord_\g(Y, \tau \restriction Y) \leq \ord_\g(X,\tau) < \g^* \).
\item \label{prop:abstractpropertiesofcollapse-2}
If \( \weight(X,\tau) < \gamma^* \), then for every topology \( \tau' \) on \( X \) such that \( \Bor{\g}(X,\tau') = \Bor{\g}(X,\tau) \) we have \( \ord_\g(X,\tau') < \g^* \) too.
\end{enumerate-(1)}
\end{proposition}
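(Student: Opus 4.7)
The plan is to handle the two parts separately, relying on the basic structural properties of the \( \g \)-Borel hierarchy developed in Section~\ref{sec:preliminaries}.

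For part~\ref{prop:abstractpropertiesofcollapse-1}, the key observation is that both \( \Bor{\g} \) and the pointclasses \( \Siii{0}{\a}{\g} \) are hereditary. Setting \( \rho = \ord_\g(X,\tau) \), hereditariness immediately gives
\[
\Bor{\g}(Y, \tau \restriction Y) = \{A \cap Y \mid A \in \Bor{\g}(X,\tau)\} = \{A \cap Y \mid A \in \Siii{0}{\rho}{\g}(X,\tau)\} = \Siii{0}{\rho}{\g}(Y, \tau \restriction Y),
\]
so \( \ord_\g(Y, \tau\restriction Y) \leq \rho \), as required.

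For part~\ref{prop:abstractpropertiesofcollapse-2}, I fix a basis \( \mathcal{B} \) for \( (X,\tau) \) with \( |\mathcal{B}| < \g^* \) and a topology \( \tau' \) as in the statement. Each \( B \in \mathcal{B} \) lies in \( \Bor{\g}(X,\tau') = \bigcup_{\a < \g^*} \Siii{0}{\a}{\g}(X,\tau') \); since \( \g^* \) is a regular cardinal and \( |\mathcal{B}| < \g^* \), there is some \( \a_0 < \g^* \) (which I may take \( \geq 3 \)) such that \( \mathcal{B} \subseteq \Siii{0}{\a_0}{\g}(X,\tau') \). The crucial step is then to produce a single \( \a_1 < \g^* \) with \( \tau \subseteq \Siii{0}{\a_1}{\g}(X,\tau') \). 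When \( \g \) is a regular cardinal this is immediate: \( \weight(X,\tau) < \g \) ensures every \( \tau \)-open set is a union of fewer than \( \g \) elements of \( \mathcal{B} \), hence stays in \( \Siii{0}{\a_0}{\g}(X,\tau') \) by Remark~\ref{rmk:easyclosure}, so \( \a_1 = \a_0 \) works. The main obstacle is the singular case, where we only know \( |\mathcal{B}| \leq \g \) and a \( \tau \)-open set may require a \( \g \)-indexed union of basic sets; here I will split such an index set into \( \cof(\g) \) blocks of size \( < \g \) and iterate the closure properties of Remark~\ref{rmk:easyclosure} together with Lemma~\ref{lem:hierarchy_increasing_above_2} a finite number of times, landing in \( \Siii{0}{\a_0+n}{\g}(X,\tau') \) for some \( n < \o \).

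Once \( \tau \subseteq \Siii{0}{\a_1}{\g}(X,\tau') \) has been secured, a routine transfinite induction on \( \b \geq 1 \) along the inductive definition of the hierarchy yields \( \Siii{0}{\b}{\g}(X,\tau) \subseteq \Siii{0}{\a_1+\b}{\g}(X,\tau') \) (and dually for \( \Piii{0}{\b}{\g} \)). Setting \( \rho = \ord_\g(X,\tau) \), this gives
\[
\Bor{\g}(X,\tau') = \Bor{\g}(X,\tau) = \Siii{0}{\rho}{\g}(X,\tau) \subseteq \Siii{0}{\a_1+\rho}{\g}(X,\tau'),
\]
and since \( \a_1+\rho < \g^* \) by regularity of \( \g^* \), we conclude \( \ord_\g(X,\tau') < \g^* \), as desired.
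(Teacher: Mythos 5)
Your proof is correct and follows essentially the same route as the paper: part~\ref{prop:abstractpropertiesofcollapse-1} via hereditariness of \( \Bor{\g} \) and \( \Siii{0}{\a}{\g} \), and part~\ref{prop:abstractpropertiesofcollapse-2} by using regularity of \( \g^* \) to bound the \( \tau' \)-complexity of a \( \tau \)-basis at a single level and then transferring that bound through the hierarchy by transfinite recursion. Your explicit treatment of the singular case (splitting a \( \g \)-indexed union into \( \cof(\g) \) blocks) fills in a step the paper leaves implicit, so no objection there.
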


\begin{proof}
Item~\ref{prop:abstractpropertiesofcollapse-1} follows from the fact that the pointclasses \( \Bor{\g} \) and \( \Siii{0}{\a}{\g} \) are hereditary.
As for~\ref{prop:abstractpropertiesofcollapse-2}, let \( \B \) be a basis for \( \tau \) with \( |\B| < \g^* \), and let \( \a < \g^* \) be such that \( \ord_\g(X,\tau) = 1+\a \). Since \( \g^* \) is regular and \( \tau \subseteq \Bor{\g}(X,\tau) = \Bor{\g}(X,\tau') \), there is \( 1 \leq \beta < \g^* \) such that \( \B \subseteq \Siii{0}{\b}{\g}(X,\tau') \). By recursion on \( \a \), one thus obtain \( \Siii{0}{1+\a}{\g}(X,\tau) \subseteq \Siii{0}{\b+\a}{\g}(X,\tau') \). Since \(\Bor{\g}(X,\tau') = \Bor{\g}(X,\tau) = \Siii{0}{1+\a}{\g}(X,\tau) \), this proves that \( \ord_\g(X,\tau') \leq \beta + \alpha < \g^*\).
\end{proof}

\begin{corollary} \label{cor:abstractpropertiesofcollapse}
If \( X \) and \( Y \) are two \( \g \)-Borel isomorphic topological spaces of weight smaller than \( \g^* \), then \( \ord_\g(X) < \g^* \IFF \ord_\g(Y) < \g^* \).
\end{corollary}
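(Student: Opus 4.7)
The plan is to reduce the statement to Proposition~\ref{prop:abstractpropertiesofcollapse}\ref{prop:abstractpropertiesofcollapse-2} by transferring topologies along the \( \g \)-Borel isomorphism. Let \( f \colon X \to Y \) be a \( \g \)-Borel isomorphism, let \( \tau_X \) and \( \tau_Y \) denote the given topologies, and by symmetry assume \( \ord_\g(X,\tau_X) < \g^* \); I will show that \( \ord_\g(Y,\tau_Y) < \g^* \) (the other direction is completely analogous, with the roles of \( X \) and \( Y \) swapped and using \( f^{-1} \) in place of \( f \)).

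First I would introduce on \( X \) the pulled-back topology \( \tau' = \{ f^{-1}(V) \mid V \in \tau_Y \} \), so that \( f \colon (X,\tau') \to (Y,\tau_Y) \) is by construction a homeomorphism. The key step is to verify that \( \Bor{\g}(X,\tau') = \Bor{\g}(X,\tau_X) \). The inclusion \( \tau' \subseteq \Bor{\g}(X,\tau_X) \) is immediate from the fact that \( f \) is \( \g \)-Borel, yielding \( \Bor{\g}(X,\tau') \subseteq \Bor{\g}(X,\tau_X) \). For the reverse inclusion, note that since \( f \) is a homeomorphism between \( (X,\tau') \) and \( (Y,\tau_Y) \), it induces an isomorphism of \( \g \)-algebras \( \Bor{\g}(X,\tau') \cong \Bor{\g}(Y,\tau_Y) \). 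Now for each \( U \in \tau_X \), the fact that \( f^{-1} \) is \( \g \)-Borel gives \( f[U] = (f^{-1})^{-1}(U) \in \Bor{\g}(Y,\tau_Y) \), and pulling back through the above isomorphism yields \( U = f^{-1}(f[U]) \in \Bor{\g}(X,\tau') \). Hence \( \tau_X \subseteq \Bor{\g}(X,\tau') \), which gives \( \Bor{\g}(X,\tau_X) \subseteq \Bor{\g}(X,\tau') \), and the equality follows.

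With this equality in hand, the hypothesis \( \weight(X,\tau_X) < \g^* \) together with \( \ord_\g(X,\tau_X) < \g^* \) allows us to apply Proposition~\ref{prop:abstractpropertiesofcollapse}\ref{prop:abstractpropertiesofcollapse-2} to the pair of topologies \( \tau_X \) and \( \tau' \) on \( X \), obtaining \( \ord_\g(X,\tau') < \g^* \). Finally, since homeomorphisms preserve the \( \g \)-Borel hierarchy level by level, we have \( \ord_\g(Y,\tau_Y) = \ord_\g(X,\tau') < \g^* \), as required.

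The whole argument is essentially a bookkeeping exercise once one has the idea of pulling back the topology; the only point requiring minimal care is the verification that the original topology \( \tau_X \) is recovered inside \( \Bor{\g}(X,\tau') \), but this is an immediate consequence of \( f^{-1} \) being \( \g \)-Borel combined with the homeomorphism between \( (X,\tau') \) and \( (Y,\tau_Y) \). I do not expect any real obstacle here, as the weight hypothesis is used exactly where Proposition~\ref{prop:abstractpropertiesofcollapse}\ref{prop:abstractpropertiesofcollapse-2} needs it, and no additional assumption on the weight of the refined topology \( \tau' \) is required.
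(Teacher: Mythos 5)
Your proof is correct and follows essentially the same route as the paper's: pull back the topology of \( Y \) along \( f \) to get \( \tau' \) on \( X \), check \( \Bor{\g}(X,\tau') = \Bor{\g}(X,\tau_X) \) using that \( f \) is a \( \g \)-Borel isomorphism, apply Proposition~\ref{prop:abstractpropertiesofcollapse}\ref{prop:abstractpropertiesofcollapse-2}, and conclude via the homeomorphism \( (X,\tau') \cong (Y,\tau_Y) \). You merely spell out in more detail the verification of the equality of \( \g \)-Borel algebras, which the paper leaves implicit.
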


\begin{proof}
Let \( f \colon X \to Y \) be a \( \g \)-Borel isomorphism. Suppose that \( \ord_\g(X) < \g^* \). Let \( \tau \) be the topology of \( X \), and let \( \tau' \) be the pull-back of the topology of \( Y \) along \( f \). Then \( \Bor{\g}(X,\tau) = \Bor{\g}(X,\tau') \) because \( f \) is a \( \g \)-Borel isomorphism, hence \( \ord_{\g}(X,\tau') < \g^*  \) by Proposition~\ref{prop:abstractpropertiesofcollapse}\ref{prop:abstractpropertiesofcollapse-2}, and thus also \( \ord_\g(Y) < \g^*\) because \( f \colon (X,\tau') \to Y \) is a homeomorphism by definition of \( \tau' \).
The other implication can be proved in a similar way, using \( f^{-1} \) instead of \( f \).
\end{proof}

The following result provides several criteria for the collapse of the \( \gamma \)-Borel hierarchy on a space \( X \).

\begin{proposition}\label{prop:conditions_collapse}
Let $X$ be a topological space. Then for every \( \a \geq 1 \), the following are equivalent:
\begin{enumerate}[label={\upshape (\arabic*)}, leftmargin=2pc]
\item \label{prop:conditions_collapse-1}
$\ord_\gamma(X) \leq \alpha$;
\item \label{prop:conditions_collapse-2} 
for every \( \beta \geq \alpha\), we have \( \Siii{0}{\b}{\g}(X) = \Piii{0}{\b}{\g}(X) = \Deee{0}{\b}{\g}(X) = \Bor{\g}(X)\);
\item \label{prop:conditions_collapse-3} 
for some \( \beta > \alpha \), we have 
\( \Deee{0}{\a}{\g}(X) = \Siii{0}{\b}{\g}(X) \) (equivalently, \( \Deee{0}{\a}{\g}(X) = \Piii{0}{\b}{\g}(X) \));
\item \label{prop:conditions_collapse-4} for some \( \beta > \alpha \), we have 
\( \Siii{0}{\a}{\g}(X) = \Siii{0}{\b}{\g}(X) \) (equivalently, 
\( \Piii{0}{\a}{\g}(X) = \Piii{0}{\b}{\g}(X) \)).
\item \label{prop:conditions_collapse-5}
 the $\gamma$-Borel hierarchy on $X$ is increasing above $\alpha$ and for some \( \beta > \alpha \), one of \( \Siii{0}{\a}{\g}(X) \), \( \Piii{0}{\a}{\g}(X) \), or \( \Deee{0}{\a}{\g}(X) \) coincides with one of \( \Siii{0}{\b}{\g}(X) \) or \( \Piii{0}{\b}{\g}(X) \);
\item\label{prop:conditions_collapse-6}\( \Siii{0}{1}{\g}(X)\cup \Piii{0}{1}{\g}(X) \subseteq \Siii{0}{\a}{\g}(X) \) and the class $\Siii{0}{\alpha}{\g}(X)$ is closed under intersections shorter than $\g$ (equivalently, $\Piii{0}{\alpha}{\g}(X)$ is closed under unions shorter than $\g$);
\item \label{prop:conditions_collapse-7}
\( \Siii{0}{1}{\g}(X)\cup \Piii{0}{1}{\g}(X)\subseteq \Siii{0}{\a}{\g}(X) \) and the class $\Deee{0}{\alpha}{\g}(X)$ is closed under intersections shorter than $\g$ (i.e.\ \( \Deee{0}{\alpha}{\g}(X) \) is a \( \g \)-algebra.
\end{enumerate}
If $\gamma$ is not a singular cardinal, the above conditions are also equivalent to:
\begin{enumerate}[label={\upshape (\arabic*)}, leftmargin=2pc,resume]
\item \label{prop:conditions_collapse-8}  
the $\gamma$-Borel hierarchy on $X$ is increasing above $\alpha$ and for some \( \beta > \alpha \), \( \Deee{0}{\a}{\g}(X) \) coincides with \( \Deee{0}{\b}{\g}(X) \);
\item \label{prop:conditions_collapse-9} 
for some \( \beta > \alpha \), we have \( \Siii{0}{\a}{\g}(X) = \Deee{0}{\b}{\g}(X) \) (equivalently, \( \Piii{0}{\a}{\g}(X) = \Deee{0}{\b}{\g}(X) \));
\item \label{prop:conditions_collapse-10}
$\Siii{0}{\alpha}{\g}$ is selfdual on \( X \), i.e.\ $\Siii{0}{\alpha}{\g}(X) = \Piii{0}{\alpha}{\g}(X)$;
\end{enumerate}
\end{proposition}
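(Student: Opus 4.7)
The plan is to prove (1) $\Leftrightarrow$ (2) directly from Definition~\ref{def:order}, observe that (2) trivially implies each of the other conditions, and then close the loop by showing that each of (3)--(7) (and (8)--(10) under the non-singular hypothesis) implies (1). The central case is (4) $\Rightarrow$ (1); all the other reductions pass through it.

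For (1) $\Rightarrow$ (2): assuming $\Siii{0}{\a}{\g}(X) = \Bor{\g}(X)$, the fact that $\Bor{\g}(X)$ is closed under complements gives $\Siii{0}{\a}{\g}(X) = \Piii{0}{\a}{\g}(X) = \Deee{0}{\a}{\g}(X) = \Bor{\g}(X)$. For any $\b > \a$, the inclusion $\Piii{0}{\a}{\g}(X) \subseteq \Siii{0}{\b}{\g}(X)$ holds directly from Definition~\ref{def_hierarchy_k+}, so $\Siii{0}{\a}{\g}(X) = \Piii{0}{\a}{\g}(X) \subseteq \Siii{0}{\b}{\g}(X) \subseteq \Bor{\g}(X) = \Siii{0}{\a}{\g}(X)$, and one argues dually for $\Piii{0}{\b}{\g}$ and $\Deee{0}{\b}{\g}$. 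The converse (2) $\Rightarrow$ (1) is immediate, as are the implications from (2) to each of (3)--(10).

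The heart of the proof is (4) $\Rightarrow$ (1). Suppose $\Siii{0}{\a}{\g}(X) = \Siii{0}{\b}{\g}(X)$ for some $\b > \a$. Then $\Piii{0}{\a}{\g}(X) \subseteq \Siii{0}{\b}{\g}(X) = \Siii{0}{\a}{\g}(X)$, whence $\Siii{0}{\a}{\g}(X) = \Piii{0}{\a}{\g}(X)$ by taking complements. Now, given any family $(A_i)_{i < \l}$ in $\Siii{0}{\a}{\g}(X)$ with $\l < \g$, the self-duality just established gives $A_i \in \Piii{0}{\a}{\g}(X)$ for each $i$; since $\a < \b$, the union $\bigcup_{i < \l} A_i$ is a $<\g$-length union of elements of $\Piii{0}{\a}{\g}(X) \subseteq \bigcup_{1 \leq \d < \b} \Piii{0}{\d}{\g}(X)$, hence lies in $\Siii{0}{\b}{\g}(X) = \Siii{0}{\a}{\g}(X)$ by the very definition of $\Siii{0}{\b}{\g}(X)$. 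Thus $\Siii{0}{\a}{\g}(X)$ is closed under $<\g$-length unions, and by self-duality also under $<\g$-length intersections. Combining Remark~\ref{rmk:always_true_inclusions} with $\Siii{0}{\a}{\g}(X) = \Piii{0}{\a}{\g}(X)$ yields $\Siii{0}{1}{\g}(X) \cup \Piii{0}{1}{\g}(X) \subseteq \Siii{0}{\a}{\g}(X)$, so $\Siii{0}{\a}{\g}(X)$ is a $\g$-algebra containing the topology, hence equals $\Bor{\g}(X)$.

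The remaining reductions are straightforward: condition (3) yields $\Siii{0}{\b}{\g}(X) = \Piii{0}{\b}{\g}(X)$ by complementing $\Deee{0}{\a}{\g}(X) = \Siii{0}{\b}{\g}(X)$, and then elementary chasing (using Lemma~\ref{lem:hierarchy_increasing_above_2}) produces an equality of the form required by (4); condition (5) is essentially a disjunction of instances of (4) and (3); conditions (6) and (7) already assert the closure properties derived in the central argument, so the final step applies almost verbatim (for (7) one first lifts the closure from $\Deee{0}{\a}{\g}(X)$ to $\Siii{0}{\a}{\g}(X)$ using the explicit inclusion of open and closed sets). Finally, (8), (9), (10) require $\g$ to be non-singular, and this is the main obstacle of the proof. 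In the non-singular case, Remark~\ref{rmk:easyclosure} ensures that $\Siii{0}{\a}{\g}(X)$ is closed under $<\g$-length unions unconditionally; thus (10), namely $\Siii{0}{\a}{\g}(X) = \Piii{0}{\a}{\g}(X)$, together with duality, suffices to conclude as in the (4) argument, while (8) and (9) reduce to (4) or (10) via complementation. In the singular case, $\Siii{0}{\a}{\g}(X)$ is only guaranteed to be closed under $<\cof(\g)$-length unions, so self-duality at level $\a$ alone is too weak to force a full $\g$-algebra structure --- precisely the reason (8)--(10) must be dropped when $\g$ is singular.
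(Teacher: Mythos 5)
Your architecture is genuinely different from the paper's: instead of the cycle (1)$\Rightarrow$(2)$\Rightarrow\dots\Rightarrow$(7)$\Rightarrow$(1) (and (1)$\Rightarrow$(8)$\Rightarrow$(9)$\Rightarrow$(10)$\Rightarrow$(1)), you use a hub-and-spoke scheme with (4)$\Rightarrow$(1) as the workhorse. That central argument is correct and rather elegant: the observation that \( \Siii{0}{\a}{\g}(X)=\Siii{0}{\b}{\g}(X) \) with \( \b>\a \) forces closure of \( \Siii{0}{\a}{\g}(X) \) under unions of length \( <\g \) --- because \( \Siii{0}{\b}{\g}(X) \) absorbs \( {<}\g \)-unions of \( \Piii{0}{\a}{\g}(X) \) sets by its very definition --- neatly sidesteps the singular-cardinal issue (Remark~\ref{rmk:easyclosure} only gives closure under \( {<}\cof(\g) \)-unions unconditionally), and your closing remark about why (8)--(10) must be dropped for singular \( \g \) identifies exactly the right obstruction. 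Your reduction of (5) to (3) and (4) is also fine, though you should say explicitly that the ``crossed'' cases (e.g.\ \( \Siii{0}{\a}{\g}(X)=\Piii{0}{\b}{\g}(X) \)) are \emph{not} literal instances of (3) or (4) and only reduce to them via the increasingness clause of (5); without it they fail, as the example \( \Siii{0}{1}{\o}(\RR)=\Piii{0}{2}{\o}(\RR) \) shows.

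The genuine gap is in your treatment of (6). You claim that (6) ``already asserts the closure properties derived in the central argument, so the final step applies almost verbatim,'' but the central argument used four facts: self-duality of \( \Siii{0}{\a}{\g}(X) \), closure under \( {<}\g \)-unions, closure under \( {<}\g \)-intersections, and containment of \( \Siii{0}{1}{\g}(X)\cup\Piii{0}{1}{\g}(X) \). Condition (6) hands you only the last two. Neither self-duality nor closure under \( {<}\g \)-unions is available, and without complement-closure you cannot conclude that \( \Siii{0}{\a}{\g}(X) \) (or \( \Deee{0}{\a}{\g}(X) \)) is a \( \g \)-algebra. The missing step --- which is the actual content of the implication (6)$\Rightarrow$(7) in the paper --- is to derive \( \Piii{0}{\a}{\g}(X)\subseteq\Siii{0}{\a}{\g}(X) \) from intersection-closure: for \( \a\geq 2 \), the hypothesis \( \Siii{0}{1}{\g}(X)\cup\Piii{0}{1}{\g}(X)\subseteq\Siii{0}{\a}{\g}(X) \) together with Lemma~\ref{lem:hierarchy_increasing_above_2} gives \( \bigcup_{1\leq\b<\a}\Piii{0}{\b}{\g}(X)\subseteq\Siii{0}{\a}{\g}(X) \), whence \( \Piii{0}{\a+1}{\g}(X)=\bigl(\Siii{0}{\a}{\g}(X)\bigr)_{\d_{<\g}}=\Siii{0}{\a}{\g}(X) \) by (6), and then \( \Piii{0}{\a}{\g}(X)\subseteq\Piii{0}{\a+1}{\g}(X)=\Siii{0}{\a}{\g}(X) \); the case \( \a=1 \) needs a separate (easy) remark. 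Only after this does your ``final step'' go through. Your parenthetical about (7) is also slightly off: for (7) nothing needs to be lifted to \( \Siii{0}{\a}{\g}(X) \), since \( \Deee{0}{\a}{\g}(X) \) is closed under complements by definition, so (7) directly makes it a \( \g \)-algebra containing the topology and one concludes \( \Bor{\g}(X)\subseteq\Deee{0}{\a}{\g}(X) \) immediately. With the missing derivation of self-duality inserted into the (6) case, your proof is complete.
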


\begin{proof}
\ref{prop:conditions_collapse-1}~\( \Rightarrow \)~\ref{prop:conditions_collapse-2}. Without loss of generality, we may assume $\ord_\gamma(X)=\alpha$, as if \ref{prop:conditions_collapse-2} holds for every $\beta\geq  \ord_\gamma(X)$, then it holds for every $\beta\geq\alpha\geq  \ord_\gamma(X)$.
As already observed, \( \Siii{0}{\a}{\g}(X) = \Bor{\g}(X) \) is equivalent to \( \Deee{0}{\a}{\g}(X) = \Bor{\g}(X) \). 
By Remark~\ref{rmk:always_true_inclusions}, for every $\beta \geq \alpha$ we get \( \Deee{0}{\a}{\g}(X) \subseteq \Deee{0}{\b}{\g}(X) \subseteq \Bor{\g}(X) = \Deee{0}{\a}{\g}(X)\), and hence
also \( \Siii{0}{\b}{\g}(X) = \Piii{0}{\b}{\g}(X) =  \Deee{0}{\b}{\g}(X) = \Bor{\g}(X) \).

\ref{prop:conditions_collapse-2}~\( \Rightarrow \)~\ref{prop:conditions_collapse-3}. Obvious.

\ref{prop:conditions_collapse-3}~\( \Rightarrow \)~\ref{prop:conditions_collapse-4}.
Let \( \b > \a \) be such that \( \Deee{0}{\a}{\g}(X) = \Siii{0}{\b}{\g}(X) \),
so that also \( \Deee{0}{\a}{\g}(X) = \Piii{0}{\a}{\g}(X) \).
By Remark~\ref{rmk:always_true_inclusions},  \[
\Deee{0}{\a}{\g}(X)\subseteq  \Siii{0}{\a}{\g}(X)\subseteq \Siii{0}{\b}{\g}(X)\cup \Piii{0}{\b}{\g}(X)=\Deee{0}{\a}{\g}(X),
\]
hence \( \Siii{0}{\a}{\g}(X)= \Deee{0}{\a}{\g}(X)=\Siii{0}{\b}{\g}(X) \).

\ref{prop:conditions_collapse-4}~\( \Rightarrow \)~\ref{prop:conditions_collapse-5}.
By Lemma~\ref{lem:hierarchy_increasing_above_2}, we only need to consider the case \( \a = 1 \) and show that \( \Siii{0}{1}{\g}(X) \subseteq \Siii{0}{2}{\g}(X) \) (the other condition is trivially satisfied because it is a weakening of~\ref{prop:conditions_collapse-4}). Let \( \b > \a = 1 \) be a witness for~\ref{prop:conditions_collapse-4}. Then \( \Piii{0}{1}{\g}(X) \subseteq \Siii{0}{\b}{\g}(X) = \Siii{0}{1}{\g}(X) \), and therefore \( \Siii{0}{1}{\g}(X) = \Deee{0}{1}{\g}(X) \subseteq \Deee{0}{2}{\g}(X) \subseteq \Siii{0}{2}{\g}(X) \) by Remark~\ref{rmk:always_true_inclusions}.
  
\ref{prop:conditions_collapse-5}~\( \Rightarrow \)~\ref{prop:conditions_collapse-6}.
Assume that~\ref{prop:conditions_collapse-5} holds, as witnessed by \( \b > \a \).
Let first \( \boldsymbol{\Gamma}(X) = \Siii{0}{\b}{\g}(X) \), and assume that one of \( \Deee{0}{\a}{\g}(X) \), \( \Piii{0}{\a}{\g}(X) \), or \( \Siii{0}{\a}{\g}(X) \) equals \( \boldsymbol{\Gamma}(X) \).

We first show that, necessarily, \( \Deee{0}{\a}{\g}(X) = \Siii{0}{\a}{\g}(X) = \Piii{0}{\a}{\g}(X) \).
If either \( \Deee{0}{\a}{\g}(X) = \boldsymbol{\Gamma}(X) \)
or \( \Piii{0}{\a}{\g}(X) = \boldsymbol{\Gamma}(X) \), then in both cases \( \boldsymbol{\Gamma} \subseteq \Piii{0}{\a}{\g}(X) \). Therefore \(  \Siii{0}{\a}{\g}(X) \subseteq \boldsymbol{\Gamma}(X) \subseteq \Piii{0}{\a}{\g}(X) \) (since we assumed that the $\g$-Borel hierarchy is increasing above $\alpha$), and thus \(  \Siii{0}{\a}{\g}(X) = \Piii{0}{\a}{\g}(X) = \Deee{0}{\a}{\g}(X) \).
 If instead \( \Siii{0}{\a}{\g}(X) = \boldsymbol{\Gamma}(X) \), then by \( \b > \a \) we get \( \Piii{0}{\a}{\g}(X) \subseteq \boldsymbol{\Gamma}(X) = \Siii{0}{\a}{\g}(X) \), and thus \( \Siii{0}{\a}{\g}(X) = \Piii{0}{\a}{\g}(X) = \Deee{0}{\a}{\g}(X) \) again.

Thus in all three cases we must have \( \Deee{0}{\a}{\g}(X) = \Siii{0}{\a}{\g}(X) = \Piii{0}{\a}{\g}(X) = \boldsymbol{\Gamma} = \check{\boldsymbol{\Gamma}}\). In particular, $\Siii{0}{1}{\g}(X)\cup \Piii{0}{1}{\g}(X)\subseteq \Siii{0}{\a}{\g}(X)\cup \Piii{0}{\a}{\g}(X)= \Siii{0}{\a}{\g}(X)$ by
Remark~\ref{rmk:always_true_inclusions}.
Moreover, if $\seq{A_i}{i<\delta}$ is a sequence of sets in $\Siii{0}{\a}{\g}(X)$ (for some $\delta<\gamma$),
then \( \bigcap_{i<\delta}A_i\in \Siii{0}{\a}{\g}(X) \) because \( \Piii{0}{\a+1}{\g}(X)\subseteq \check{\boldsymbol{\Gamma}}\) by \( 2 \leq \a+1 \leq \b \) and Lemma~\ref{lem:hierarchy_increasing_above_2}, and we already showed that \( \check{\boldsymbol{\Gamma}} = \Siii{0}{\a}{\g}(X) \). 

This concludes the proof in the case \( \boldsymbol{\Gamma} = \Siii{0}{\b}{\g}(X) \).
The case $\boldsymbol{\Gamma}(X)=\Piii{0}{\b}{\g}(X)$ can be treated in a similar way, hence we are done.

\ref{prop:conditions_collapse-6}~\( \Rightarrow \)~\ref{prop:conditions_collapse-7}.
It suffices to show that~\ref{prop:conditions_collapse-6} implies \( \Siii{0}{\a}{\g}(X)  = \Deee{0}{\a}{\g}(X) \).
If $\a=1$, the inclusion \( \Siii{0}{1}{\g}(X) \cup \Piii{0}{1}{\g}(X) \subseteq \Siii{0}{\a}{\g}(X) = \Siii{0}{1}{\g}(X)\) easily yields the desired equality.
Suppose now that \( \a \geq 2 \).
By Lemma~\ref{lem:hierarchy_increasing_above_2} and the assumption \( \Siii{0}{1}{\g}(X) \cup \Piii{0}{1}{\g}(X) \subseteq \Siii{0}{\a}{\g}(X)\), we have $\bigcup_{1 \leq \b<\a} \Piii{0}{\b}{\g}(X)\subseteq \Siii{0}{\a}{\g}(X)$, so that \( \Piii{0}{\a+1}{\g}(X) = \left( \Siii{0}{\a}{\g}(X) \right)_{\d_{<\g}} \). Since the latter equals \( \Siii{0}{\a}{\g}(X) \) by~\ref{prop:conditions_collapse-6}, we get \( \Piii{0}{\a}{\g}(X) \subseteq \Piii{0}{\a+1}{\g}(X) = \Siii{0}{\a}{\g}(X) \) (by \( \a \geq 2 \) and Lemma~\ref{lem:hierarchy_increasing_above_2}), and we are done.

\ref{prop:conditions_collapse-7}~\( \Rightarrow \)~\ref{prop:conditions_collapse-1}.
 Notice that \( \Siii{0}{1}{\g}(X)\cup \Piii{0}{1}{\g}(X)\subseteq \Siii{0}{\a}{\g}(X) \) implies \( \Siii{0}{1}{\g}(X)\cup \Piii{0}{1}{\g}(X)\subseteq \Piii{0}{\a}{\g}(X) \) because \( \Siii{0}{1}{\g}(X)\cup \Piii{0}{1}{\g}(X) \) is closed under complements, and thus \( \Siii{0}{1}{\g}(X)\cup \Piii{0}{1}{\g}(X)\subseteq \Deee{0}{\a}{\g}(X) \). Then \( \Deee{0}{\a}{\g}(X) \) is a \( \g\)-algebra containing all open sets, hence \( \Bor{\g}(X) \subseteq \Deee{0}{\a}{\g}(X) \)  and so \( \ord_\g(X) \leq \a \).

Assume now that $\gamma$ is not a singular cardinal.

\ref{prop:conditions_collapse-1}~\( \Rightarrow \)~\ref{prop:conditions_collapse-8}.
By Remark~\ref{rmk:always_true_inclusions}, \( \ord_{\k^+}(X) \leq \a \) implies \( \Bor{\g}(X) \subseteq \Deee{0}{\a}{\g}(X) \), hence also \( \Deee{0}{\a}{\g}(X) = \Deee{0}{\b}{\g}(X) \) for any \( \b \geq \a \). As for increasingness, by Lemma~\ref{lem:hierarchy_increasing_above_2} it is enough to consider that case \( \a = 1 \) and show that \( \Siii{0}{1}{\g}(X) \subseteq \Siii{0}{2}{\g}(X) \). In this case, \ref{prop:conditions_collapse-1} reads as \( \Bor{\g}(X) = \Deee{0}{1}{\g}(X)\), hence \( \Siii{0}{1}{\g}(X) = \Deee{0}{1}{\g}(X) = \Deee{0}{2}{\g}(X) \subseteq \Siii{0}{2}{\g}(X) \) by Remark~\ref{rmk:always_true_inclusions} again.

\ref{prop:conditions_collapse-8}~\( \Rightarrow \)~\ref{prop:conditions_collapse-9}.
Follows from the fact that when the \( \g \)-Borel hierarchy on \( X \) is increasing above \( \a \), $\Deee{0}{\a}{\g}(X) \subseteq \Siii{0}{\a}{\g}(X)\subseteq \Deee{0}{\b}{\g}(X)$.

\ref{prop:conditions_collapse-9}~\( \Rightarrow \)~\ref{prop:conditions_collapse-10}.
Obvious.

\ref{prop:conditions_collapse-10}~\( \Rightarrow \)~\ref{prop:conditions_collapse-1}.
By Remark~\ref{rmk:easyclosure}, the class \( \Siii{0}{\a}{\g}(X) = \Piii{0}{\a}{\g}(X) \) is a \( \gamma \)-algebra because \( \g \) is not a singular cardinal. By Remark~\ref{rmk:always_true_inclusions}, we have \( \Siii{0}{1}{\g}(X) \subseteq \Siii{0}{\a}{\g}(X) \cup \Piii{0}{\a}{\g}(X)=\Siii{0}{\a}{\g}(X)\), therefore \( \Bor{\g}(X) \subseteq \Siii{0}{\a}{\g}(X) \) and thus \( \ord_\gamma(X) \leq \alpha \).
  \end{proof}

Notice that the assumption \enquote{the $\gamma$-Borel hierarchy on $X$ is increasing above $\alpha$} is automatically satisfied if either \( \a \geq 2 \) or \( \Siii{0}{1}{\g}(X) \subseteq \Siii{0}{2}{\g}(X) \), and the assumption \enquote{\( \Siii{0}{1}{\g}(X)\cup \Piii{0}{1}{\g}(X) \subseteq \Siii{0}{\a}{\g}(X) \)} is automatically satisfied if either \( \a \geq 3 \), or \( \a = 2 \) and \( \Siii{0}{1}{\g}(X) \subseteq \Siii{0}{2}{\g}(X) \).
In particular, we get:

\begin{corollary} \label{cor:conditions_collapse}
Let \( X \) be a topological space whose \( \g \)-Borel hieararchy is increasing, and suppose that \( \g \) is not a singular cardinal. Then the \( \g \)-Borel hierarchy on \( X \) collapses if and only if two of the pointclasses appearing in it coincide.
\end{corollary}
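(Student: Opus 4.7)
The plan is to derive both directions from Proposition~\ref{prop:conditions_collapse}, which, under the assumption that $\g$ is not singular, provides a list of characterizations of collapse equivalent to \enquote{$\ord_\g(X) \leq \alpha$}, several of which directly refer to coincidences of pointclasses in the $\g$-Borel hierarchy.

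The forward direction is immediate: if $\ord_\g(X) \leq \alpha < \g^*$, then condition~(2) of Proposition~\ref{prop:conditions_collapse} yields $\Siii{0}{\a}{\g}(X) = \Piii{0}{\a}{\g}(X)$, which exhibits two distinct pointclasses in the hierarchy that coincide.

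For the backward direction, I would argue by case analysis. Suppose two distinct pointclasses of the $\g$-Borel hierarchy on $X$ coincide; up to swapping, we may take them to sit at levels $\alpha \leq \beta$, and split the analysis into $\alpha < \beta$ and $\alpha = \beta$. In the first case, if the higher pointclass has type $\boldsymbol\Sigma$ or $\boldsymbol\Pi$, then condition~(5) of Proposition~\ref{prop:conditions_collapse} applies (using that the hierarchy on $X$ is increasing, by hypothesis); if the higher one has type $\boldsymbol\Delta$, then the lower being of type $\boldsymbol\Sigma$ or $\boldsymbol\Pi$ matches condition~(9), while its being of type $\boldsymbol\Delta$ matches condition~(8). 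In the second case, if the coincidence is $\Siii{0}{\a}{\g}(X) = \Piii{0}{\a}{\g}(X)$, that is exactly condition~(10); otherwise the coincidence is of mixed type at a single level, e.g.\ $\Siii{0}{\a}{\g}(X) = \Deee{0}{\a}{\g}(X)$, in which case $\Siii{0}{\a}{\g}(X) \subseteq \Piii{0}{\a}{\g}(X)$ follows immediately, and complementation yields the reverse inclusion, again landing in condition~(10).

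The main obstacle is not a conceptual one, but rather the bookkeeping required to ensure every pairing of types and levels has been covered. In particular, the non-singularity assumption on $\g$ is used precisely to activate conditions~(8)--(10) of Proposition~\ref{prop:conditions_collapse}, which are indispensable for dispatching the mixed-type and same-level cases.
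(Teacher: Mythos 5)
Your proof is correct and follows the paper's intended route: the corollary is stated there without a separate proof precisely because it is the case-by-case unwinding of Proposition~\ref{prop:conditions_collapse} that you carry out, with the increasing-hierarchy hypothesis supplying the side condition in items~(5) and~(8) and the non-singularity of \( \g \) activating items~(8)--(10) for the same-level and \( \boldsymbol{\Delta} \)-type cases. The only tacit point (shared with the paper) is that ``pointclasses appearing in the hierarchy'' must be read as those at levels \( 1 \leq \a < \g^* \), since at level \( \g^* \) and beyond all classes trivially coincide with \( \Bor{\g}(X) \).
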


Nevertheless, in the general case that the assumptions \enquote{the $\gamma$-Borel hierarchy on $X$ is increasing above $\alpha$} in items~\ref{prop:conditions_collapse-5} and~\ref{prop:conditions_collapse-8} and \enquote{\( \Siii{0}{1}{\g}(X)\cup \Piii{0}{1}{\g}(X)\subseteq \Siii{0}{\a}{\g}(X) \)} in items~\ref{prop:conditions_collapse-6} and~\ref{prop:conditions_collapse-7} are necessary.
For example, set \( \g = \o \), \( X = \RR \), and \( \a = 2 \).
Then \( \Siii{0}{1}{\omega}(\RR) = \Piii{0}{2}{\omega}(\RR) \), \( \Piii{0}{1}{\omega}(\RR) = \Siii{0}{2}{\omega}(\RR) \), and \( \Deee{0}{1}{\omega}(\RR) = \Deee{0}{2}{\omega}(\RR)=\{\emptyset, \RR\} \); moreover, all the mentioned classes  are closed under finite (i.e.\ shorther than \( \o \)) unions and intersections, yet $\ord_\omega(\RR)\geq3$.

Also, the assumption that $\gamma$ is not singular is necessary for the equivalente between~\ref{prop:conditions_collapse-10} and~\ref{prop:conditions_collapse-1}, as in this case for certain values of \( \alpha \geq 1\) it can happen that \( \Siii{0}{\a}{\g}(X) = \Piii{0}{\a}{\g}(X) \), even though the \( \gamma \)-Borel hierarchy does not collapse (see Theorem~\ref{hierarchy_theorem}). In contrast, we will show that under certain mild assumptions on \( X \) and \( \gamma \), items~\ref{prop:conditions_collapse-8} and~\ref{prop:conditions_collapse-9} are equivalent to~\ref{prop:conditions_collapse-1} also in the singular case (see Proposition~\ref{prop:conditions_collapse_singular}).

Finally, we notice the following specific collapsing phenomenon, that will be relevant for some of the results in Sections~\ref{sec_borel_hierarchy} and~\ref{sec_alt}.

\begin{lemma}\label{lem:door_spaces}
Let \( X \) be a Hausdorff topological space.
If the \( \g \)-Borel hierarchy on \( X \) is increasing, then the following are equivalent:
\begin{enumerate-(1)}
\item\label{door_space-1} 
$\Siii{0}{1}{\g}(X) \cup \Piii{0}{1}{\g}(X) = \Deee{0}{2}{\g}(X)$,
\item\label{door_space-2} 
$\Siii{0}{1}{\g}(X) \cup \Piii{0}{1}{\g}(X) = \Bor{\g}(X)=\powset(X)$,
\item\label{door_space-3} 
$X$ has at most one non-isolated point.%
\footnote{Spaces satisfying one of these equivalent conditions are usually called Door spaces. See~\cite[Chapter~2, Exercise~C, page 76]{KelleyMR370454}, which already states the equivalence~\ref{door_space-2}~\( \IFF \)~\ref{door_space-3}.}
\end{enumerate-(1)}
\end{lemma}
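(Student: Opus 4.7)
The plan is to prove the cycle $(3) \Rightarrow (2) \Rightarrow (1) \Rightarrow (3)$, where the last implication carries the real content. For $(3) \Rightarrow (2)$, if $p$ denotes the unique non-isolated point of $X$ (or $X$ is discrete, in which case there is nothing to check), then any $A \subseteq X$ avoiding $p$ is the union of the open singletons of its isolated members, hence open, and applying this to $X \setminus A$ when $p \in A$ yields that $A$ is closed. Thus $\powset(X) = \Siii{0}{1}{\g}(X) \cup \Piii{0}{1}{\g}(X)$, which clearly equals $\Bor{\g}(X)$. The implication $(2) \Rightarrow (1)$ is immediate from $\Siii{0}{1}{\g}(X) \cup \Piii{0}{1}{\g}(X) \subseteq \Deee{0}{2}{\g}(X) \subseteq \powset(X)$, the first inclusion being furnished by Lemma~\ref{lem:hierarchy_increasing_above_2} together with the standing hypothesis that the $\gamma$-Borel hierarchy on $X$ is increasing.

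For $(1) \Rightarrow (3)$ I argue contrapositively: assuming $X$ has two distinct non-isolated points $x_0 \neq x_1$, I produce a set in $\Deee{0}{2}{\g}(X) \setminus (\Siii{0}{1}{\g}(X) \cup \Piii{0}{1}{\g}(X))$. By Hausdorffness, fix disjoint open sets $U \ni x_0$ and $V \ni x_1$; since $U$ misses $V$, we have $x_0 \notin \overline V$. The technical heart is producing an open set $W \subseteq V$ which fails to be closed, and I split into three cases: (a) if $V$ is not clopen, let $W = V$; (b) if $V$ is clopen but some $y \in V \setminus \{x_1\}$ is non-isolated, let $W = V \setminus \{y\}$ (open since $\{y\}$ is closed); (c) if $V$ is clopen and every point of $V \setminus \{x_1\}$ is isolated, let $W = V \setminus \{x_1\}$, a union of clopen singletons. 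In cases (b) and (c) the deleted point lies in $V$ and is non-isolated, so if $W^c$ were open at it, an open neighbourhood $N$ contained in $W^c$ would satisfy $N \cap V = \{\text{the deleted point}\}$, forcing this singleton to be open and contradicting non-isolation; hence $W$ is open but not closed in all three cases.

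The witness is then $A := \{x_0\} \cup W$. Membership in $\Deee{0}{2}{\g}(X)$ is a mechanical check: $A$ is the union of the closed set $\{x_0\}$ and the open set $W$, while $A^c = (X \setminus W) \cap (X \setminus \{x_0\})$ is the intersection of a closed and an open set; both $A$ and $A^c$ thus lie in $\Siii{0}{2}{\g}(X)$ by Lemma~\ref{lem:hierarchy_increasing_above_2}, the increasing hypothesis, and the closure properties in Remark~\ref{rmk:closureunderfinite operations}. Since $W \subseteq V$ is disjoint from $U$, we have $U \cap A = \{x_0\}$, so $A$ being open would force $\{x_0\}$ to be open, contradicting the non-isolation of $x_0$. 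Conversely, picking $z \in \overline W \setminus W$ (nonempty because $W$ is not closed), we have $z \neq x_0$ because $x_0 \notin \overline V \supseteq \overline W$, so $z \in A^c$; yet every neighbourhood of $z$ meets $W \subseteq A$, preventing $A^c$ from being open at $z$. The main obstacle is the construction of $W$ in case (c), the \emph{convergent-net} situation where $x_1$ has only clopen neighbourhoods inside $V$; there one must exploit the non-isolation of $x_1$ itself to manufacture a non-clopen open set.
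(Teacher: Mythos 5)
Your proof is correct and follows essentially the same route as the paper: the cycle is the same, and the witness for \ref{door_space-1}~$\Rightarrow$~\ref{door_space-3} is the same kind of set, a closed singleton at one non-isolated point united with an open-but-not-closed subset of a separating neighbourhood of the other. The only difference is that your three-case construction of $W$ is superfluous: the single choice $W = V \setminus \{x_1\}$ (which is what the paper uses) is always open, since $\{x_1\}$ is closed, and never closed, since an open neighbourhood of $x_1$ inside its complement would force $\{x_1\}$ to be open against the non-isolation of $x_1$ --- exactly the argument you already give in your cases (b) and (c).
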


\begin{proof}
\ref{door_space-1}~$\Rightarrow$~\ref{door_space-3}.
We prove the contrapositive.
Suppose that there are two distinct non-isolated points $x,y \in X$.
Since $X$ is Hausdorff, there exist disjoint open sets $U,V\subseteq X$ such that $x\in U$ and $y\in V$. Consider the set
\( A=\{p\}\cup V\setminus\{q\} \).
Then $A$ is neither open not closed by the choice of \( x \) and \( y \).
However, we have that \( \{ x \} \) and \( \{ y \} \) are closed sets (because \( X \) is Hausdorff), the \( \g \)-Borel hierarchy on \( X \) is increasing by hypothesis, and
$\Deee{0}{2}{\g}(X)$ is an  $\omega$-algebra by Remark~\ref{rmk:closureunderfinite operations}.
Therefore $A\in \Deee{0}{2}{\g}(X) \setminus (\Siii{0}{1}{\g}(X) \cup \Piii{0}{1}{\g}(X))$, and so~\ref{door_space-1} fails.

\ref{door_space-3}~$\Rightarrow$~\ref{door_space-2}.
If \( X \) is discrete~\ref{door_space-2} is trivially satisfied, so assume that $X$ has at most one non-isolated point $x$ and consider an arbitrary \( A \subseteq X \).
If $x\notin A$, then $A$ is open because it consists of isolated points. For the same reason,
if $x\in A$ then $X\setminus A$ is open, and so $A$ is closed.
Therefore \( \powset(X) \subseteq \Siii{0}{1}{\g}(X) \cup \Piii{0}{1}{\g}(X) \) and we are done.

\ref{door_space-2}~$\Rightarrow$~\ref{door_space-1}. Obvious. \qedhere
\end{proof}

\subsection{Decomposition theorems} \label{subsec:decomposition}

In analogy with the Cantor-Bendixson theorem, we show that there is a canonical decomposition of each topological space, separating a part on which the \( \g \)-Borel hierarchy collapses from a part where the \( \g \)-Borel hierarchy does not collapse even locally.
Recall that \( \g^* \) denotes the smallest regular cardinal above \( \g \).

\begin{prop}\label{prop:CB_for_hierarchy_collapse}
Let $X$ be a topological space with \( \weight(X)< \g^* \). 
 Then $X$ can be partitioned into an open subspace $C$ and a closed  subspace $P$ such that the \( \g \)-Borel hierarchy collapses on $C$, but it does not collapse on every nonempty open subset of $P$. Moreover, such a decomposition of \( X \) is unique.
\end{prop}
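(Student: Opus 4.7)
The plan is to define $C$ as the union of all open subsets of $X$ on which the $\gamma$-Borel hierarchy collapses, and let $P = X \setminus C$. Then $C$ is open and $P$ is closed by construction, so we only need to verify the two collapse properties and uniqueness.

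Uniqueness is the easiest part. If $X = C' \sqcup P'$ is any other decomposition with the stated properties, then $C'$ is open with collapsing hierarchy, so maximality of $C$ gives $C' \subseteq C$. If the inclusion were strict, then $V = C \setminus C'$ would be a nonempty relatively open subset of $P'$, so the hierarchy on $V$ would not collapse. But $V \subseteq C$, and by Proposition~\ref{prop:abstractpropertiesofcollapse}\ref{prop:abstractpropertiesofcollapse-1} the collapse on $C$ transfers to $V$, a contradiction. Hence $C' = C$ and $P' = P$.

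For the property on $P$: suppose for contradiction that some nonempty open $V \subseteq P$ satisfies $\ord_\gamma(V) < \gamma^*$. Write $V = W \cap P$ for some open $W \subseteq X$, so that $W = (W \cap C) \sqcup V$ with $W \cap C$ open in $W$ and $V$ closed in $W$. The hierarchy on $W \cap C$ also collapses, as it is a subspace of $C$. Set $\alpha = \max(\ord_\gamma(W \cap C),\, \ord_\gamma(V),\, 3) < \gamma^*$. For any $A \in \Bor{\gamma}(W)$, heredity of $\Siii{0}{\alpha}{\gamma}$ yields $S_1, S_2 \in \Siii{0}{\alpha}{\gamma}(W)$ with $A \cap (W \cap C) = (W \cap C) \cap S_1$ and $A \cap V = V \cap S_2$. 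Since $\alpha \geq 3$, the $\gamma$-Borel hierarchy on $W$ is increasing, hence $W \cap C$ and $V$ lie in $\Siii{0}{\alpha}{\gamma}(W)$ and this pointclass is closed under finite intersections and unions (Remark~\ref{rmk:closureunderfinite operations}), giving $A \in \Siii{0}{\alpha}{\gamma}(W)$. Thus $\ord_\gamma(W) \leq \alpha < \gamma^*$, contradicting maximality of $C$.

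The technical heart of the argument is showing that the hierarchy on $C$ itself collapses. Fix a basis $\B$ of $X$ with $|\B| < \gamma^*$, and let $\B'$ consist of those $B \in \B$ contained in some open $U \subseteq X$ with $\ord_\gamma(U) < \gamma^*$. Then $\B'$ is a basis of $C$, and by Proposition~\ref{prop:abstractpropertiesofcollapse}\ref{prop:abstractpropertiesofcollapse-1} each $B \in \B'$ has collapsing hierarchy. By regularity of $\gamma^*$ and $|\B'| < \gamma^*$, the ordinal $\alpha = \max\bigl(3, \sup_{B \in \B'} \ord_\gamma(B)\bigr)$ still satisfies $\alpha < \gamma^*$. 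Given $A \in \Bor{\gamma}(C)$, for each $B \in \B'$ we have $A \cap B \in \Siii{0}{\alpha}{\gamma}(B)$, and heredity yields $S_B \in \Siii{0}{\alpha}{\gamma}(C)$ with $A \cap B = B \cap S_B$; since $\alpha \geq 3$ the hierarchy on $C$ is increasing and $B \cap S_B \in \Siii{0}{\alpha}{\gamma}(C)$.

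The main obstacle is then to show that the union $A = \bigcup_{B \in \B'}(B \cap S_B)$ still lies in $\Siii{0}{\alpha}{\gamma}(C)$, even though this class is only closed under unions of length strictly less than the additivity of $\gamma$ (Remark~\ref{rmk:easyclosure}). When $\gamma$ is regular one has $|\B'| < \gamma = \gamma^*$ and we are done immediately. When $\gamma$ is singular, $|\B'|$ may reach $\gamma$, but we partition $\B' = \bigcup_{i < \cof(\gamma)} \B'_i$ with $|\B'_i| < \gamma$; applying closure under $<\gamma$-unions inside each piece and then closure under $<\cof(\gamma)$-unions across the pieces yields $A \in \Siii{0}{\alpha}{\gamma}(C)$. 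When $\gamma$ is not a cardinal we instead use closure under $|\gamma|$-sized unions of Remark~\ref{rmk:easyclosure}. In every case $\ord_\gamma(C) \leq \alpha < \gamma^*$, completing the proof.
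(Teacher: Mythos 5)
Your decomposition is exactly the paper's ($C$ is the union of all open sets on which the hierarchy collapses, $P$ its complement), and your uniqueness argument and the overall three-part structure coincide with the paper's proof. There is, however, a genuine problem with the closure-property bookkeeping in both the $C$-part and the $P$-part: you try to keep everything at level $\a$ by invoking Remark~\ref{rmk:closureunderfinite operations} to intersect an open set ($B$, resp.\ $W\cap C$) or a closed set ($V$) with a $\Siii{0}{\a}{\g}$ set inside $\Siii{0}{\a}{\g}$. That remark assumes the $\g$-Borel hierarchy is increasing, which is \emph{not} a hypothesis of Proposition~\ref{prop:CB_for_hierarchy_collapse} (the space is not assumed regular), and the closure genuinely fails without it: in $Y=\{0,1,2\}$ with open sets $\emptyset,\{0\},\{0,1\},Y$, both $\{0,1\}$ (open, hence in $\Piii{0}{2}{\g}(Y)\subseteq\Siii{0}{3}{\g}(Y)$) and $\{1,2\}$ (closed, hence in $\Siii{0}{3}{\g}(Y)$) lie in $\Siii{0}{3}{\g}(Y)$, but their intersection $\{1\}$ does not, since every nonempty set in $\Piii{0}{1}{\g}(Y)\cup\Piii{0}{2}{\g}(Y)$ contains $0$ or $2$. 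A second, independent problem occurs in your singular case: Remark~\ref{rmk:easyclosure} only gives closure of $\Siii{0}{\a}{\g}$ under unions of length ${<}\cof(\g)$ when $\g$ is singular, so "applying closure under ${<}\g$-unions inside each piece" is not justified.

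Both defects are repaired by giving up on level $\a$ and bumping finitely, which is precisely what the paper does: place each piece $A\cap B$ into $\Piii{0}{\a}{\g}(C)$ rather than $\Siii{0}{\a}{\g}(C)$ (the classes $\Piii{0}{\b}{\g}$ \emph{are} closed under finite intersections unconditionally, and $B\in\Siii{0}{1}{\g}(C)\subseteq\Piii{0}{2}{\g}(C)\subseteq\Piii{0}{\a}{\g}(C)$ by Lemma~\ref{lem:hierarchy_increasing_above_2}); then the union of at most $\weight(X)$-many single $\Piii{0}{\a}{\g}$ sets lands in $\Siii{0}{\a+1}{\g}(C)$ by definition when $\weight(X)<\g$, and in $\Siii{0}{\a+3}{\g}(C)$ via the $\cof(\g)$ two-stage trick when $\weight(X)=\g$ is singular. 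Since the statement only requires \emph{some} bound below $\g^*$, this is a mechanical fix and your strategy is otherwise sound; but the two steps as you wrote them are not valid in the stated generality.
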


\begin{proof}
Given a basis \( \B \) of $X$ with $|\B| = \weight(X)$,
let $\B' = \{ B \in \B \mid \ord_\g(B) < \g^* \}$. 
Let $C=\bigcup \B'$ and $P=X\setminus C$. We claim that \( C \) and \( P \) are as required.

For each $B\in \B'$, let $1 \leq \alpha_B < \g^*$ be such that $\Siii{0}{\a_B}{\g}(B) = \Bor{\g}(B)$, and let $\alpha=\sup\{\alpha_B + 1\mid B\in \B'\}$. In this way, $\Deee{0}{\a_B}{\g}(B) = \Bor{\g}(B)$ and \( 2 \leq \alpha < \g^* \). 
  Take now any \( A \in \Bor{\g}(C) \). For every \( B \in \B'\), we have \( A \cap B \in \Bor{\g}(B) = \Deee{0}{\a_B}{\g}(B) \subseteq \Deee{0}{\a}{\g}(B) \subseteq \Piii{0}{\a}{\g}(B) \) by Lemma~\ref{lem:hierarchy_increasing_above_2}. Since \( \Piii{0}{\a}{\g} \) is hereditary, there is \( A'_B \in \Piii{0}{\a}{\g}(C) \) such that \( A'_B \cap B = A \cap B \),
and since \( B \in \Siii{0}{1}{\g}(C) \subseteq \Piii{0}{2}{\g}(C) \subseteq \Piii{0}{\a}{\g}(C) \) by Lemma~\ref{lem:hierarchy_increasing_above_2}, we get that \( A'_B \cap B \in \Piii{0}{\a}{\g}(C) \) too.
It follows that \( A = \bigcup_{B \in \B'} (A'_B \cap B)  \) is a union of at most \( \weight(X) \)-many \( \Piii{0}{\a}{\g}(C) \) sets. By our hypotheses, \( \weight(X) \leq \gamma \). We distinguish two cases.
If \( \weight(X) < \g\), then \( A \in \Siii{0}{\a+1}{\g}(C) \subseteq\Siii{0}{\a+3}{\g}(C) \).
If \( \weight(X) = \g \), then \( \gamma \) needs to be a singular cardinal. Write \( A = \bigcup_{i < \g} A_i \) with \( A_i \in \Piii{0}{\a}{\g}(C) \), and fix an increasing sequence of ordinals \( \seq{\gamma_j}{j < \cof(\gamma)} \) cofinal in \( \gamma \). Then \( A'_j = \bigcup_{i < \gamma_j} A_i \in \Siii{0}{\a+1}{\g}(C) \subseteq \Piii{0}{\a+2}{\g}(C)  \) for every \( j < \cof(\g) \), hence \( A = \bigcup_{j < \cof(\gamma)} A'_j \in \Siii{0}{\a+3}{\g}(C) \) because \( \cof(\gamma) < \gamma \). Since we showed that in all cases \( \Bor{\g}(C) \subseteq \Siii{0}{\a+3}{\g}(C) \), this means that \( \ord_\g(C) \leq \a+3 < \g^* \).

Next suppose towards a contradiction that there is an open set \( U \subseteq X \) such that \( U \cap P \neq \emptyset \) and \( \ord_\g(U \cap P) < \g^* \). Without loss of generality we may assume that \( U \in \B \). Let \( \b = \max \{ 3,  \ord_\g(C), \ord_\g(U \cap P) \}\), so that \( 3 \leq \b < \g^* \) and \( \ord_\g(U \setminus P) \leq \b \) too by Proposition~\ref{prop:abstractpropertiesofcollapse}\ref{prop:abstractpropertiesofcollapse-1}.
Every \( A \subseteq U  \) can be written as \( A = (A \cap P) \cup (A \setminus P) \). Since \( U \cap P, U \setminus P \in \Deee{0}{3}{\g}(U) \subseteq \Deee{0}{\b}{\g}(U) \), using that both \( \Bor{\g} \) and \( \Piii{0}{\b}{\g} \) are hereditary pointclasses we easily get that each \( A \in \Bor{\g}(U) \) is the union of two \( \Deee{0}{\b}{\g}(U) \) sets. This shows that \( \ord_\g(U) \leq \beta+1 < \g^*\), and hence \( U \in \B' \), contradicting \( U \cap P \neq \emptyset \).

For the uniqueness part, suppose that \( C' \) and \( P' \) are disjoint subset of \( X \) such that \( X = C' \cup P' \), \( C' \) is open, \( \ord_\g(C') < \g^* \), and \( \ord_\g(U \cap P') = \g^* \) for every open \( U \subseteq X \) such that \( U \cap P' \neq \emptyset \).
Then we must have \( C' \subseteq C \), as \( C \) can equivalently be described as the union of all open sets \( V \subseteq X \) such that \( \ord_\g(V) < \g^*\). If the inclusion were proper, then \( C \cap P' \neq \emptyset \) because \( P' = X \setminus C' \). But then \( \ord_\g (C \cap P') \leq \ord_\g(C) < \g^*\) by Proposition~\ref{prop:abstractpropertiesofcollapse}\ref{prop:abstractpropertiesofcollapse-1}, contradicting the choice of \( P' \). This shows that \( C' = C \), and hence also \( P' = P \).
\end{proof}

Proposition~\ref{prop:CB_for_hierarchy_collapse} admits a refined level-by-level version that, although a bit more technical, will be useful later on.

\begin{prop}\label{prop:CB_for_hierarchy_collapse_successor_case} 
Suppose that the \( \g \)-Borel hierarchy on \( X \) is increasing. 
Let \( \l < \g^*\) and \( 1 \leq \beta < \alpha < \g^*\) be such that both \( \Siii{0}{\b}{\g}(X) \) and \( \Siii{0}{\a}{\g}(X) \) are closed under unions of size \( \lambda \),
and let $\B\subset \Siii{0}{\b}{\g}(X)$ be such that $ |\B| \leq \lambda$.
Then $X$ can be partitioned into a $\Siii{0}{\b}{\g}(X)$ set \( C \) and a $\Piii{0}{\b}{\g}(X)$ set $P = X \setminus C$ such that $\ord_\g(C)\leq \a$, while $\ord_\g(B\cap P)=\ord_\g(B)>\a$ for every $B\in \B$ such that $B\cap P \neq \emptyset$.
Moreover, if $\ord_\g(X)>\a$, then $P \neq \emptyset$ and $\ord_\g(X)=\ord_\g(P)>\a$.
\end{prop}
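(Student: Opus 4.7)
My plan is to mimic the Cantor--Bendixson argument of Proposition~\ref{prop:CB_for_hierarchy_collapse}, but applied to the given family \( \B \) instead of a basis, and tightened to land exactly at level \( \a \) thanks to the closure hypotheses. I would set
\[
\B' = \{ B \in \B \mid \ord_\g(B) \leq \a \}, \quad C = \bigcup \B', \quad P = X \setminus C.
\]
Since \( \B' \subseteq \Siii{0}{\b}{\g}(X) \), \( |\B'| \leq \l \), and \( \Siii{0}{\b}{\g}(X) \) is closed under \( \l \)-unions by hypothesis, the partition itself comes for free: \( C \in \Siii{0}{\b}{\g}(X) \) and \( P \in \Piii{0}{\b}{\g}(X) \).

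For \( \ord_\g(C) \leq \a \), I would take an arbitrary \( A \in \Bor{\g}(C) \) and, for each \( B' \in \B' \), use \( \ord_\g(B') \leq \a \) together with heredity of \( \Siii{0}{\a}{\g} \) to extend \( A \cap B' \in \Siii{0}{\a}{\g}(B') \) to some \( A_{B'} \in \Siii{0}{\a}{\g}(C) \). Since \( B' \in \Siii{0}{\b}{\g}(C) \subseteq \Siii{0}{\a}{\g}(C) \) by the increasing property and \( \b < \a \), the intersection \( A_{B'} \cap B' \) stays in \( \Siii{0}{\a}{\g}(C) \), and \( A = \bigcup_{B' \in \B'}(A_{B'} \cap B') \) is a \( \l \)-union of such sets, hence in \( \Siii{0}{\a}{\g}(C) \) by the closure hypothesis on \( \Siii{0}{\a}{\g}(X) \) (passed to the subspace \( C \) via heredity).

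The second claim is obtained by the same decomposition applied to \( B \). For \( B \in \B \) with \( B \cap P \neq \emptyset \), we have \( B \notin \B' \), so \( \ord_\g(B) > \a \); and \( \ord_\g(B \cap P) \leq \ord_\g(B) \) is automatic by Proposition~\ref{prop:abstractpropertiesofcollapse}\ref{prop:abstractpropertiesofcollapse-1}. Let \( \eta = \ord_\g(B \cap P) \). I would first show \( \eta > \a \) by contradiction: if \( \eta \leq \a \), every \( A \in \Bor{\g}(B) \) splits as \( (A \cap B \cap C) \cup (A \cap B \cap P) \); extending each piece via heredity at level \( \a \) (the first piece using \( \ord_\g(B \cap C) \leq \ord_\g(C) \leq \a \), the second using \( \eta \leq \a \)) and combining them with \( B \cap C, B \cap P \in \Deee{0}{\a}{\g}(B) \) (from \( \b < \a \) and the increasing hierarchy) via closure under finite operations would force \( \ord_\g(B) \leq \a \), contradicting \( \ord_\g(B) > \a \). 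Once \( \eta > \a \) is known, running the very same decomposition at level \( \eta \) --- now absorbing the \( \Siii{0}{\a}{\g}(B) \) piece into \( \Siii{0}{\eta}{\g}(B) \), and using \( B \cap C, B \cap P \in \Siii{0}{\eta}{\g}(B) \) by \( \b < \eta \) and increasingness --- gives \( \ord_\g(B) \leq \eta \), hence equality. The moreover clause is the same argument with \( X \) in place of \( B \): if \( \ord_\g(X) > \a \) then \( P \neq \emptyset \) (else \( X = C \) and the first step would give \( \ord_\g(X) \leq \a \)), and the decomposition at level \( \ord_\g(P) \) produces \( \ord_\g(X) = \ord_\g(P) \). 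The main technical obstacle is the careful bookkeeping of the three ordinals \( \b < \a < \eta \) and verifying at each step that the ambient pointclass is large enough to contain \( C \), \( P \), and the various extended sets, while still retaining both the finite closure properties and the \( \l \)-sized union closure needed for the final combinations.
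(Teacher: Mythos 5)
Your proposal is correct and follows essentially the same route as the paper's proof: the same definition of \( \B' \), \( C = \bigcup\B' \), \( P = X\setminus C \), the same heredity-plus-closure argument for \( \ord_\g(C)\le\a \), and the same \( (A\cap C)\cup(A\cap P) \) decomposition for the order comparison. The only cosmetic difference is that the paper packages the last step as the single two-sided inequality \( \ord_\g(D\cap P)\le\ord_\g(D)\le\max\{\a,\ord_\g(D\cap P)\} \), whereas you run the decomposition twice (once for the contradiction giving \( \eta>\a \), once at level \( \eta \)); both yield the same conclusion.
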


\begin{proof} 
The proof follows closely that of Proposition~\ref{prop:CB_for_hierarchy_collapse}.
Let $\B' = \{ B \in \B \mid \ord_\g(B)\leq \a \}$, and let
$C=\bigcup \B'$ and $P=X\setminus C$. Then $C\in\Siii{0}{\b}{\g}(X)$ because we assumed that \( \Siii{0}{\b}{\g}(X) \) is closed under unions of $\l$, and thus $P\in \Piii{0}{\b}{\g}(X)$.

We know that \( \Siii{0}{\a}{\g}(X) \) is closed under finite intersections (Remark~\ref{rmk:closureunderfinite operations}) and unions of size \( \l \) (by hypothesis), hence the same is true of \( \Siii{0}{\a}{\g}(C) \) because \( \Siii{0}{\a}{\g} \) is hereditary. This easily gives that \( \ord_\g(C) \leq \alpha \): if \( A \in \Bor{\g}(C) \) then for every \( B \in \B' \) we have \( A \cap B \in \Bor{\g}(B) = \Siii{0}{\a}{\g}(B) \), hence there is \( A_B \in \Siii{0}{\a}{\g}(C) \) such that \( A \cap B = A_B \cap B \), and since \( B \in \Siii{0}{\a}{\g}(C)\) too (because \( B \in \Siii{0}{\b}{\g}(X) \subseteq \Siii{0}{\a}{\g}(X) \) and \( B \subseteq C\)), we obtain \( A = \bigcup_{B \in \B'} (A_B \cap B) \in \Siii{0}{\a}{\g}(C) \).

Now consider \( D \in \Bor{\g}(X) \), and let $\a'=\ord_\g(D\cap P)$. 
For every \( A \in \Bor{\g}(D) \) we get \( A \cap C \in \Siii{0}{\a}{\g}(D \cap C) \) (since \( \ord_\g(D \cap C) \leq \ord_\g(C) \leq \alpha \)) and \( A \cap P \in \Siii{0}{\a'}{\g}(D \cap P) \).
Since \( P \in \Piii{0}{\b}{\g}(X) \subseteq \Deee{0}{\a}{\g}(X) \), the usual computation yields \( A = (A \cap C) \cup (A \cap P) \in \Siii{0}{\max(\a,\a')}{\g}(D)\). Hence 
\begin{equation}\label{eq:ord=max}
\ord_\g(D \cap P) \leq \ord_\g(D) \leq \max\{ \a,\ord_\g(D\cap P) \}.
\end{equation}

Applying~\eqref{eq:ord=max} to $D=B\in \B$, we get that if \( B \cap P \neq \emptyset \), then \( \ord_{\g}(B \cap P) > \a \) (as otherwise \( \ord_\g(B) \leq \a \), and so \( B \subseteq C \) by construction), and therefore also \( \ord_\g(B \cap P) = \ord_\g(B) > \a \).

If instead we apply~\eqref{eq:ord=max} to $D=X$, we get that if $\ord_\g(X)>\alpha$ then also $\ord_\g(X)=\ord_\g(P)>\alpha$, which in particular implies $P \neq \emptyset$.  \end{proof}

\subsection{Universal sets} \label{subsec:universalsets}

We now turn the attention to the existence of universal sets for the classes appearing in the \( \g \)-Borel hierarchy. The key lemma, which is essentially folklore (see e.g.\ the proof of~\cite[Theorem 22.3]{KEC95}), is the following.

\begin{lemma} \label{lem:universalinborelhierarchy}
Let \( \mu \leq \lambda \) be infinite cardinals, and let \( X \) be a topological space.
\begin{enumerate-(1)}
\item \label{lem:universalinborelhierarchy-1}
If \( \weight(X) \leq \l \), then there is a \( \pre{\l}{2}\)-universal set \( \U \) for \( \Siii{0}{1}{\g}(X) \).
\item \label{lem:universalinborelhierarchy-2}
Suppose that for each \( \b < \mu \) we are given a boldface pointclass
\( \boldsymbol{\Gamma}_\b \) such that \( \emptyset \in \boldsymbol{\Gamma}_\b(X) \) and \( \boldsymbol{\Gamma}_\b(X) \) admits a \( \pre{\l}{2}\)-universal set \( \U_\b \). Then there is also a \( \pre{\l}{2}\)-universal set \( \U \) for 
\[
\left( \bigcup_{\b < \mu} \boldsymbol{\Gamma}_\b(X) \right)_{\sigma_\mu}.
\]
\end{enumerate-(1)}
\end{lemma}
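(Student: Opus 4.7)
For part~\ref{lem:universalinborelhierarchy-1}, the plan is to fix a basis $\{B_\alpha \mid \alpha < \lambda\}$ of $X$ (enumerated with possible repetitions, which is fine because $\weight(X)\leq\lambda$), and then set
\[
\U = \bigcup_{\alpha < \lambda} \bigl(V_\alpha \times B_\alpha\bigr), \qquad V_\alpha = \{y \in \pre{\lambda}{2} \mid y(\alpha) = 1\}.
\]
Each $V_\alpha$ is clopen and each $B_\alpha$ is open, so $\U$ is open in the product topology, i.e.\ $\U \in \Siii{0}{1}{\g}(\pre{\lambda}{2}\times X)$. Conversely, any open $U\subseteq X$ has the form $U=\bigcup_{\alpha\in S}B_\alpha$ for some $S\subseteq\lambda$, and then $\U_{\chi_S}=U$. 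As noted in the text just after the definition of universal set, this suffices to conclude that $\U$ is $\pre{\lambda}{2}$-universal for $\Siii{0}{1}{\g}(X)$.

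For part~\ref{lem:universalinborelhierarchy-2}, the idea is to use an element $y\in\pre{\lambda}{2}$ to encode simultaneously a $\mu$-long sequence of codes, one for each of the pointclasses $\boldsymbol{\Gamma}_\beta$. Since $\mu\leq\lambda$ and both are infinite, $|\mu\times\mu\times\lambda|=\lambda$, so we can fix a bijection $\sigma\colon\mu\times\mu\times\lambda\to\lambda$. For each $(i,\beta)\in\mu\times\mu$ this induces a continuous projection $\pi_{i,\beta}\colon\pre{\lambda}{2}\to\pre{\lambda}{2}$ by $\pi_{i,\beta}(y)(\alpha)=y(\sigma(i,\beta,\alpha))$, and moreover the combined map $y\mapsto\seq{\pi_{i,\beta}(y)}{i,\beta<\mu}$ is surjective onto $\pre{\mu\times\mu}{(\pre{\lambda}{2})}$. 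I then define
\[
\U = \bigcup_{(i,\beta)\in\mu\times\mu} V_{i,\beta}, \qquad V_{i,\beta} = \bigl\{(y,x) \in \pre{\lambda}{2}\times X \mid (\pi_{i,\beta}(y),x)\in\U_\beta\bigr\}.
\]
Since each $V_{i,\beta}$ is the preimage of $\U_\beta\in\boldsymbol{\Gamma}_\beta(\pre{\lambda}{2}\times X)$ under the continuous map $(y,x)\mapsto(\pi_{i,\beta}(y),x)$, boldface-ness of $\boldsymbol{\Gamma}_\beta$ gives $V_{i,\beta}\in\boldsymbol{\Gamma}_\beta(\pre{\lambda}{2}\times X)$, so $\U\in\bigl(\bigcup_{\beta<\mu}\boldsymbol{\Gamma}_\beta\bigr)_{\sigma_\mu}(\pre{\lambda}{2}\times X)$.

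For universality, given $A=\bigcup_{i<\delta}A_i$ with $\delta\leq\mu$ and $A_i\in\boldsymbol{\Gamma}_{f(i)}(X)$, I pad with the empty set to take $\delta=\mu$; the hypothesis $\emptyset\in\boldsymbol{\Gamma}_\beta(X)$ is used here, together with the fact that $\emptyset\in\boldsymbol{\Gamma}_\beta(X)$ entails the existence of a code $e_\beta\in\pre{\lambda}{2}$ with $(\U_\beta)_{e_\beta}=\emptyset$. For each $i<\mu$ pick $w_i\in\pre{\lambda}{2}$ with $(\U_{f(i)})_{w_i}=A_i$, and then use surjectivity of the combined projection to choose $y\in\pre{\lambda}{2}$ with
\[
\pi_{i,\beta}(y) = \begin{cases} w_i & \text{if } \beta = f(i), \\ e_\beta & \text{otherwise.}\end{cases}
\]
This yields $\U_y=\bigcup_{i,\beta}(\U_\beta)_{\pi_{i,\beta}(y)}=\bigcup_{i<\mu}A_i=A$, giving the remaining direction of universality.

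The only point that requires any care is the cardinal-arithmetic bookkeeping in part~\ref{lem:universalinborelhierarchy-2}: one must verify that the bijection $\sigma$ really makes the joint projection surjective (so that each coordinate of $y$ can be set independently), and that the resulting union has length at most $\mu$ (which is why indexing over $\mu\times\mu$ rather than a larger product is crucial). Everything else is a direct bookkeeping of definitions together with the boldface closure of each $\boldsymbol{\Gamma}_\beta$.
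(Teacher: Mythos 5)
Your part~(1) is correct and is, up to presentation, the same set as the paper's: writing $\U=\bigcup_{\alpha<\lambda}(V_\alpha\times B_\alpha)$ with $V_\alpha=\{y\in\pre{\lambda}{2}\mid y(\alpha)=1\}$ describes exactly the union over $s\in\pre{<\lambda}{2}$ of $\clopen{s}\times\bigcup\{B_\alpha\mid\alpha<\leng{s},\ s(\alpha)=1\}$, and each $V_\alpha$ is indeed clopen in the bounded topology. Part~(2) also follows the same ``diagonal coding'' idea, but it contains one genuine gap. You take an \emph{arbitrary} bijection $\sigma\colon\mu\times\mu\times\lambda\to\lambda$ and assert that each $\pi_{i,\beta}(y)(\alpha)=y(\sigma(i,\beta,\alpha))$ is continuous; this is precisely what you need in order to invoke boldfaceness and conclude $V_{i,\beta}\in\boldsymbol{\Gamma}_\beta(\pre{\lambda}{2}\times X)$. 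For the bounded topology on $\pre{\lambda}{2}$, the preimage $\pi_{i,\beta}^{-1}(\clopen{s})$ for $s$ of length $\gamma$ is determined by the values of $y$ on $\{\sigma(i,\beta,\alpha)\mid\alpha<\gamma\}$, and such a set of constraints defines an open set only if those coordinates are \emph{bounded} below $\lambda$. When $\lambda$ is regular this is automatic (the set has size $|\gamma|<\lambda=\cof(\lambda)$), but when $\lambda$ is singular an arbitrary bijection can send an initial segment of length $\cof(\lambda)$ onto a cofinal subset of $\lambda$, in which case $\pi_{i,\beta}$ is not continuous and the step fails. The singular case cannot be ignored: the lemma is applied with $\lambda=\kappa$, and $\kappa$ is allowed to be singular throughout the paper.

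The repair is exactly what the paper does: replace the arbitrary bijection by a pairing with the boundedness property. Using the G\"odel pairing $\langle\cdot,\cdot\rangle$, the map $f_\delta(y)(i)=y(\langle\delta,i\rangle)$ reads off coordinates from $\{\langle\delta,i\rangle\mid i<\gamma\}$, which is always bounded below $\lambda$ since each $\langle\delta,i\rangle$ has cardinality at most $|\max(\delta,i,\omega)|$; hence each $f_\delta$ is continuous. Note that the point you single out as requiring care --- surjectivity of the joint projection --- is automatic for any bijection $\sigma$; the property that actually needs verification is this boundedness, i.e.\ continuity. The remainder of your universality argument (padding with $\emptyset$ via the hypothesis $\emptyset\in\boldsymbol{\Gamma}_\beta(X)$, choosing codes coordinatewise, and reassembling $y$) is correct and matches the paper's.
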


\begin{proof}
\ref{lem:universalinborelhierarchy-1}
Fix  a basis \(\{U_\a \mid \a<\l \}\) for \(X\), and let  
\[
\U = \bigcup_{s\in \pre{<\l}{2}} \left(\clopen{s}\times \bigcup \{U_\a \mid \a<\leng{s}, s(\a)=1\} \right) .
\]
Clearly, \(\U \in \Siii{0}{1}{\g}(\pre{\l}{2} \times X)\). 
Given \(U \subseteq X\) open, let \(y \in \pre{\l}{2}\) be such that \(y(\a)=1 \IFF U_\a \subseteq U\): then \(\U_y=U\) because \(U=\bigcup\{U_\a \mid y(\a)=1\}\) by choice of \( y \). This shows that $\U$ is \(\pre{\l}{2}\)-universal for \(\Siii{0}{1}{\g}(X)\).

\ref{lem:universalinborelhierarchy-2}
Let \( \langle \cdot , \cdot \rangle \colon \l \times \l \to \l\) be the usual G\"odel pairing function, and let \( ( \cdot)_0,(\cdot)_1 \colon \l \to \l \) be its left and right inverses, so that \( \langle (\delta)_0,(\delta)_1 \rangle = \delta \) for every \( \delta < \lambda \). Recall also that since \( \mu \) is a cardinal, then \( \langle \cdot , \cdot \rangle \) maps \( \mu \times \mu \) onto \( \mu \). 

For any \( \d < \mu \), let  $f_\d \colon \pre{\l}{2} \to \pre{\l}{2}$ be defined by \( f_\d(x)(i) = x \left(  \langle \d, i \rangle  \right)\) for every \( i < \l \).
Notice that each $f_{\d}$ is continuous.  Define $\U \subseteq \pre{\l}{2} \times X $ by 
\[
(y, x) \in \mathcal{U} \IFF \exists \d < \mu \, [(f_{\d}(y), x) \in \U_{(\d)_0} ]. 
\]
For each \(\d<\mu \), the set $\{(y, x) \in \pre{\l}{2} \times X \mid (f_{\d}(y), x) \in \mathcal{U}_{(\d)_0}\} $ belongs to $\boldsymbol{\Gamma}_{(\d)_0}(\pre{\l}{2}\times X)$ because it is the preimage of $\U_{(\d)_0}$ under the continuous function $f_\d \times \id_X$, thus $\U \in \left( \bigcup_{\b < \mu} \boldsymbol{\Gamma}_\b(\pre{\l}{2}\times X) \right)_{\sigma_\mu}$. 
Let now $A = \bigcup_{i < \mu} A_i$ for \( A_i \in \bigcup_{\b < \mu} \boldsymbol{\Gamma}_\b(X) \).
Let \(\iota \colon \mu \to \mu \) be an injective map such that \( A_i \in \boldsymbol{\Gamma}_{(\iota(i))_0}(X) \).
For each \( \delta < \mu \), let \( y_\delta \in \pre{\l}{2}\) be such that \( \left(\U_{(\delta)_0}\right)_{y_\d} = A_i \) if \( \delta = \iota(i) \) for some \( i < \mu \), and \( \left(\U_{(\delta)_0}\right)_{y_\d} = \emptyset\) otherwise. For \( \mu \leq \delta < \l \), pick an arbitrary \( y_\delta \in \pre{<\l}{2}\). Finally, let \( y\) be the only element of \( \pre{\l}{2} \) such that \( f_\delta(y) = y_\delta \) for every \( \delta < \l \). 
Since \( \U_y = \bigcup_{\delta<\mu} \left(\U_{(\delta)_0} \right)_{y_\d} \), we get $\U_{y}=A$ and we are done.
\end{proof}

Recall once again that \( \g^* \) is the least regular cardinal above \( \g \), and that it is an upper bound for \( \ord_\g(X) \).

\begin{corollary} \label{cor:universalinborelhierarchy}
Let \( \l \) be an infinite cardinal, and let \( X \) be a topological space such that \( \weight(X) \leq \l \).
Assume that \( \gamma \) is not a limit cardinal, and that \( \gamma \leq \l^+ \). Then for every \( 1 \leq \a < \g^* \) there are \( \pre{\l}{2}\)-universal sets for both \( \Siii{0}{\a}{\g}(X) \) and \( \Piii{0}{\a}{\g}(X) \).
\end{corollary}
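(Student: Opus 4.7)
The plan is to reduce to the case where \( \gamma \) is a successor cardinal and then induct on \( \alpha \), feeding the inductive data into Lemma~\ref{lem:universalinborelhierarchy}.

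First I would dispose of the case where \( \gamma \) is not a cardinal. By Remark~\ref{rmk:refining}, the \( \gamma \)-Borel hierarchy coincides, level by level, with the \( |\gamma|^+ \)-Borel hierarchy, and \( \gamma^* = |\gamma|^+ \). The hypotheses \( \gamma \leq \lambda^+ \) and \enquote{\( \gamma \) is not a cardinal} force \( \gamma < \lambda^+ \), hence \( |\gamma| \leq \lambda \). Together with the case where \( \gamma \) is already a successor cardinal (where \( \gamma = \mu^+ \leq \lambda^+ \) yields \( \mu \leq \lambda \)), I may thus assume throughout that \( \gamma = \mu^+ \) for some infinite cardinal \( \mu \leq \lambda \), so that \( \gamma^* = \mu^+ \) and \( (\cdot)_{\sigma_{<\gamma}} = (\cdot)_{\sigma_\mu} \).

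Next I would perform induction on \( 1 \leq \alpha < \mu^+ \) to produce a \( \pre{\lambda}{2} \)-universal set \( \U_\alpha \) for \( \Siii{0}{\alpha}{\gamma}(X) \); since the complement \( (\pre{\lambda}{2} \times X) \setminus \U_\alpha \) is then \( \pre{\lambda}{2} \)-universal for \( \Piii{0}{\alpha}{\gamma}(X) \) (as already noted in the paragraph preceding Lemma~\ref{lemma_universal}), this will suffice. The base case \( \alpha = 1 \) follows directly from Lemma~\ref{lem:universalinborelhierarchy}\ref{lem:universalinborelhierarchy-1}, since \( \weight(X) \leq \lambda \). For the inductive step \( \alpha > 1 \), I would use Definition~\ref{def_hierarchy_k+} to write
\[
\Siii{0}{\alpha}{\gamma}(X) = \left( \bigcup_{1 \leq \beta < \alpha} \Piii{0}{\beta}{\gamma}(X) \right)_{\sigma_\mu}
\]
and then invoke Lemma~\ref{lem:universalinborelhierarchy}\ref{lem:universalinborelhierarchy-2}. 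Since \( \alpha < \mu^+ \) yields \( |\alpha| \leq \mu \), the (at most) \( |\alpha| \)-many classes \( \Piii{0}{\beta}{\gamma} \) (for \( 1 \leq \beta < \alpha \)) can be re-indexed as a \( \mu \)-sequence \( ( \boldsymbol{\Gamma}_\delta )_{\delta < \mu} \) by fixing any enumeration \( \delta \mapsto \beta(\delta) \) and padding the remaining indices with the trivial boldface pointclass assigning \( \{\emptyset\} \) to every space (which contains \( \emptyset \) and has \( \emptyset \subseteq \pre{\lambda}{2} \times X \) as a \( \pre{\lambda}{2} \)-universal set). By the inductive hypothesis, each nontrivial \( \boldsymbol{\Gamma}_\delta \) admits a \( \pre{\lambda}{2} \)-universal set on \( X \), so Lemma~\ref{lem:universalinborelhierarchy}\ref{lem:universalinborelhierarchy-2} delivers the desired \( \U_\alpha \).

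I do not expect any serious obstacle: the two moving parts are the initial reduction to a successor cardinal and the matching of the index cardinality to \( \mu \), both of which are bookkeeping once one observes that the hypotheses on \( \gamma \) force the relevant cardinal parameter (either \( \mu \) or \( |\gamma| \)) to be bounded by \( \lambda \), so that the cofinality condition \( (\cdot)_{\sigma_\mu} = (\cdot)_{\sigma_{<\gamma}} \) required to apply Lemma~\ref{lem:universalinborelhierarchy}\ref{lem:universalinborelhierarchy-2} at the correct hierarchy level is automatic.
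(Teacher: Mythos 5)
Your proposal is correct and follows essentially the same route as the paper: reduce via Remark~\ref{rmk:refining} to the case \( \gamma = \mu^+ \) with \( \mu \leq \lambda \), then induct on \( \alpha \), using Lemma~\ref{lem:universalinborelhierarchy}\ref{lem:universalinborelhierarchy-1} for the base case and Lemma~\ref{lem:universalinborelhierarchy}\ref{lem:universalinborelhierarchy-2} for the inductive step after re-indexing the classes \( \Piii{0}{\beta}{\gamma}(X) \) by \( \mu \). The only (immaterial) difference is that the paper re-indexes via a surjection \( \mu \to \alpha \setminus \{0\} \) rather than padding with a trivial pointclass.
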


\begin{proof}
By the assumption on \( \g \), without loss of generality we may assume that \( \g = \mu^+ \) is a successor cardinal (see Remark~\ref{rmk:refining}), so that \( \mu \leq \l \). In this situation, \( \Siii{0}{\a}{\g}(X) = \left( \bigcup_{1 \leq \beta < \alpha } \Piii{0}{\b}{\g}(X) \right)_{\sigma_\mu}\) for every \( \a > 1 \).

We proceed by induction on \( 1 \leq \a < \g^* = \mu^+ \). If \( \a = 1 \), Lemma~\ref{lem:universalinborelhierarchy}\ref{lem:universalinborelhierarchy-1} provides us with a \( \pre{\l}{2}\)-universal set \( \U \) for \( \Siii{0}{1}{\g}(X) \), and hence \( \U^c \) is \( \pre{\l}{2} \)-universal for \( \Piii{0}{1}{\g}(X) \).
Suppose now that \( \a > 1 \). Let \( \rho \colon \mu \to (\a \setminus \{ 0 \}) \) be a surjection, and for every \( \b < \mu \) let \( \boldsymbol{\Gamma}_\b = \Piii{0}{\rho(\b)}{\g}(X) \). Then \( \Siii{0}{\a}{\g}(X) = \left( \bigcup_{\beta < \mu } \boldsymbol{\Gamma}_\b(X) \right)_{\sigma_\mu} \). The inductive hypothesis ensures that we can apply Lemma~\ref{lem:universalinborelhierarchy}\ref{lem:universalinborelhierarchy-2} and obtain a \( \pre{\l}{2}\)-universal set \( \U \) for \( \Siii{0}{\a}{\g}(X) \), so that \( \U^c \) is universal for \( \Piii{0}{\a}{\g}(X) \) and we are done.
\end{proof}

\subsection{More spaces}

Lemma~\ref{lem:change_topology_hierarchy} and Lemma~\ref{lem:hierarchy_increasing_above_2} can be used to show that the infinite levels of the \( \g \)-Borel hierarchy on a space \( X \) of weight smaller than \( \g^* \) do not depend too much on the chosen topology, in the sense that the original topology of \( X \) can be enhanced without altering the classes \( \Siii{0}{\a}{\g}(X)\) (and hence also \( \Piii{0}{\a}{\g}(X)\) and \( \Deee{0}{\a}{\g}(X) \)) for \( \a \geq \o \). For example, we can increase the additivity of the space under certain hypotheses on its weight, which are satisfied if e.g.\ \( \weight(X) = \kappa \) and \( \nu = \cof(\kappa) \).

\begin{proposition}\label{lem:hierarchy_for_mu_additive_topology} 
Let $(X,\tau)$ be a topological space of weight $\lambda$, and let \( \nu \) be a cardinal such that \( \lambda^{<\nu} < \gamma \). Let $\tau'$ be the smallest $\nu$-additive topology refining $\tau$. 
      Then for every $\alpha\geq \omega$, we have
\[
\Siii{0}{\a}{\g}(X,\tau)= \Siii{0}{\a}{\g}(X,\tau'),
\]
and analogously for \( \Piii{0}{\a}{\g}\) and \( \Deee{0}{\a}{\g}\).
Moreover, 
\[
\bigcup\nolimits_{1 \leq n < \omega} \Siii{0}{n}{\g}(X,\tau) = \bigcup\nolimits_{1 \leq n < \omega } \Siii{0}{n}{\g}(X,\tau').
\]
\end{proposition}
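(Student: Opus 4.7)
The plan is to apply Lemma~\ref{lem:change_topology_hierarchy} to the bijection $\id_X$ viewed as a map between $(X,\tau)$ and $(X,\tau')$. The inclusion $\tau \subseteq \tau'$ already gives one direction, namely $\Siii{0}{\a}{\g}(X,\tau) \subseteq \Siii{0}{\a}{\g}(X,\tau')$ for all $\alpha \geq 1$, by Remark~\ref{rmk:refining}; the real work is the reverse inclusion, and the key parameter will turn out to be $\beta = 3$.

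First I would fix a basis $\B$ for $\tau$ with $|\B| = \lambda$ and let $\B'$ consist of all intersections $\bigcap_{i<\delta} B_i$ with $\delta < \nu$ and $B_i \in \B$. Distributing unions through intersections shows that $\B'$ is a basis for $\tau'$, and $|\B'| \leq \lambda^{<\nu} < \gamma$ by hypothesis. In particular $\nu \leq \gamma$, so every member of $\B'$ is an intersection of fewer than $\gamma$-many $\tau$-open sets, and thus belongs to $\Piii{0}{2}{\g}(X,\tau)$. Every $U \in \tau'$ is a union of at most $|\B'| < \gamma$-many members of $\B'$, hence a union of fewer than $\gamma$-many elements of $\Piii{0}{2}{\g}(X,\tau)$, which places $U$ in $\Siii{0}{3}{\g}(X,\tau)$. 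This shows that $\id_X \colon (X,\tau) \to (X,\tau')$ is $\Siii{0}{3}{\g}$-measurable; its inverse is continuous, hence trivially $\Siii{0}{3}{\g}$-measurable.

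The rest is Lemma~\ref{lem:change_topology_hierarchy} applied with $f = \id_X$ and $\beta = 3$, noting that $\beta \cdot \omega = 3 \cdot \omega = \omega$ in ordinal arithmetic. For every $\alpha \geq \omega$ this yields $A \in \Siii{0}{\a}{\g}(X,\tau) \IFF A \in \Siii{0}{\a}{\g}(X,\tau')$, and the analogues for $\Piii{0}{\a}{\g}$ and $\Deee{0}{\a}{\g}$ follow by taking complements and intersections. The \enquote{moreover} clause comes directly from the second conclusion of Lemma~\ref{lem:change_topology_hierarchy} with the same $\beta$. I expect the main subtlety, rather than a genuine obstacle, to lie in turning the hypothesis $\lambda^{<\nu} < \gamma$ into the bound \enquote{every $\tau'$-open set is $\Siii{0}{3}{\g}$ with respect to $\tau$}: the refined topology $\tau'$ can have a much larger weight than $\tau$, and only by working at the level of basic sets does one see that three levels of the hierarchy suffice to absorb the change of topology, so that $\beta \cdot \omega = \omega$ and the stability begins exactly at $\alpha = \omega$.
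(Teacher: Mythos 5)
Your proof is correct and follows essentially the same route as the paper's: both reduce to showing that $\id_X$ and its inverse are $\Siii{0}{3}{\g}$-measurable, using the basis of ${<}\nu$-sized intersections of $\tau$-basic sets (of size $\lambda^{<\nu}<\gamma$, so each member is $\Piii{0}{2}{\g}(X,\tau)$ and each $\tau'$-open set is $\Siii{0}{3}{\g}(X,\tau)$), and then invoke Lemma~\ref{lem:change_topology_hierarchy} with $\beta=3$ and $\beta\cdot\omega=\omega$. No gaps.
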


\begin{proof}
By Lemma~\ref{lem:change_topology_hierarchy}, it is enough to check that the identity $\id_X \colon (X,\tau)\to (X,\tau')$ is a $\g$-Borel isomorphism such that both it and its inverse are $\Siii{0}{3}{\g}$-measurable.
Fix a basis \( \B \) for \( \tau \) of size \( \l \). Then a basis for \( \tau'\) is \( \B' = \{\bigcap \A\mid \A\subset \B, |\A|<\nu\} \); in particular, \( \weight(X,\tau') \leq \lambda^{<\nu} \).
On the one hand, $\id_X^{-1} \colon (X,\tau')\to (X,\tau)$ is continuous because $\tau' \supseteq \tau$, and thus it is also $\Siii{0}{3}{\g}$-measurable.
On the other hand, since \( \nu \leq \lambda^{<\nu} < \gamma \) we have that \( \B' \subseteq \Piii{0}{2}{\gamma}(X,\tau) \), thus \( \tau' \subseteq \Siii{0}{3}{\g}(X,\tau) \) and \( \id_X \) is \( \Siii{0}{3}{\g}\)-measurable too.
\end{proof}

Another evidence of the fact that, when restricting to the infinite levels of the \( \g \)-Borel hierarchy, the only relevant parameter is the weight of the space is given by the following result.

\begin{prop}\label{prop:embed_T_0_spaces_into_Cantor}
 Let $(X,\tau)$ be a $T_0$ topological space of weight at most $\lambda$, and suppose that \( \max \{ \mu, \cof(\mu)^+ \} \leq \gamma \) for \( \mu = 2^{< \lambda} \). Then \( \tau \) can be refined to a topology \( \tau' \) such that:
\begin{enumerate-(1)}
\item 
\( (X,\tau')\) embeds into \( \pre{\l}{2} \), hence \( \tau' \) is regular Hausdorff, \( (\omega,\cof(\lambda)) \)-Nagata--Smirnov, \( \cof(\l) \)-additive, and \( \weight(X,\tau') \leq \mu \);
\item 
$\tau'$ has a basis of size at most \( \mu \) that consists of \(\Deee{0}{3}{\l}(X,\tau) \) sets;  
 \item 
$\Siii{0}{\a}{\g}(X,\tau)= \Siii{0}{\a}{\g}(X,\tau')$ for all $\alpha\geq \omega$, $\Bor{\g}(X,\tau)=\Bor{\g}(X,\tau')$, and also \( \bigcup_{1 \leq n < \omega} \Siii{0}{n}{\g}(X,\tau) = \bigcup_{1 \leq n < \omega } \Siii{0}{n}{\g}(X,\tau') \).
\end{enumerate-(1)}
           \end{prop}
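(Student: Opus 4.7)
The plan is to take $\tau'$ to be the pullback of the bounded topology on $\pre{\l}{2}$ along a canonical coding map obtained from a basis of $(X,\tau)$. Fix an enumeration $\{U_\a \mid \a < \l\}$ of such a basis and define $f \colon X \to \pre{\l}{2}$ by $f(x)(\a) = 1 \IFF x \in U_\a$. The $T_0$ hypothesis makes $f$ injective: whenever $x \neq y$, some open (hence some basic $U_\a$) separates them. Let $\tau'$ be the coarsest topology on $X$ making $f$ continuous, equivalently the topology generated by $\{V_s \mid s \in \pre{<\l}{2}\}$, where $V_s = f^{-1}([s])$; then $f \colon (X,\tau') \to \pre{\l}{2}$ is a topological embedding. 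Because each $U_\a = f^{-1}(\{y \mid y(\a)=1\})$ belongs to $\tau'$, we have $\tau \subseteq \tau'$, and in particular $\id_X \colon (X,\tau') \to (X,\tau)$ is continuous.

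Item~(1) is immediate from the embedding $f$ together with the fact that $\pre{\l}{2}$ is regular Hausdorff, $\cof(\l)$-additive, of weight $\mu = 2^{<\l}$, and $(\omega,\cof(\l))$-Nagata--Smirnov, and that these properties pass to subspaces. For item~(2), the family $\{V_s \mid s \in \pre{<\l}{2}\}$ is a basis of $\tau'$ of size at most $|\pre{<\l}{2}| = \mu$. Each $V_s$ factors as $U_s \cap C_s$, where $U_s = \bigcap\{U_\a \mid s(\a)=1\}$ is an intersection of fewer than $\l$ opens of $\tau$ (hence $U_s \in \Piii{0}{2}{\l}(X,\tau)$) and $C_s = \bigcap\{X \setminus U_\a \mid s(\a)=0\}$ is closed in $\tau$ (hence $C_s \in \Piii{0}{1}{\l}(X,\tau)$). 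Lemma~\ref{lem:hierarchy_increasing_above_2} then places both $U_s$ and $C_s$ inside $\Siii{0}{3}{\l}(X,\tau) \cap \Piii{0}{3}{\l}(X,\tau)$, and Remark~\ref{rmk:closureunderfinite operations} yields that these two classes are closed under binary intersections, so $V_s \in \Deee{0}{3}{\l}(X,\tau)$.

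For item~(3), the strategy is to invoke Lemma~\ref{lem:change_topology_hierarchy} applied to $\id_X$ and $\id_X^{-1}$ with $\beta < \omega$: since $\beta \cdot \omega = \omega$, this simultaneously delivers $\Siii{0}{\a}{\g}(X,\tau) = \Siii{0}{\a}{\g}(X,\tau')$ for all $\a \geq \omega$ together with the finite-level union equality from the moreover clause, and $\Bor{\g}(X,\tau) = \Bor{\g}(X,\tau')$ then follows from $\tau \subseteq \tau' \subseteq \Bor{\g}(X,\tau)$ since both sides are $\g$-algebras. The map $\id_X^{-1}$ is already continuous, so what remains is to show $\tau' \subseteq \Siii{0}{n}{\g}(X,\tau)$ for some finite $n$. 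Each basic $V_s$ lies in $\Deee{0}{3}{\l}(X,\tau) \subseteq \Piii{0}{3}{\g}(X,\tau)$ (using $\l \leq \mu \leq \g$ and Remark~\ref{rmk:refining}), and every $U \in \tau'$ is a union of at most $\mu$ such sets. When $\mu < \g$, such a union has length strictly less than $\g$ and hence belongs to $\Siii{0}{4}{\g}(X,\tau)$ directly from the definition of that class. The case $\mu = \g$ is the main obstacle: the assumption $\cof(\mu)^+ \leq \g$ forces $\g$ to be singular (otherwise $\cof(\mu) = \mu = \g$ gives $\cof(\mu)^+ = \g^+ > \g$), and I would resolve it via a cofinality decomposition, writing the $\g$-indexed union as an outer union indexed by $\cof(\g)$ of inner unions of length less than $\g$. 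Each inner union belongs to $\Siii{0}{4}{\g}(X,\tau) \subseteq \Piii{0}{5}{\g}(X,\tau)$, while the outer union of $\cof(\g) < \g$ many $\Piii{0}{5}{\g}$ sets belongs to $\Siii{0}{6}{\g}(X,\tau)$ by definition. In both cases $\tau' \subseteq \Siii{0}{6}{\g}(X,\tau)$, so Lemma~\ref{lem:change_topology_hierarchy} applies with $\beta = 6$ and closes the proof; the cofinality trick underpinned by $\cof(\mu)^+ \leq \g$ is the only nontrivial ingredient and is precisely what keeps the image of $\tau'$ inside a finite level of the $\g$-Borel hierarchy on $(X,\tau)$.
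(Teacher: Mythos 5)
Your construction is the same as the paper's: the coding map $f(x)(\a)=1 \IFF x\in U_\a$, the refined topology generated by the preimages of the basic clopen sets $\clopen{s}$, the computation placing the new basis inside $\Deee{0}{3}{\l}(X,\tau)$, and a final appeal to Lemma~\ref{lem:change_topology_hierarchy} via the identity map. The only cosmetic difference is in item~(3), where the paper splits on whether $\mu$ is regular or singular and lands in $\Siii{0}{3}{\g}(X,\tau)$ or $\Siii{0}{5}{\g}(X,\tau)$, while you split on $\mu<\g$ versus $\mu=\g$ and land one level higher (at $\Siii{0}{4}{\g}$ or $\Siii{0}{6}{\g}$) by using only the $\Piii{0}{3}{\g}$ side of the basis sets; since both witnesses $\b$ are finite, $\b\cdot\omega=\omega$ and Lemma~\ref{lem:change_topology_hierarchy} yields the identical conclusion.
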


\begin{proof}
Let $\B=\{B_\alpha\mid \alpha<\lambda\}$ be an enumeration (possibly with repetitions) of a basis for $\tau$. Let $f \colon X\to \pre{\lambda}{2}$ be defined by $f(x)(\alpha)=1$ if and only if $x\in B_\alpha$. 
Since $X$ is $T_0$, the map \( f \) is injective, and it is open on its image because
\( f[B_\alpha]=\{y \in f[X] \mid y(\alpha) = 1\} \)
is open in $f[X]$.
For every basic clopen set $\clopen{s}$ of $\pre{\kappa}{2}$ we have that \( f^{-1}(\clopen{s})= A\cap C \), where 
\begin{align*}
A & =\bigcap \{B_\alpha\mid \alpha<\leng{s} \wedge s(\alpha)=1\}, \quad \text{and} \\
C& =\bigcap \{X\setminus B_\alpha\mid \alpha<\leng{s} \wedge  s(\alpha)=0\}. \end{align*}
Then $A \in \Piii{0}{2}{\l}(X,\tau)$ and $C \in \Piii{0}{1}{\l}(X,\tau)$, hence \( A \cap C \in \Deee{0}{3}{\l}(X,\tau) \) by Lemma~\ref{lem:hierarchy_increasing_above_2}.
Let \( \tau' \) be the topology generated by \( \B' =  \B \cup \{ f^{-1}([s]) \mid s \in \pre{<\l}{2} \} \).
Then by the above computation and Lemma~\ref{lem:hierarchy_increasing_above_2}, the basis of \( \tau' \) obtained by closing \( \B' \) under finite intersections has size at most \( 2^{< \l} = \mu  \) and consists of \( \Deee{0}{3}{\l}(X,\tau) \) sets. Moreover, \( f \colon (X,\tau') \to \pre{\l}{2}\) is an embedding because \( f^{-1}([s]) \in \B' \) for every \( s \in \pre{<\l}{2}\) and \( f[B] \) is open in \( f[X] \) for every \( B \in \B' \).

Finally, consider the map \( \id_X \colon (X,\tau) \to (X,\tau') \). Its inverse \( \id_X^{-1} \colon (X,\tau') \to (X,\tau) \) is continuous because \( \tau' \supseteq \tau \). Since \( \gamma \geq \mu \geq \lambda \) and \( |\B'| \leq \mu \), every \( \tau' \)-open set \( U \) is a union of \( \mu \)-many \( \Deee{0}{3}{\mu}(X,\tau) \) sets by Remark~\ref{rmk:refining}. We distinguish two cases. If \( \mu \) is regular, then \( \g \geq \cof(\mu)^+ = \mu^+ > \mu\), and so \( U \in \Siii{0}{3}{\mu^+}(X,\tau) \subseteq \Siii{0}{3}{\g}(X,\tau)\) by Remark~\ref{rmk:refining} again.
If instead \( \mu \) is singular, then \( U \in \Siii{0}{5}{\g}(X,\tau)\). Indeed, if \( U = \bigcup_{i < \mu} U_i \) with \( U_i \in \Deee{0}{3}{\mu}(X,\tau) \subseteq \Deee{0}{3}{\g}(X,\tau) \) and \( \seq{\mu_j}{j < \cof(\mu)} \) is an increasing sequence of ordinals cofinal in \( \mu \), then \( V_j = \bigcup_{i < \mu_j} U_i \in \Siii{0}{3}{\g}(X,\tau) \subseteq \Piii{0}{4}{\g}(X,\tau) \) for every \( j < \cof(\mu) \) (because \( \gamma \geq \mu > \mu_j \)), and hence \( U = \bigcup_{j < \cof(\mu)} V_j \in \Siii{0}{5}{\g}(X,\tau) \) because \( \g > \cof(\mu) \). This shows that \( \id_X \) is either \( \Siii{0}{3}{\g}\)-measurable or \( \Siii{0}{5}{\g} \)-measurable. In all cases, by Lemma~\ref{lem:change_topology_hierarchy} we get that
$\Siii{0}{\a}{\g}(X,\tau)= \Siii{0}{\a}{\g}(X,\tau')$ for all $\alpha\geq \omega$,  \( \bigcup_{1 \leq n < \omega} \Siii{0}{n}{\g}(X,\tau) = \bigcup_{1 \leq n < \omega } \Siii{0}{n}{\g}(X,\tau') \), and hence also $\Bor{\g}(X,\tau)=\Bor{\g}(X,\tau')$.
   \end{proof}

To see the relevance of Proposition~\ref{prop:embed_T_0_spaces_into_Cantor} in the context of this paper, notice that setting \( \lambda = \kappa \) we obtain that \( \mu = \kappa \) too (because we assumed \( 2^{< \kappa} = \kappa\)), and thus the above result can be applied with \( \gamma = \kappa^+ \), and also with \( \gamma = \kappa \) if \( \kappa \) is singular. 
On the one hand, this implies that for results that do not depend on the initial levels of the $\kappa^+$-Borel (or of the \( \kappa \)-Borel) hierarchy, without loss of generality we can work only with subspaces of $\pre{\kappa}{2}$. This applies e.g.\ to the question of whether such hierarchies collapse (Proposition~\ref{prop:abstractpropertiesofcollapse}), and thus it justifies our apparent restriction in scope while analyzing the connection between the \( \kappa \)-Perfect Set Property and the collapse of the \( \kappa^+ \)-Borel hierarchy in Section~\ref{sec: psp section}.
On the other hand, Proposition~\ref{prop:embed_T_0_spaces_into_Cantor} guarantees that all results contained herein touching on the infinite levels of the $\kappa^+$-Borel hierarchy hold in full generality for all $T_0$ topological spaces of weight at most \( \kappa \), without necessarily assuming them to be Hausdorff, nor regular.

As a corollary of Proposition~\ref{prop:embed_T_0_spaces_into_Cantor}, we obtain an elegant proof of the following well-known fact, which says that \( \kappa^+ \)-Borel spaces can be identified with the subspaces of the generalized Cantor space \( \pre{\k}{2} \).

\begin{proposition} \label{fact: wlog subsets cantor space}
The following are equivalent, for any $\kappa^+$-algebra \(\B\) on a set $X$:  
\begin{enumerate-(1)}
\item\label{k-Borel space-1} 
$(X,\B)$ is a $\kappa^+$-Borel space, i.e. $\B$ separates points and is generated by a subfamily \( \A \) of size at most $\kappa$;
\item\label{k-Borel space-2'} 
$\B$ is generated by a \( T_0 \) topology of weight at most $\kappa$;
\item\label{k-Borel space-2} 
$\B$ is generated by a regular Hausdorff topology of weight at most $\kappa$;
\item\label{k-Borel space-3} 
there is an injection $f \colon X \to \pre{\k}{2}$ such that \( A \in \B \IFF f[A] \in {\Bor{\k^+}(f[X])} \), for every \( A \subseteq X\).
 \end{enumerate-(1)}
\end{proposition}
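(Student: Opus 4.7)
The plan is to establish the implications in the cycle $(3) \Rightarrow (2) \Rightarrow (1) \Rightarrow (4) \Rightarrow (3)$, with $(1) \Rightarrow (4)$ being the main content and everything else essentially bookkeeping. The implication $(3) \Rightarrow (2)$ is trivial since a regular Hausdorff space is automatically $T_0$. For $(2) \Rightarrow (1)$, given a $T_0$ topology $\tau$ of weight at most $\kappa$ generating $\B$, I take $\A$ to be any basis of $\tau$ of size at most $\kappa$; the $T_0$ property guarantees that for distinct $x, y$ there is a basic open set containing exactly one of them, so $\B$ separates points. For $(4) \Rightarrow (3)$, I pull back the subspace topology of $f[X] \subseteq \pre{\k}{2}$ along $f$: this is a regular Hausdorff topology on $X$ of weight at most $\kappa$, and the stipulated correspondence of Borel sets shows it generates $\B$.

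The heart is $(1) \Rightarrow (4)$. Given $\A = \{A_\alpha \mid \alpha < \kappa\}$ generating $\B$, I first observe that \emph{$\A$ itself separates points}: the collection $\{B \in \B \mid x \in B \IFF y \in B\}$ is a $\kappa^+$-algebra, and if $\A$ failed to separate a pair $x \neq y$, this collection would contain $\A$ and hence equal $\B$, contradicting the assumption. Next, I let $\tau$ be the (automatically $T_0$) topology on $X$ generated by $\A$ as a subbasis, so $\weight(X,\tau) \leq \kappa$. The key computation is $\B = \Bor{\k^+}(X,\tau)$: on the one hand, $\tau$ has a basis of size at most $\kappa$ consisting of finite intersections from $\A$, and any $\tau$-open set is a union of at most $\kappa$-many such basic sets, hence lies in the $\kappa^+$-algebra generated by $\A$, which is $\B$; on the other hand, $\Bor{\k^+}(X,\tau)$ is a $\kappa^+$-algebra containing $\A$, hence contains $\B$.

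Having identified $\B$ as $\Bor{\k^+}(X,\tau)$, I apply Proposition~\ref{prop:embed_T_0_spaces_into_Cantor} with $\lambda = \kappa$ and $\gamma = \kappa^+$ (noting $\mu = 2^{<\kappa} = \kappa$ by the standing assumption, so $\max\{\mu, \cof(\mu)^+\} \leq \kappa^+$ is satisfied). This yields a refinement $\tau'$ of $\tau$, an embedding $f \colon (X,\tau') \to \pre{\k}{2}$, and crucially the equality $\Bor{\k^+}(X,\tau) = \Bor{\k^+}(X,\tau')$. Since $f$ is a homeomorphism onto $f[X]$ (equipped with the subspace topology from $\pre{\k}{2}$), it sends $\Bor{\k^+}(X,\tau')$ bijectively onto $\Bor{\k^+}(f[X])$. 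Combining, for every $A \subseteq X$ we get $A \in \B \IFF f[A] \in \Bor{\k^+}(f[X])$, and $f$ is injective as an embedding, establishing $(4)$.

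The only mildly subtle point is the equality $\B = \Bor{\k^+}(X,\tau)$, which relies on the fact that since $\A$ (and hence any finite-intersection basis derived from it) has size at most $\kappa$, arbitrary unions of basic open sets are automatically $\kappa^+$-sized unions and so remain inside $\B$. Everything else is essentially a transport argument through Proposition~\ref{prop:embed_T_0_spaces_into_Cantor}, which does the heavy lifting of producing the embedding into $\pre{\k}{2}$.
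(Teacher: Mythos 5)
Your proof is correct and follows essentially the same route as the paper: the implications $(4)\Rightarrow(3)\Rightarrow(2)\Rightarrow(1)$ are the routine ones, and the substantive implication $(1)\Rightarrow(4)$ is obtained by noting that $\A$ separates points, generating a $T_0$ topology of weight at most $\kappa$ whose $\kappa^+$-Borel algebra is $\B$, and then invoking Proposition~\ref{prop:embed_T_0_spaces_into_Cantor} with $\lambda=\mu=\kappa$ and $\gamma=\kappa^+$. You merely spell out details the paper leaves implicit (the $\kappa^+$-algebra argument for why $\A$ separates points, the verification that $\B=\Bor{\k^+}(X,\tau)$, and the hypothesis check for the cited proposition), all of which are accurate.
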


\begin{proof}
The implications \ref{k-Borel space-3}~$\Rightarrow$~\ref{k-Borel space-2}, \ref{k-Borel space-2}~$\Rightarrow$~\ref{k-Borel space-2'}, and \ref{k-Borel space-2'}~$\Rightarrow$~\ref{k-Borel space-1} are obvious. To see \ref{k-Borel space-1}~$\Rightarrow$~\ref{k-Borel space-3},
notice that since $\B$ separates points, so does $\A$.
Then the topology $\tau$ generated by $\A$ is $T_0$. Thus an application of Proposition~\ref{prop:embed_T_0_spaces_into_Cantor} with \( \lambda = \mu = \kappa \) and \( \gamma = \kappa^+\) yields the desired result.
\end{proof}

\section{The \( \k^+\)-Borel hierarchy}\label{sec_borel_hierarchy}

As customary in generalized descriptive set theory, from this point onward (and unless stated otherwise),
\begin{quotation}
\emph{all topological spaces are  assumed to be regular Hausdorff and of weight at most \( \kappa \).}
\end{quotation}

In this section, we fix once and for all such a space \( X \), and we study its $\kappa^+$-Borel hierarchy. Notice however that, in view of Proposition~\ref{prop:embed_T_0_spaces_into_Cantor}, all results would remain valid for arbitrary \( T_0 \) spaces of weight at most \( \kappa \) if we restrict the attention to the infinite levels \( \alpha \geq \omega \). Also, recall that the \( \k^+\)-Borel hierarchy on \( X \) is increasing by Lemma~\ref{lem:increasing} (applied with \( \g = \k^+\)). This will be tacitly used throughout the section.

We begin with the closure properties of the pointclasses appearing in the \( \k^+ \)-Borel hierarchy on \( X \).
Notice that \( \ord_{\kappa^+}(X) \leq \k^+ \) because \( \k^+ \), being a successor cardinal, is regular.

\begin{prop}\label{k+_hierarchy_closure}
  Given any \( 1 < \a < \k^+ \),  let \( \widehat{\alpha} = \cof(\k) \) if \( \a \) is a successor ordinal, and \( \widehat{\a} = \cof(\a) \) if \( \a \) is limit. Then:

\begin{enumerate-(1)} \item \label{k+_hierarchy_closure-1}
\(\Siii{0}{\a}{\k^+}( X) \) is closed under unions of length \(\kappa\) and intersections of size smaller than \( \widehat{\a} \);
      \item \label{k+_hierarchy_closure-2}
\(\Piii{0}{\a}{\k^+}( X) \) is closed under intersections of length \(\kappa\) and unions of size smaller than \( \widehat{\a} \);
      \item \label{k+_hierarchy_closure-3}
\(\Deee{0}{\a}{\k^+}( X) \) is closed under complements and both unions and intersections of size smaller than \( \widehat{\a} \), that is, \(\Deee{0}{\a}{\k^+}( X) \) is a \( \widehat{\a} \)-algebra.
      \end{enumerate-(1)} 
Furthermore, the same is true for $\a=1$ if \(X\) is $\cof(\k)$-additive.%
\footnote{More generally, the closure properties of \( \Siii{0}{1}{\k^+}(X) \) under intersections (or, equivalently, of \( \Piii{0}{1}{\k^+}(X) \) under unions) precisely correspond, by definition, to the additivity of the space.}
\end{prop}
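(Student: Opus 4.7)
The plan is to focus on (1), from which (2) follows by de Morgan and (3) by combining (1) and (2); the base case \(\a=1\) assuming \(\cof(\k)\)-additivity is immediate from the definitions. Closure of \(\Siii{0}{\a}{\k^+}(X)\) under unions of length \(\k\) is automatic from Remark~\ref{rmk:easyclosure} since \(\k^+\) is regular, so the substantive task is to verify closure under intersections of size \(\d<\widehat{\a}\), for \(\a\geq 2\). Granted (1), the argument for (3) is routine: \(\Deee{0}{\a}{\k^+}(X)\) is closed under complements by definition, and for \(A^{(j)} \in \Deee{0}{\a}{\k^+}(X)\) with \(j<\d<\widehat{\a}\), both \(\bigcup_j A^{(j)}\) and its complement \(\bigcap_j (A^{(j)})^c\) lie in \(\Siii{0}{\a}{\k^+}(X)\) by (1), so \(\bigcup_j A^{(j)} \in \Deee{0}{\a}{\k^+}(X)\); intersections are symmetric.

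In the successor case \(\a=\b+1\) (so \(\widehat{\a}=\cof(\k)\)), I write each \(A^{(j)} = \bigcup_{i<\k} B^{(j)}_i\) with \(B^{(j)}_i \in \Piii{0}{\b}{\k^+}(X)\) and distribute:
\[
\bigcap_{j<\d} A^{(j)} \;=\; \bigcup_{f \in \prod_{j<\d}\k}\, \bigcap_{j<\d} B^{(j)}_{f(j)}.
\]
Each inner intersection lies in \(\Piii{0}{\b}{\k^+}(X)\) by the automatic \(<\k^+\)-intersection closure dual to Remark~\ref{rmk:easyclosure}, while \(|\prod_{j<\d}\k|=\k^\d\leq\k\) under \(2^{<\k}=\k\): any \(f\colon\d\to\k\) is bounded (since \(|\d|<\cof(\k)\)), so \(\k^\d = \sup_{\mu<\k}\mu^\d \leq \sup_{\mu<\k} 2^{\mu\cdot\d} = 2^{<\k} = \k\). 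Hence \(\bigcap_j A^{(j)}\) is a union of at most \(\k\)-many \(\Piii{0}{\b}{\k^+}(X)\) sets, placing it in \(\Siii{0}{\b+1}{\k^+}(X)\).

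The limit case is the main obstacle: when \(\cof(\a)\) exceeds \(\cof(\k)\) (which can occur for singular \(\k\)), the count \(\k^\d\) in the distributive argument may exceed \(\k\), so a different strategy is needed. The idea is to leverage that \(\a\) is limit. Fix a cofinal sequence \(\seq{\xi_\eta}{\eta<\cof(\a)}\) in \(\a\), and writing \(A^{(j)} = \bigcup_{i<\l_j} B^{(j)}_i\) with \(B^{(j)}_i \in \Piii{0}{\g^{(j)}_i}{\k^+}(X)\) and \(\g^{(j)}_i<\a\), set
\[
A^{(j)}_\eta \;=\; \bigcup \left\{ B^{(j)}_i \mid \g^{(j)}_i < \xi_\eta \right\} \in \Siii{0}{\xi_\eta}{\k^+}(X).
\]
The families \(\{A^{(j)}_\eta\}_\eta\) are increasing in \(\eta\) with \(A^{(j)}=\bigcup_{\eta<\cof(\a)} A^{(j)}_\eta\), and since \(\d<\cof(\a)\) and \(\cof(\a)\) is a regular cardinal, any \(\d\)-sequence \(\seq{\eta_j}{j<\d}\) has supremum below \(\cof(\a)\). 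Monotonicity then delivers the commutation
\[
\bigcap_{j<\d} A^{(j)} \;=\; \bigcup_{\eta<\cof(\a)} \,\bigcap_{j<\d} A^{(j)}_\eta,
\]
and each \(\bigcap_j A^{(j)}_\eta\) is an intersection of \(\d\)-many sets in \(\Siii{0}{\xi_\eta}{\k^+}(X) \subseteq \Piii{0}{\xi_\eta+1}{\k^+}(X)\), hence in \(\Piii{0}{\xi_\eta+1}{\k^+}(X)\) by \(<\k^+\)-intersection closure. Since \(\xi_\eta+1 < \a\) (using that \(\a\) is limit) and \(\cof(\a) \leq \k\), the outer union puts \(\bigcap_j A^{(j)}\) into \(\Siii{0}{\a}{\k^+}(X)\), as required.
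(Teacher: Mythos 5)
Your proof is correct, and for the successor case it is essentially the paper's argument: the same distributive identity
\[
\bigcap_{j<\d} A^{(j)} = \bigcup_{f \in \pre{\d}{\k}} \bigcap_{j<\d} B^{(j)}_{f(j)},
\]
with the same cardinality bound \( \k^\d \leq \k \) extracted from \( \d < \cof(\k) \) and \( 2^{<\k} = \k \). Where you genuinely diverge is the limit case. The paper stays with the distributive law: it regroups each \( A^{(j)} \) as a union of \( \cof(\a) \)-many sets lying in classes \( \Piii{0}{\b_\eta+2}{\k^+}(X) \) along a cofinal sequence, distributes over \( \pre{\d}{\cof(\a)} \), and must then check \( \cof(\a)^\d \leq \k \) — which forces a case split (for \( \k \) regular this is \( \k^{<\k} = \k \); for \( \k \) singular one invokes that \( 2^{<\k}=\k \) makes \( \k \) strong limit, so \( \cof(\a)^\d \leq 2^{\cof(\a)} < \k \)). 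Your argument replaces the distribution by a monotone approximation: the truncations \( A^{(j)}_\eta \) increase in \( \eta \), and since \( \d < \cof(\a) \) with \( \cof(\a) \) regular, the intersection commutes with the union over \( \eta \) outright, so the only arithmetic needed is \( \cof(\a) \leq \k \). This buys a uniform treatment of regular and singular \( \k \) and is arguably cleaner; the paper's route keeps the limit case formally parallel to the successor case. One tacit point common to both proofs: rewriting all the \( \Piii{0}{\b'}{\k^+} \) constituents with \( \b' < \b \) as \( \Piii{0}{\b}{\k^+} \) sets in the successor step uses that the hierarchy is increasing, which holds here by Lemma~\ref{lem:increasing}.
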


\begin{proof}
Part~\ref{k+_hierarchy_closure-2} follows from~\ref{k+_hierarchy_closure-1} by taking complements, and part~\ref{k+_hierarchy_closure-3} follows from~\ref{k+_hierarchy_closure-1} and~\ref{k+_hierarchy_closure-2}. Thus it is enough to prove~\ref{k+_hierarchy_closure-1}.

The fact that \( \Siii{0}{\a}{\g}(X) \) is closed under unions of size \( \k \) already follows from Remark~\ref{rmk:easyclosure} applied with \( \g = \k^+ \), so let us consider closure under intersections shorter than \( \widehat{\a} \). We argue by induction on \( 1 < \a < \k^+\).

Consider first the case where \( \a = \b+1 \) is a successor ordinal (which covers in particular the base case \( \a = 2 \)). 
Let \(\l < \cof(\k)\) and \(\seq{A_i}{i<\lambda}\) be a sequence of sets in \(\Siii{0}{\a}{\k^+}( X) \). By definition, for each \(i<\lambda\) we have that \(A_i=\bigcup_{j<\k} A_{i,j}\) with \(A_{i,j} \in \Piii{0}{\b}{\k^+}( X)  \) (here we implicitly use that the \( \k^+ \)-Borel hierarchy on \( X \) is increasing). Then, we can write 
\[
\bigcap_{i<\l} A_i= \bigcap_{i<\l} \left(\bigcup_{j<\k}A_{i,j}\right)= \bigcup_{s \in \pre{\l}{\k} } \left(\bigcap_{i<\l}A_{i,s(i)} \right). 
\]
Since \( \Piii{0}{\b}{\k^+}(X) \) is closed under intersections of size \( \k\) (Remark~\ref{rmk:easyclosure}), we have \(\bigcap_{i<\l}A_{i,s(i)} \in \Piii{0}{\b}{\k^+}( X) \) for all \(s \in \pre{\l}{\k}\). 
By choice of \( \l \), we get \(  |\pre{\l}{\k}|  = \k^\l \leq \k^{<\cof(\k)}=\k\), where the last equality follows from \(2^{<\k}=\k\). Thus  \(\bigcap_{i<\l} A_i = \bigcup_{s \in \pre{\l}{\k} } (\bigcap_{i<\l}A_{i,s(i)} )\in \Siii{0}{\a}{\k^+}( X) \), as desired.

Now let \(\a\) be a limit ordinal, and let us prove that \(  \Siii{0}{\a}{\k^+}( X) \) is closed under intersections shorter than \( \cof(\a)\). 
Fix \(\l < \cof(\a)\) and a sequence \(\seq{A_i}{i<\lambda}\) of sets in \(\Siii{0}{\a}{\k^+}( X) \). For every \( i < \l \) there is a sequence \( \seq{A'_{j,\ell}}{\ell < \k} \) of sets in \( \bigcup_{1 \leq \b < \a} \Piii{0}{\b}{\g}(X) \) such that \( A_i = \bigcup_{\ell < \k} A'_{i ,\ell} \). Fix an increasing sequence \( \seq{\b_j}{j < \cof(\a)} \) of ordinals cofinal in \( \a \), and for each \( j < \cof(\a) \) let \( A_{i,j} = \bigcup \{ A'_{i,\ell} \mid \ell < \k, A'_{i,\ell} \in \Piii{0}{\b_j}{\g}(X) \}\). Then \( A_{i,j} \in \Siii{0}{\b_j+1}{\g}(X) \subseteq \Piii{0}{\b_j+2}{\g}(X) \), and \( A_i = \bigcup_{j < \cof(\a)} A_{i,j} \).
By the same computation we used before, we can write 
\[
\bigcap_{i<\l} A_i= \bigcap_{i<\l} \left(\bigcup_{j<\cof(\a)}A_{i,j} \right)= \bigcup_{s \in \pre{\l}{\cof(\a)} } \left(\bigcap_{i<\l}A_{i,s(i)} \right).
\]
Fix any \( s \in \pre{\l}{\cof(\a)}\), and let \( \bar \beta  = \sup \{ \beta_{s(i)} + 2 \mid i < \l \} \). Since \( \l < \cof(\a) \), we have \( \bar \beta < \a \), and \( \bigcap_{i<\l}A_{i,s(i)} \in \Piii{0}{\bar \beta}{\g}(X) \subseteq \bigcup_{1 \leq \b < \a } \Piii{0}{\b}{\g}(X) \) because \( \Piii{0}{\bar \beta}{\g}(X) \) is closed under intersections of length \( \k \). It is thus enough to show that \( \cof(\a) ^\l = | \pre{\l}{\cof(\a)} |  \leq \k \).
If $\kappa$ is regular, then $\lambda < \cof(\alpha) \leq \kappa$, hence%
\footnote{Recall that \( \kappa^{< \kappa} = \kappa \) is equivalent to \( \kappa \) being regular and such that \( 2^{<\kappa} = \kappa \).}
$\cof(\a)^\l \leq \kappa^{<\kappa} = \kappa$ and we are done. If $\kappa$ is singular, then $\l < \cof(\alpha) < \kappa$, thus \( \cof(\a)^\l \leq 2^{\cof(\a)}<\k\) because in the singular case \(2^{<\kappa}=\kappa\) is equivalent to $\kappa$ being strong limit, hence we are done again. 

Finally, if \(\a=1\) then \(\Siii{0}{1}{\k^+}( X)\) is closed under unions of any size, and it is closed under intersections shorter than \(\cof(\k)\) if and only if $X$ is $\cof(\k)$-additive. This concludes our proof.
\end{proof}

When \( \kappa \) is regular, it is not hard to see that many of the closure properties stated in Proposition~\ref{k+_hierarchy_closure} are optimal if \( \ord_{\k^+}(X) > \a \). 

\begin{proposition} \label{prop:optimalityregular} 
Suppose that \( \k \) is \emph{regular}.
 Given any \( 1 \leq \alpha < \ord_{\k^+}(X)\), let \( \widehat{\alpha} = \cof(\k)=\k \) if \( \a \) is a successor ordinal, and \( \widehat{\a} = \cof(\a) \) if \( \a \) is limit. Then:
\begin{enumerate-(1)}
\item \label{prop:optimalityregular-1}
\( \Siii{0}{\a}{\k^+}(X) \) is neither closed under complements, nor under intersections of size \( \widehat{\a} \);
\item \label{prop:optimalityregular-2}
\( \Piii{0}{\a}{\k^+}(X) \) is neither closed under complements, nor under unions of size \( \widehat{\a} \);
\item \label{prop:optimalityregular-3}
if \( \a > 1 \), then \( \Deee{0}{\a}{\k^+}(X) \) is not closed under unions or intersections of size \( \widehat{\a} \), and the same is true for \( \a = 1 \) if \( X \subseteq \pre{\k}{2}\).
\end{enumerate-(1)}
\end{proposition}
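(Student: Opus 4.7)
The strategy is uniform across the three items: assume the stated closure property fails to be optimal (i.e., the larger closure in question holds), and derive $\ord_{\k^+}(X)\le\a$, contradicting $\a<\ord_{\k^+}(X)$. The main tools are Proposition~\ref{prop:conditions_collapse} (especially items \ref{prop:conditions_collapse-6} and \ref{prop:conditions_collapse-10}, applicable because $\k^+$ is regular), the increasingness of the $\k^+$-Borel hierarchy on $X$ (Lemma~\ref{lem:increasing}), and the inclusions from Remark~\ref{rmk:always_true_inclusions}.

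For \ref{prop:optimalityregular-1}: closure of $\Siii{0}{\a}{\k^+}(X)$ under complements would give $\Siii{0}{\a}{\k^+}(X)=\Piii{0}{\a}{\k^+}(X)$, contradicting $\a<\ord_{\k^+}(X)$ via Proposition~\ref{prop:conditions_collapse}\ref{prop:conditions_collapse-10}. Suppose instead $\Siii{0}{\a}{\k^+}(X)$ is closed under intersections of size $\widehat{\a}$. If $\a$ is a successor, then $\widehat{\a}=\k$; combined with the closure under $\k$-unions already granted by Proposition~\ref{k+_hierarchy_closure}\ref{k+_hierarchy_closure-1}, this makes $\Siii{0}{\a}{\k^+}(X)$ closed under all intersections of length $<\k^+$, and Proposition~\ref{prop:conditions_collapse}\ref{prop:conditions_collapse-6} yields the collapse. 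If $\a$ is limit, fix a strictly increasing sequence $\langle\g_j\mid j<\cof(\a)\rangle$ cofinal in $\a$. Any $A\in\Piii{0}{\a}{\k^+}(X)$ writes as $A=\bigcap_{\ell<\k}A_\ell$ with $A_\ell\in\Siii{0}{\b_\ell}{\k^+}(X)$ for some $\b_\ell<\a$; setting $C_j=\bigcap\{A_\ell\mid\ell<\k,\b_\ell<\g_j\}$, one has $C_j\in\Piii{0}{\g_j}{\k^+}(X)\subseteq\Siii{0}{\a}{\k^+}(X)$ and $A=\bigcap_{j<\cof(\a)}C_j$. The assumed closure under $\cof(\a)$-intersections then places $A\in\Siii{0}{\a}{\k^+}(X)$, giving $\Siii{0}{\a}{\k^+}(X)=\Piii{0}{\a}{\k^+}(X)$ and collapse via Proposition~\ref{prop:conditions_collapse}\ref{prop:conditions_collapse-10}.

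Item \ref{prop:optimalityregular-2} follows from \ref{prop:optimalityregular-1} by complementation. For \ref{prop:optimalityregular-3} with $\a>1$: assume $\Deee{0}{\a}{\k^+}(X)$ is closed under intersections of size $\widehat{\a}$. The very same decomposition as above (in the successor case $\a=\b+1$ each $A\in\Piii{0}{\a}{\k^+}(X)$ is a $\k$-intersection of sets in $\Siii{0}{\b}{\k^+}(X)\subseteq\Deee{0}{\a}{\k^+}(X)$; in the limit case, the $C_j$ above lie in $\Piii{0}{\g_j}{\k^+}(X)\subseteq\Deee{0}{\a}{\k^+}(X)$) gives $\Piii{0}{\a}{\k^+}(X)\subseteq\Deee{0}{\a}{\k^+}(X)\subseteq\Siii{0}{\a}{\k^+}(X)$, hence selfduality and collapse. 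Closure under $\widehat{\a}$-unions is dual. For $\a=1$ with $X\subseteq\pre{\k}{2}$: since $\ord_{\k^+}(X)>1$, $X$ cannot be discrete (in a discrete space every subset is clopen, so $\ord_{\k^+}(X)=1$); thus there is a non-isolated $x\in X$, and $\{x\}=\bigcap_{\b<\k}([x\restriction\b]\cap X)$ is a $\k$-intersection of clopens of $X$ which is closed but not open, hence not clopen, refuting closure of $\Deee{0}{1}{\k^+}(X)$ under $\k$-intersections. Closure under $\k$-unions fails dually, since clopens of a zero-dimensional space form a Boolean algebra.

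The main obstacle is purely bookkeeping in the limit case of \ref{prop:optimalityregular-1} and \ref{prop:optimalityregular-3}: the canonical $\k$-indexed presentation of a $\Piii{0}{\a}{\k^+}$-set must be coarsened to a $\cof(\a)$-indexed intersection via a cofinal sequence, which is essentially the same regrouping trick already used in the proof of Proposition~\ref{k+_hierarchy_closure}\ref{k+_hierarchy_closure-1}.
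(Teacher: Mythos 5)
Your overall strategy --- assume the extra closure and derive $\ord_{\k^+}(X)\le\a$ via Proposition~\ref{prop:conditions_collapse} --- is exactly the paper's, and for $\a\ge 2$ your argument is correct. Your treatment of the limit case is a mild variant: you decompose each $\Piii{0}{\a}{\k^+}$ set as a $\cof(\a)$-indexed intersection of $\Siii{0}{\a}{\k^+}$ sets and conclude selfduality, whereas the paper decomposes each $\Siii{0}{\a}{\k^+}$ set as a $\cof(\a)$-indexed union of $\Deee{0}{\a}{\k^+}$ sets and reduces everything to the non-closure of $\Deee{0}{\a}{\k^+}$ under $\widehat{\a}$-unions; the two regroupings are dual and buy the same thing. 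Your handling of item~\ref{prop:optimalityregular-3} for $\a=1$ (a non-isolated point whose singleton is a $\k$-intersection of relative clopens) also matches the paper's argument in substance.

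The one genuine gap is the case $\a=1$ of item~\ref{prop:optimalityregular-1} (and hence~\ref{prop:optimalityregular-2}). There you invoke Proposition~\ref{prop:conditions_collapse}\ref{prop:conditions_collapse-6}, but that item carries the hypothesis $\Siii{0}{1}{\k^+}(X)\cup\Piii{0}{1}{\k^+}(X)\subseteq\Siii{0}{\a}{\k^+}(X)$, which for $\a=1$ amounts to every closed set being open and is not available; the paper explicitly notes this hypothesis is only automatic for $\a\ge 2$. So the step ``closure under $\k$-intersections plus closure under $\k$-unions yields collapse via~\ref{prop:conditions_collapse-6}'' does not go through at $\a=1$ as written. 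The fix is short and is what the paper does: since $X$ is Hausdorff (indeed $T_1$) of weight at most $\k$, every singleton is the intersection of the at most $\k$-many basic open sets containing it, so closure of $\Siii{0}{1}{\k^+}(X)$ under $\k$-sized intersections would make every singleton open, forcing $X$ to be discrete and contradicting $\ord_{\k^+}(X)>1$. With that one sentence added, your proof is complete.
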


Notice that part~\ref{prop:optimalityregular-3} may fail if \( \a = 1\): if e.g.\ the space \( X \) is connected, then \( \Deee{0}{1}{\k^+}(X) = \{ \emptyset, X \}  \) is closed under arbitrary unions and intersections. 

\begin{proof}
 Suppose first that \( 1 \leq \a < \k^+ \) is such that \( \widehat{\a} = \k \).
If \( \Siii{0}{\a}{\k^+}(X) \) (equivalently: \( \Piii{0}{\a}{\k^+}(X) \)) were closed under complements, or if \( \a > 1 \) (so that \( \Siii{0}{1}{\k^+}(X) \cup \Piii{0}{1}{\k^+}(X) \subseteq \Siii{0}{\a}{\k^+}(X) \) because the \( \k^+ \)-Borel hierarchy on \( X \) is increasing) and either \( \Siii{0}{\a}{\k^+}(X) \) were closed under intersections of size \( \k \) (equivalently: \( \Piii{0}{\a}{\k^+}(X) \) were closed under unions of size \( \k \)), or \( \Deee{0}{\a}{\k^+}(X) \) were closed under either unions or intersections of size \( \k \), then \( \ord_{\k^+}(X) \leq \a \) by Proposition~\ref{prop:conditions_collapse}, against our assumptions. 
Next, consider the case \( \a = 1 \). If \( \Siii{0}{\a}{\k^+}(X) \) were closed under intersections of size \( \k \) (equivalently: \( \Piii{0}{\a}{\k^+}(X) \) were closed under unions of size \( \k \)), then each singleton \( \{ x \} \subseteq X \) would be open because \( \weight(X) \leq \k \); thus \( X \) would be discrete, against \( \a = 1 < \ord_{\k^+}(X) \).
Now further assume that \( X \subseteq \pre{\k}{2}\). If \( \Deee{0}{1}{\k^+}(X) \) were closed under intersections (equivalently: unions) of size \( \k \), then each singleton \( \{ x \} \subseteq X \) would be clopen because \( X \) has a clopen basis of size at most \( \k \); this again implies that \( X \) is discrete, contradicting \( \a = 1 < \ord_{\k^+}(X) \).
This concludes the proof when \( \a \) is such that \( \widehat{\a} = \k \).

Assume now that \( 1 \leq  \a  < \ord_{\k^+}(X) \) is limit and \( \widehat{\a} =  \cof(\a) < \k \).  
The fact that \( \Siii{0}{\a}{\k^+}(X) \) and \( \Piii{0}{\a}{\k^+}(X) \) are not closed under complements follows again from Proposition~\ref{prop:conditions_collapse}.
As for the other closure properties, notice that \( \Piii{0}{\a}{\k^+}(X) \) is closed under unions of size \( \widehat{\a} \) if and only if \( \Siii{0}{\a}{\k^+}(X) \) is closed under under intersections of size \( \widehat{\a} \).  Also, either condition implies that \( \Deee{0}{\a}{\k^+}(X) \) is closed under unions of size \( \widehat{\a} \), since \( \Siii{0}{\a}{\k^+}(X) \) is always closed under unions of size \( \k \) by Proposition~\ref{k+_hierarchy_closure}\ref{k+_hierarchy_closure-1}. 
Thus, it is enough to show that \( \Deee{0}{\a}{\k^+}(X) \) is not closed under unions of size \( \widehat{\a} \).
Arguing as in the limit case of the proof of Proposition~\ref{k+_hierarchy_closure}, we get that every \( A \in \Siii{0}{\a}{\k^+}(X) \) can be written as \( A = \bigcup_{i < \widehat{\a}} A_i \) with \( A_i  \in \bigcup_{1 \leq \beta < \alpha} \Piii{0}{\b}{\k^+}(X) \subseteq \Deee{0}{\a}{\k^+}(X) \).
Thus we would have that if \( \Deee{0}{\a}{\k^+}(X) \) were closed under unions of size \( \widehat{\a} \), then \( \Siii{0}{\a}{\k^+}(X) = \Deee{0}{\a}{\k^+}(X) \), which again contradicts \( \a < \ord_{\k^+}(X) \) by Proposition~\ref{prop:conditions_collapse}.
\end{proof}

\begin{remark} \label{rmk:optimalityregular}
One may wonder if the closure under \( \kappa \)-sized unions of \( \Siii{0}{\a}{\k^+}(X) \) (equivalently: the closure under \( \k \)-sized intersections of \( \Piii{0}{\a}{\k^+}(X) \)) is optimal as well when \( \k \) is regular and \( 1 < \a <  \ord_{\k^+}(X) \). 
 This is the case if e.g.\ \( \a = \b+1 \) is a successor ordinal and \( \k \) is a regular cardinal such that \( 2^{\k} = \k^+ \). Indeed, every \( A \in \Piii{0}{\a+1}{\k^+}(X) \) can be written as \( A = \bigcap_{i < \k} \bigcup_{j < \k} A_{i,j} \) with \( A_{i,j} \in \Piii{0}{\b}{\k^+}(X) \), and hence
\( A = \bigcup_{s \in \pre{\k}{\k}} \bigcap_{i < \k} A_{i,s(i)} \) by the usual computation.
Since \( \Piii{0}{\b}{\k^+}(X) \) is closed under \( \k \)-sized intersections (Remark~\ref{rmk:easyclosure}), this means that every \( A \in \Piii{0}{\a+1}{\k^+}(X) \) is a union of \( 2^\k \)-many \( \Piii{0}{\b}{\k^+}(X) \) sets, hence if \( \Siii{0}{\a}{\k^+}(X) \) were closed under unions of size \( \k^+ = 2^\k \) we would have \( \Piii{0}{\a+1}{\k^+}(X) = \Siii{0}{\a}{\k^+}(X) \), contradicting \( \ord_{\k^+}(X) > \a \).
\end{remark}

Proposition~\ref{prop:optimalityregular} remains true also when \( \k \) is singular, but the argument is more delicate and is postponed to Corollary~\ref{cor_hier_clos}, where it will be derived from an analogous result concerning the \( \k \)-Borel hierarchy.

\begin{proposition} \label{prop:proper_for_k+}
 Suppose that \( 1 \leq \a \leq \ord_{\k^+}(X) \), and that \( \a \neq \ord_{\k^+}(X) \) if either \( \ord_{\k^+}(X) = \k^+ \) or \( \ord_{\k^+}(X) = 2 \). Then
\begin{equation}\label{eq:properness}
\bigcup_{1 \leq \b < \a} \left( \Siii{0}{\b}{\k^+}(X) \cup \Piii{0}{\b}{\k^+}(X) \right) \subsetneq \Deee{0}{\a}{\k^+}(X). 
\end{equation}
Moreover, \eqref{eq:properness} holds for \( \a = \ord_{\k^+}(X) = 2 \) as well if \( X \) does not have exactly one non-isolated point.
\end{proposition}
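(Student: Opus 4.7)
The inclusion $\subseteq$ in~\eqref{eq:properness} is automatic: for $\a = 1$ the indexed union is empty, and for $\a \ge 2$ the $\k^+$-Borel hierarchy on $X$ is increasing (Lemma~\ref{lem:increasing}), so $\Siii{0}{\b}{\k^+}(X) \cup \Piii{0}{\b}{\k^+}(X) \subseteq \Deee{0}{\a}{\k^+}(X)$ for every $\b < \a$. I will establish the strict inclusion in three cases.

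\emph{Case $\a = 1$.} The LHS of~\eqref{eq:properness} is the empty collection of sets, while $\emptyset \in \Deee{0}{1}{\k^+}(X)$; the strict inclusion is thus trivial.

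\emph{Case $\a = 2$.} I apply Lemma~\ref{lem:door_spaces} with $\g = \k^+$, which asserts that $\Siii{0}{1}{\k^+}(X) \cup \Piii{0}{1}{\k^+}(X) = \Deee{0}{2}{\k^+}(X)$ if and only if $X$ has at most one non-isolated point. Under the hypotheses of the proposition this pathology is excluded. If $\a = 2 < \ord_{\k^+}(X)$, a $X$ with at most one non-isolated point would give $\Bor{\k^+}(X) = \powset(X) = \Siii{0}{1}{\k^+}(X) \cup \Piii{0}{1}{\k^+}(X)$, hence $\ord_{\k^+}(X) \le 2$, contradicting $\a < \ord_{\k^+}(X)$. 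If $\a = \ord_{\k^+}(X) = 2$, the ``moreover'' assumption rules out exactly one non-isolated point, and a discrete $X$ is also ruled out because then $\ord_{\k^+}(X) = 1 \ne 2$. In either subcase, $X$ has at least two non-isolated points, and Lemma~\ref{lem:door_spaces} delivers a witness in $\Deee{0}{2}{\k^+}(X) \setminus (\Siii{0}{1}{\k^+}(X) \cup \Piii{0}{1}{\k^+}(X))$.

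\emph{Case $\a \ge 3$.} Since $\a \le \ord_{\k^+}(X)$ and $\k^+$ is regular (hence not singular), Proposition~\ref{prop:conditions_collapse}\ref{prop:conditions_collapse-10} yields $\Siii{0}{\b}{\k^+}(X) \ne \Piii{0}{\b}{\k^+}(X)$ for every $1 \le \b < \a$. Using the $\pre{\k}{2}$-universal sets furnished by Corollary~\ref{cor:universalinborelhierarchy} and a standard diagonal argument, I can, inside any subspace of $X$ whose $\k^+$-Borel hierarchy has order exceeding $\b$, extract sets of complexity exactly $\b$ on each side (one in $\Siii{0}{\b}{\k^+} \setminus \Piii{0}{\b}{\k^+}$, one in $\Piii{0}{\b}{\k^+} \setminus \Siii{0}{\b}{\k^+}$). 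Proposition~\ref{prop:CB_for_hierarchy_collapse_successor_case}, possibly iterated, is then used to partition $X$ into disjoint definable pieces on which the hierarchy retains sufficient order. When $\a = \b + 1$ is a successor, I place a $\Siii{0}{\b}{\k^+} \setminus \Piii{0}{\b}{\k^+}$ witness on one piece and a $\Piii{0}{\b}{\k^+} \setminus \Siii{0}{\b}{\k^+}$ witness on a disjoint one, and take their union $C$: hereditariness of the pointclasses $\Siii{0}{\b}{\k^+}$ and $\Piii{0}{\b}{\k^+}$ forces $C \notin \Siii{0}{\b}{\k^+}(X) \cup \Piii{0}{\b}{\k^+}(X)$, while the closure properties of Proposition~\ref{k+_hierarchy_closure} place $C$ in $\Deee{0}{\a}{\k^+}(X)$. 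For $\a$ a limit, the construction is iterated along a cofinal sequence $\seq{\b_i}{i < \cof(\a)}$ using a $\cof(\a)$-fold decomposition and the appropriate closure properties of Proposition~\ref{k+_hierarchy_closure}.

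\emph{Main obstacle.} The technical heart of the argument is the construction of the decomposition into disjoint pieces of $X$ on which the hierarchy retains orders exceeding the relevant $\b$: since $X$ need not be zero-dimensional, clopen partitions are generally unavailable, so one must rely heavily on Proposition~\ref{prop:CB_for_hierarchy_collapse_successor_case} (iterated in the limit case) to produce the needed pieces, and carefully verify via hereditariness that the ``glued'' witness simultaneously avoids every lower class $\Siii{0}{\g}{\k^+}(X) \cup \Piii{0}{\g}{\k^+}(X)$ while remaining in $\Deee{0}{\a}{\k^+}(X)$.
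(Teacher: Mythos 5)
Your treatment of the cases \( \a=1 \) and \( \a=2 \) is correct, and for \( \a=2 \) it is in fact slightly cleaner than the paper's: invoking Lemma~\ref{lem:door_spaces} directly for both subcases (\( \a=2<\ord_{\k^+}(X) \) and \( \a=2=\ord_{\k^+}(X) \)) avoids having to run the successor machinery with \( \a'=1 \). The successor case \( \a=\b+1\geq 3 \) also follows the paper's route: Proposition~\ref{prop:CB_for_hierarchy_collapse_successor_case} produces a closed kernel \( P \) with \( \ord_{\k^+}(B\cap P)>\b \) for every basic open \( B \) meeting \( P \), one picks two disjoint such pieces around two distinct points of the non-discrete set \( P \), extracts a \( \Siii{0}{\b}{\k^+}\setminus\Piii{0}{\b}{\k^+} \) witness in one and a \( \Piii{0}{\b}{\k^+}\setminus\Siii{0}{\b}{\k^+} \) witness in the other (non-selfduality coming from Proposition~\ref{prop:conditions_collapse}, not from universal sets, which are a red herring here), and glues. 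This part is fine modulo the details you defer.

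The genuine gap is the limit case. Iterating Proposition~\ref{prop:CB_for_hierarchy_collapse_successor_case} along a cofinal sequence \( \seq{\a_j}{j<\cof(\a)} \) produces a \emph{decreasing} chain of closed kernels \( P_0\supseteq P_1\supseteq\cdots \), and the natural candidates for your "disjoint definable pieces" are the successive differences \( P_j\setminus P_{j+1} \). But these can all be empty: this happens exactly when the set \( \{\ord_{\k^+}(B)\mid B\in\B,\ \ord_{\k^+}(B)<\a\} \) is bounded by some \( \a'<\a \), in which case the iteration stabilizes at the first stage past \( \a' \) and yields no \( \cof(\a) \)-sized disjoint family at all. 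So "possibly iterated" does not deliver the decomposition, and this is precisely the configuration the paper must handle separately (Claim~\ref{claim:newproperclaim}): in the bounded case it first replaces \( X \) by a subspace of \( \pre{\k}{2} \) via Proposition~\ref{prop:embed_T_0_spaces_into_Cantor} so that a clopen basis is available, and then finds \( \cof(\a) \)-many pairwise disjoint relatively clopen pieces of the single kernel \( P \), each automatically of order \( \geq\a \), using either a convergent \( \k \)-sequence of points (for \( \k \) regular) or a counting argument exploiting that \( \k \) is strong limit (for \( \k \) singular); only in the complementary unbounded case does a difference construction along basic open sets with strictly increasing orders work. Without this case split — and without the preliminary reduction to a zero-dimensional space, which you explicitly forgo — the limit case of your argument does not go through.
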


\begin{proof}  
If \( \a = 1 \), then~\eqref{eq:properness} reduces to \( \Deee{0}{1}{\k^+}(X) \neq \emptyset\), which is true because e.g.\ \( X \in \Deee{0}{1}{\k^+}(X) \).
If instead \( \a = 2 = \ord_{\k^+}(X) \), then \( X \) is not discrete by \( \ord_{\k^+}(X) > 1 \), so under our additional assumption on \( X \) for this specific case we conclude that \( X \) must have at least two non-isolated point, and~\eqref{eq:properness} is satisfied by Lemma~\ref{lem:door_spaces}.
Therefore from this point onward we can assume that \( \ord_{\k^+}(X) \geq 3 \) and \( \a \geq 2 \).
    Recall also that $\Siii{0}{\a}{\k^+}(X)$ is always closed under unions of size $\kappa$ (Remark~\ref{rmk:easyclosure}), and that the $\kappa^+$-Borel hierarchy is increasing on $X$ (Lemma~\ref{lem:increasing}).

Assume first that \( \a = \a'+1 \) is a successor ordinal: then it is enough to show that there is \( A \in \Deee{0}{\a}{\k^+}(X) \setminus \left( \Siii{0}{\a'}{\k^+}(X) \cup \Piii{0}{\a'}{\k^+}(X) \right) \). 
Let \( P \subseteq X \) be the closed set obtained by applying Proposition~\ref{prop:CB_for_hierarchy_collapse_successor_case} with \( \g = \k^+ = \g^*  \), \( \l = \k \), \( \b = 1 < \a' \), and \( \B \) a basis for \( X \) of size at most \( \k \). The space \( P \) is nonempty because \( \ord_{\k^+}(X) > \a' \), and it is not discrete because \( \ord_{\k^+}(P) = \ord_{\k^+}(X) > 1 \). 
Pick two distinct points \( x_0,x_1 \in P \), and let \( B_0,B_1 \in \B \) be disjoint open neighborhoods of \( x_0 \) and \( x_1 \), respectively, so that \( B_0 \cap P \) and \( B_1 \cap P \) are both nonempty. (Here we use that \( P \) is not a singleton because it is not discrete.) Since \( \ord_{\k^+}(B_i \cap P) > \a' \) for both \(i = 0\) and \( i = 1 \), by Proposition~\ref{prop:conditions_collapse} there are \( A_0 \in \Siii{0}{\a'}{\k^+}(B_0 \cap P) \setminus \Piii{0}{\a'}{\k^+}(B_0 \cap P) \) and \( A_1 \in \Piii{0}{\a'}{\k^+}(B_1 \cap P) \setminus \Siii{0}{\a'}{\k^+}(B_1 \cap P) \). Then \( A = A_0 \cup A_1 \) is as desired.
Indeed, since \( \Siii{0}{\a'}{\k^+} \) and \( \Piii{0}{\a'}{\k^+} \) are hereditary, there are \( A'_0 \in \Siii{0}{\a'}{\k^+}(X) \) and \( A'_1 \in \Piii{0}{\a'}{\k^+}(X) \) such that \( A_0 = A'_0 \cap (B_0 \cap P) \) and \( A_1 = A'_1 \cap (B_1 \cap P) \). Since \( \Deee{0}{\a}{\k^+}(X) \) is at least an \( \omega \)-algebra by 
Remark~\ref{rmk:closureunderfinite operations},
 we get \( A \in \Deee{0}{\a}{\k^+}(X) \).
On the other hand, if \( A \in \Siii{0}{\a'}{\k^+}(X) \), then \( A_1 = A \cap (B_1 \cap P) \in \Siii{0}{\a'}{\k^+}(B_1 \cap P) \), contradicting the choice of \( A_1 \); similarly, if \( A \in \Piii{0}{\a'}{\k^+}(X) \), then \( A_0 = A \cap (B_0 \cap P) \in \Piii{0}{\a'}{\k^+}(B_0 \cap P) \), contradicting the choice of \( A_0 \).

Suppose now that \( \a \) is limit. 
In this case, we can assume \( X \subseteq \pre{\k}{2}\), by Proposition~\ref{prop:embed_T_0_spaces_into_Cantor} (applied with \( \g = \k^+ \) and \( \l = \k \), so that also \( \mu = \k \)).
We let \( \B = \{ \clopen{s} \cap X \mid s \in \pre{<\k}{2} \} \) be the canonical basis for $X$, which consists of clopen sets and has size at most $\kappa$. 
Let also $\seq{\a_j}{j<\cof(\a)}$ be an increasing sequence of ordinals cofinal in $\a$ such that \( \a_0 > 2 \). 

\begin{claim} \label{claim:newproperclaim}
There is a family \( \seq{C_j}{j < \cof(\a)} \) of nonempty pairwise disjoint closed subsets of \( X \) such that \( \ord_{\k^+}(C_j) > \a_j \) for each \( j < \cof(\a) \).
\end{claim}

\begin{proof}[Proof of the Claim]
We distinguish two cases. 
Suppose first there is an ordinal $1 \leq \a'<\a$ such that \(  \ord_{\k^+}(B)< \a \) implies \(  \ord_{\k^+}(B)\leq \a' \) for every $B\in \B$. 
Let \( P \subseteq X \) be the closed set obtained by applying Proposition~\ref{prop:CB_for_hierarchy_collapse_successor_case} with \( \g = \k^+ = \g^*  \), \( \l = \k \), \( \b = 1 < \a' \), and \( \B \)  the canonical basis for \( X \). Since \( \ord_{\k^+}(X) > \a' \), we have \( P \neq \emptyset \) and \( \ord_{\k^+}(P) = \ord_{\k^+}(X)\geq \a\). 
In particular, $\ord_{\kappa^+}(P) \geq 3$, hence $|P|>\kappa$ and \( P \) is not discrete.
Notice also that for every \( s \in \pre{<\k}{2}\), if \( \clopen{s} \cap P \neq \emptyset \) then \( \ord_{\k^+}(\clopen{s} \cap P) = \ord_{\k^+}(\clopen{s} \cap X) > \a' \), and hence \( \ord_{\k^+}(\clopen{s} \cap P)  \geq \a > \a_j \) (for every \( j < \cof(\a) \)) by case assumption and choice of \( \a' \). Thus it is enough to set \( C_j = \clopen{s_j} \cap P \), for a suitable choice of \( s_j \in \pre{<\k}{2}\). For this we distinguish two subcases.

If $\kappa$ is regular, then  we can pick a limit point \( x \in P \) and a sequence \( \seq{x_j}{j < \k} \) of points from \( P \setminus \{ x \} \) converging to \( x \) because \( P \) is not discrete. For every \( j < \k \), let \( s_j = x_j \restriction \b_j \) for \( \b_j < \k  \) smallest such that \( x \restriction \b_j \neq x_j \restriction \b_j \), and notice that \( x_j \) witnesses \( \clopen{s_j} \cap P \neq \emptyset \). By regularity of \( \k \), without loss of generality we may assume that \( \b_i \neq \b_j \) for distinct \( i,j < \k \), so that \( \clopen{s_i} \cap \clopen{s_j} = \emptyset\). Then the sequence \( \seq{C_j}{j < \cof(\a)} \) with \( C_j = \clopen{s_j} \cap P  \) is as desired. 
If instead $\kappa$ is singular, then it is strong limit by \( 2^{< \k} = \k \), and moreover $\cof(\a)<\k$ because $\a<\k^+$. Let $\seq{\k_i}{i<\cof(\k)}$ be an increasing  sequence of ordinals cofinal in $\k$, and for each \( i < \cof(\k) \) let \( \B_i = \{ \clopen{s} \cap P \mid s \in \pre{\k_i}{2} \wedge \clopen{s} \cap P \neq \emptyset \}\). 
The elements of a given \( \B_i \) are clearly pairwise disjoint.
If \( |\B_i| < \cof(\a) \) for every \( i < \cof(\k) \), then \( |P| \leq \cof(\a)^{\cof(\k)} < \k \), a contradiction. Therefore it is enough to pick \( i < \cof(\k) \) such that \( |\B_i| \geq \cof(\a) \), and let \( \seq{C_j}{j < \cof(\a)} \) be an enumeration without repetitions of a large enough subfamily of \( \B_i \).

The remaining case is when for every ordinal $\a'<\a$ there is $B\in \B$ such that \(\a'<\ord_{\k^+}(B) < \a \).
For \( j < \cof(\a) \), recursively pick \( B_j \in \B \) such that \( \a_j < \ord_{\k^+}(B_j) < \a \) and \( \ord_{\k^+}(B_j) > \sup_{i < j} \ord_{\k^+}(B_i) \). Since \( \ord_{\k^+}\left( \bigcup_{i < j} B_i \right) = \sup_{i<j} \ord_{\k^+}(B_i) < \ord_{\k^+}(B_j) \), we have that \( C_j = B_j \setminus \bigcup_{i < j} B_i \neq \emptyset \).
Moreover, arguing as in the proof of Proposition~\ref{prop:CB_for_hierarchy_collapse_successor_case} one can check that since both \( \bigcup_{i < j} B_i \) and \(  C_j \) belong to \( \Deee{0}{2}{\k^+}(X) \), then \( \ord_{\k^+}(B_j) \leq \max \left\{ \ord_{\k^+}\left( \bigcup_{i < j} B_i \right), \ord_{\k^+}(C_j), 2 \right\} \).
We conclude that \( \ord_{\k^+}(C_j) = \ord_{\k^+}(B_j) > \a_j \) by \( \ord_{\k^+}(B_j) > \ord_{\k^+}\left( \bigcup_{i < j} B_i \right) \) and  \( \ord_{\k^+}(B_j) > \a_j \geq \a_0 > 2 \). 
 \end{proof}

For each \( j < \cof(\a) \), pick a set \( A_j \in \Siii{0}{\a_j}{\k^+}(C_j) \setminus \Piii{0}{\a_j}{\k^+}(C_j)\): we claim that \( A = \bigcup_{j < \cof(\a)} A_j  \) witnesses~\eqref{eq:properness}.
 To see this, for each \( j < \cof(\a) \) use the fact that \( \Siii{0}{\a_j}{\k^+} \) is hereditary to find \( A'_j \in \Siii{0}{\a_j}{\k^+}(X) \) such that \( A_j = A'_j \cap (\clopen{s_j} \cap P) \). Then
\begin{align*}
A & = \bigcup\nolimits_{j < \cof(\a)} (A'_j \cap C_j), \\
X \setminus A & = \left(X \setminus \bigcup\nolimits_{j < \cof(\a)} C_j \right) \cup \bigcup\nolimits_{j < \cof(\a)} \left(C_j \setminus A'_j\right),
\end{align*}
hence both \( A \) and \( X \setminus A \) belong to \( \Siii{0}{\a}{\k^+}(X) \) by Proposition~\ref{k+_hierarchy_closure} and \( 2 < \alpha < \k^+ \) (which also entails \( \cof(\a) \leq \k \)), so that \( A \in \Deee{0}{\a}{\k^+}(X) \).
Suppose towards a contradiction that \( A \in \Siii{0}{\b}{\k^+}(X) \cup \Piii{0}{\b}{\k^+}(X) \) for some \( 1 \leq \b < \a \). Let \( j < \cof(\a) \) be such that \( \a_j > \b \), so that \( A \in \Piii{0}{\a_j}{\k^+}(X) \).
Then \( A_j = A \cap C_j \in \Piii{0}{\a_j}{\k^+}(C_j) \), contradicting the choice of \( A_j \).
\end{proof}

A fundamental question to be addressed is whether the \( \k^+\)-Borel hierarchy always needs to be \markdef{proper}, that is, if the inclusions in~\eqref{eq:inclusionswhenincreasing}, when setting \( \g = \k^+ \), are strict for all relevant \(\alpha, \beta<\ord_{\k^+}(X) \).  We are now going to show that this is always the case.

\begin{corollary} \label{cor:proper_for_k+}
For every \( 1 \leq \a < \ord_{\k^+}(X) \), we have that
\begin{enumerate-(1)}
\item\label{cor:proper_for_k+-1} \( \Deee{0}{\a}{\k^+}(X) \subsetneq \Siii{0}{\a}{\k^+}(X) \) (equivalently, \( \Deee{0}{\a}{\k^+}(X) \subsetneq  \Piii{0}{\a}{\k^+}(X)\)), and
\item\label{cor:proper_for_k+-2} \( \bigcup_{1 \leq \b < \a} \left( \Siii{0}{\b}{\k^+}(X) \cup \Piii{0}{\b}{\k^+}(X) \right) \subsetneq \Deee{0}{\a}{\k^+}(X)\).
\end{enumerate-(1)}
\end{corollary}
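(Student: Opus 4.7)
The plan is to deduce both parts of the corollary directly from results already established earlier in the section, with essentially no new work required.

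For part~\ref{cor:proper_for_k+-1}, I would invoke Proposition~\ref{prop:conditions_collapse}, specifically the equivalence between items~\ref{prop:conditions_collapse-1} and~\ref{prop:conditions_collapse-10}, which applies because $\g = \k^+$ is a successor cardinal and in particular not singular. Contrapositively, $\a < \ord_{\k^+}(X)$ forces $\Siii{0}{\a}{\k^+}(X) \neq \Piii{0}{\a}{\k^+}(X)$. Pick any $A$ in one but not the other; without loss of generality $A \in \Siii{0}{\a}{\k^+}(X) \setminus \Piii{0}{\a}{\k^+}(X)$, hence $A \in \Siii{0}{\a}{\k^+}(X) \setminus \Deee{0}{\a}{\k^+}(X)$, giving the strict inclusion $\Deee{0}{\a}{\k^+}(X) \subsetneq \Siii{0}{\a}{\k^+}(X)$. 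The equivalence of the two formulations is then immediate by taking complements: from the same $A$, the set $X \setminus A$ witnesses $\Deee{0}{\a}{\k^+}(X) \subsetneq \Piii{0}{\a}{\k^+}(X)$, and symmetrically.

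For part~\ref{cor:proper_for_k+-2}, I would directly apply Proposition~\ref{prop:proper_for_k+}. That proposition requires $1 \leq \a \leq \ord_{\k^+}(X)$ together with $\a \neq \ord_{\k^+}(X)$ whenever $\ord_{\k^+}(X) \in \{2, \k^+\}$. The hypothesis of our corollary is the strictly stronger $1 \leq \a < \ord_{\k^+}(X)$, so $\a \neq \ord_{\k^+}(X)$ holds unconditionally and no edge case arises. The conclusion of Proposition~\ref{prop:proper_for_k+} is precisely the desired strict inclusion.

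Since both parts follow at once from prior propositions, there is no substantive obstacle in this proof: the main difficulty was absorbed into the proofs of Proposition~\ref{prop:conditions_collapse} and especially Proposition~\ref{prop:proper_for_k+}, whose limit-ordinal case (using the claim producing a disjoint family of nonempty closed sets with large order) does the heavy lifting. The corollary serves as a clean packaging of properness of the $\k^+$-Borel hierarchy below its order $\ord_{\k^+}(X)$, phrased entirely in terms of strict inclusions between successive ambiguous, additive and multiplicative classes.
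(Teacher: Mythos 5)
Your proposal is correct and follows essentially the same route as the paper: part~\ref{cor:proper_for_k+-1} via the non-selfduality equivalence in Proposition~\ref{prop:conditions_collapse} (valid since \( \k^+ \) is regular), and part~\ref{cor:proper_for_k+-2} by citing Proposition~\ref{prop:proper_for_k+}, whose hypotheses are automatically met because \( \a < \ord_{\k^+}(X) \) rules out the exceptional cases \( \a = \ord_{\k^+}(X) \in \{2,\k^+\} \). The paper's write-up additionally spells out the cases \( \ord_{\k^+}(X)=1 \) and \( \a=1 \) and invokes Lemma~\ref{lem:door_spaces} for \( \a>1 \), but as you observe this extra care is not needed since the problematic non-isolated-point clause of Proposition~\ref{prop:proper_for_k+} only concerns \( \a=\ord_{\k^+}(X)=2 \), which cannot occur here.
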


\begin{proof}
Part~\ref{cor:proper_for_k+-1} follows directly from Proposition~\ref{prop:conditions_collapse}, so we only need to prove part~\ref{cor:proper_for_k+-2}.

If $\ord_{\k^+}(X)=1$ there is nothing to prove. 
If \( \ord_{\k^+}(X) > \a = 1 \), then the result follows from the fact that, for example, \( X \in \Deee{0}{1}{\k^+}(X) \). Finally, suppose that \( 1 < \a < \ord_{\k^+}(X) \).
Then \( \ord_{\k^+}(X) \geq 3 \), and hence \( X \) has at least two non-isolated points by Lemma~\ref{lem:door_spaces}. The result then follows from Proposition~\ref{prop:proper_for_k+}.
   \end{proof}

Another natural question related to the properness of the $\kappa^+$-Borel hierarchy is the following.
Suppose that \( 1 \leq  \ord_{\k^+}(X) < \k^+ \). (Examples of spaces with this property will be provided in Section~\ref{sec: alpha-forcing} for finite values of \( \ord_{\k^+}(X) \), and in a follow-up to this paper for infinite values.) Is there any set \( A \in \Deee{0}{\ord_{\k^+}(X)}{\k^+}(X) = \Bor{\k^+}(X) \) that does not belong to lower levels of the \( \k^+ \)-Borel hierarchy on \( X \)?
Lemma~\ref{lem:door_spaces} and Proposition~\ref{prop:proper_for_k+} allow us to answer to this question as well: the answer is affirmative if and only if the space does not have exactly one non-isolated point.

\begin{corollary}\label{cor:proper_for_k+_final_level} 
Suppose that \( \ord_{\k^+}(X) < \k^+ \). Then
the following are equivalent:
\begin{enumerate-(1)}
\item\label{cor:proper_for_k+_final_level-1}  there is $A\in \Deee{0}{\ord_{\kappa^+}(X)}{\k^+}(X)\setminus \bigcup_{1 \leq \b < \ord_{\kappa^+}(X)} \left( \Siii{0}{\b}{\k^+}(X) \cup \Piii{0}{\b}{\k^+}(X) \right)$;
\item\label{cor:proper_for_k+_final_level-2} $X$ does not have exactly one non-isolated point.
\end{enumerate-(1)}
In particular, \ref{cor:proper_for_k+_final_level-1} holds whenever $\ord_{\kappa^+}(X)\neq 2$.
\end{corollary}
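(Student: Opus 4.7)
The plan is to set $\alpha = \ord_{\k^+}(X)$ and recall from Proposition~\ref{prop:conditions_collapse} that $\Deee{0}{\alpha}{\k^+}(X) = \Bor{\k^+}(X)$. Both directions of the equivalence can then be extracted from results already proved in the excerpt, with no new construction required: Proposition~\ref{prop:proper_for_k+} handles $(2) \Rightarrow (1)$, while Lemma~\ref{lem:door_spaces} powers the contrapositive of $(1) \Rightarrow (2)$. The whole issue is really just carefully matching the hypotheses of Proposition~\ref{prop:proper_for_k+}, which has a genuine exception at $\alpha = 2$.

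For $(2) \Rightarrow (1)$, I would split into three subcases according to the value of $\alpha$. If $\alpha = 1$, the union on the right-hand side of (1) is empty, so any nonempty element of $\Deee{0}{1}{\k^+}(X)$ (e.g.\ $X$ itself) trivially witnesses (1). If $\alpha \geq 3$, then $\alpha < \k^+$ together with $\alpha \neq 2$ permits a direct application of Proposition~\ref{prop:proper_for_k+} with $\a = \alpha$, whose conclusion is precisely~\eqref{eq:properness} for this $\a$, i.e.\ (1). Finally, if $\alpha = 2$, hypothesis (2) means that $X$ does not have exactly one non-isolated point, which is exactly the condition enabling the ``moreover'' clause of Proposition~\ref{prop:proper_for_k+} to apply at $\a = \ord_{\k^+}(X) = 2$; again the conclusion is (1).

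For the contrapositive of $(1) \Rightarrow (2)$, suppose that $X$ has exactly one non-isolated point. Since the \(\k^+\)-Borel hierarchy on $X$ is increasing (Lemma~\ref{lem:increasing}), Lemma~\ref{lem:door_spaces} gives $\Siii{0}{1}{\k^+}(X) \cup \Piii{0}{1}{\k^+}(X) = \Bor{\k^+}(X) = \powset(X)$. The space $X$ is not discrete (it has a non-isolated point), so $\alpha \geq 2$; conversely, from $\powset(X) = \Siii{0}{1}{\k^+}(X) \cup \Piii{0}{1}{\k^+}(X) \subseteq \Deee{0}{2}{\k^+}(X)$ (via Lemma~\ref{lem:hierarchy_increasing_above_2} and Remark~\ref{rmk:closureunderfinite operations}) we deduce $\ord_{\k^+}(X) \leq 2$, so $\alpha = 2$. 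But then the union appearing in (1) reduces to $\Siii{0}{1}{\k^+}(X) \cup \Piii{0}{1}{\k^+}(X) = \Bor{\k^+}(X) = \Deee{0}{\alpha}{\k^+}(X)$, and no $A$ can witness (1).

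The final ``in particular'' clause is then immediate: if $\alpha \neq 2$, the contrapositive argument just given shows that $X$ cannot have exactly one non-isolated point, i.e.\ (2) holds, and hence (1) follows from the implication $(2) \Rightarrow (1)$. I do not see any genuine obstacle here; the only delicate point is ensuring the $\alpha = 2$ boundary case is handled both in $(2) \Rightarrow (1)$ (where hypothesis (2) is essential to unlock the ``moreover'' part of Proposition~\ref{prop:proper_for_k+}) and in $(1) \Rightarrow (2)$ (where the Door-space argument forces $\alpha$ to take exactly this value).
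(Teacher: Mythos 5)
Your proof is correct and follows exactly the route the paper intends: the paper states this corollary without a displayed proof, noting only that it follows from Lemma~\ref{lem:door_spaces} and Proposition~\ref{prop:proper_for_k+}, and your argument is precisely the careful case analysis (splitting on $\ord_{\kappa^+}(X)=1$, $=2$, $\geq 3$, and using the Door-space lemma for the converse) that fills in those details. The handling of the $\alpha=2$ boundary case on both sides is the only delicate point, and you treat it correctly.
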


 When \( \a = 2 \), condition~\ref{cor:proper_for_k+_final_level-1} from Corollary~\ref{cor:proper_for_k+_final_level} may fail even for closed subsets \( X \) of \( \pre{\kappa}{2}\). 
For example, let \( X = \{ x \} \cup \{ x_i \mid i < \k \} \), where for every \( i < \k \) we let \( x_i(j) = x(j) \) if \( j\neq i  \) and \( x_i(i) = 1 - x(i) \). Since \( X \) consists of a \( \k \)-sequence of isolated points converging to \( x \), it is easy to see that it is as required.

We next move to the question of when the \( \k^+ \)-Borel hierarchy on \( X \) does not collapse. 
In classical descriptive set theory, the non-collapse of the Borel hierarchy of an uncountable Polish space $X$ follows from the Perfect Set Property and the fact that \( (\omega_1 \text{-}) \boldsymbol{\Sigma}^0_\a(X)\) has a $\pre{\omega}{2}$-universal set for each $1 \leq \alpha< \omega_1$ (see \cite[Section 22A]{KEC95}).
	 	 In the generalized context, the non-collapse of the \( \k^+ \)-Borel hierarchy on \( \pre{\k}{2} \) has been first obtained in~\cite[Proposition 4.19]{AMR19} for an \emph{arbitrary} infinite cardinal \( \k \), i.e.\ without assuming \( 2^{< \k} = \k \). This required different methods, because if the condition \(2^{<\k}=\k\) fails, then it might happen that neither \(\Siii{0}{\a}{\k^+}(\pre{\k}{2})\) nor  \(\Piii{0}{\a}{\k^+}(\pre{\k}{2})\) have a \( \pre{\k}{2} \)-universal set, for any \( 1 \leq \a < \k^+ \) (\cite[Corollary 4.16]{AMR19}).
However, restoring our assumption \( 2^{<\k} = \k \) allows us to follow more closely the classical argument.

\begin{proposition}\label{universal_sigma_pi}
 For each \(1 \leq \a <\k^+\) there are \(\pre{\k}{2}\)-universal sets for both \(\Siii{0}{\a}{\k^+}(X)\) and \(\Piii{0}{\a}{\k^+}(X)\). 
Therefore there are subsets of \( \pre{\k}{2}\) that are \( \k \)-complete for \( \Siii{0}{\a}{\k^+} \) and \( \Piii{0}{\a}{\k^+} \), respectively.
In contrast, there is no \( \pre{\k}{2} \)-universal set for \( \Deee{0}{\a}{\k^+}(\pre{\k}{2}) \) or \( \Bor{\k^+}(\pre{\k}{2}) \). 

Moreover, \( \pre{\k}{2}\) can systematically be replaced by \( X \) in all the above statements if \( \pre{\k}{2}\) embeds into \( X \).
\end{proposition}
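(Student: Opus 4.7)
The plan is to derive each of the three claims by a direct appeal to the machinery already set up in Section~\ref{sec:setup} and Section~\ref{subsec:universalsets}, with only minor additional bookkeeping required when transferring results to an arbitrary $X$. First, for the existence of $\pre{\kappa}{2}$-universal sets for $\Siii{0}{\alpha}{\kappa^+}(X)$ and $\Piii{0}{\alpha}{\kappa^+}(X)$, I would invoke Corollary~\ref{cor:universalinborelhierarchy} with $\lambda = \kappa$ and $\gamma = \kappa^+$: since $\kappa^+$ is a successor cardinal (hence not a limit cardinal) and $\gamma = \kappa^+ \leq \lambda^+$, and since $\weight(X) \leq \kappa$ by the standing assumption of this section, the corollary directly produces the required universal sets for each $1 \leq \alpha < \kappa^+ = \gamma^*$. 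Complementing a $\pre{\kappa}{2}$-universal set for $\Siii{0}{\alpha}{\kappa^+}(X)$ yields one for $\Piii{0}{\alpha}{\kappa^+}(X)$, as already observed in Section~\ref{sec:setup}.

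For the existence of $\kappa$-complete subsets of $\pre{\kappa}{2}$, I would apply Lemma~\ref{univ_compl} to the pointclasses $\Siii{0}{\alpha}{\kappa^+}$ and $\Piii{0}{\alpha}{\kappa^+}$ (which are hereditary and boldface), taking $X = Y = \pre{\kappa}{2}$, the identity as the embedding $\pre{\kappa}{2} \to \pre{\kappa}{2}$, and the standard homeomorphism $\pre{\kappa}{2} \times \pre{\kappa}{2} \cong \pre{\kappa}{2}$ obtained by pairing coordinates. The non-existence of a $\pre{\kappa}{2}$-universal set for $\Deee{0}{\alpha}{\kappa^+}(\pre{\kappa}{2})$ or $\Bor{\kappa^+}(\pre{\kappa}{2})$ then follows from Lemma~\ref{universal_selfdual}, since both of these pointclasses are boldface and selfdual on every space by construction (the first by definition, the second because $\Bor{\kappa^+}$ is a $\kappa^+$-algebra and is therefore closed under complements).

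Finally, for the moreover part, fix an embedding $f \colon \pre{\kappa}{2} \to X$. The existence of $X$-universal sets for $\Siii{0}{\alpha}{\kappa^+}(X)$ and $\Piii{0}{\alpha}{\kappa^+}(X)$ follows from Lemma~\ref{lemma_universal} applied to the $\pre{\kappa}{2}$-universal sets produced above. To obtain a subset of $X$ that is $\kappa$-complete for $\Siii{0}{\alpha}{\kappa^+}$, I would take the $\kappa$-complete set $A \subseteq \pre{\kappa}{2}$ from the previous paragraph and use heredity to pick $B \in \Siii{0}{\alpha}{\kappa^+}(X)$ with $B \cap f[\pre{\kappa}{2}] = f[A]$; since $f$ is injective, $f^{-1}(B) = A$, so for any $C \in \Siii{0}{\alpha}{\kappa^+}(\pre{\kappa}{2})$ realized as $g^{-1}(A)$ for a continuous $g \colon \pre{\kappa}{2} \to \pre{\kappa}{2}$, the continuous composition $f \circ g \colon \pre{\kappa}{2} \to X$ satisfies $(f \circ g)^{-1}(B) = C$, witnessing the $\kappa$-completeness of $B$; the argument for $\Piii{0}{\alpha}{\kappa^+}$ is identical. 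The non-existence of $X$-universal sets for $\Deee{0}{\alpha}{\kappa^+}(X)$ and $\Bor{\kappa^+}(X)$ once again follows from Lemma~\ref{universal_selfdual}, without even needing the embedding hypothesis. No step poses a serious obstacle, as all the heavy lifting is already contained in the lemmas of Section~\ref{subsec:universalsets}; the only point requiring a touch of care is verifying that heredity combined with injectivity of $f$ is enough to pull $\kappa$-completeness back across the embedding.
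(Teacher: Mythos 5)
Your proposal is correct and follows essentially the same route as the paper: Corollary~\ref{cor:universalinborelhierarchy} with $\lambda=\kappa$, $\gamma=\kappa^+$ for the universal sets, Lemma~\ref{univ_compl} (via $\pre{\kappa}{2}\times\pre{\kappa}{2}\cong\pre{\kappa}{2}$) for completeness, Lemma~\ref{universal_selfdual} for the selfdual classes, and Lemma~\ref{lemma_universal} plus heredity of $\Siii{0}{\a}{\k^+}$ for the transfer to $X$. The only difference is that you spell out in more detail the verification that heredity and injectivity of $f$ suffice to transport $\kappa$-completeness, which the paper leaves as a one-line remark.
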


\begin{proof}
For the first part, apply Corollary~\ref{cor:universalinborelhierarchy} with \( \l = \k \) and \( \g = \k^+\) to get the desired \( \pre{\k}{2}\)-universal sets, and when \( X = \pre{\k}{2}\) apply Lemma~\ref{univ_compl}  to obtain the existence of the complete sets, or Lemma~\ref{universal_selfdual} to obtain the non-existence of \( \pre{\k}{2} \)-universal sets for the selfdual classes (relatively to the space \( \pre{\k}{2}\) itself). 

Assume now that there is an embedding \( f \colon \pre{\k}{2} \to X \). Then Lemma~\ref{lemma_universal} and the first part ensure that there are \( X \)-universal sets for both \(\Siii{0}{\a}{\k^+}(X)\) and \(\Piii{0}{\a}{\k^+}(X)\). 
As for complete sets, let \( A' \subseteq \pre{\k}{2}\) be \( \k \)-complete for \( \Siii{0}{\a}{\k^+} \), and let \( A \in \Siii{0}{\a}{\k^+}(X) \) be such that \( A \cap f[\pre{\k}{2}] = f[A'] \): then \( A \) is \( \k \)-complete for \( \Siii{0}{\a}{\k^+} \) as well.
The case of \( \Piii{0}{\a}{\k^+}(X) \) is similar. The non-existence of \( X \)-universal sets for \( \Deee{0}{\a}{\k^+}(X) \) and \( \Bor{\k^+}(X) \) follows from Lemma~\ref{universal_selfdual} again.
\end{proof}

\begin{cor}\label{collapse_k+} 
The $\kappa^+$-Borel hierarchy on $\pre{\kappa}{2}$ does not collapse. 
\end{cor}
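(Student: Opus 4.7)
The plan is to obtain the non-collapse as a direct consequence of Proposition~\ref{universal_sigma_pi}, combined with the characterizations of collapse established in Proposition~\ref{prop:conditions_collapse} and the non-existence of universal sets for selfdual classes from Lemma~\ref{universal_selfdual}. The argument is essentially a Russell-style diagonal argument, classical in spirit and adapted here to the generalized setting, made possible by the fact that under \( 2^{<\k} = \k \) we recover universal sets of the same ``type'' as in the countable case.

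More precisely, the plan is to argue by contradiction: assume the \( \k^+ \)-Borel hierarchy on \( \pre{\k}{2} \) collapses, i.e.\ \( \ord_{\k^+}(\pre{\k}{2}) = \a \) for some \( \a < \k^+ \). Since \( \k^+ \) is a successor cardinal and therefore not singular, the equivalence \ref{prop:conditions_collapse-1}~\(\IFF\)~\ref{prop:conditions_collapse-10} of Proposition~\ref{prop:conditions_collapse} applies and yields that \( \Siii{0}{\a}{\k^+} \) is selfdual on \( \pre{\k}{2} \), that is, \( \Siii{0}{\a}{\k^+}(\pre{\k}{2}) = \Piii{0}{\a}{\k^+}(\pre{\k}{2}) \). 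By Lemma~\ref{universal_selfdual}, this would entail that there is no \( \pre{\k}{2} \)-universal set for \( \Siii{0}{\a}{\k^+}(\pre{\k}{2}) \). However, Proposition~\ref{universal_sigma_pi} provides exactly such a universal set for every \( 1 \leq \a < \k^+ \), a contradiction.

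There is no real obstacle here; the only subtlety to double-check is that \( \k^+ \) falls under the non-singular case of Proposition~\ref{prop:conditions_collapse}, so that the implication \ref{prop:conditions_collapse-10}~\(\Rightarrow\)~\ref{prop:conditions_collapse-1} is available, which it is since \( \k^+ \) is a regular successor cardinal. Alternatively, one could bypass Proposition~\ref{prop:conditions_collapse} and run the diagonal argument directly: fixing a \( \pre{\k}{2} \)-universal set \( \U \) for \( \Siii{0}{\a}{\k^+}(\pre{\k}{2}) \), the set \( D = \{ x \in \pre{\k}{2} \mid (x,x) \notin \U \} \) would witness properness at level \( \a \), exactly as in the proof of Lemma~\ref{universal_selfdual}.
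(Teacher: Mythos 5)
Your argument is correct and is precisely the intended one: the paper states Corollary~\ref{collapse_k+} as an immediate consequence of Proposition~\ref{universal_sigma_pi}, with the implicit reasoning being exactly your chain through Lemma~\ref{universal_selfdual} and the selfduality criterion \ref{prop:conditions_collapse-10} of Proposition~\ref{prop:conditions_collapse} (valid since \( \kappa^+ \) is regular, hence not singular). Both your main route and the direct diagonalization you sketch at the end are fine.
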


By Proposition~\ref{prop:abstractpropertiesofcollapse}\ref{prop:abstractpropertiesofcollapse-1}, this implies that the \(\k^+\)-Borel hierarchy does not collapse on any space that contains a homeomorphic copy of \( \pre{\k}{2}\). Actually, the same conclusion holds in greater generality.

\begin{theorem}\label{thm:non-collpase_general_space}
 If there is a $\kappa^+$-Borel embedding $f \colon\pre{\kappa}{2}\to X$, then the $\kappa^+$-Borel hierarchy on $X$ does not collapse.
\end{theorem}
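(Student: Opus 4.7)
The plan is to reduce the statement to the already-established non-collapse result on $\pre{\k}{2}$ (Corollary~\ref{collapse_k+}), using the heredity of collapse under subspaces and its invariance under $\k^+$-Borel isomorphisms for spaces of sufficiently small weight. The argument should be short, since all the heavy lifting has been done by Propositions~\ref{prop:abstractpropertiesofcollapse}, Corollary~\ref{cor:abstractpropertiesofcollapse}, and Corollary~\ref{collapse_k+}.

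More precisely, I would argue by contrapositive. Assume $\ord_{\k^+}(X) < \k^+$. Let $Y = f[\pre{\k}{2}]$ equipped with the subspace topology inherited from $X$. First, since the pointclass $\Bor{\k^+}$ is hereditary (equivalently, by Proposition~\ref{prop:abstractpropertiesofcollapse}\ref{prop:abstractpropertiesofcollapse-1} applied with $\g = \k^+$, so that $\g^* = \k^+$), the $\k^+$-Borel hierarchy on $Y$ collapses as well: $\ord_{\k^+}(Y) \leq \ord_{\k^+}(X) < \k^+$. Second, by the definition of a $\k^+$-Borel embedding, the map $f \colon \pre{\k}{2} \to Y$ is a $\k^+$-Borel isomorphism. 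Both $\pre{\k}{2}$ and $Y$ are regular Hausdorff spaces of weight at most $\k < \k^+ = (\k^+)^*$ (the weight of $Y$ is bounded by that of $X$, which is at most $\k$ under our standing assumption).

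I can therefore invoke Corollary~\ref{cor:abstractpropertiesofcollapse} with $\g = \k^+$ to transfer the collapse across the $\k^+$-Borel isomorphism $f$, obtaining $\ord_{\k^+}(\pre{\k}{2}) < \k^+$. This contradicts Corollary~\ref{collapse_k+}, which asserts that the $\k^+$-Borel hierarchy on $\pre{\k}{2}$ does not collapse, and the result follows.

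The only potential subtlety is verifying that the weight hypothesis of Corollary~\ref{cor:abstractpropertiesofcollapse} is met for $Y$; but this is automatic because weight is monotone under taking subspaces, and $X$ itself has weight at most $\k$ by the global assumption in force from Section~\ref{sec_borel_hierarchy} onward. Thus there is no real obstacle: the work reduces to correctly bookkeeping hypotheses and invoking the right earlier results in the right order.
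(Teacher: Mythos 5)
Your proof is correct and follows essentially the same route as the paper's: pass to $Y=f[\pre{\kappa}{2}]$, apply Proposition~\ref{prop:abstractpropertiesofcollapse}\ref{prop:abstractpropertiesofcollapse-1} to transfer the collapse to $Y$, then use Corollary~\ref{cor:abstractpropertiesofcollapse} to transfer it across the $\kappa^+$-Borel isomorphism to $\pre{\kappa}{2}$, contradicting Corollary~\ref{collapse_k+}. Your extra care in verifying the weight hypothesis is a welcome, if minor, addition.
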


\begin{proof}
Assume towards a contradiction that \( \ord_{\k^+}(X) < \k^+ \), and let \( Y = f[X] \).
Then \( \ord_{\k^+}(Y) < \k^+\) by Proposition~\ref{prop:abstractpropertiesofcollapse}\ref{prop:abstractpropertiesofcollapse-1}, hence \( \ord_{\k^+}(\pre{\k}{2}) < \k^+ \) by Corollary~\ref{cor:abstractpropertiesofcollapse}, against Corollary~\ref{collapse_k+}.
\end{proof}

Theorem~\ref{thm:non-collpase_general_space} provides an important sufficient condition for the non-collapse of the $\k^+$-Borel hierarchy on a space \( X \), but in Corollary~\ref{cor: closed set with full order} we will show that, consistently, there may be very simple spaces (in fact, even closed subspaces of \( \pre{\k}{\k}\), for \( \k \) regular)  such that \( \pre{\k}{2} \) does not $\kappa^+$-Borel embed into them, yet they have a non-collapsing $\kappa^+$-Borel hierarchy.

As a consequence of Theorem~\ref{thm:non-collpase_general_space}, we get that if e.g.\ all \( \k^+ \)-Borel subsets of \( \pre{\cof(\k)}{\k}\) (or equivalently: of \( \pre{\k}{2}\)) satisfy a very weak form of the \( \k \)-Perfect Set Property, then the \( \k^+ \)-Borel hierarchy does not collapse on every topological space of weight at most \( \kappa \)
that is $\kappa^+$-Borel isomorphic to a $\kappa^+$-Borel subset of $\pre{\cof(\k)}{\kappa}$ of size greater than \( \k \). (The weak \( \k \)-Perfect Set Property we are alluding to is the following: Either \( |A| \leq \k \), or there is a \( \k^+ \)-Borel embedding of \( \pre{\k}{2}\) into \( A \). See~\cite{AgosMottoSchlichtRegular} and~\cite{DMR} for other variants and their relationships when \( \k \) is regular or \( \cof(\k) = \o \), respectively.) This scenario is consistent if \( \k \) is regular (\cite{schlicht_perfect_2017}), and is necessarily true if \( \k \) has countable cofinality (\cite{DMR}) --- the case of a singular cardinal \( \k \) of uncountable cofinality, instead, has not yet been studied.
However, it is also known that if \( \k \) is regular, then it is consistent to have 
 $\k^+$-Borel (or even closed) sets \( X \) that do not satisfy any form of $\kappa$-Perfect Set Property (see Section~\ref{sec: psp section}): for such sets, the non-collapse of the $\kappa^+$-Borel hierarchy cannot be derived merely from the existence of $\pre{\kappa}{2}$-universal sets, and indeed we will show that it is consistent to have \( 2 < \ord_{\k^+}(X) < \k^+ \) (see Section~\ref{sec: alpha-forcing}).

We conclude this section with a brief digression on \( \k^+ \)-Borel measurable functions.
Recall that a function \(f \colon X \to Y \) is {\(\boldsymbol{\Gamma} \)-measurable if \( f^{-1}(U) \in \boldsymbol{\Gamma}(X) \) for every open \( U \subseteq Y \). 
In particular, \(f \) is called \(\k^+ \)-Borel measurable if it is \( \Bor{\k^+} \)-measurable or, equivalently, if \( f^{-1}(B) \in \Bor{\k^+}(X) \) for every \( B \in \Bor{\k^+}(Y) \).
We denote by \( \mathcal{B}(X,Y) \) the collection of all \( \k^+ \)-Borel measurable functions from \( X \) to \( Y \), and for each ordinal \( \a \geq 1 \) we let \( \mathcal{M}_\a(X,Y) \) be the collection of all \( \Siii{0}{\a}{\k^+} \)-measurable functions from \( X \) to \( Y \). 
Clearly, \( \mathcal{B}(X,Y) = \mathcal{M}_{\ord_{\k^+}(X)}(X,Y)\). In particular, we always have \( \mathcal{B}(X,Y) = \mathcal{M}_{\k^+}(X,Y) \), and if \( \weight(Y) \leq \k \) then \( \mathcal{B}(X,Y) = \bigcup_{1 \leq \a < \k^+} \mathcal{M}_\a(X,Y) \). This means that the classes \( \mathcal{M}_\a(X,Y) \) stratify the \( \k^+ \)-Borel measurable functions in a hierarchy of length at most \( \k^+ \). In analogy with Definition~\ref{def:order}, we introduce the following parameter, that measures the length of such hierarchy.

\begin{definition}\label{def:order_functions}
Let $X$ and $Y$ be topological spaces. The \markdef{order} of the hierarchy of $\k^+$-Borel measurable functions from $X$ to $Y$ is
\[
\ord^{\mathrm{Fun}}_{\k^+}(X,Y) = \min\left\{\alpha \in \Ord \mid \mathcal{M}_\a(X,Y) =\mathcal{B}(X,Y)\right\}.
\]
\end{definition} 

By the previous discussion, \( \ord^{\mathrm{Fun}}_{\k^+}(X,Y)  \leq \k^+ \); when \( \ord^{\mathrm{Fun}}_{\k^+}(X,Y)  < \k^+ \) we say that the hierarchy of $\k^+$-Borel measurable functions from $X$ to $Y$ \markdef{collapses}. The next proposition shows that if \( Y \) contains at least two points, then this happens precisely when the \( \k^+ \)-Borel hierarchy \emph{of sets} on \( X \) collapses.

\begin{proposition} \label{prop:stratificationofBorelfunctions}
Let \( Y \) be a Hausdorff topological space such that 
 \(|Y|\geq 2\). 
Then
 \[
\ord_{\k^+}(X) = \ord^{\mathrm{Fun}}_{\k^+}(X,Y),
\]
 
Moreover, the hierarchy of \( \k^+ \)-Borel measurable functions is proper, meaning that 
\( \mathcal{M}_\a \setminus \bigcup_{1\leq \beta < \a } \mathcal{M}_\b(X,Y) \neq \emptyset \)
for every \( 1 \leq \a < \k^+ \) with \( \a \leq \ord_{\k^+}^{\mathrm{Fun}}(X,Y) \).
\end{proposition}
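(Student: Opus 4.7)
The plan is to use a characteristic function trick to reduce the study of $\mathcal{M}_\alpha$ to that of $\Deee{0}{\alpha}{\k^+}(X)$. Fix distinct $y_0, y_1 \in Y$ (possible since $|Y| \geq 2$) and, by Hausdorffness of $Y$, disjoint open neighborhoods $U_0 \ni y_0$ and $U_1 \ni y_1$. For every $A \subseteq X$, let $f_A \colon X \to Y$ be defined by $f_A(x) = y_1$ if $x \in A$ and $f_A(x) = y_0$ otherwise. For any open $U \subseteq Y$, the preimage $f_A^{-1}(U)$ is one of $\emptyset$, $X$, $A$, $X \setminus A$, depending on which of $y_0, y_1$ lies in $U$; moreover the last two cases are actually realized via $U_0$ and $U_1$ respectively. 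Therefore, for every ordinal $\gamma \geq 1$,
\[
f_A \in \mathcal{M}_\gamma(X,Y) \IFF A \in \Deee{0}{\gamma}{\k^+}(X).
\]

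For the equality of the two orders, the inequality $\ord^{\mathrm{Fun}}_{\k^+}(X,Y) \leq \ord_{\k^+}(X)$ is immediate from the definitions: if $\alpha = \ord_{\k^+}(X)$, then for every $f \in \mathcal{B}(X,Y)$ and open $U \subseteq Y$ we have $f^{-1}(U) \in \Bor{\k^+}(X) = \Siii{0}{\alpha}{\k^+}(X)$, so $f \in \mathcal{M}_\alpha$. For the reverse inequality, let $\beta = \ord^{\mathrm{Fun}}_{\k^+}(X,Y)$. Every $A \in \Bor{\k^+}(X)$ gives rise to a $\k^+$-Borel measurable $f_A$, which must then lie in $\mathcal{M}_\beta$; applying the observation of the first paragraph with $U_1$ shows $A = f_A^{-1}(U_1) \in \Siii{0}{\beta}{\k^+}(X)$, yielding $\ord_{\k^+}(X) \leq \beta$.

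For properness, fix $1 \leq \alpha < \k^+$ with $\alpha \leq \ord^{\mathrm{Fun}}_{\k^+}(X,Y) = \ord_{\k^+}(X)$. The case $\alpha = 1$ is trivial since any constant function is in $\mathcal{M}_1$ and the avoidance union is empty. For $\alpha \geq 2$, I invoke Corollary~\ref{cor:proper_for_k+} when $\alpha < \ord_{\k^+}(X)$, and Corollary~\ref{cor:proper_for_k+_final_level} when $\alpha = \ord_{\k^+}(X)$, to obtain a set $A \in \Deee{0}{\alpha}{\k^+}(X) \setminus \bigcup_{1 \leq \b < \a} (\Siii{0}{\b}{\k^+}(X) \cup \Piii{0}{\b}{\k^+}(X))$. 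By the biconditional of the first paragraph, $f_A \in \mathcal{M}_\alpha$, and if $f_A$ belonged to some $\mathcal{M}_\beta$ with $1 \leq \beta < \alpha$, then $A \in \Deee{0}{\beta}{\k^+}(X) \subseteq \Siii{0}{\beta}{\k^+}(X)$, contradicting the choice of $A$.

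The only real obstacle is the exceptional case excluded by Corollary~\ref{cor:proper_for_k+_final_level}, namely $\alpha = \ord_{\k^+}(X) = 2$ with $X$ having exactly one non-isolated point $p$. In this situation Lemma~\ref{lem:door_spaces} gives $\Deee{0}{2}{\k^+}(X) = \Siii{0}{1}{\k^+}(X) \cup \Piii{0}{1}{\k^+}(X)$, so no witness $A$ with the required properties exists. Here I would use the function $f_{\{p\}}$ directly: since $Y$ is Hausdorff $\{p\}$ is closed, but by assumption it is not open, hence $f_{\{p\}}^{-1}(U_0) = \{p\}$ is not open, so $f_{\{p\}} \notin \mathcal{M}_1$. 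On the other hand, every preimage lies in $\{\emptyset, X, \{p\}, X \setminus \{p\}\}$, each of which is open or closed and therefore in $\Siii{0}{2}{\k^+}(X)$ (the \( \k^+ \)-Borel hierarchy on \( X \) being increasing by Lemma~\ref{lem:increasing}); hence $f_{\{p\}} \in \mathcal{M}_2 \setminus \mathcal{M}_1$, completing the proof.
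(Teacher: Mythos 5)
Your proposal is correct and follows essentially the same route as the paper: the characteristic-function reduction $f_A \leftrightarrow A$ for both the order equality and properness, with witnesses supplied by the properness results for the set hierarchy. The only cosmetic difference is in the case $\a=\ord_{\k^+}(X)=2$: since one only needs $A\in\Deee{0}{2}{\k^+}(X)\setminus\Siii{0}{1}{\k^+}(X)$ (avoiding $\Siii{0}{\b}$ at lower levels suffices, as $f_A\in\mathcal{M}_\b$ already forces $A\in\Siii{0}{\b}{\k^+}(X)$), the paper simply takes any closed non-open set there, whereas you split off the door-space case and use the singleton of the unique non-isolated point — which amounts to the same witness.
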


\begin{proof}
The inequality \( \ord_{\k^+}^{\mathrm{Fun}}(X,Y) \leq \ord_{\k^+}(X) \) is obvious, so let us prove the reverse inequality.
Fix distinct points \( y_0,y_1 \in Y \), and take any \( A \in \Bor{\k^+}(X) \). Let \( f_A \colon X \to Y \) be defined by \( f_A(x) = y_0 \) if \( x \in A \), and \( f_A(x) = y_1 \) otherwise. Then \( f_A \in \mathcal{B}(X,Y) = \mathcal{M}_{\ord_{\k^+}^{\mathrm{Fun}}(X,Y)}(X,Y) \); since \( A = f_A^{-1}(U) \) for any open \( U \subseteq Y\) such that \( y_0 \in U \) and \( y_1 \notin U \), we then get \( A \in \Siii{0}{\ord_{\k^+}^{\mathrm{Fun}}(X,Y)}{\k^+}(X) \). The previous argument shows that \( \Bor{\k^+}(X) \subseteq \Siii{0}{\ord_{\k^+}^{\mathrm{Fun}}(X,Y)}{\k^+}(X) \), and thus \( \ord_{\k^+}(X) \leq \ord_{\k^+}^{\mathrm{Fun}}(X,Y) \), as desired.

We now move to properness. To simplify the notation, let \( \overline{\a} = \ord_{\k^+}(X) = \ord_{\k^+}^{\mathrm{Fun}}(X,Y)\). Given \( 1 \leq \a < \overline{\a} \), let \( A \in \Deee{0}{\a}{\k^+}(X) \setminus \bigcup_{1 \leq \b < \a} \Siii{0}{\b}{\k^+}(X) \); the existence of such an \( A \) is granted by Proposition~\ref{prop:proper_for_k+}. Then \( f_A \in \mathcal{M}_{\a}(X,Y) \), but \( f_A \notin \mathcal{M}_\b(X,Y) \) for any \( 1 \leq \b < \a \) because otherwise \( A = f^{-1}_A(U) \) would belong to \( \Siii{0}{\b}{\k^+}(X) \). The same argument works if \( \a = \overline{\a} < \k^+ \) too, except that when \(\overline{\a} = 2 \), instead of using Proposition~\ref{prop:proper_for_k+} to find the desired \( A \) use the fact that \( \Deee{0}{2}{\k^+}(X) \setminus \Siii{0}{1}{\k^+}(X) \) is nonempty because \( X \) must contain a proper closed set (otherwise it would be discrete, against \( \ord_{\k^+}(X) = 2 \)).
\end{proof}

An extensive study of the structure of the \(\k^+\)-Borel measurable functions  with respect to various kinds of limits, and the suitable notion of generalized Baire class \(\a\) function can be found in~\cite{MRP25}.
Here we briefly touch on this subject at the end of Section~\ref{sec: alpha-forcing}.
	
\section{An alternative hierarchy when \(\k\) is singular}\label{sec_alt}

Recall that, starting from Section~\ref{sec_borel_hierarchy}, we agreed that all topological spaces are assumed to be regular Hausdorff and of weight at most \( \k \). Fix such a space \( X \).
When \(\k\) is singular, we have that \(\Bor{\k^+}(X)=\Bor{\k}(X)\). Indeed, \( \Bor{\k}(X) \subseteq  \Bor{\k^+}(X)\) by Remark~\ref{rmk:refining}. For the other inclusion, it is enough to check that \(\Bor{\k}(X)\) is closed under unions of length $\kappa$. Let \(A = \bigcup_{i<\kappa} A_{i}\), where  \(A_{i} \in \Bor{\k}(X) \) for every \(i< \kappa\), and fix any sequence of ordinals \( \seq {\kappa_j }{j<\cof(\kappa) } \) cofinal in \(\kappa\). Then for all \(j<\cof(\k)\), we have \(A'_j = \bigcup_{i < \k_j}  A_{i} \in \Bor{\k}(X) \) because \(\k_j <\k\), hence \(A=\bigcup_{j<\cof(\k)}A'_j \in \Bor{\k}(X)\) because \(\cof(\k)<\k\).

Thus, when $\k$ is singular, the \(\kappa^+\)-Borel sets can naturally be stratified into an alternative hierarchy: the \(\k\)-Borel hierarchy. The study of such hierarchy, together with its interplay with the \(\k^+\)-Borel hierarchy studied in Section~\ref{sec_borel_hierarchy}, is the subject of this section. To this aim, we will often assume \(\Siii{0}{1}{\k}(X) \subseteq \Siii{0}{2}{\cof(\k)^+}(X)\). 
On the one hand, this is due to the fact that in order to guarantee that the \( \k \)-Borel hierarchy is increasing, we need to have at least \( \Siii{0}{1}{\k}(X) \subseteq \Siii{0}{2}{\k}(X) \), and this is no longer granted by the other tacit hypotheses on \( X \) (as it was the case for the \( \k^+ \)-Borel hierarchy). On the other hand, the fact that every open set can be written as a \( \cof(\k) \)-sized union of closed sets allows for a neater presentation of the combinatorial properties of the \( \k \)-Borel hierarchy. By Fact~\ref{lem:NS_implies_increasing_hierarchy}, such condition is anyway satisfied by virtually all spaces of interest in generalized descriptive set theory, including all subspaces of \( \pre{\k}{2} \) and \( \pre{\cof(\k)}{\k} \), so it does not really affect the intended scope of this paper. 
However, we also remark that by Proposition~\ref{prop:embed_T_0_spaces_into_Cantor}, all our results are valid for arbitrary \( T_0 \) spaces of weight at most \( \k \) if we only consider infinite levels \( \a \geq \o \).

We begin with a result describing the relationship between the \( \k \)-Borel hierarchy and the \( \k^+ \)-Borel hierarchy. From this, we will derive various other properties of the \( \k \)-Borel hierarchy and, complementing some results obtained in Section~\ref{sec_borel_hierarchy}, on the \( \k^+ \)-Borel hierarchy when \( \k \) is singular.

Recall that every ordinal \(\a\) can be written uniquely as \(\a=\g+n\), with \(n <\o \) and either \( \g=0 \) or \( \g \) limit.
Accordingly, we say that \(\a\) is even (respectively, odd) if \(n\) is even (respectively, odd). If \(\a = \g+n\) is even, to simplify the notation we set \(\frac{\a}{2}=\g+ \frac{n}{2} \).
Notice that \( \frac{2 \cdot \a}{2} = \a \) for every ordinal \( \a \), and that if \( \a = 0 \) or \( \a \) is limit, then \( \frac{\a}{2} = \frac{2 \cdot \a}{2} =  \a \).

\begin{theorem}\label{hierarchy_theorem}
Assume that $\k$ is \emph{singular}, and \( \Siii{0}{1}{\k}(X) \subseteq \Siii{0}{2}{\cof(\k)^+}(X) \). 
 Given \(\alpha < \kappa^+\), let \( \widehat{\a} = \cof(\a) \) if \( \a \) is limit, and \( \widehat{\a} = \cof(\kappa) \) otherwise.
\begin{enumerate-(1)} \item \label{hierarchy_theorem_even} 
If $\alpha$ is even, then  
\[
\Siii{0}{1+\alpha}{\k}(X)= \left( \bigcup\nolimits_{\beta < \alpha}\Piii{0}{1+\beta}{\k}(X)\right) _{\sigma_{\widehat{\a}}} = \Siii{0}{1+\frac{\alpha}{2}}{\k^+}(X) ,
\]
and dually for $\Piii{0}{1+\alpha}{\k}(X)$.

\item\label{hierarchy_theorem_odd} 
If $\alpha$ is odd, then $\Siii{0}{1+\alpha}{\k}(X) = \Piii{0}{1+\alpha}{\kappa}(X)=\Deee{0}{1+\alpha}{\k}(X)$.
\end{enumerate-(1)}
\end{theorem}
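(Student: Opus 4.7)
I prove (1) and (2) together by transfinite induction on $\alpha<\k^+$. Three combinatorial pivots drive everything: (i) $\k$ is strong-limit singular, so $\nu^\lambda<\k$ for all $\nu,\lambda<\k$; (ii) the hypothesis lets me write every open set as a $\cof(\k)$-union of closed sets (and dually every closed set as a $\cof(\k)$-intersection of opens); (iii) by Proposition~\ref{k+_hierarchy_closure}, $\Piii{0}{\gamma}{\k^+}(X)$ is closed under $\k$-sized intersections and $\Siii{0}{\gamma}{\k^+}(X)$ under $\k$-sized unions, for every $\gamma\geq 1$. The case $\alpha=0$ just asserts open sets equal open sets. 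For the base case $\alpha=1$ (odd), given $A=\bigcap_{i<\lambda}U_i\in \Piii{0}{2}{\k}(X)$ with $\lambda<\k$, I apply (ii) to write $U_i=\bigcup_{k<\cof(\k)}C_{i,k}$ and distribute to
\[
A=\bigcup_{f\in\pre{\lambda}{\cof(\k)}}\bigcap_{i<\lambda}C_{i,f(i)};
\]
each inner intersection is closed and the outer index has size $\cof(\k)^\lambda<\k$ by (i), so $A\in\Siii{0}{2}{\k}(X)$, and duality yields the reverse inclusion.

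\paragraph{Inductive step.} For even $\alpha=2\beta$, the identification $\Siii{0}{1+2\beta}{\k}(X)=\Siii{0}{1+\beta}{\k^+}(X)$ together with the $\widehat{\a}$-union formula is obtained by splitting a $\leq\k$-union of $\Pi^0_{<1+\beta}(\k^+)$-representatives into $\cof(\k)$-many blocks along a cofinal sequence in $\k$, and applying the odd-level selfdualities below $2\beta$ coming from the IH to merge each $<\k$-block into a single element of $\Piii{0}{<1+2\beta}{\k}(X)$; when $\beta$ is a limit, an additional $\cof(\beta)$-grouping keeps the constituent levels strictly below $2\beta$. For odd $\alpha=2\beta+1$, start with $A=\bigcap_{i<\lambda}B_i\in\Piii{0}{1+\alpha}{\k}(X)$, where $\lambda<\k$ and $B_i\in\Siii{0}{1+\beta}{\k^+}(X)$ by the even IH at $2\beta$. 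Decompose $B_i=\bigcup_{j<\cof(\k)}\bar D_{i,j}$ so that each $\bar D_{i,j}\in\Piii{0}{1+\beta}{\k^+}(X)=\Piii{0}{1+2\beta}{\k}(X)$ (see the main obstacle paragraph), and distribute to
\[
A=\bigcup_{f\in\pre{\lambda}{\cof(\k)}}\bigcap_{i<\lambda}\bar D_{i,f(i)}.
\]
Each inner intersection lies in $\Piii{0}{1+\beta}{\k^+}(X)$ by closure under $\k$-intersections (iii), and the outer index has size $\cof(\k)^\lambda<\k$ by (i); hence $A\in(\Piii{0}{1+2\beta}{\k}(X))_{\sigma_{<\k}}=\Siii{0}{1+\alpha}{\k}(X)$. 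Duality then finishes the odd step.

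\paragraph{Main obstacle.} The delicate point is arranging, in the odd step, $\bar D_{i,j}\in\Piii{0}{1+\beta}{\k^+}(X)$ rather than merely in $\Siii{0}{1+\beta}{\k^+}(X)$, since only the former is closed under the $\k$-intersections that arise after distribution. When $\beta$ is a successor, all $\Pi^0_{<1+\beta}(\k^+)$-constituents live at a single level $1+\beta_0$ with $\beta_0<\beta$, and the odd IH at $2\beta_0+1<\alpha$ identifies the $<\k$-union of these constituents with an element of $\Piii{0}{1+2\beta_0+1}{\k}(X)\subseteq\Piii{0}{1+2\beta}{\k}(X)$. The limit case is subtler: the constituent levels may be cofinal in $\beta$, and one has to pre-partition the $\k$-sized union index of $B_i$ so that within each $<\k$-block the involved $\beta'$-levels stay bounded strictly below $\beta$. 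The whole argument essentially exploits an asymmetry between the two hierarchies: $\Piii{0}{1+\beta}{\k^+}(X)$ is closed under \emph{all} $\k$-intersections by (iii), whereas the same class in its $\k$-Borel guise $\Piii{0}{1+2\beta}{\k}(X)$ is only guaranteed closure under $<\cof(\k)$-intersections from its $\k$-Borel definition (Remark~\ref{rmk:easyclosure}). This asymmetry is precisely what forces the appearance of a selfdual level between every two consecutive levels of the $\k^+$-Borel hierarchy.
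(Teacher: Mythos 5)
Your proposal is correct and follows essentially the same route as the paper's proof: induction on $\alpha$, distributing $<\kappa$-intersections over the union decompositions, bounding the resulting index set by $\widehat{\delta}^{\lambda}<\kappa$ via the strong-limit property, and using the closure of $\Piii{0}{1+\beta}{\k^+}(X)$ under $\kappa$-sized intersections together with the odd-level selfdualities from the inductive hypothesis to keep the distributed pieces at the right level. The only cosmetic imprecision is that in the odd step the decomposition of $B_i$ has length $\widehat{2\beta}$ (which is $\cof(\beta)$ when $\beta$ is limit, not $\cof(\kappa)$), but your ``main obstacle'' paragraph already accounts for exactly this point.
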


\begin{proof}
By induction on  \(\alpha< \kappa^+\). The case \( \a = 0 \) is obvious, hence assume \( \a > 0 \).
 
We first consider~\ref{hierarchy_theorem_even}, so assume that \( \a \) is even and write it as \( \a = \g + 2n \) for some \( n < \o \) and \( \g = 0 \) or \( \g \) limit.
If \( n = 0 \), then \( \a = \g > 0 \) is limit, hence \( \frac{\a}{2} = \a \). 
Recalling also Remark~\ref{rmk:refining}, the inclusions \( \left( \bigcup\nolimits_{\beta < \alpha}\Piii{0}{1+\beta}{\k}(X)\right) _{\sigma_{\widehat{\a}}} \subseteq \Siii{0}{1+\a}{\k}(X) \subseteq \Siii{0}{1+\a}{\k^+}(X) \) are obvious, because \( \widehat{\a} = \cof(\a) < \k \).
The inductive hypothesis and 
the fact that, being \( \a \) limit, \( \b < \a \Rightarrow 2 \cdot \b < \a\), ensure that \( \Siii{0}{1+\b}{\k^+}(X) = \Siii{0}{1+2 \cdot \b}{\k}(X) \), and hence \( \bigcup_{\b < \a} \Siii{0}{1+\b}{\k}(X) = \bigcup_{\b < \a} \Siii{0}{1+\b}{\k^+}(X) \).
Arguing as in the limit case of the proof of Proposition~\ref{k+_hierarchy_closure}, every \( A \in \Siii{0}{1+\a}{\k^+}(X) \) can be written as \( A = \bigcup_{j < \cof(\a)} A_j \) with \( A_j \in \bigcup_{\b < \a} \Siii{0}{1+\b}{\k^+}(X) \) for every \( j < \cof(\a) \). Since \( \widehat{\a} = \cof(\a) \), we get \( A \in \left( \bigcup\nolimits_{\beta < \alpha}\Piii{0}{1+\beta}{\k}(X)\right) _{\sigma_{\widehat{\a}}} \). This shows that \( \Siii{0}{1+\a}{\k^+}(X) \subseteq \left( \bigcup\nolimits_{\beta < \alpha}\Piii{0}{1+\beta}{\k}(X)\right) _{\sigma_{\widehat{\a}}} \), as desired.

Assume now that \( n = m+1 \), so that \( \a=\g+2m+2\) and \( \frac{\a}{2} = \g+m+1 \). The inclusion \( \left( \bigcup\nolimits_{\beta < \alpha}\Piii{0}{1+\beta}{\k}(X)\right) _{\sigma_{\widehat{\a}}} \subseteq \Siii{0}{1+\a}{\k}(X) \) is again obvious because \( \widehat{\a} = \cof(\k) < \k \). Since by inductive hypothesis \( \Piii{0}{1+\g+2m}{\k}(X) = \Piii{0}{1+\g+m}{\k^+}(X) \), we get \( \Siii{0}{1+\g+2m+1}{\k}(X) \subseteq \Siii{0}{1+\g+m+1}{\k^+}(X) \). As \( \g+2m+1 \) is odd, \( \Piii{0}{1+\g+2m+1}{\k}(X) = \Siii{0}{1+\g+2m+1}{\k}(X)\) by inductive hypothesis again. Therefore \( \Piii{0}{1+\g+2m+1}{\k}(X) \subseteq \Siii{0}{1+\g+m+1}{\k^+}(X) \), and since the latter is closed under unions of size $\kappa$ by Theorem~\ref{k+_hierarchy_closure}\ref{k+_hierarchy_closure-1}, 
it follows that 
\[
\Siii{0}{1+\a}{\k}(X) = \Siii{0}{1+\g+2m+2}{\k}(X) \subseteq \Siii{0}{1+\g+m+1}{\k^+}(X) = \Siii{0}{1+\frac{\a}{2}}{\k^+}(X).
\]
Let now \( A \in \Siii{0}{1+\frac{\a}{2}}{\k^+}(X) = \Siii{0}{1+\g+m+1}{\k^+}(X) \), and let \( A_i \in \Piii{0}{1+\g+m}{\k^+}(X)\) be such that \( A = \bigcup_{i < \k} A_i \). Fix a sequence \( \seq{\kappa_j}{j < \cof(\kappa)} \) cofinal in \(\k\), and let \(B_j=\bigcup_{i<\k_j}A_i\) for every \( j < \cof(\k) \).
By inductive hypothesis, \( \Piii{0}{1+\g+m}{\k^+}(X) = \Piii{0}{1+\g+2m}{\k}(X) \), hence \( B_j \in \Siii{0}{1+\g+2m+1}{\k}(X) = \Piii{0}{1+\g+2m+1}{\k}(X) \) because \( \g+2m+1 \) is odd. Since \( A = \bigcup_{j < \cof(\k)} B_j \) and \( \widehat{\a} = \cof(\k) \), we conclude that \( A \in \left( \bigcup\nolimits_{\beta < \alpha}\Piii{0}{1+\beta}{\k}(X)\right) _{\sigma_{\widehat{\a}}} \). This shows that \( \Siii{0}{1+\frac{\a}{2}}{\k^+}(X) \subseteq \left( \bigcup\nolimits_{\beta < \alpha}\Piii{0}{1+\beta}{\k}(X)\right) _{\sigma_{\widehat{\a}}} \), as desired.

We now move to~\ref{hierarchy_theorem_odd}, so assume that \( \a \) is odd, and let \( \d \) be its predecessor, which is an even ordinal. It is enough to prove that $\Piii{0}{1+\a}{\k}(X)\subseteq \Siii{0}{1+\a}{\k}(X)$.
 Let \(A \in \Piii{0}{1+\d+1}{\k}(X)\), and let \(\l<\k\) and \(A_i \in \Siii{0}{1+\d}{\k}(X) \) be such that \(A=\bigcap_{i<\l}A_i\). 
We claim that for every \( i < \l \) there are \( \Piii{0}{1+\d}{\k}(X)\) sets \( A_{i,j} \), for \( j < \widehat{\d} \), such that \( A_i = \bigcup_{j < \widehat{\d}} A_{i,j} \). If \( \d = 0 \) this is true by the assumption \( \Siii{0}{1}{\k}(X) \subseteq \Siii{0}{2}{\cof(\k)^+}(X) \). If \( \d > 0 \), then this follows by the inductive hypothesis together with \( \bigcup_{\b < \d} \Piii{0}{1+\b}{\k}(X)  \subseteq \Piii{0}{1+\d}{\k}(X) \). Therefore
\[
A=\bigcap_{i<\l} \left(\bigcup_{j<\widehat{\d}}A_{i,j} \right)=\bigcup_{s \in \pre{\l}{(\widehat{\d})}}\left(\bigcap_{i<\l}A_{i,s(i)} \right).
\]
Since \( \d \) is even, \( \Piii{0}{1+\d}{\k}(X) = \Piii{0}{1+\frac{\d}{2}}{\k^+}(X) \) by inductive hypothesis, and since the latter is closed under intersections of size \( \k \) by Proposition~\ref{k+_hierarchy_closure}\ref{k+_hierarchy_closure-2}, we have \( \bigcap_{i<\l}A_{i,s(i)} \in \Piii{0}{1+\d}{\k}(X) \) for every \( s \in \pre{\l}{(\widehat{\d})} \). Since \( \k \) is strong limit (because it is singular and \( 2^{<\k} = \k \)) and \( \widehat{\d},\l < \k \), we have \( \left|\pre{\l}{(\widehat{\d})}\right| < \k \), and so \( A \in \Siii{0}{1+\d+1}{\k}(X)\), as desired.
\end{proof}

\begin{cor} \label{cor:ordersingular}
Suppose that \(\k\) is \emph{singular}, and that \( \Siii{0}{1}{\k}(X) \subseteq \Siii{0}{2}{\cof(\k)^+}(X) \). Then for every \( \a < \k^+ \)  \[
\ord_{\k^+}(X) \leq 1+ \a \quad \IFF \quad \ord_{\k}(X) \leq 1+2 \cdot \a.
\]
In particular, if one of \( \ord_{\k^+}(X) \) or \( \ord_{\k}(X) \) is a limit ordinal \( \a \), then \( \ord_{\k^+}(X) = \ord_{\k}(X) = \a \). \end{cor}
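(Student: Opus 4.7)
The plan is to derive everything from Theorem~\ref{hierarchy_theorem}\ref{hierarchy_theorem_even}. Since for every ordinal $\alpha$ the ordinal $2\cdot\alpha$ is even and satisfies $\frac{2\cdot\alpha}{2}=\alpha$ (as remarked before the statement of Theorem~\ref{hierarchy_theorem}), part~\ref{hierarchy_theorem_even} of that theorem yields the crucial identity
\[
\Siii{0}{1+2\cdot\alpha}{\k}(X) = \Siii{0}{1+\alpha}{\k^+}(X).
\]
Combining this with the observation made at the beginning of the section that $\Bor{\k}(X)=\Bor{\k^+}(X)$ whenever $\k$ is singular, I get the equivalence claimed in the statement: both $\ord_{\k^+}(X)\leq 1+\alpha$ and $\ord_{\k}(X)\leq 1+2\cdot\alpha$ are, by definition, just the assertion that the class displayed above coincides with $\Bor{\k}(X)=\Bor{\k^+}(X)$.

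For the \enquote{in particular} statement, I will use the standard ordinal-arithmetic facts that for every limit ordinal $\alpha$ we have $1+\alpha=\alpha$ and $2\cdot\alpha=\alpha$, and moreover $\beta<\alpha$ implies $1+2\cdot\beta<\alpha$ (all easy from the Cantor normal form of $\alpha$, since a limit $\alpha$ has the form $\omega^{\alpha_1}\cdot c_1+\dots+\omega^{\alpha_k}\cdot c_k$ with $\alpha_k\geq 1$). Suppose first that $\ord_{\k^+}(X)=\alpha$ is limit. Applying the main equivalence with parameter $\alpha$ gives $\ord_{\k}(X)\leq 1+2\cdot\alpha=\alpha$. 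If we had the strict inequality $\ord_{\k}(X)=\beta<\alpha$, then $\ord_{\k}(X)\leq 1+2\cdot\beta$, so the equivalence would yield $\ord_{\k^+}(X)\leq 1+\beta<\alpha$, contradicting $\ord_{\k^+}(X)=\alpha$. The case where $\ord_{\k}(X)=\alpha$ is limit is handled symmetrically by running the same argument with the roles of the two hierarchies swapped.

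There is no real obstacle here once Theorem~\ref{hierarchy_theorem} is in hand: both parts are essentially one-line consequences of the identity above. The only mild subtlety is the bookkeeping of ordinal arithmetic in the \enquote{in particular} direction, where one must verify $1+\alpha=\alpha$, $2\cdot\alpha=\alpha$, and $1+2\cdot\beta<\alpha$ for $\beta<\alpha$ with $\alpha$ limit, but all these are routine.
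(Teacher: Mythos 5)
Your proof is correct and follows exactly the intended derivation: the paper states this as an immediate consequence of Theorem~\ref{hierarchy_theorem}\ref{hierarchy_theorem_even}, using precisely the identity \( \Siii{0}{1+2\cdot\alpha}{\k}(X)=\Siii{0}{1+\alpha}{\k^+}(X) \) (valid since \( 2\cdot\alpha \) is even with \( \frac{2\cdot\alpha}{2}=\alpha \)) together with \( \Bor{\k}(X)=\Bor{\k^+}(X) \) and the ordinal-arithmetic facts you list. The only micro-point worth noting is that when the limit ordinal \( \a \) equals \( \k^+ \) the displayed equivalence cannot be invoked with parameter \( \a \) itself, but there the first inequality is trivial and your contradiction argument with the smaller parameter \( \b \) already covers that case.
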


Restricting the attention to infinite ordinals \( \a \), and using Proposition~\ref{prop:embed_T_0_spaces_into_Cantor} and Theorem~\ref{thm:non-collpase_general_space}, we also get:

\begin{cor}\label{collapse_k}
Suppose that \(\k\) is \emph{singular}. Then the \( \k \)-Borel hierarchy on \( X \) collapses if and only if the same happens for the \( \k^+ \)-Borel hierarchy on \( X \).

In particular, the \( \k \)-Borel hierarchy does not collapse on \( \pre{\k}{2} \) and all other spaces \( X \) such that there is a $\kappa^+$-Borel embedding of $\pre{\kappa}{2}$ into $X$.
\end{cor}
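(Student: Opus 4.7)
The plan is to derive the equivalence from Corollary \ref{cor:ordersingular}, which asserts that under the hypothesis $\Siii{0}{1}{\kappa}(X) \subseteq \Siii{0}{2}{\cof(\kappa)^+}(X)$ one has $\ord_{\kappa^+}(X) \leq 1+\alpha$ if and only if $\ord_{\kappa}(X) \leq 1+2\cdot \alpha$, for every $\alpha < \kappa^+$. Assuming this equivalence is available for $X$, both directions of the collapse statement are immediate: if $\ord_{\kappa^+}(X) \leq 1+\alpha$ for some $\alpha < \kappa^+$ then $\ord_{\kappa}(X) \leq 1+2\cdot \alpha < \kappa^+$ by regularity of $\kappa^+$, and symmetrically for the converse.

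The main obstacle is that the technical hypothesis $\Siii{0}{1}{\kappa}(X) \subseteq \Siii{0}{2}{\cof(\kappa)^+}(X)$ is not part of the statement of Corollary \ref{collapse_k}, so one must first reduce to a setting where it is automatic. This is accomplished by Proposition \ref{prop:embed_T_0_spaces_into_Cantor}: applied with $\lambda = \kappa$, and noting that $\mu = 2^{<\kappa} = \kappa$, it provides a refined topology $\tau'$ on $X$ under which $X$ embeds into $\pre{\kappa}{2}$, while simultaneously preserving both $\Bor{\kappa^+}(X)$ and $\Bor{\kappa}(X)$ as well as every infinite level of both hierarchies (the proposition is applied twice, once with $\gamma = \kappa^+$ and once with $\gamma = \kappa$, the latter admissible precisely because $\kappa$ is singular). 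One then checks the essentially routine fact that the collapse of either hierarchy is invariant under this refinement: any collapse at some level $\alpha < \kappa^+$ implies $\Bor{\gamma}(X) \subseteq \Siii{0}{\max\{\alpha,\omega\}}{\gamma}(X)$, a condition preserved by the passage between $\tau$ and $\tau'$ since it concerns only $\Bor{\gamma}(X)$ and an infinite level. Once $X \subseteq \pre{\kappa}{2}$, Fact \ref{fact:open=unionclopen} supplies the missing hypothesis and Corollary \ref{cor:ordersingular} applies to conclude.

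For the \emph{In particular} clause, Corollary \ref{collapse_k+} directly gives the non-collapse of the $\kappa^+$-Borel hierarchy on $\pre{\kappa}{2}$, and Theorem \ref{thm:non-collpase_general_space} extends this to every space $X$ admitting a $\kappa^+$-Borel embedding of $\pre{\kappa}{2}$. The first half of the corollary, just established, then transfers this non-collapse to the $\kappa$-Borel hierarchy on all such spaces.
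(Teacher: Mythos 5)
Your proposal is correct and follows essentially the same route as the paper, which also derives the equivalence from Corollary~\ref{cor:ordersingular} by restricting to infinite levels and using Proposition~\ref{prop:embed_T_0_spaces_into_Cantor} to reduce to subspaces of \( \pre{\kappa}{2} \), where Fact~\ref{fact:open=unionclopen} supplies the hypothesis \( \Siii{0}{1}{\k}(X) \subseteq \Siii{0}{2}{\cof(\k)^+}(X) \), and then invokes Theorem~\ref{thm:non-collpase_general_space} for the final clause.
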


We now move to the closure properties of the pointclasses appearing in the \( \k \)-Borel hierarchy on \( X \).

\begin{prop} \label{k_hierarchy_closure1} 
Suppose that $\k$ is \emph{singular}, and that \( \Siii{0}{1}{\k}(X) \subseteq \Siii{0}{2}{\cof(\k)^+}(X) \). 
Given \( 0<\a < \k^+ \), let \( \widehat{\a} = \cof(\a) \) if \( \a \) is limit, and \( \widehat{\a} = \cof(\kappa) \) otherwise.  Then:
\begin{enumerate-(1)}
\item \label{k_hierarchy_closure1-1}
If \( \a \) is even, then \(\Siii{0}{1+\a}{\k}(X)\) is closed under unions of size \( \k\)  and intersections of size smaller than \( \widehat{\a} \). Therefore \(\Piii{0}{1+\a}{\k}(X) \) is closed under intersections of size \( \k\)  and unions of size smaller than \( \widehat{\a} \), and \(\Deee{0}{1+\a}{\k}(X)\) is a \( \widehat{\a} \)-algebra.
\item \label{k_hierarchy_closure1-2}
If \(\a\) is odd, then \(\Siii{0}{1+\a}{\k}(X)= \Piii{0}{1+\a}{\k}(X)=\Deee{0}{1+\a}{\k}(X)\) is a \( \cof(\k) \)-algebra.
\end{enumerate-(1)}
Furthermore, the same is true for \( \a = 0 \) if \( X \) is \( \cof(\k) \)-additive.%
\footnote{Recall that, more generally, the closure properties of \( \Siii{0}{1}{\k^+}(X) \) under intersections (and of \( \Piii{0}{1}{\k^+}(X) \) under unions) are completely determined by the additivity of the space.}
\end{prop}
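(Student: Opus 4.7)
The plan is to reduce each closure statement to Proposition~\ref{k+_hierarchy_closure} via the translation supplied by Theorem~\ref{hierarchy_theorem}, so that the whole argument becomes essentially ordinal bookkeeping. The \( \a = 0 \) case under the extra assumption that \( X \) is \( \cof(\k)\)-additive is treated directly: \( \Siii{0}{1}{\k}(X) \) consists of all open sets, which is closed under arbitrary unions and, by \( \cof(\k) \)-additivity, under intersections of size strictly less than \( \cof(\k) = \widehat{0} \). The remaining closure properties for \( \Piii{0}{1}{\k}(X) \) and \( \Deee{0}{1}{\k}(X) \) then follow by complementation.

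For part~\ref{k_hierarchy_closure1-1}, i.e.\ \( \a > 0 \) even, Theorem~\ref{hierarchy_theorem}\ref{hierarchy_theorem_even} identifies \( \Siii{0}{1+\a}{\k}(X) \) with \( \Siii{0}{\a'}{\k^+}(X) \), where \( \a' = 1 + \frac{\a}{2} \). Since \( \a \geq 2 \) we have \( \a' \geq 2 \), so Proposition~\ref{k+_hierarchy_closure}\ref{k+_hierarchy_closure-1} applies and delivers closure under unions of length \( \k \) and intersections of size strictly less than \( \widehat{\a'} \) (in the sense of that proposition). The only thing to verify is that \( \widehat{\a'} \) matches \( \widehat{\a} \) (in the sense of the current statement). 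If \( \a \) is limit, then \( \frac{\a}{2} = \a \) and \( 1+\a = \a \), so \( \a' = \a \) is a limit ordinal and \( \widehat{\a'} = \cof(\a) = \widehat{\a} \). If instead \( \a \) is a successor, write \( \a = \g + 2n \) with \( n \geq 1 \) and \( \g = 0 \) or \( \g \) a limit ordinal; then \( \a' = 1 + \g + n \) is again a successor, so both \( \widehat{\a'} \) and \( \widehat{\a} \) equal \( \cof(\k) \). The dual closure properties of \( \Piii{0}{1+\a}{\k}(X) \) and the \( \widehat{\a} \)-algebra structure of \( \Deee{0}{1+\a}{\k}(X) \) then follow from the \( \Sigma \) statement by complementation.

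For part~\ref{k_hierarchy_closure1-2}, when \( \a \) is odd, Theorem~\ref{hierarchy_theorem}\ref{hierarchy_theorem_odd} yields \( \Siii{0}{1+\a}{\k}(X) = \Piii{0}{1+\a}{\k}(X) = \Deee{0}{1+\a}{\k}(X) \), so the class is already closed under complements. Closure under \( <\cof(\k) \)-sized unions is provided by Remark~\ref{rmk:easyclosure} (because \( \k \) is singular), and closure under \( <\cof(\k) \)-sized intersections then follows by complementation. Hence the class is a \( \cof(\k) \)-algebra. The only conceptually delicate step in the whole argument is the ordinal-arithmetic matching \( \widehat{\a'} = \widehat{\a} \) in the even case; I expect no further obstacle.
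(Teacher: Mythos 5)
Your proof is correct and follows essentially the same route as the paper: the even case is reduced to Proposition~\ref{k+_hierarchy_closure} via Theorem~\ref{hierarchy_theorem}\ref{hierarchy_theorem_even} (the paper compresses your ordinal-arithmetic check into the single observation that \( \a \) is limit if and only if \( 1+\frac{\a}{2} \) is limit), and the odd case follows from Theorem~\ref{hierarchy_theorem}\ref{hierarchy_theorem_odd} together with Remark~\ref{rmk:easyclosure}. Your explicit verification that \( \widehat{\a'}=\widehat{\a} \) and your direct treatment of the \( \a=0 \) case simply spell out details the paper leaves to the reader.
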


\begin{proof}
Part~\ref{k_hierarchy_closure1-1} follows from Theorem~\ref{hierarchy_theorem}\ref{hierarchy_theorem_even} and Proposition~\ref{k+_hierarchy_closure}, once we notice that \( \alpha \) is limit if and only if \( 1+\frac{\a}{2} \) is limit. Part~\ref{k_hierarchy_closure1-2} follows instead from Theorem~\ref{hierarchy_theorem}\ref{hierarchy_theorem_odd} and Remark~\ref{rmk:easyclosure}.
The additional part concerning the case \( \a = 0 \) is obvious.
\end{proof}

We will show in Proposition~\ref{k_hierarchy_closure2} that the closure properties stated in Proposition~\ref{k_hierarchy_closure1} are optimal. In order to do this, we first need to consider properness of the \( \k \)-Borel hierarchy, and determine when the inclusions in~\eqref{eq:inclusionswhenincreasing} are strict for \( \g = \k \).
The following is the analogue of Corollary~\ref{cor:proper_for_k+} for the \( \k \)-Borel hierarchy.

\begin{proposition} \label{prop:proper_for_k} 
Suppose that \( \k \) is \emph{singular},
and that \( \Siii{0}{1}{\k}(X) \subseteq \Siii{0}{2}{\cof(\k)^+}(X) \).
Let \( \a < \k^+ \) be such that \( 1+ \a < \ord_{\k}(X) \).  
\begin{enumerate-(1)}
\item\label{prop:proper_for_k-1} 
If \( \a \) is even, then \( \Deee{0}{1+\a}{\k}(X) \subsetneq \Siii{0}{1+\a}{\k}(X) \) (equivalently, \( \Deee{0}{1+\a}{\k}(X) \subsetneq \Piii{0}{1+\a}{\k}(X) \)).
\item\label{prop:proper_for_k-2} 
  \( \bigcup_{\b < \a} (\Siii{0}{1+\b}{\k}(X) \cup \Piii{0}{1+\b}{\k}(X)) \subsetneq \Deee{0}{1+\a}{\k}(X). \)
  \end{enumerate-(1)}
\end{proposition}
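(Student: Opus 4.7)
The strategy is to combine the hierarchy correspondence from Theorem~\ref{hierarchy_theorem} with the \( \k^+ \)-Borel analogue (Corollary~\ref{cor:proper_for_k+}), handling separately the selfdual \( \k \)-Borel levels that arise when \( \k \) is singular.

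Part~\ref{prop:proper_for_k-1} is a direct translation. Since \( \a \) is even and \( 1+\a<\ord_\k(X) \), Corollary~\ref{cor:ordersingular} gives \( 1+\frac{\a}{2}<\ord_{\k^+}(X) \); then Corollary~\ref{cor:proper_for_k+}\ref{cor:proper_for_k+-1} supplies \( A\in\Siii{0}{1+\frac{\a}{2}}{\k^+}(X)\setminus\Piii{0}{1+\frac{\a}{2}}{\k^+}(X) \), and Theorem~\ref{hierarchy_theorem}\ref{hierarchy_theorem_even} identifies this with a set in \( \Siii{0}{1+\a}{\k}(X)\setminus\Deee{0}{1+\a}{\k}(X) \).

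Part~\ref{prop:proper_for_k-2} is proved by induction on \( \a \), closely mimicking the argument of Proposition~\ref{prop:proper_for_k+} but within the \( \k \)-Borel hierarchy. The base cases \( \a\leq 1 \) go as in that proof, using Lemma~\ref{lem:door_spaces} and that \( \ord_\k(X)>2 \) forces \( X \) to have at least two non-isolated points. For the successor step \( \a=\a'+1\geq 2 \), apply Proposition~\ref{prop:CB_for_hierarchy_collapse_successor_case} with \( \g=\k \), whose hypotheses are met by the assumed increasingness of the \( \k \)-Borel hierarchy on \( X \) and the closure properties from Proposition~\ref{k_hierarchy_closure1}. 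This yields a decomposition \( X=C\cup P \) such that \( \ord_\k(B\cap P)>1+\a' \) for every basic open \( B \) meeting \( P \). Pick disjoint basic open \( B_0,B_1 \) meeting \( P \), and assemble \( A=A_0\cup A_1 \), where each \( A_i\subseteq B_i\cap P \) separates \( \Deee{0}{1+\a}{\k}(B_i\cap P) \) from \( \Siii{0}{1+\a'}{\k}(B_i\cap P)\cup\Piii{0}{1+\a'}{\k}(B_i\cap P) \): if \( \a' \) is even this is part~\ref{prop:proper_for_k-1} applied with parameter \( \a' \) on \( B_i\cap P \); if \( \a' \) is odd this is the inductive instance of part~\ref{prop:proper_for_k-2} applied at the strictly smaller level \( 1+\a'<1+\a \). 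For limit \( \a \), reduce to \( X\subseteq\pre{\k}{2} \) via Proposition~\ref{prop:embed_T_0_spaces_into_Cantor} (so that open sets decompose into \( \cof(\k) \)-many clopen pieces, as in Fact~\ref{fact:open=unionclopen}), fix an increasing sequence of \emph{even} ordinals \( \seq{\a_j}{j<\cof(\a)} \) cofinal in \( \a \), build a family \( \seq{C_j}{j<\cof(\a)} \) of pairwise disjoint clopen subsets of \( X \) with \( \ord_\k(C_j)>1+\a_j \) (arguing as in the limit case of Proposition~\ref{prop:proper_for_k+}), apply part~\ref{prop:proper_for_k-1} inside each \( C_j \) to pick \( A_j\in\Siii{0}{1+\a_j}{\k}(C_j)\setminus\Piii{0}{1+\a_j}{\k}(C_j) \), and glue via \( A=\bigcup_j A_j\in\Deee{0}{1+\a}{\k}(X) \) using Proposition~\ref{k_hierarchy_closure1}. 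In both cases, restriction of \( A \) to the relevant piece witnesses \( A\notin\bigcup_{\b<\a}(\Siii{0}{1+\b}{\k}(X)\cup\Piii{0}{1+\b}{\k}(X)) \).

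The main obstacle is the selfdual-level subcase of the successor step: when \( \a' \) is odd, the classes \( \Siii{0}{1+\a'}{\k}(B_i\cap P) \) and \( \Piii{0}{1+\a'}{\k}(B_i\cap P) \) coincide, so the standard trick of pairing a \( \Siii \)-witness with a \( \Piii \)-witness breaks down. The induction on \( \a \) and appeal to part~\ref{prop:proper_for_k-2} at a lower level in place of part~\ref{prop:proper_for_k-1} is the key device that circumvents this difficulty, and is what makes the singular case genuinely more subtle than the regular-\( \k \) analogue in Proposition~\ref{prop:proper_for_k+}.
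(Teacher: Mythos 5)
Part~\ref{prop:proper_for_k-1}, the base cases \( \a\leq 1 \), and the successor case in which the predecessor \( \a' \) is \emph{even} (where the usual pairing of a \( \Siii{0}{1+\a'}{\k}\setminus\Piii{0}{1+\a'}{\k} \) witness with a \( \Piii{0}{1+\a'}{\k}\setminus\Siii{0}{1+\a'}{\k} \) witness on two disjoint pieces of the decomposition works) all match the paper's proof. Your limit case is workable with some adaptation, though the paper gets it for free: for limit \( \a \) one has \( 1+\a=\a \), \( \Deee{0}{1+\a}{\k}(X)=\Deee{0}{\a}{\k^+}(X) \), and the union of the lower \( \k \)-Borel classes equals the union of the lower \( \k^+ \)-Borel classes, so Corollary~\ref{cor:proper_for_k+}\ref{cor:proper_for_k+-2} applies directly.

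The genuine gap is the even-successor case, \( \a=\b+2 \) with \( \b \) even, so that the predecessor \( \a'=\b+1 \) is odd and the level \( 1+\a' \) is selfdual --- exactly the obstacle you flag. Your remedy (invoke the inductive instance of part~\ref{prop:proper_for_k-2} at level \( \a' \)) cannot work: that instance produces a set \( A_i\in\Deee{0}{1+\a'}{\k}(B_i\cap P) \) which only avoids the classes strictly below \( \a' \), whereas the class to be escaped, \( \bigcup_{\b'<\a}\bigl(\Siii{0}{1+\b'}{\k}(X)\cup\Piii{0}{1+\b'}{\k}(X)\bigr) \), equals (by increasingness and selfduality) the class \( \Siii{0}{1+\a'}{\k}(X)=\Deee{0}{1+\a'}{\k}(X) \) itself. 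Since this class is at least an \( \o \)-algebra and the pointclasses are hereditary, your glued set \( A=A_0\cup A_1 \) lies in \( \Siii{0}{1+\a'}{\k}(X) \), i.e.\ inside the very union it was meant to avoid; no witness imported from a strictly lower level can escape the selfdual level \( \a' \). What is actually needed is a \( \Deee{0}{1+\a}{\k}(X) \) set outside \( \Siii{0}{1+\b+1}{\k}(X)=\bigcup_{\l<\k}\left(\Piii{0}{1+\b}{\k}(X)\right)_{\sigma_\l} \), and the paper obtains it by a diagonalization over the union-lengths \( \l<\k \): it partitions \( X \) minus a point into \( \cof(\k) \)-many \( \Deee{0}{3}{\k} \) pieces of \( \k \)-order greater than \( 1+\a \) (Claim~\ref{claim:newtakeonevenordinals}), shows that any subspace of order \( >1+\a \) admits, for each \( \l<\k \), a \( \Siii{0}{1+\b+1}{\k} \) set not in \( \left(\Piii{0}{1+\b}{\k}\right)_{\sigma_\l} \) (Claim~\ref{claim:prop:Delta_alpha_eq_Delta_alphaplus1_iff_collapse}), picks on the \( i \)-th piece a witness escaping \( \l=\k_i \) for a sequence cofinal in \( \k \), and checks that the union of these witnesses is \( \Deee{0}{1+\a}{\k} \) yet in no \( \left(\Piii{0}{1+\b}{\k}\right)_{\sigma_\l} \), hence not \( \Siii{0}{1+\b+1}{\k} \). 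This cofinal family of increasingly strong witnesses is the missing idea in your proposal. (A secondary defect in the same subcase: Proposition~\ref{prop:CB_for_hierarchy_collapse_successor_case} cannot be applied at level \( 1+\a' \), since \( \Siii{0}{1+\a'}{\k}(X) \) is not closed under \( \k \)-sized unions at odd levels; one could decompose at the even level \( 1+\a \) instead, but this does not repair the main problem above.)
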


\begin{proof}
Part~\ref{prop:proper_for_k-1} follows from
Theorem~\ref{hierarchy_theorem}\ref{hierarchy_theorem_even} and Corollary~\ref{cor:proper_for_k+}.

The proof of part~\ref{prop:proper_for_k-2} follows the one of Proposition~\ref{prop:proper_for_k+}.
For \( \a = 0 \), it is enough to observe that \( \Deee{0}{1}{\k}(X) \neq \emptyset \), as witnessed by \( X \).
If $\alpha=1$, so that $\ord_{\k}(X)>2$, the result follows from Lemma~\ref{lem:door_spaces}.
Assume now \( \a \geq 2 \). 

If \( \a > 1 \) is odd, let \( \a' \) be its predecessor, so that \( \a' \) is an even ordinal not smaller than \( 2 \). By Proposition~\ref{k_hierarchy_closure1}\ref{k_hierarchy_closure1-1}, the class \( \Siii{0}{1+\a'}{\k}(X) \) is closed under unions of size \( \k \). Apply Proposition~\ref{prop:CB_for_hierarchy_collapse_successor_case} to the ordinals \( 1 = \beta < 1+\a' < \k^+ \) for \(\g=\l=\k\) and \( \B \) any basis for \( X \) satisfying \( |\B| \leq \k \), and let \( P \subseteq X \) be the closed set obtained this way. Then,  \( \ord_\k (B \cap P) > 1+ \a' \) for every \( B \in \B \) such that \( B \cap P \neq \emptyset \), and since \( \ord_\k(X) > 1+ \a'  \), we have \( P \neq \emptyset\) and \( \ord_\k(P) > 1+ \a' \), which in particular implies that \( P \) contains at least two distinct points \( x_0 \) and \( x_1 \). 
Let \( B_0,B_1 \in \B \) be disjoint open sets such that \( x_0 \in B_0 \) and \( x_1 \in B_1 \). Since \( \a' \) is even, by Proposition~\ref{prop:conditions_collapse} and Theorem~\ref{hierarchy_theorem}\ref{hierarchy_theorem_even} there are \( A_0 \in \Siii{0}{1+\a'}{\k}(B_0 \cap P) \setminus \Piii{0}{1+\a'}{\k}(B_0 \cap P) \) and \( A_1 \in \Piii{0}{1+\a'}{\k}(B_1 \cap P) \setminus \Siii{0}{1+\a'}{\k}(B_1 \cap P) \). Let \( A = A_0 \cup A_1 \). 
Since \( \Siii{0}{1+\a'}{\k} \) and \( \Piii{0}{1+\a'}{\k} \) are hereditary, and since \( \Deee{0}{1+\a}{\k}(X) \) is at least an \( \omega \)-algebra by Remark~\ref{rmk:closureunderfinite operations},
 one easily gets \( A \in \Deee{0}{1+\a}{\k}(X) \). 
Moreover, \( A \in \bigcup_{\b<\a} (\Siii{0}{1+\b}{\k}(X) \cup \Piii{0}{1+\b}{\k}(B)) = \Siii{0}{1+\a'}{\k}(X) \cup \Piii{0}{1+\a'}{\k}(X) \) is forbidden by the choice of \( A_0 \) and \( A_1\), hence we are done.

Let us move to the even case.
Consider first the case when \( \a \) is a successor ordinal, so that \( \a = \b+2 \) for some even ordinal \( \b \geq 2 \).
Fix any increasing sequence of ordinals \( \seq{\k_i}{i < \cof(\k)} \) cofinal in \( \k \).

\begin{claim}\label{claim:newtakeonevenordinals}
There is \( x \in X \) and a partition \( \seq{B_i}{i < \cof(\k)} \) of \( X \setminus \{ x \} \) such that \( B_i \in \Deee{0}{3}{\k}(X) \) and \( \ord_\k(B_i) > 1+\a \) for every \( i < \cof(\k) \).
\end{claim}

\begin{proof}[Proof of the Claim]
Let \( f \colon X \to \pre{\k}{2}\) be defined as in the proof of Proposition~\ref{prop:embed_T_0_spaces_into_Cantor}, so that \( f \) is injective and \( \Siii{0}{3}{\k} \)-measurable (because \( \weight(\pre{\k}{2}) = 2^{< \k} = \k \) and \( \Siii{0}{3}{\k}(X) \) is closed under unions of size \( \k \) by Proposition~\ref{k_hierarchy_closure1}). Let 
\[ 
S = \{ s \in \pre{<\k}{2} \mid \ord_{\k}( f^{-1}(\clopen{s})) \leq 1+ \a \} , 
\]
and let \( O = \bigcup \{  \clopen{s} \mid s \in S \} \). 
 
If \( C = f[X] \setminus O \) were discrete, then it would have size at most \( \k \), so that \( |f^{-1}(C)| \leq \k  \) too. This would mean that \( f^{-1}(C) \in \Siii{0}{2}{\k^+}(X) = \Siii{0}{3}{\k}(X) \), and also \( \ord_\k(f^{-1}(C)) \leq 3 \). In particular, \( \{ f^{-1}(\clopen{s}) \mid s \in S \} \cup \{ f^{-1}(C) \} \) would be a \( \k \)-sized covering of \( X \) consisting of \( \Siii{0}{3}{\k} \)-subspaces with order at most \( 1+ \a \), and hence \( \ord_\k(X) \leq 1+ \a \) because \( \Siii{0}{3}{\k}(X) \subseteq \Siii{0}{1+\a}{\k}(X) \) and \( \Siii{0}{1+\a}{\k}(X) \) is closed under finite intersections and unions of size \( \k \) (Proposition~\ref{k_hierarchy_closure1}). This contradicts the fact that \( 1 + \a < \ord_\k(X) \), and so we conclude that \( C \) is not discrete.

Let \( y \in C \) be a non-isolated point in \( C \), and set \( x = f^{-1}(y) \). 
Recursively construct a sequence of ordinals \( \seq{\b_i}{i < \cof(\k)} \) cofinal in \( \k \) such that 
$U_i \cap C \neq \emptyset$ for \( U_i =  \clopen{y\restriction \sup_{j < i} \b_j}\setminus \clopen{y\restriction \b_i} \).
Finally, let \( B_i = f^{-1}(U_i) \). 
Then \( B_i \in \Deee{0}{3}{\k}(X) \) because \( U_i \) is clopen, and \( \seq{B_i}{i < \cof(\k)} \) is a partition of \( X \setminus \{ x \} \) because \( \seq{U_i}{i < \cof(\k)} \) is a partition of \( \pre{\k}{2} \setminus \{ y \}  \). Moreover, since the clopen set \( U_i \) intersects \( C \), there is $s\in \pre{\k}{2}\setminus S$ such that $\clopen{s}\subseteq U_i$. Therefore \( f^{-1}(\clopen{s}) \subseteq B_i\), and hence \( \ord_\k(B_i) > 1+\a \) by \( s \notin S \) and Proposition~\ref{prop:abstractpropertiesofcollapse}\ref{prop:abstractpropertiesofcollapse-1}.
\end{proof}

\begin{claim} \label{claim:prop:Delta_alpha_eq_Delta_alphaplus1_iff_collapse}
If \( \ord_\k(Y) > 1 + \a \), then for every cardinal \( \l < \k \) there is \( C \in \Siii{0}{1+\b+1}{\k}(Y) \setminus \left(\Piii{0}{1+\beta}{\kappa} (Y)\right)_{\sigma_{\lambda}} \).
\end{claim}

\begin{proof}[Proof of the Claim]
Suppose not, so that \( \Siii{0}{1+\b+1}{\k}(Y) = \left( \Piii{0}{1+\b}{\k}(Y) \right)_{\sigma_\l} \). Since \( \b+1 \) is odd, by Theorem~\ref{hierarchy_theorem} we have \( \Siii{0}{1+\b+1}{\k}(Y) = \Piii{0}{1+\b+1}{\k}(Y) \) and \( \Siii{0}{1+\a}{\k}(Y) = \left(  \Piii{0}{1+\b+1}{\k}(Y) \right)_{\sigma_{\cof(\k)}}\). Therefore \( \Siii{0}{1+\a}{\k}(Y) \subseteq \left( \Piii{0}{1+\b}{\k}(Y) \right)_{\sigma_\mu}\) for \( \mu = \l \cdot \cof(\k) < \k \), hence \( \Siii{0}{1+\a}{\k}(Y) = \Siii{0}{1+\b+1}{\k}(Y) = \Deee{0}{1+\b+1}{\k}(Y)  \) is a \( \k \)-algebra (in fact, a \( \k^+ \)-algebra) because \( \Siii{0}{1+\a}{\k}(Y) \) is closed under unions of size \( \k \) by Proposition~\ref{k_hierarchy_closure1}\ref{k_hierarchy_closure1-1}. It follows that \( \Bor{\k}(Y) \subseteq \Siii{0}{1+\b+1}{\k}(Y) \), contradicting \( \ord_\k(Y) > 1+ \a \).
\end{proof}

Let \( \seq{B_i}{i < \cof(\k)} \) be as in Claim~\ref{claim:newtakeonevenordinals}.
For each \( i < \cof(\k) \), apply Claim~\ref{claim:prop:Delta_alpha_eq_Delta_alphaplus1_iff_collapse} with \( Y = B_i \) and \( \l = \k_i \) in order to find \( A_i \in \Siii{0}{1+\b+1}{\k}(B_i ) \setminus \left(\Piii{0}{1+\beta}{\kappa} (B_i )\right)_{\sigma_{\k_i}} \). Let \( A = \bigcup_{i < \cof(\k)} A_i \): we claim that \( A \in \Deee{0}{1+\a}{\k}(X) \setminus \Siii{0}{1+\b+1}{\k}(X) \), which is enough because \( \b+1 \) is odd and hence \( \Siii{0}{1+\b+1}{\k}(X) = \Piii{0}{1+\b+1}{\k}(X) \) by Theorem~\ref{hierarchy_theorem}\ref{hierarchy_theorem_odd}. 
On the one hand, using the fact that \( \Siii{0}{1+\b+1}{\k}(X) \) is hereditary and the closure properties of \( \Siii{0}{1+\a}{\k}(X) \) provided in Proposition~\ref{k_hierarchy_closure1}\ref{k_hierarchy_closure1-1}, it is easy to see that both \( A \) and
\[
X \setminus A = \{ x \} \cup \bigcup\nolimits_{i < \cof(\k)} \left( B_i \setminus A_i \right)
\]
belong to \( \Siii{0}{1+\a}{\k}(X) \) because \( 1+ \a \geq 3 \).
On the other hand, if \( A \in \Siii{0}{1+\b+1}{\k}(X) \), then there would be \( \l < \k \) such that \( A \in \left( \Piii{0}{1+\b}{\k}(X) \right)_{\sigma_{\l}}\). Let \( i < \cof(\k) \) be such that \( \l \leq \k_i \). 
Then \( A_i = A \cap B_i  \in \left( \Piii{0}{1+\b}{\k}(B_i ) \right)_{\sigma_{\k_i}} \), against the choice of \( A_i \).

Finally, let \( \a \) be limit. Since in this case \(  1+ \a = 1 + 2 \cdot \a \), we have \( \ord_{\k^+}(X) > 1+\a = \a \) by \( \ord_\k(X) > 1+\a \) and Corollary~\ref{cor:ordersingular}. Moreover, since \( \frac{\a}{2} = \a \) we have \( \Deee{0}{1+\a}{\k}(X)  = \Deee{0}{\a}{\k^+}(X) \) and 
\[ 
\bigcup_{\b < \a} (\Siii{0}{1+\b}{\k}(X) \cup \Piii{0}{1+\b}{\k}(X)) = \bigcup_{1 \leq \b < \a} (\Siii{0}{\b}{\k^+}(X) \cup \Piii{0}{\b}{\k^+}(X))
\]
by the fact that the \( \k \)-Borel hierarchy is increasing and Theorem~\ref{hierarchy_theorem}\ref{hierarchy_theorem_even}.
Therefore the result follows from Corollary~\ref{cor:proper_for_k+}\ref{cor:proper_for_k+-2}.
        \end{proof}

At the current stage, it is not yet clear whether it can happen that \( 2 < \ord_\k(X)  < \k^+\) if \( \o < \cof(\k) < \k \). Nevertheless, we record the following result that complements Proposition~\ref{prop:proper_for_k}\ref{prop:proper_for_k-2} for the case $\a =\ord_{\k}(X)$ and is the
analogue of Corollary~\ref{cor:proper_for_k+_final_level} for the $\k$-Borel hierarchy.

\begin{proposition}\label{prop:proper_for_k_alpha=ord} 
Suppose that \( \k \) is \emph{singular},
and that \( \Siii{0}{1}{\k}(X) \subseteq \Siii{0}{2}{\cof(\k)^+}(X) \).
The following are equivalent:
\begin{enumerate-(1)}
    \item\label{prop:proper_for_k_alpha=ord-1}  there is $A\in \Deee{0}{\ord_{\kappa}(X)}{\k}(X)\setminus \bigcup_{1 \leq \b < \ord_{\kappa}(X)} \left( \Siii{0}{\b}{\k}(X) \cup \Piii{0}{\b}{\k}(X) \right)$,
    \item\label{prop:proper_for_k_alpha=ord-2} $X$ does not have exactly one non-isolated point.
\end{enumerate-(1)}
In particular, \ref{prop:proper_for_k_alpha=ord-1} holds whenever $\ord_{\kappa^+}(X)\neq 2$.
\end{proposition}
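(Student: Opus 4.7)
The plan is to prove the equivalence by a case analysis on $\a = \ord_\k(X)$, leveraging the corresponding result for the $\k^+$-Borel hierarchy (Corollary~\ref{cor:proper_for_k+_final_level}) and translating via Theorem~\ref{hierarchy_theorem} and the order comparison in Corollary~\ref{cor:ordersingular}.

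For the easy direction $(1) \Rightarrow (2)$, I argue by contrapositive. If $X$ has exactly one non-isolated point, then by Lemma~\ref{lem:door_spaces} we have $\Bor{\k}(X) = \powset(X) = \Siii{0}{1}{\k}(X) \cup \Piii{0}{1}{\k}(X)$ and $\ord_\k(X) = 2$; in particular $\Deee{0}{\ord_\k(X)}{\k}(X) \setminus \bigcup_{1 \leq \b < \ord_\k(X)} (\Siii{0}{\b}{\k}(X) \cup \Piii{0}{\b}{\k}(X))$ is empty, so (1) fails.

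For $(2) \Rightarrow (1)$, I split into four subcases. If $\a = 1$ (so $X$ is discrete), then $A = X \in \Deee{0}{1}{\k}(X)$ works trivially since the set of lower classes is empty. If $\a = 2$, Corollary~\ref{cor:ordersingular} yields $\ord_{\k^+}(X) = 2$, so Corollary~\ref{cor:proper_for_k+_final_level} applied with hypothesis (2) produces $A \in \Deee{0}{2}{\k^+}(X) \setminus (\Siii{0}{1}{\k^+}(X) \cup \Piii{0}{1}{\k^+}(X))$; Theorem~\ref{hierarchy_theorem} (at levels $1$ and $2$) identifies these with the corresponding $\k$-classes and so yields the required set. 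If $\a \geq 3$ is a successor and the level below is $\k$-selfdual, i.e.\ $\Siii{0}{\a-1}{\k}(X) = \Piii{0}{\a-1}{\k}(X) = \Deee{0}{\a-1}{\k}(X)$, then any $A \in \Bor{\k}(X) \setminus \Deee{0}{\a-1}{\k}(X)$ (which exists since $\ord_\k(X) = \a > \a - 1$) works: by the increasing property, all lower $\Siii{}{}{}$ and $\Piii{}{}{}$ classes collapse into $\Deee{0}{\a-1}{\k}(X)$.

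The core case is the remaining one: $\a \geq 3$ with either $\a$ a limit ordinal, or $\a$ a successor with $\Siii{0}{\a-1}{\k}(X) \neq \Piii{0}{\a-1}{\k}(X)$. Here I would first observe, using Corollary~\ref{cor:ordersingular} together with the parity analysis coming from Theorem~\ref{hierarchy_theorem}, that $\ord_{\k^+}(X) \geq 3$, so Corollary~\ref{cor:proper_for_k+_final_level} gives a set $A \in \Deee{0}{\ord_{\k^+}(X)}{\k^+}(X) = \Bor{\k^+}(X) = \Bor{\k}(X) = \Deee{0}{\a}{\k}(X)$ that is not in any lower class of the $\k^+$-hierarchy. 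I would then use Theorem~\ref{hierarchy_theorem} (specifically the identity $\Siii{0}{2\b-1}{\k}(X) = \Siii{0}{\b}{\k^+}(X)$ for $\b \geq 1$, together with $\Siii{0}{2\b}{\k}(X) \subseteq \Siii{0}{2\b+1}{\k}(X) = \Siii{0}{\b+1}{\k^+}(X)$) to show that every $\Siii{0}{\gamma}{\k}(X)$ with $\gamma < \a$ lies inside some $\Siii{0}{\b_\gamma}{\k^+}(X)$ with $\b_\gamma < \ord_{\k^+}(X)$; the dual containment for $\Piii{}{}{}$ follows by taking complements.

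The main obstacle is verifying the strict inequality $\b_\gamma < \ord_{\k^+}(X)$ in the last case. This requires a short case analysis based on the Cantor normal form of $\a$: when $\a = 2n$ ($n \geq 2$) one has $\ord_{\k^+}(X) = n+1$ and $\b_\gamma \leq n$ for $\gamma \leq 2n-1$; when $\a = \lambda + 2n+1$ with $\lambda$ limit one checks $\ord_{\k^+}(X) = \lambda + n + 1$ and $\b_\gamma \leq \lambda + n$ for $\gamma \leq \lambda + 2n$; and when $\a$ is a limit, cofinality of the map $\b \mapsto 2\b - 1$ in $\a$ gives the required bound. Each of these verifications follows directly from Corollary~\ref{cor:ordersingular}.
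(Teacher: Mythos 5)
Your proof is correct, and its skeleton coincides with the paper's: the contrapositive of \ref{prop:proper_for_k_alpha=ord-1}$\Rightarrow$\ref{prop:proper_for_k_alpha=ord-2} via Lemma~\ref{lem:door_spaces}; the trivial cases $\ord_\k(X)\in\{1,2\}$; the successor case with selfdual predecessor level, disposed of by noting that otherwise $\Bor{\k}(X)$ would already sit inside that level; and the limit case, reduced to Corollary~\ref{cor:proper_for_k+_final_level} through Corollary~\ref{cor:ordersingular} and Theorem~\ref{hierarchy_theorem}. The one genuine divergence is the successor case whose predecessor level is \emph{not} selfdual (the paper's case ``$\ord_\k(X)=1+\a$ with $\a$ odd''): the paper handles it by re-running the direct construction from the odd case of Proposition~\ref{prop:proper_for_k} (splitting a kernel $P$ obtained from Proposition~\ref{prop:CB_for_hierarchy_collapse_successor_case} into two pieces carrying a non-$\boldsymbol{\Pi}$ and a non-$\boldsymbol{\Sigma}$ set, respectively), whereas you fold it into the same reduction used for limit ordinals: compute $\ord_{\k^+}(X)$ from Corollary~\ref{cor:ordersingular}, take the witness supplied by Corollary~\ref{cor:proper_for_k+_final_level}, and verify that every $\k$-level strictly below $\ord_\k(X)$ is contained in a $\k^+$-level strictly below $\ord_{\k^+}(X)$. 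Your ordinal bookkeeping for that last verification is right (e.g.\ $\ord_\k(X)=2n$ forces $\ord_{\k^+}(X)=n+1$ while $\Siii{0}{2n-1}{\k}(X)=\Siii{0}{n}{\k^+}(X)$), so the reduction closes. The payoff of your route is uniformity --- a single argument covers both the limit and the non-selfdual successor case --- at the cost of the level arithmetic; the paper's route avoids that arithmetic but must observe separately that the odd case of the proof of Proposition~\ref{prop:proper_for_k} survives the boundary case $1+\a=\ord_\k(X)$.
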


\begin{proof}
If $X$ has exactly one non-isolated point, then $\ord_{\k}(X)=2$ and $\Deee{0}{2}{\k}(X)= \Siii{0}{1}{\k}(X) \cup \Piii{0}{1}{\k}(X)$ by Lemma~\ref{lem:door_spaces}. This shows
\ref{prop:proper_for_k_alpha=ord-1}~\( \Rightarrow \)~\ref{prop:proper_for_k_alpha=ord-2}.
 
We now show that \ref{prop:proper_for_k_alpha=ord-2}~\( \Rightarrow \)~\ref{prop:proper_for_k_alpha=ord-1}, using ideas and methods developed in the proofs of Propositions~\ref{prop:proper_for_k+} and~\ref{prop:proper_for_k}.
If \( \ord_\k(X) = 1 \), then~\ref{prop:proper_for_k_alpha=ord-1} reduces to \( \Deee{0}{1}{\k}(X) \neq \emptyset \), which is true because e.g.\ \( X \in \Deee{0}{1}{\k}(X) \). 
If \( \ord_k(X) = 2 \), then \( X \) is not discrete and thus \( X \) has at least two non-isolated points by~\ref{prop:proper_for_k_alpha=ord-2}: therefore~\ref{prop:proper_for_k_alpha=ord-1} holds by Lemma~\ref{lem:door_spaces}.

If \( \ord_\k(X) = 1+\a \) with $\a$ odd, then the result follows from the fact that the proof of Proposition~\ref{prop:proper_for_k} in the odd case works also under the assumption that \( \a \) is such that \( 1 + \a = \ord_\k(X) \).

If instead \( \ord_\k(X) = 1+\a \) for \( \a \) an even successor ordinal, then we use the fact that the predecessor \( \a' \) of \( \a \) is odd, and hence the class \( \Siii{0}{1+\b}{\k} \) is selfdual on \( X \). Thus, if \( \Deee{0}{\ord_\k(X)}{\k}(X) =  \Deee{0}{1+\a'+1}{\k}(X) \subseteq \Siii{0}{1+\a'}{\k}(X) \cup \Piii{0}{1+\a'}{\k}(X) \), then we would have \( \Bor{\k}(X) \subseteq \Siii{0}{1+\a'}{\k}(X) \cup \Piii{0}{1+\a'}{\k}(X) \subseteq \Siii{0}{1+\a'}{\k}(X) \), and hence \( \ord_\k(X) \leq 1+\a' < 1+\a \), a contradiction.

Finally, suppose that \( \ord_\k(X) = 1+\a \) for \( \a \) limit. Then \( \ord_{\k^+}(X) = \ord_\k(X) = 1+\a = \a \) by Corollary~\ref{cor:ordersingular}. Moreover, \( \Deee{0}{\ord_\k(X)}{\k}(X) = \Deee{0}{\ord_{\k^+}(X)}{\k^+}(X) \) and
\[ 
\bigcup_{1 \leq \b < \ord_\k(X)} (\Siii{0}{\b}{\k}(X) \cup \Piii{0}{\b}{\k}(X)) = \bigcup_{1 \leq \b < \ord_{\k^+}(X)} (\Siii{0}{\b}{\k^+}(X) \cup \Piii{0}{\b}{\k^+}(X))
\]
by the fact that the \( \k \)-Borel hierarchy is increasing and Theorem~\ref{hierarchy_theorem}\ref{hierarchy_theorem_even}.
Therefore the result follows from Corollary~\ref{cor:proper_for_k+_final_level}.
\end{proof}

We are now ready to prove a counterpart of Proposition~\ref{prop:conditions_collapse} for the special case of a singular cardinal \( \g = \k \) (which requires the additional assumption \( \Siii{0}{1}{\k}(X) \subseteq \Siii{0}{2}{\cof(\k)^+}(X) \)). The major difference is that condition~\ref{prop:conditions_collapse_singular-8} can be included in the list only when \( \a \) is even: for example, by Theorem~\ref{hierarchy_theorem}\ref{hierarchy_theorem_odd}, for every odd \( \a \) we have \ \( \Siii{0}{1+\a}{\k}(\pre{\k}{2}) = \Piii{0}{1+\a}{\k}(\pre{\k}{2}) \), yet the \( \k \)-Borel hierarchy on \( \pre{\k}{2}\) does not collapse by Corollary~\ref{collapse_k}.

\begin{proposition}\label{prop:conditions_collapse_singular}
Suppose that \( \k \) is \emph{singular}, and that \( \Siii{0}{1}{\k}(X) \subseteq \Siii{0}{2}{\cof(\k)^+}(X) \). 
Then for every \( \a < \k^+ \), the following are equivalent:
\begin{enumerate}[label={\upshape (\arabic*)}, leftmargin=2pc]
\item \label{prop:conditions_collapse_singular-1}
$\ord_\k(X) \leq 1+\alpha$;
\item \label{prop:conditions_collapse_singular-2}
for every \( \beta \geq \alpha\), \( \Siii{0}{1+\b}{\k}(X) = \Piii{0}{1+\b}{\k}(X) = \Deee{0}{1+\b}{\k}(X) = \Bor{\k}(X)\);
\item \label{prop:conditions_collapse_singular-3}
for some \( \beta > \alpha \), one of \( \Siii{0}{1+\a}{\k}(X) \), \( \Piii{0}{1+\a}{\k}(X) \), or \( \Deee{0}{1+\a}{\k}(X) \) coincides with one of \( \Siii{0}{1+\b}{\k}(X) \) or \( \Piii{0}{1+\b}{\k}(X) \);
\item \label{prop:conditions_collapse_singular-4}
the class $\Siii{0}{1+\alpha}{\k}(X)$ is closed under intersections of size $\k$ (equivalently, $\Piii{0}{1+\alpha}{\k}(X)$ is closed under unions of size $\k$);
\item \label{prop:conditions_collapse_singular-5}
the class $\Siii{0}{1+\alpha}{\k}(X)$ is closed under intersections shorter than $\k$ (equivalently, $\Piii{0}{1+\alpha}{\k}(X)$ is closed under unions shorter than $\k$);
\item \label{prop:conditions_collapse_singular-9}
for some \( \beta > \alpha \), one of \( \Siii{0}{1+\a}{\k}(X) \), \( \Piii{0}{1+\a}{\k}(X) \), or \( \Deee{0}{1+\a}{\k}(X) \) coincides with \( \Deee{0}{1+\b}{\k}(X) \).
\end{enumerate}
Moreover, if \( \a \geq 1 \) the above conditions are also equivalent to
\begin{enumerate}[resume*]
\item \label{prop:conditions_collapse_singular-6}
the class $\Deee{0}{1+\alpha}{\k}(X)$ is closed under intersections of size $\k$, and hence it is a \( \k^+ \)-algebra;
\item \label{prop:conditions_collapse_singular-7}
the class $\Deee{0}{1+\alpha}{\k}(X)$ is closed under intersections shorter than $\k$, and hence it is a \( \k \)-algebra.
\end{enumerate}
If instead \( \a \) is even, then we can add to the list of equivalent conditions the following one:
\begin{enumerate}[resume*]
\item \label{prop:conditions_collapse_singular-8}
the boldface pointclass $\Siii{0}{1+\alpha}{\k}$ is selfdual on \( X \), i.e.\ $\Siii{0}{1+\alpha}{\k}(X) = \Piii{0}{1+\alpha}{\k}(X)$.
\end{enumerate}
\end{proposition}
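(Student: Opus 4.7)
The plan is to reduce each of the listed conditions about the $\k$-Borel hierarchy at level $1+\alpha$ to an analogous condition about the $\k^+$-Borel hierarchy via Theorem~\ref{hierarchy_theorem}, and then to appeal to Proposition~\ref{prop:conditions_collapse} applied with $\gamma=\kappa^+$ together with Corollary~\ref{cor:ordersingular} for the translation of orders. Under the blanket assumption $\Siii{0}{1}{\k}(X) \subseteq \Siii{0}{2}{\cof(\k)^+}(X)$ the $\k$-Borel hierarchy on $X$ is increasing, so the implications $(1)\Rightarrow(2)$ and thence to each of $(3)$--$(9)$ are routine given the closure properties from Proposition~\ref{k_hierarchy_closure1}; the substantive work lies in recovering $(1)$ from each of the weaker conditions.

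In the even case, write $\alpha=2\beta$. By Theorem~\ref{hierarchy_theorem}\ref{hierarchy_theorem_even} the three pointclasses at level $1+\alpha$ of the $\k$-Borel hierarchy coincide with their $\k^+$-counterparts at level $1+\beta$, and by Corollary~\ref{cor:ordersingular}, $(1)$ is equivalent to $\ord_{\k^+}(X)\leq 1+\beta$. Conditions $(2)$, $(3)$, $(6)$, $(7)$, $(8)$, $(9)$ then translate into direct counterparts of items in Proposition~\ref{prop:conditions_collapse}, closing those equivalences immediately. The main obstacle is $(4)$ and $(5)$, which after translation only yield closure of $\Siii{0}{1+\beta}{\k^+}(X)$ under intersections of size $\k$ or size $<\k$---strictly weaker than the size $\leq\k$ required by Proposition~\ref{prop:conditions_collapse}\ref{prop:conditions_collapse-6}. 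The gap will be closed in two steps: first, $(5)\Rightarrow(4)$ via a cofinal sequence $\langle\k_j\mid j<\cof(\k)\rangle$ in $\k$ and the rewriting $\bigcap_{i<\k}A_i = \bigcap_{j<\cof(\k)}\bigcap_{i<\k_j}A_i$, where both sizes $\cof(\k)$ and $\k_j$ are $<\k$; second, $(4)$ forces closure under complements because any $A\in\Siii{0}{1+\beta}{\k^+}(X)$ can be written as $A=\bigcup_{i<\k}A_i$ with $A_i\in\Piii{0}{\gamma_i}{\k^+}(X)$ for some $\gamma_i<1+\beta$ (the case $\beta=0$ being handled by the $\cof(\k)$-additivity argument as for Proposition~\ref{k_hierarchy_closure1}), so $X\setminus A=\bigcap_{i<\k}(X\setminus A_i)$ is a $\k$-sized intersection of members of $\Siii{0}{\gamma_i}{\k^+}(X)\subseteq\Siii{0}{1+\beta}{\k^+}(X)$ and lies in the class by $(4)$. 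Combined with $\k$-sized unions (Proposition~\ref{k_hierarchy_closure1}), this makes $\Siii{0}{1+\beta}{\k^+}(X)$ a $\k^+$-algebra containing the topology, so it equals $\Bor{\k^+}(X)=\Bor{\k}(X)$ and $(1)$ follows.

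The odd case is handled analogously but with two noteworthy modifications. By Theorem~\ref{hierarchy_theorem}\ref{hierarchy_theorem_odd}, the three pointclasses at level $1+\alpha$ already coincide, which makes $(8)$ a tautology and therefore unable to imply $(1)$: e.g.\ $\ord_\k(\pre{\k}{2})=\k^+$ by Corollary~\ref{collapse_k}, yet $\Siii{0}{1+\alpha}{\k}(\pre{\k}{2})=\Piii{0}{1+\alpha}{\k}(\pre{\k}{2})$ for every odd $\alpha$. This is why $(8)$ is excluded for odd $\alpha$. For $(4)$ and $(5)$ the same cofinal decomposition gives $(5)\Rightarrow(4)$, and then $(4)$ forces closure under complements via the principle that every element of $\Siii{0}{1+\alpha}{\k}(X)$ is a $<\k$-sized union of $\Piii{0}{1+\alpha-1}{\k}(X)$-sets whose complements lie in $\Siii{0}{1+\alpha-1}{\k}(X)\subseteq\Siii{0}{1+\alpha}{\k}(X)$; the resulting class is once again a $\k^+$-algebra containing the topology. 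The residual equivalences among $(2)$, $(3)$, and $(9)$ follow from the increasing character of the hierarchy together with Propositions~\ref{prop:proper_for_k} and~\ref{prop:proper_for_k_alpha=ord}, which preclude any strict equality between distinct levels short of collapse.
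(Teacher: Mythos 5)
Your proposal is correct in substance but follows a genuinely different route from the paper's. The paper handles \ref{prop:conditions_collapse_singular-1}$\iff$\ref{prop:conditions_collapse_singular-2}$\iff$\ref{prop:conditions_collapse_singular-3}$\iff$\ref{prop:conditions_collapse_singular-6}$\iff$\ref{prop:conditions_collapse_singular-7} by applying Proposition~\ref{prop:conditions_collapse} \emph{directly with \( \g = \k \) singular} (items (1)--(7) of that proposition do not require regularity), and then closes the remaining cycle as \ref{prop:conditions_collapse_singular-2}$\Rightarrow$\ref{prop:conditions_collapse_singular-4}$\Rightarrow$\ref{prop:conditions_collapse_singular-5}$\Rightarrow$\ref{prop:conditions_collapse_singular-9}$\Rightarrow$\ref{prop:conditions_collapse_singular-1}, where the last step runs through the properness results (Propositions~\ref{prop:proper_for_k} and~\ref{prop:proper_for_k_alpha=ord}) in a two-case analysis on whether \( 1+\b \) lies below or at \( \ord_\k(X) \). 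You instead translate everything into the \( \k^+ \)-hierarchy via Theorem~\ref{hierarchy_theorem} and Corollary~\ref{cor:ordersingular}, and you recover \ref{prop:conditions_collapse_singular-1} from \ref{prop:conditions_collapse_singular-4}/\ref{prop:conditions_collapse_singular-5} by a direct De Morgan argument (write \( A \) as a \( \k \)-union of lower-level \( \Pi \) sets, intersect the complements, conclude selfduality and hence that the class is a \( \k^+ \)-algebra). That argument is sound --- closure under intersections of size exactly \( \k \) gives all sizes \( \leq\k \) by padding, and the needed union-closure comes from Proposition~\ref{k_hierarchy_closure1} --- and it buys you a properness-free proof of \ref{prop:conditions_collapse_singular-4},\ref{prop:conditions_collapse_singular-5}$\Rightarrow$\ref{prop:conditions_collapse_singular-1}, which is arguably cleaner than the paper's detour through \ref{prop:conditions_collapse_singular-9}. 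The one place where your write-up is thinner than you claim is the assertion that \ref{prop:conditions_collapse_singular-3} and \ref{prop:conditions_collapse_singular-9} "translate into direct counterparts" in the even case: the witnessing level \( 1+\b \) may have the opposite parity, in which case \( \Siii{0}{1+\b}{\k}(X) \) or \( \Deee{0}{1+\b}{\k}(X) \) is \emph{not} a level of the \( \k^+ \)-hierarchy and no item of Proposition~\ref{prop:conditions_collapse} applies verbatim. The fix is short --- since the hierarchy is increasing, \( \Deee{0}{1+\b}{\k}(X) \supseteq \Siii{0}{1+\a}{\k}(X)\cup\Piii{0}{1+\a}{\k}(X) \) for every \( \b>\a \), so any coincidence in \ref{prop:conditions_collapse_singular-3} or \ref{prop:conditions_collapse_singular-9} forces selfduality at level \( 1+\a \), which for even \( \a \) is selfduality at a \( \k^+ \)-level and yields collapse --- but it should be said, because for odd \( \a \) that selfduality is automatic and carries no information, which is exactly why you (correctly) fall back on Propositions~\ref{prop:proper_for_k} and~\ref{prop:proper_for_k_alpha=ord} there, just as the paper does.
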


Notice that~\ref{prop:conditions_collapse_singular-6} and~\ref{prop:conditions_collapse_singular-7} are equivalent to the other conditions even when \( \a = 0 \), if we additionally require \( X \subseteq \pre{\k}{2}\).

\begin{proof}
The equivalence among conditions
\ref{prop:conditions_collapse_singular-1}, \ref{prop:conditions_collapse_singular-2}, \ref{prop:conditions_collapse_singular-3} and, under the assumption \( \a \geq 1 \), \ref{prop:conditions_collapse_singular-6} and \ref{prop:conditions_collapse_singular-7} follows from Proposition~\ref{prop:conditions_collapse}, 
as every $\k$-algebra is also a $\k^+$-algebra (by singularity of $\k$), the $\k$-Borel hierarchy is increasing (because we assumed \( \Siii{0}{1}{\k}(X) \subseteq \Siii{0}{2}{\cof(\k)^+}(X) \)), and hence also $\Siii{0}{1}{\k}(X)\cup \Piii{0}{1}{\k}(X)\subseteq \Siii{0}{1+\a}{\k}(X)$ if $\a\geq 1$.

\ref{prop:conditions_collapse_singular-2}~\( \Rightarrow \)~\ref{prop:conditions_collapse_singular-4}.
Follows from the fact that \( \Bor{\k}(X) = \Bor{\k^+}(X) \) because \( \k \) is singular.

\ref{prop:conditions_collapse_singular-4}~\( \Rightarrow \)~\ref{prop:conditions_collapse_singular-5}.
Obvious.

\ref{prop:conditions_collapse_singular-5}~\( \Rightarrow \)~\ref{prop:conditions_collapse_singular-9}.
Under~\ref{prop:conditions_collapse_singular-5}, we have 
\[ 
\Siii{0}{1+\a}{\k}(X) \subseteq \Deee{0}{1+\a+1}{\k}(X) \subseteq \Piii{0}{1+\a+1}{\k}(X) \subseteq \Siii{0}{1+\a}{\k}(X) .
\]

\ref{prop:conditions_collapse_singular-9}~\( \Rightarrow \)~\ref{prop:conditions_collapse_singular-1}. 
If \( \a = 0 \) the results is trivial, as in all cases we would get \( \Deee{0}{1}{\k}(X) = \Deee{0}{1+\b}{\k}(X) \), and hence \( X \), being Hausdorff, would be discrete because \( \Piii{0}{1}{\k}(X) \subseteq \Deee{0}{1+\b}{\k}(X) = \Deee{0}{1}{\k}(X) \). Thus we can suppose that \( \a \geq 1 \).
We prove the contrapositive, so assume \( \ord_\k(X) > 1 + \a \) and fix an arbitrary \( \b > \a \). We distinguish two cases. If \( 1+ \b < \ord_k(X) \), then \( \Deee{0}{1+\b}{\k}(X) \setminus (\Siii{0}{1+\a}{\k}(X) \cup \Piii{0}{1+\a}{\k}(X) ) \neq \emptyset\) by Proposition~\ref{prop:proper_for_k}. Assume now that \( \ord_\k(X) \leq 1+ \b \). Since \( 2 \leq 1+\a < \ord_\k(X) \), then Proposition~\ref{prop:proper_for_k_alpha=ord}\ref{prop:proper_for_k_alpha=ord-1} applies and \( \Deee{0}{\ord_\k(X)}{\k}(X) \setminus (\Siii{0}{1+\a}{\k}(X) \cup \Piii{0}{1+\a}{\k}(X) ) \neq \emptyset \). Since \( \Deee{0}{\ord_\k(X)}{\k}(X) \subseteq \Deee{0}{1+\b}{\k}(X) \) by case assumption, we are done.

Finally, let \( \a \) be even. The implication
\ref{prop:conditions_collapse_singular-2}~\( \Rightarrow \)~\ref{prop:conditions_collapse_singular-8} is obvious, so to conclude the proof it is enough to show that~\ref{prop:conditions_collapse_singular-8}~\( \Rightarrow \)~\ref{prop:conditions_collapse_singular-1}. Since \( \a \) is even, \( \Siii{0}{1+\a}{\k}(X) \) is closed under unions of size \( \k \) by Proposition~\ref{k_hierarchy_closure1}\ref{k_hierarchy_closure1-1}, and hence it is a \( \k^+ \)-algebra by~\ref{prop:conditions_collapse_singular-8}. Since \( \Siii{0}{1}{\k}(X) \subseteq \Siii{0}{1+\a}{\k}(X) \), we get in particular that \( \Bor{\k}(X) \subseteq \Siii{0}{1+\a}{\k}(X)\), and hence \( \ord_\k(X) \leq 1+ \a \).
\end{proof}

We can now come back to the closure properties of the pointclasses in the \( \k \)-Borel hierarchy (Proposition~\ref{k_hierarchy_closure1}), and prove their optimality.

\begin{prop} \label{k_hierarchy_closure2}
Suppose that $\k$ is \emph{singular}, and that \( \Siii{0}{1}{\k}(X) \subseteq \Siii{0}{2}{\cof(\k)^+}(X) \). 
 Let \( \alpha \) be such that \( 1 + \a < \ord_{\k}(X)\), and let once again \( \widehat{\a} = \cof(\a) \) if \( \a \) is limit, and \( \widehat{\a} = \cof(\kappa) \) otherwise.
\begin{enumerate-(1)}  \item \label{k_hierarchy_closure2-1}
If \(\a\) is even, then \(\Siii{0}{1+\a}{\k}(X)\) is not closed under complements or intersections of size \( \widehat{\a} \), \(\Piii{0}{1+\a}{\k}(X) \) is not closed under complements or unions of size \( \widehat{\a} \). If furthermore \( \a > 0 \), then \(\Deee{0}{1+\a}{\k}(X)\) is not closed under unions or intersections of size \( \widehat{\a} \), and the same is true for \( \a = 0 \) if \( X \subseteq \pre{\k}{2}\).
\item \label{k_hierarchy_closure2-2}
If \(\a\) is odd, then \(\Siii{0}{1+\a}{\k}(X)= \Piii{0}{1+\a}{\k}(X)=\Deee{0}{1+\a}{\k}(X)\) is not closed under unions or intersections of size \( \cof (\kappa)\).
\end{enumerate-(1)}
\end{prop}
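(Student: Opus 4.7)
My strategy is to argue by contradiction throughout: each putative closure property should force either selfduality of $\Siii{0}{1+\alpha}{\k}(X)$ or the coincidence of two adjacent levels of the $\k$-Borel hierarchy, and Proposition~\ref{prop:conditions_collapse_singular} will then deliver $\ord_{\k}(X)\leq 1+\alpha$, contradicting the hypothesis. The algebraic engine is Theorem~\ref{hierarchy_theorem}, which for $\alpha$ even expresses $\Siii{0}{1+\alpha}{\k}(X)$ as a $\widehat{\alpha}$-sized union of $\Piii$-sets from strictly lower levels, and for $\alpha$ odd asserts $\Siii{0}{1+\alpha}{\k}(X)=\Piii{0}{1+\alpha}{\k}(X)$. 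Part~\ref{k_hierarchy_closure2-2} is the quickest: combining this odd-level selfduality with Theorem~\ref{hierarchy_theorem}\ref{hierarchy_theorem_even} applied to $\alpha+1$ (an even successor, so $\widehat{\alpha+1}=\cof(\k)$) gives
\[
\Siii{0}{1+\alpha+1}{\k}(X) = \bigl(\Siii{0}{1+\alpha}{\k}(X)\bigr)_{\sigma_{\cof(\k)}},
\]
so closure of $\Siii{0}{1+\alpha}{\k}(X)$ under $\cof(\k)$-sized unions would force $\Siii{0}{1+\alpha+1}{\k}(X)=\Siii{0}{1+\alpha}{\k}(X)$, whence $\ord_{\k}(X)\leq 1+\alpha$ via Proposition~\ref{prop:conditions_collapse_singular}\ref{prop:conditions_collapse_singular-3}; the case of $\cof(\k)$-sized intersections is symmetric, since closure under complements of the selfdual class exchanges the two operations.

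For part~\ref{k_hierarchy_closure2-1} with $\alpha>0$ even, dualizing the decomposition from Theorem~\ref{hierarchy_theorem}\ref{hierarchy_theorem_even} expresses each $A\in\Piii{0}{1+\alpha}{\k}(X)$ as a $\widehat{\alpha}$-sized intersection of members of $\bigcup_{\beta<\alpha}\Siii{0}{1+\beta}{\k}(X)\subseteq\Siii{0}{1+\alpha}{\k}(X)$, the inclusion using that the $\k$-Borel hierarchy is increasing. Closure of $\Siii{0}{1+\alpha}{\k}(X)$ under $\widehat{\alpha}$-sized intersections would thus yield $\Piii{0}{1+\alpha}{\k}(X)\subseteq\Siii{0}{1+\alpha}{\k}(X)$, i.e.\ selfduality, producing the forbidden collapse through Proposition~\ref{prop:conditions_collapse_singular}\ref{prop:conditions_collapse_singular-8}; the dual statement for $\Piii{0}{1+\alpha}{\k}(X)$ and $\widehat{\alpha}$-sized unions follows by complementation, and non-closure under complements is a direct application of the same item. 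For the ambiguous class one exploits $\bigcup_{\beta<\alpha}\Piii{0}{1+\beta}{\k}(X)\subseteq\Deee{0}{1+\alpha}{\k}(X)$, which holds precisely because $\alpha>0$ ensures that every $\Piii{0}{1+\beta}{\k}(X)$ lies in both $\Siii{0}{1+\alpha}{\k}(X)$ and $\Piii{0}{1+\alpha}{\k}(X)$ by increasingness, to rewrite every $\Siii{0}{1+\alpha}{\k}$-set as a $\widehat{\alpha}$-union of $\Deee{0}{1+\alpha}{\k}$-sets; closure of $\Deee{0}{1+\alpha}{\k}(X)$ under $\widehat{\alpha}$-sized unions would then give $\Siii{0}{1+\alpha}{\k}(X)\subseteq\Deee{0}{1+\alpha}{\k}(X)$ and, once again, selfduality.

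The main obstacle is the base level $\alpha=0$, where Theorem~\ref{hierarchy_theorem}'s decomposition is unavailable and the auxiliary hypothesis $\Siii{0}{1}{\k}(X)\subseteq\Siii{0}{2}{\cof(\k)^+}(X)$ becomes essential. Complementing that hypothesis shows every closed subset of $X$ to be a $\cof(\k)$-sized intersection of open sets, so closure of $\Siii{0}{1}{\k}(X)$ under $\cof(\k)$-sized intersections would force $\Piii{0}{1}{\k}(X)\subseteq\Siii{0}{1}{\k}(X)$; since $\Siii{0}{1}{\k}(X)$ is automatically closed under $\k$-sized unions, Proposition~\ref{prop:conditions_collapse_singular}\ref{prop:conditions_collapse_singular-8} applies also at $\alpha=0$ and delivers $\ord_{\k}(X)\leq 1$, against $\ord_{\k}(X)>1$. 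Non-closure under complements at level~$1$ is then another direct application of the same item. Finally, for $\Deee{0}{1}{\k}(X)$ under the extra hypothesis $X\subseteq\pre{\k}{2}$, the singularity of $\k$ lets one express each singleton $\{x\}$ as a $\cof(\k)$-sized intersection of basic clopen neighborhoods indexed along a cofinal sequence in $\k$; closure of clopens under $\cof(\k)$-sized intersections would therefore render $X$ discrete, incompatible with $\ord_{\k}(X)>1$.
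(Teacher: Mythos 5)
Your proof is correct and follows essentially the same route as the paper's: both arguments derive each putative closure property from the decompositions in Theorem~\ref{hierarchy_theorem} and then contradict \( 1+\a < \ord_\k(X) \) via Proposition~\ref{prop:conditions_collapse_singular}. The only differences are organizational — the paper reduces the even case \( \a \geq 2 \) to the single claim that \( \Deee{0}{1+\a}{\k}(X) \) is not closed under \( \widehat{\a} \)-sized intersections, and at \( \a = 0 \) it concludes via discreteness of \( X \) rather than via selfduality and item~\ref{prop:conditions_collapse_singular-8} — but these variants are interchangeable and equally valid.
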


As already observed, \( \Deee{0}{1}{\k}(X) \) might instead be closed under arbitrary unions and intersections, depending on the space \( X \).

\begin{proof}
We begin with the case when \( \a \) is even. Suppose first that \( \a = 0 \). If \( \Siii{0}{1}{\k}(X) \) were closed under intersections of size \( \cof(\k) \) (equivalently: if \( \Piii{0}{1}{\k}(X) \) were closed under unions of size \( \cof(\k) \)), then every singleton \( \{ x \} \) would also be open, as our assumptions entail that \( \Piii{0}{1}{\k}(X) \subseteq \Piii{0}{2}{\cof(\k)^+}(X) \) and singletons are closed. But this means that \( X \) is discrete, contradicting the assumption \( \ord_\k(X) > 1 \).
As for \( \Deee{0}{1}{\k}(X)\), if it were closed under unions (or equivalently, intersections) of size \( \cof(\k) \) and \( X \subseteq \pre{\k}{2}\), then by Fact~\ref{fact:open=unionclopen} we would have \( \Siii{0}{1}{\k}(X) \subseteq \Deee{0}{1}{\k}(X)\), and hence \( X \) would again be discrete, against \( \ord_\k(X) > 1 \).
 
Suppose now that \( \a \geq 2 \) is even. Arguing as in the proof of Proposition~\ref{prop:optimalityregular}, the fact that \( \Piii{0}{1+\a}{\k}(X) \) is closed under intersections of size \( \k \) by Proposition~\ref{k_hierarchy_closure1}\ref{k_hierarchy_closure1-1} yields that it is enough to show that \( \Deee{0}{1+\a}{\k}(X) \) is not closed under intersections of size \( \widehat{\a} \). Suppose towards a contradiction that this is not true. Since \( \bigcup_{\b < \a} \Siii{0}{1+\b}{\k}(X) \subseteq\Deee{0}{1+\a}{\k}(X) \), applying Theorem~\ref{hierarchy_theorem}\ref{hierarchy_theorem_even} we get that 
\[
\Piii{0}{1+\a}{\k}(X) = \left( \bigcup_{\b < \a} \Siii{0}{1+\a}{\k}(X) \right)_{\delta_{\widehat{\a}}} \subseteq \Deee{0}{1+\a}{\k}(X).
\]
This easily leads to \( \Siii{0}{1+\a}{\k}(X) = \Piii{0}{1+\a}{\k}(X) \), which contradicts Proposition~\ref{prop:conditions_collapse_singular} since we assumed \( \ord_{\k}(X) > 1+\a \).

Suppose now that \( \a \) is odd. It is enough to consider the case of unions. Since \( \a + 1 \) is an even successor ordinal, by Theorem~\ref{hierarchy_theorem}\ref{hierarchy_theorem_even} we get that if \( \Piii{0}{1}{\a}(\k) \) were closed under unions of size \( \cof(\k) \) then \( \Siii{0}{1+\a+1}{\k}(X) = \left( \Piii{0}{1+\a}{\k}(X) \right)_{\sigma_{\cof(\k)}} = \Piii{0}{1+\a}{\k}(X) \), which by \( \ord_\k(X) > 1+ \a \) contradicts again Proposition~\ref{prop:conditions_collapse_singular}.
\end{proof}

From Proposition~\ref{k_hierarchy_closure2} and Theorem~\ref{hierarchy_theorem}\ref{hierarchy_theorem_even} we can derive the optimality of the closure properties of the pointclasses in the $\k^+$-Borel hierarchy when \( \k \) is singular, thus complementing Proposition \ref{prop:optimalityregular}.

\begin{cor}\label{cor_hier_clos}
Suppose that \( \k \) is \emph{singular}, and that \( \Siii{0}{1}{\k}(X) \subseteq \Siii{0}{2}{\cof(\k)^+}(X) \).
 For \( 1 \leq \alpha < \ord_{\k^+}(X)\), let \( \widehat \a = \cof(\k) \) if \( \a \) is a successor ordinal, and \( \widehat \a = \cof(\a) \) if \( \a \) is limit. Then:
\begin{enumerate-(1)}
\item \label{cor_hier_clos-1}
\( \Siii{0}{\a}{\k^+}(X) \) is not closed under complements and  intersections of size \( \widehat{\a} \);
\item \label{cor_hier_clos-2}
\( \Piii{0}{\a}{\k^+}(X) \) is not closed under complements and unions of size \( \widehat{\a} \);
\item \label{cor_hier_clos-3}
if \( \a > 0 \), then \( \Deee{0}{\a}{\k^+}(X) \) is not closed under unions or intersections of size \( \widehat{\a} \), and the same is true for \( \a = 0 \) if \( X \subseteq \pre{\k}{2}\).
\end{enumerate-(1)}
\end{cor}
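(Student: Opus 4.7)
The plan is to reduce the corollary to Proposition~\ref{k_hierarchy_closure2}\ref{k_hierarchy_closure2-1} via the dictionary provided by Theorem~\ref{hierarchy_theorem}\ref{hierarchy_theorem_even}, which identifies the pointclass at level $\alpha$ of the $\kappa^+$-Borel hierarchy with the pointclass at an even level of the $\kappa$-Borel hierarchy. Since all three items of the corollary concern pointclasses of the $\kappa^+$-Borel hierarchy and Proposition~\ref{k_hierarchy_closure2} already gives the matching non-closure statements on the $\kappa$-side, once the translation is set up the result will follow transparently.

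Concretely, I will associate to each $\alpha$ with $1 \leq \alpha < \ord_{\k^+}(X)$ an even ordinal $\alpha^{*} < \kappa^+$ such that $\alpha = 1 + \alpha^{*}/2$. Writing $\alpha = \delta + k$ with $\delta = 0$ or $\delta$ limit and $k < \omega$, I set $\alpha^{*} = 2(k-1)$ if $\delta = 0$ (so $k \geq 1$) and $\alpha^{*} = \delta + 2k$ if $\delta$ is limit; in both cases $\alpha^{*}$ is even and $\alpha^{*} = 0$ iff $\alpha = 1$. Theorem~\ref{hierarchy_theorem}\ref{hierarchy_theorem_even} then yields $\Siii{0}{\alpha}{\k^+}(X) = \Siii{0}{1+\alpha^{*}}{\k}(X)$ and $\Piii{0}{\alpha}{\k^+}(X) = \Piii{0}{1+\alpha^{*}}{\k}(X)$, hence $\Deee{0}{\alpha}{\k^+}(X) = \Deee{0}{1+\alpha^{*}}{\k}(X)$. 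Moreover Corollary~\ref{cor:ordersingular} translates the hypothesis $\alpha < \ord_{\k^+}(X)$ into $1 + \alpha^{*} < \ord_{\k}(X)$, placing us squarely in the scope of Proposition~\ref{k_hierarchy_closure2}\ref{k_hierarchy_closure2-1}.

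To finish, I need to verify that the parameter $\widehat{\alpha^{*}}$ appearing in Proposition~\ref{k_hierarchy_closure2} coincides with $\hat\alpha$ as defined in the corollary. If $\alpha$ is a successor ordinal, then $\alpha^{*}$ is also a successor (being either a positive integer or of the form $\delta + 2k$ with $k \geq 1$), so $\widehat{\alpha^{*}} = \cof(\k) = \hat\alpha$; if $\alpha$ is limit, then $\alpha^{*} = \alpha$, so $\widehat{\alpha^{*}} = \cof(\alpha) = \hat\alpha$. With this matching, Proposition~\ref{k_hierarchy_closure2}\ref{k_hierarchy_closure2-1} immediately gives items~\ref{cor_hier_clos-1} and \ref{cor_hier_clos-2}, together with item~\ref{cor_hier_clos-3} whenever $\alpha > 1$ (so that $\alpha^{*} > 0$). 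The borderline case $\alpha = 1$ of item~\ref{cor_hier_clos-3} corresponds to the $\alpha^{*} = 0$ clause of Proposition~\ref{k_hierarchy_closure2}\ref{k_hierarchy_closure2-1}, which is precisely the clause that requires $X \subseteq \pre{\k}{2}$. I do not expect any serious obstacle: all the real work has already been carried out in the proofs of Theorem~\ref{hierarchy_theorem}, Corollary~\ref{cor:ordersingular}, and Proposition~\ref{k_hierarchy_closure2}, so the only delicate point here is the bookkeeping for the map $\alpha \mapsto \alpha^{*}$ and the verification that the two definitions of the associated cofinality agree.
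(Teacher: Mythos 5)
Your proposal is correct and is essentially the paper's own proof: the paper derives this corollary in one line from Proposition~\ref{k_hierarchy_closure2} together with Theorem~\ref{hierarchy_theorem}\ref{hierarchy_theorem_even}, and your $\alpha \mapsto \alpha^{*}=2\cdot\alpha'$ bookkeeping (with $\alpha=1+\alpha'$), the translation of $\alpha<\ord_{\k^+}(X)$ into $1+\alpha^{*}<\ord_{\k}(X)$ via Corollary~\ref{cor:ordersingular}, and the matching of the cofinality parameters are exactly the details left implicit there. The only cosmetic quibble is that for $\alpha=1$ one gets $\alpha^{*}=0$, which is not a successor ordinal, but Proposition~\ref{k_hierarchy_closure2} assigns $\cof(\k)$ to all non-limit ordinals, so your conclusion stands.
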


We conclude this section by briefly discussing the existence of universal and complete sets for the various pointclasses in the \( \k \)-Borel hierarchy. The following result is the counterpart of Proposition~\ref{universal_sigma_pi}.

\begin{proposition}\label{universal_sigma_pi_for_k-Borel} 
Suppose that \( \k \) is singular, and that \( \Siii{0}{1}{\k}(X) \subseteq \Siii{0}{2}{\cof(\k)^+}(X) \). Let \( \a < \k^+ \).
\begin{enumerate-(1)}
\item \label{universal_sigma_pi_for_k-Borel-1}
If $\alpha$ is even, then there are \(\pre{\k}{2}\)-universal sets for both \(\Siii{0}{1+\a}{\k}(X)\) and \(\Piii{0}{1+\a}{\k}(X)\). 
Therefore there exists subsets of \( \pre{\k}{2}\) that are \(\k\)-complete for \( \Siii{0}{1+\a}{\k} \) and \( \Piii{0}{1+\a}{\k}\).
In contrast, there is no \( \pre{\k}{2} \)-universal set for \( \Deee{0}{1+\a}{\k}(\pre{\k}{2}) \) or \( \Bor{\k}(\pre{\k}{2}) \).
\item \label{universal_sigma_pi_for_k-Borel-2}
If $\alpha$ is odd, then there are neither \( \pre{\k}{2}\)-universal sets for \( \Siii{0}{1+\a}{\k}(\pre{\k}{2}) \) nor subsets of \( \pre{\k}{2} \) that are \(\k\)-complete for \( \Siii{0}{1+\a}{\k} \), and the same for \( \Piii{0}{1+\a}{\k} \) and \( \Deee{0}{1+\a}{\k} \).
\end{enumerate-(1)}
Moreover, \( \pre{\k}{2}\) can be replaced by \( X \) in all the above statements as soon as \( \pre{\k}{2}\) embeds into \( X \).
\end{proposition}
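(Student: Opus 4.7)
The natural plan is to leverage Theorem~\ref{hierarchy_theorem} to reduce the existence statements in~\ref{universal_sigma_pi_for_k-Borel-1} to the \( \k^+ \)-Borel results already proved, while reading off the selfduality responsible for the non-existence statements in both~\ref{universal_sigma_pi_for_k-Borel-1} and~\ref{universal_sigma_pi_for_k-Borel-2}. For \( \a \) even, Theorem~\ref{hierarchy_theorem}\ref{hierarchy_theorem_even} identifies \( \Siii{0}{1+\a}{\k}(X) \) with \( \Siii{0}{1+\frac{\a}{2}}{\k^+}(X) \) (and dually for \( \Pi \)), so the \( \pre{\k}{2} \)-universal sets provided by Proposition~\ref{universal_sigma_pi} are simultaneously universal for both hierarchies; Lemma~\ref{univ_compl} applied with \( X = \pre{\k}{2} \) (using that \( \pre{\k}{2} \times \pre{\k}{2} \) is homeomorphic to \( \pre{\k}{2} \)) then converts them into the desired \( \k \)-complete subsets of \( \pre{\k}{2} \). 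For the negative half of~\ref{universal_sigma_pi_for_k-Borel-1}, both \( \Deee{0}{1+\a}{\k} \) and \( \Bor{\k} \) are patently selfdual, so Lemma~\ref{universal_selfdual} rules out any \( \pre{\k}{2} \)-universal set.

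For \( \a \) odd, Theorem~\ref{hierarchy_theorem}\ref{hierarchy_theorem_odd} collapses the three pointclasses into a single selfdual one on every relevant space (in particular on \( \pre{\k}{2} \) itself), so Lemma~\ref{universal_selfdual} immediately forbids \( \pre{\k}{2} \)-universal sets. To rule out \( \k \)-complete subsets of \( \pre{\k}{2} \), the plan is to diagonalize: fix a coding \( y \mapsto f_y \) of the continuous maps \( \pre{\k}{2} \to \pre{\k}{2} \) by elements of \( \pre{\k}{2} \) for which the evaluation \( \mathrm{ev} \colon \pre{\k}{2} \times \pre{\k}{2} \to \pre{\k}{2} \), \( (y,x) \mapsto f_y(x) \), is continuous and exhausts all such maps (a standard construction enabled by \( 2^{<\k} = \k \)). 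Were \( A \subseteq \pre{\k}{2} \) to be \( \k \)-complete for \( \Gamma = \Siii{0}{1+\a}{\k} \), the set \( \U = \mathrm{ev}^{-1}(A) \) would belong to \( \Gamma(\pre{\k}{2} \times \pre{\k}{2}) \) by boldfaceness and satisfy \( \U_y = f_y^{-1}(A) \); the \( \k \)-completeness of \( A \) would then make the sections \( \U_y \) exhaust \( \Gamma(\pre{\k}{2}) \), turning \( \U \) into a \( \pre{\k}{2} \)-universal set, contradicting the previous sentence.

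The \enquote{moreover} clause is handled by transporting everything along an embedding \( \iota \colon \pre{\k}{2} \to X \). In the even case, Lemma~\ref{lemma_universal} transfers the universal sets to \( X \)-universal sets, and a \( \k \)-complete set \( A \subseteq X \) is produced by taking a \( \k \)-complete \( A' \subseteq \pre{\k}{2} \), using hereditariness of \( \Siii{0}{1+\a}{\k} \) and \( \Piii{0}{1+\a}{\k} \) to extend \( \iota[A'] \) to some \( A \in \Gamma(X) \), and composing continuous reductions into \( \pre{\k}{2} \) with \( \iota \), exactly as in the proof of Proposition~\ref{universal_sigma_pi}. The non-existence of \( X \)-universal sets for the selfdual classes on \( X \) follows from Lemma~\ref{universal_selfdual} applied to \( X \) (selfduality on \( X \) is again provided by Theorem~\ref{hierarchy_theorem}), and the evaluation argument against \( \k \)-complete subsets of \( X \) goes through after recoding continuous maps \( \pre{\k}{2} \to X \) in place of \( \pre{\k}{2} \to \pre{\k}{2} \), which remains possible since \( \weight(X) \leq \k \). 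The one place where care is needed is precisely this evaluation-coding step: one must exhibit a canonical parameterization of \( \mathrm{Cont}(\pre{\k}{2}, X) \) by elements of \( \pre{\k}{2} \) for which the evaluation map is continuous, and I expect this to be the main (but still routine) technical obstacle, handled by a direct adaptation of the classical construction that is made legitimate by the hypothesis \( 2^{<\k} = \k \).
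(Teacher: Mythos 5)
Your proposal is correct in its overall architecture and coincides with the paper's proof on part~\ref{universal_sigma_pi_for_k-Borel-1} (Theorem~\ref{hierarchy_theorem}\ref{hierarchy_theorem_even} plus Proposition~\ref{universal_sigma_pi}, then Lemmas~\ref{univ_compl} and~\ref{universal_selfdual}), on the universal-set half of part~\ref{universal_sigma_pi_for_k-Borel-2}, and on the transfer to \(X\) via Lemma~\ref{lemma_universal} and hereditariness. Where you genuinely diverge is the non-existence of \(\k\)-complete sets in the odd case. The paper writes \(\a=\b+1\) and decomposes \(\Siii{0}{1+\a}{\k}(X)=\bigcup_{\l<\k}\bigl(\Piii{0}{1+\b}{\k}(X)\bigr)_{\sigma_\l}\); each subclass \(\bigl(\Piii{0}{1+\b}{\k}\bigr)_{\sigma_\l}\) is boldface and, by the even case together with Lemma~\ref{lem:universalinborelhierarchy}\ref{lem:universalinborelhierarchy-2}, admits a \(\pre{\k}{2}\)-universal set; a \(\k\)-complete set would lie in one such subclass and, being closed under continuous preimages, that subclass would swallow all of \(\Siii{0}{1+\a}{\k}(\pre{\k}{2})\), contradicting Lemma~\ref{universal_selfdual} via selfduality. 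This argument also disposes of the \enquote{moreover} clause for complete sets \(A\subseteq X\) with no extra work. Your route instead runs the classical \enquote{complete set plus parameterized continuous functions yields a universal set} argument, which is workable but rests on a claim you state too strongly and defer as \enquote{routine}: a \emph{total} parameterization of \(\mathrm{Cont}(\pre{\k}{2},\pre{\k}{2})\) by \(\pre{\k}{2}\) with \emph{globally} continuous evaluation. In the classical compact analogue this is outright false (a jointly continuous evaluation on \(\pre{\o}{2}\times\pre{\o}{2}\) would make the induced family compact in \(C(\pre{\o}{2},\pre{\o}{2})\), which is not compact), so the caveat is not cosmetic; the standard construction only gives continuity of the evaluation on a proper set \(D\subseteq\pre{\k}{2}\) of valid codes, after which you must use hereditariness of \(\Siii{0}{1+\a}{\k}\) to extend \(\mathrm{ev}^{-1}(A)\) from \(D\times\pre{\k}{2}\) to the full product and check that the sections still exhaust the class. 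With that fix your argument goes through, but for the \enquote{moreover} part you would additionally need to recode \(\mathrm{Cont}(\pre{\k}{2},X)\), whereas the paper's decomposition argument applies verbatim; on balance the paper's route is more self-contained given the machinery already developed in Section~\ref{sec:preliminaries}.
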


\begin{proof}
Part~\ref{universal_sigma_pi_for_k-Borel-1} follows from Theorem~\ref{hierarchy_theorem}\ref{hierarchy_theorem_even} and Proposition~\ref{universal_sigma_pi}, so let us move to part~\ref{universal_sigma_pi_for_k-Borel-2}. 

Assume that \( \a \) is odd, so that \( \Siii{0}{1+\a}{\k}(\pre{\k}{2}) = \Piii{0}{1+\a}{\k}(\pre{\k}{2}) = \Deee{0}{1+\a}{\k}(\pre{\k}{2}) \) by Theorem~\ref{hierarchy_theorem}\ref{hierarchy_theorem_odd}. 
Then none of those classes can have a \( \pre{\k}{2} \)-universal set by Lemma~\ref{universal_selfdual}. 

Next, let us consider complete sets.
Let \( \b \) be such that \( \a = \b +1 \). Then \( \Siii{0}{1+\a}{\k}(X) = \left( \Piii{0}{1+\b}{\k}(X) \right)_{\sigma_{<\k}} = \bigcup_{\l < \k}\left( \Piii{0}{1+\b}{\k}(X) \right)_{\sigma_{\l}} \).

\begin{claim} \label{claim:universal_sigma_pi_for_k-Borel}
For every \( \l < \k \), the class \( \left( \Piii{0}{1+\b}{\k}(X) \right)_{\sigma_{\l}} \) has a \( \pre{\k}{2}\)-universal set.
\end{claim}

Indeed, \( \b \) is even and hence, by part~\ref{universal_sigma_pi_for_k-Borel-1}, has a \( \pre{\k}{2}\)-universal set. Applying Lemma~\ref{lem:universalinborelhierarchy}\ref{lem:universalinborelhierarchy-2}, we easily get the desired result.

Towards a contradiction, assume now that \(  A \subseteq \pre{\k}{2} \) is \(\k\)-complete for \( \Siii{0}{1+\a}{\k} \). Let \( \l < \k \) be such that \( A \in \left( \Piii{0}{1+\b}{\k}(\pre{\k}{2}) \right)_{\sigma_{\l}} \). Since the latter is closed under continuous preimages, we get that \( \Siii{0}{1+\a}{\k}(\pre{\k}{2}) = \left( \Piii{0}{1+\b}{\k}(\pre{\k}{2}) \right)_{\sigma_{\l}} \). Since \( \Siii{0}{1+\a}{\k} \) is a boldface pointclass that is selfdual on \( \pre{\k}{2}\), while \( \left( \Piii{0}{1+\b}{\k}(\pre{\k}{2}) \right)_{\sigma_{\l}} \) has a \( \pre{\k}{2}\)-universal set by Claim~\ref{claim:universal_sigma_pi_for_k-Borel}, this contradicts Lemma~\ref{universal_selfdual}.

Finally, the additional part can be inferred arguing as in the second part of the proof of Proposition~\ref{universal_sigma_pi}.
\end{proof}

The proof of Proposition~\ref{universal_sigma_pi_for_k-Borel} also gives the interesting information that if \( \pre{\k}{2}\) embeds into \( X \), then for every odd ordinal \( \a = \b+1 < \k^+\) and every cardinal \( \l < \k \),
\[
\left( \Piii{0}{1+\b}{\k}(X)\right) _{\sigma_\l}\subsetneq \Siii{0}{1+\alpha}{\k}(X).
\]
In other words: the class \( \Siii{0}{1+\a}{\k}(X) \) cannot be obtained by considering only unions of a \emph{fixed} cardinality below \( \k \). This is in stark contrast with what happens when \( \a \) is even: by Theorem~\ref{hierarchy_theorem}\ref{hierarchy_theorem_even}, in that case it is enough to consider unions of a fixed size \( \widehat{\a} < \k \).

\section{\( \kappa \)-Thin $\kappa^+$-Borel sets in ${<}\kappa$-closed forcing extensions} \label{sec: psp section}

The initial motivation for the results in this section is Theorem \ref{thm:non-collpase_general_space}, which, in full alignment with classical results, provides a sufficient criterion for the non-collapse of the $\kappa^+$-Borel hierarchy: the existence of a \( \kappa \)-perfect subset. While this sufficient condition is also necessary in the classical case, it fails in the generalized context due to the breakdown of the \( \kappa \)-Perfect Set Property in generalized descriptive set theory. This failure leaves open the following question: suppose $X$ is a space of weight at most $\kappa$ such that its $\kappa^+$-Borel hierarchy does not collapse. Is there necessarily a $\kappa^+$-Borel embedding of $\pre{\kappa}{2}$ into $X$?

We will show that when $\kappa$ is a regular cardinal, this is not the case. From this point onward, the regularity of $\kappa$ will always be tacitly assumed. 
     Furthermore, we will restrict our attention to subspaces of $\pre{\kappa}{\kappa}$, for two reasons.
First, providing examples among subspaces of $\pre{\kappa}{\kappa}$ is more interesting than simply constructing a topological space with no additional structure. 
Also, this allows to deal with matters of definability and provide results that hold in general for a wide class of subspaces of $\pre{\kappa}{\kappa}$.
Nevertheless, notice that these results can be further extended to other \( T_0 \) topological spaces of weight at most \( \kappa \) anyway, thanks to Proposition~\ref{fact: wlog subsets cantor space}.

In the classical setting, a set has the Perfect Set Property if either it is countable, or it contains a copy of the Cantor space \( \pre{\o}{2}\). Due to the compactness of the latter and the fact that uncountable analytic sets contains a closed set homeomorphic to \( \pre{\o}{2} \), there are several equivalent reformulation of the second alternative in Perfect Set Property which range from ``there is a Borel injection of \( \pre{\o}{2}\) into the set'' to ``the set contains a closed set homeomorphic to \( \pre{\o}{2}\)''. In general, when moving to the generalized context the equivalence among the analogues of the above variations ceases to exists: however, in Corollary~\ref{cor: thin borel set eq} we are going to show that it survives if we restrict the attention to \( \k^+ \)-Borel sets.

Throughout this section, we will be working with the following version of $\kappa$-perfectness.

\begin{definition}\label{def:perfect}
We say that a set $P \subseteq \pre{\kappa}{\kappa}$ is \markdef{\( \kappa \)-perfect} if it is closed and homeomorphic to the generalized Cantor space $\pre{\kappa}{2}$, and call a set \markdef{\( \kappa \)-thin} if it has no \( \kappa \)-perfect subset.
\end{definition}

Accordingly, we will say that a set \( A \subseteq \pre{\k}{\k} \) has the \markdef{\( \k \)-Perfect Set Property} if either \( |A| \leq \k \), or \( A \) contains a \( \k \)-perfect set. Moreover, we say that a boldface pointclass \( \boldsymbol{\Gamma}\) has the \( \k \)-Perfect Set Property if there are no \( \k \)-thin sets in \( \boldsymbol{\Gamma}(\pre{\k}{\k}) \) of size greater than \( \k \), that is, if every \( A \in \boldsymbol{\Gamma}(\pre{\k}{\k}) \) has the \( \k \)-Perfect Set Property. This is the strongest form of the property that can be formulated in purely topological terms.

\subsection{Robust codes for \( \k^+ \)-Borel sets}

Recall that a tree is a set of sequences closed under initial segments. 
 Given a set $A$, \( \mu \in \Card \), and a tree $T\subseteq \pre{<\mu}{A}$, we define the body of $T$ as
\[
[T]^\mu_A =   \{  x \in \pre{\mu}{A} \mid \forall \alpha < \mu \, ( x \restriction \alpha \in T) \}.
\]
When \( \mu \) and/or \( A \) are clear from the context, we suppress them from the notation and write \( [T] \) instead of \( [T]^\mu_A \).
In most cases, we will have either \( \mu=\kappa \) of \( \mu = \omega \). If \( T \) is a tree, then for every \( t \in T \) we denote by $\succ_T(t)$ the set of immediate successors of $t$ in $T$; when clear from the context, we may suppress the subscript $T$.

A tree $S \subseteq \pre{<\o}{A}$ is called well-founded if \( [S] = \emptyset \). 
Every well-founded tree \( S \) naturally carries a rank function, defined as $\rank_S(s)= 0 $ if $s \in S$ is a leaf (i.e.\ \( \succ_S(s) = \emptyset \)), and $\rank_S(s) = \sup\{\rank_S(t) + 1 \mid t \in \succ_S(s)\}$ otherwise. Denote by $S^\alpha$ (respectively: $S^{<\alpha}$, or $S^{> \alpha}$) the set of nodes $s \in S$ such that $\rank_S(s) = \alpha$ (respectively: $\rank_S(s) <\alpha$, or $\rank_S(s) > \alpha$). In particular, \( S^0 \) is the set of leaves of the tree \( S \).

 In classical descriptive set theory, Borel codes provide a way of interpreting a Borel set in different transitive models of set theory. The same is true in the generalized setting. 
Let \( X \subseteq \pre{\k}{\k} \), and let \( S \subseteq \pre{<\o}{A}\) be a well-founded tree with \( |A| \leq \k \). Given a function $f \colon S^0 \to \pre{<\kappa}{\kappa}$, we may assign to each $s \in S$ its \markdef{interpretation} $\mathcal{I}^f_{S,X}(s)$ as a $\kappa^+$-Borel subset of $X$ via
\[
\mathcal{I}^f_{S,X}(s) = 
\begin{cases}
    \clopen{f(s)} \cap X & \text{ if } s \in T^0 \\
    \displaystyle \bigcap_{t \in \succ_S(s)} X \minus \mathcal{I}^f_{S,X}(t) & \text{ otherwise.}
\end{cases}
\] 
The space \( X \) will always be clear from the context, and it will systematically be omitted from the notation; when reasonable, also the reference to the tree $S$ will be dropped, resulting in the lighter notations \( \mathcal{I}^f_S(s) \) or \( \mathcal{I}^f(s) \). Note that if $\rank_{S}(s) = \beta > 0$, then $\mathcal{I}^f_S(s) \in \Piii{0}{\b}{\k^+}(X)$, while if \( \rank_S(s) = 0 \), then $\mathcal{I}^f_S(s) \in \Deee{0}{1}{\k^+}(X)$. 
Conversely, if \( B \in \Piii{0}{\b}{\k^+}(X) \) for some \( 1 \leq \b < \k^+ \), then there is \( \langle S,f \rangle \) such that \( \rank_S(\emptyset) = \b \) and \( B = \mathcal{I}^f_S(\emptyset) \).

We call the pair $\langle S, f\rangle$ a \markdef{$\kappa^+$-Borel code} for the set $B = \mathcal{I}^f_S(\emptyset)$; in view of the above discussion, when \( \rank_S(\emptyset) = \b \) we also say that \( \langle S,f \rangle \) is a \markdef{\( \Piii{0}{\b}{\k^+} \) code} for \( B \). We would like to emphasize that although \( \k^+\)-Borel sets can generally be coded in many different ways, and these different codings are equivalent for most practical purposes, it is convenient for this and the next section that we work with this specific coding.

If \( \langle S,f \rangle \) is a \( \k^+ \)-Borel code for \( B = \mathcal{I}^f_S(\emptyset) \in \Bor{\k^+}(X)\) and \( V' \) is a transitive model of \( \mathsf{ZFC} \) containing \( \langle S,f \rangle \), then \( B \) can be re-interpreted in \( V' \) giving rise to the \( \k^+ \)-Borel set \( B^{V'} = (\mathcal{I}^f_S(\emptyset))^{V'} \). (The superscript \( V' \) will be dropped from the notation if clear from the context.) In particular, if \( B \in \Piii{0}{\b}{\k^+}(X) \) and we choose a \( \k^+ \)-Borel code \( \langle S,f \rangle \) for \( B \) such that \( \rank_S(\emptyset) = \b \), then \( B^{V'} \) is \( \Piii{0}{\b}{\k^+}(X) \) in \( V' \) as well. 
Moreover, by \( \Siii{1}{1}{\k} \)-absoluteness (Lemma~\ref{lemma:analyticabsoluteness}), the choice of the \( \k^+ \)-Borel code \( \langle S,f \rangle \) for \( B \) is irrelevant whenever \( V' \) is a generic extension by a \( {<}\k\)-closed forcing, so that in such a situation one can refer to \( B^{V'} \) without having to specify which \( \k^+ \)-Borel code is used to re-interpret \( B \) in \( V' \).

In this section we are mostly concerned with \( \k^+ \)-Borel sets that are \( \k \)-thin. A convenient way to control such property is to view \( \k^+\)-Borel sets as Suslin\(_\k\) sets, i.e.\ as projections of the bodies \( [T] \) of trees of the form \( T \subseteq \pre{< \k}{(\k \times W)}\), as in this case a close connection can be made between a strengthening of the \( \kappa \)-Perfect Set Property and fresh elements in ${<}\kappa$-closed forcing extensions. 
This connection is provided by Theorem~\ref{th: imperfect equiv}, which is due to Philipp L\"ucke \cite{luckeSigma11Definability2012} and proved in the restricted context of $\Siii{1}{1}{\k}$ (i.e.\ \( \k\)-analytic) sets. However, the proof given by L\"ucke easily extends to all Suslin$_\kappa$ sets, yielding the slight generalization stated here.
The proof ultimately relies on some classical arguments, for which we refer the reader to \cite[Lemma 3.4]{kunen_set_1983}.

To introduce some notation, suppose that $T \subseteq \pre{<\kappa}{(\kappa \times W)}$ is a tree, where $W$ is any set. Let $\pi[T]$ denote the projection of the body of $T$ onto its first coordinate, that is:
\[
    \pi[T] = \{x \in \pre{\kappa}{\kappa} \mid \exists y \in \pre{\kappa}{W} \, ( (x,y) \in [T] )\}.
\]
Likewise, let $\pi(s,w) = s$ for every pair $(s, w) \in T$; clearly, \( \pi(T) \subseteq \pre{<\k}{\k} \) is still a tree.

\begin{definition}
Given sets  $W$ and $X \subseteq \pre{\kappa}{\kappa}$, we say that $X$ is \markdef{$W$-Suslin$_\kappa$}
if there exists a tree $T \subseteq \pre{<\kappa}{(\kappa \times W)}$ such that $X = \pi[T]$.
\end{definition} 

We often do not care about the specifics of the second coordinate in the tree; in that case, we simply write Suslin\(_\kappa \) for sets that are $W$-Suslin\(_\kappa \) for some $W$.%
\footnote{Suslin$_\kappa$ sets are not to be confused for the established notion of a $\kappa$-Suslin set, i.e.\ the projection of a tree on $\pre{<\omega}{(\omega \times \kappa)}$.}    
The most prominent members of this class are the \( \kappa \)-analytic (or $\Siii{1}{1}{\k}$) sets, which are obtained setting $W = \kappa$. 

Suslin\(_\k \) sets \( X = \pi[T] \) can again be naturally re-interpreted in any transitive model $V'$ of \( \mathsf{ZFC} \) containing $T$ as as $(\pi[T])^{V'}$; however, we should be careful to remind ourselves that this interpretation is far from robust, and without any absoluteness results it depends heavily on the chosen tree $T$.
Indeed, in the generalized setting we lack the rich array of absoluteness results we are accustomed to when studying the real line. For example, there is no hope for even $\Siii{1}{1}{\k}$-absoluteness to hold between any two transitive models of \( \mathsf{ZFC} \), since for a tree $T \subseteq \pre{<\kappa}{\kappa}$ the statement \enquote{$T$ has a branch of length $\kappa$} will in general not be absolute. On the bright side, the next lemma shows that at least for ${<}\kappa$-closed forcing extensions, $\Siii{1}{1}{\k}$-absoluteness and more generally, Suslin$_\kappa$-absoluteness, holds. The lemma is well-known and likely folklore, a proof can for example also be found in \cite[Lemma~2.7]{friedman_regularity_2016}.

\begin{lemma} \label{lemma:analyticabsoluteness}
Let $\bP$ be a ${<}\kappa$-closed forcing notion, $G$ a $\bP$-generic filter, $W \in V$ and $T \subseteq \pre{<\kappa}{W}$ a tree. If $V[G] \models [T] \neq \emptyset$, then $V \models [T] \neq \emptyset$.
\end{lemma}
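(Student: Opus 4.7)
The plan is to carry out a standard construction leveraging $<\kappa$-closedness to extract a branch of $T$ already in the ground model from a branch witnessed in $V[G]$.

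First, I would work in $V[G]$ to fix an element $x \in [T]^V{}^{[G]}$ with $x \colon \kappa \to W$, and pick a $\bP$-name $\dot{x} \in V$ together with a condition $p \in G$ forcing $\dot{x} \in [T]$. Since $W \in V$ and $T \in V$, both the values $\dot{x}(\alpha)$ and the tree $T$ are meaningful in $V$.

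Then, working entirely in $V$, I would recursively construct a decreasing sequence of conditions $\langle p_\alpha \mid \alpha < \kappa \rangle$ below $p$, together with elements $w_\alpha \in W$, such that $p_\alpha \Vdash \dot{x}(\alpha) = \check{w}_\alpha$. At each successor stage $\alpha+1$, density and the forcing theorem applied to $p_\alpha$ (which already forces $\dot{x}(\alpha+1)$ to be some element of $W$) yield a stronger condition $p_{\alpha+1}$ deciding $\dot{x}(\alpha+1)$ to be a specific element $w_{\alpha+1} \in V$. At limit stages $\delta < \kappa$, the $<\kappa$-closedness of $\bP$ provides a condition below all $p_\beta$ for $\beta < \delta$, from which we can then choose $p_\delta$ as above. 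Both the recursion and the choices happen in $V$ (using AC for choosing witnesses), and crucially the recursion has length $\kappa$ while the closure of $\bP$ is $<\kappa$, so the construction goes through all the way to $\kappa$.

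Finally, set $y = \langle w_\alpha \mid \alpha < \kappa \rangle \in V$. For each $\alpha < \kappa$, the condition $p_\alpha$ forces $\dot{x} \restriction \alpha = \check{y} \restriction \alpha$; since $p_\alpha \leq p$ forces $\dot{x} \in [T]$, it also forces $\dot{x} \restriction \alpha \in T$, and therefore $y \restriction \alpha = \check{y} \restriction \alpha \in \check{T}$ holds in $V$ as well (this is a statement about ground model objects). Hence every initial segment of $y$ is in $T$, so $y \in [T]^V$ and $V \models [T] \neq \emptyset$.

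There is no real obstacle beyond checking that the recursion can be carried out in $V$, which is exactly what $<\kappa$-closedness is designed to guarantee. The one subtle point worth underlining is that we do not need any absoluteness for the statement \enquote{$y \restriction \alpha \in T$}: this is a $\Delta_0$ statement about $y \restriction \alpha$, $\alpha$, and $T$, all of which lie in $V$.
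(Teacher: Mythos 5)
Your proof is correct and is exactly the standard folklore argument that the paper invokes without reproducing (it only cites \cite[Lemma~2.7]{friedman_regularity_2016}): build a decreasing $\kappa$-sequence of conditions below $p$ deciding $\dot{x}(\alpha)$ one coordinate at a time, using ${<}\kappa$-closedness only at limit stages $\delta<\kappa$, and observe that each forced statement $\check{y}\restriction\alpha\in\check{T}$ concerns ground-model objects and hence holds in $V$. The key points — that the recursion never needs a lower bound for the full $\kappa$-sequence, and that no absoluteness beyond statements about checked names is required — are correctly identified.
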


Recall that a map \( \varphi \) between two trees \( T \) and \( T' \) is \markdef{order preserving} if \( s \subseteq t \Rightarrow \varphi(s) \subseteq \varphi(t) \) for every \( s,t \in T \). Similarly, \( \varphi \) is \markdef{strict order preserving} if \( s \subsetneq t \Rightarrow \varphi(s) \subsetneq \varphi(t) \) for every \( s,t \in T \), and it \markdef{preserves incompatibility} if \( s \perp t \Rightarrow \varphi(s) \perp \varphi(t) \) for all \( s,t \in T \). If \( \varphi \) is strict order preserving and preserves also incompatibility, then it is called \markdef{order embedding}; equivalently, \( \varphi \) is an order embedding if it is injective and \( s \subsetneq t \IFF \varphi(s) \subsetneq \varphi(t) \), for all \( s,t \in T \). Similar terminology will be used also for \emph{partial} maps between \( T \) and \( T' \).

It $T, T'\subseteq \pre{<\mu}{A}$, to every strict order preserving function $\varphi \colon T \to T'$ we can associate a function $f_\varphi \colon [T]\to [T']$ by setting $f_\varphi(x)=\bigcup_{\alpha < \mu} \varphi(x \restriction \a)$, for every $x\in [T]$.
Notice that for every strict order preserving function $\varphi \colon T \to T'$ there is a \emph{continuous}%
\footnote{By continuous, we mean that if \( s \in T \) has limit length \( \delta \), then \( \varphi'(s) = \bigcup_{\a < \delta} \varphi(s \restriction \a) \).}
strict order preserving function $\varphi' \colon T \to T'$ such that $f_\varphi=f_{\varphi'}$; indeed, it is enough to define $\varphi'(s)=\bigcup_{t\subsetneq s} \varphi(t)$ for all $s\in T$ of limit length, and $\varphi'(s)=\varphi(s)$ for all other $s\in T$.
Also, if \( \varphi \) is an order embedding, then \( f_\varphi \) is a topological embedding, i.e.\ a homeomorphism onto its image.

Following L\"ucke, we introduce the following strengthening of the \( \kappa \)-Perfect Set Property for Suslin$_\kappa$ sets.

\begin{definition}
Let $T \subseteq \pre{<\kappa}{(\kappa \times W)}$ be a tree. We say $\varphi \colon \pre{<\kappa}{2} \to T$ is an \markdef{$\exists$-perfect order embedding} if for each $s, t \in \pre{<\kappa}{2}$:
\begin{itemizenew}
\item 
\( \varphi \) is a strict order preserving map; 
\item 
\( \pi \circ \varphi \) preserves incompatibility. 
\end{itemizenew}
\end{definition}

\begin{lemma} \label{lem:char_exists-embeddings}
Let $T \subseteq \pre{<\kappa}{(\kappa \times W)}$ be a tree, and let $\varphi \colon \pre{<\kappa}{2} \to T$ be an order preserving function. The following are equivalent: 
\begin{enumerate-(1)}
\item \label{lem:char_exists-embeddings-1}
$\varphi$ is $\exists$-perfect order embedding;
\item \label{lem:char_exists-embeddings-2}
$\pi\circ \varphi$ is an order embedding of $\pre{<\k}{2}$ into $\pi(T)$;
\item \label{lem:char_exists-embeddings-3}
both $\varphi$ and $\pi \restriction \varphi(\pre{<\kappa}{2})$ are order embeddings.
\end{enumerate-(1)} 
\end{lemma}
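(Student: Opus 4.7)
I would prove the equivalence via the cycle $\ref{lem:char_exists-embeddings-1}\Rightarrow\ref{lem:char_exists-embeddings-2}\Rightarrow\ref{lem:char_exists-embeddings-3}\Rightarrow\ref{lem:char_exists-embeddings-1}$. The pivotal observation, which gets recorded once and reused throughout, is that the projection $\pi$ is itself strict order preserving on $T$: if $s\subsetneq t$ in $T$, then $t$ extends $s$ by appending pairs in $\k \times W$, so $\pi(t)$ extends $\pi(s)$ by appending their first coordinates, giving $\pi(s)\subsetneq \pi(t)$. Consequently, whenever $\varphi$ is strict order preserving, so is $\pi\circ\varphi$.

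For $\ref{lem:char_exists-embeddings-1}\Rightarrow\ref{lem:char_exists-embeddings-2}$: combine the above observation with the hypothesis that $\pi\circ\varphi$ preserves incompatibility, and recall that by definition an order embedding is precisely a strict order preserving map that also preserves incompatibility.

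For $\ref{lem:char_exists-embeddings-2}\Rightarrow\ref{lem:char_exists-embeddings-3}$: to see that $\varphi$ is an order embedding, take $s,t\in\pre{<\k}{2}$. If $s\subsetneq t$, the strict order preservation of $\pi\circ\varphi$ gives $\pi(\varphi(s))\subsetneq\pi(\varphi(t))$; together with $\varphi(s)\subseteq\varphi(t)$ (from $\varphi$ being order preserving) and the injectivity just obtained, this forces $\varphi(s)\subsetneq\varphi(t)$. If $s\perp t$ then $\pi(\varphi(s))\perp\pi(\varphi(t))$ by incompatibility preservation, which a fortiori yields $\varphi(s)\perp\varphi(t)$. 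To see that $\pi\restriction\varphi(\pre{<\k}{2})$ is an order embedding, write two elements of its domain as $\varphi(s),\varphi(t)$: since $\varphi$ is now known to be an order embedding, the relation between $\varphi(s)$ and $\varphi(t)$ (equality, proper extension, or incompatibility) translates faithfully back to $s$ and $t$, so applying the corresponding property of $\pi\circ\varphi$ delivers the required behavior of $\pi$ on $\varphi(\pre{<\k}{2})$.

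For $\ref{lem:char_exists-embeddings-3}\Rightarrow\ref{lem:char_exists-embeddings-1}$: $\varphi$ being an order embedding is in particular strict order preserving; and if $s\perp t$ then $\varphi(s)\perp\varphi(t)$ by the incompatibility preservation of $\varphi$, and then $\pi(\varphi(s))\perp\pi(\varphi(t))$ by the same property for $\pi\restriction\varphi(\pre{<\k}{2})$, so $\pi\circ\varphi$ preserves incompatibility.

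There is no substantial obstacle; the proof is essentially bookkeeping once one recognizes that $\pi$ automatically contributes strict order preservation. The only point requiring mild care is at $\ref{lem:char_exists-embeddings-2}\Rightarrow\ref{lem:char_exists-embeddings-3}$, where one must distinguish the hypothesis that $\varphi$ is only assumed order preserving (not strict) from the strict behavior one can deduce after composing with $\pi$.
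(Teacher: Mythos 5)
Your proof is correct and follows essentially the same route as the paper's: the same cycle of implications $\ref{lem:char_exists-embeddings-1}\Rightarrow\ref{lem:char_exists-embeddings-2}\Rightarrow\ref{lem:char_exists-embeddings-3}\Rightarrow\ref{lem:char_exists-embeddings-1}$, anchored by the same key observation that $\pi$ is automatically strict order preserving on $T$. The only (immaterial) difference is that in $\ref{lem:char_exists-embeddings-2}\Rightarrow\ref{lem:char_exists-embeddings-3}$ the paper extracts strictness of $\varphi$ from injectivity of $\pi\circ\varphi$, whereas you read it off directly from the strict order preservation of $\pi\circ\varphi$; both are fine.
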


\begin{proof} 
\ref{lem:char_exists-embeddings-1} \( \Rightarrow \) \ref{lem:char_exists-embeddings-2}. 
Obvious, since \( \pi \) is strict order preserving.

\ref{lem:char_exists-embeddings-2} \( \Rightarrow \) \ref{lem:char_exists-embeddings-3}.
Since $\pi\circ \varphi$ is, in particular, injective, so are $\varphi$ and $\pi\restriction \varphi(\pre{<\kappa}{2})$. 
Thus, since $\varphi$ is order preserving by assumption, it also preserves the strict order.
Now, $\pi$ is always order preserving,  so in order for $\pi\circ \varphi$ to preserve incompatibility, $\varphi$ needs to preserve incompatibility as well. 
Thus $\varphi$ is an order embedding. It follows that $\pi\restriction \varphi(\pre{<\kappa}{2})$ is an order embedding as well (the argument is similar).

\ref{lem:char_exists-embeddings-3} \( \Rightarrow \) \ref{lem:char_exists-embeddings-1}. 
If $\varphi$ and $\pi \restriction \varphi(\pre{<\kappa}{2})$ are both order embeddings, so is \( \pi \circ \varphi \). In particular, \( \varphi \) is strict order preserving, and \( \pi \circ \varphi \) preserves incompatibility. 
\end{proof}

\begin{lemma} \label{lem:order_embeddings_closed_images}
    Let $T,T'\subseteq \pre{<\k}{A}$ for some $A$. Let $\varphi \colon T\to T'$ be an order embedding and suppose that $T$ is ${<} \kappa$-splitting, i.e.\ that \( |\succ_T(s)| < \k \) for all \( s \in T \). Then $f_\varphi([T])$ is closed in \( \pre{\k}{A} \) (with respect to the bounded topology).
\end{lemma}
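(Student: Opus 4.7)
The plan is to identify $f_\varphi([T])$ with the body of an explicit subtree of $\pre{<\kappa}{A}$. Define
\[
R = \{t \in \pre{<\kappa}{A} \mid \exists s \in T,\ t \subseteq \varphi(s)\},
\]
which is downward closed and hence a tree. The inclusion $f_\varphi([T]) \subseteq [R]$ is immediate: given $y = f_\varphi(x)$ with $x \in [T]$, for any $\alpha < \kappa$ one may choose $\gamma < \kappa$ with $|\varphi(x\restriction\gamma)| \geq \alpha$ (by strict order preservation), and then $y\restriction\alpha \subseteq \varphi(x\restriction\gamma)$ witnesses $y\restriction\alpha \in R$. Since $[R]$ is automatically closed in the bounded topology, it suffices to establish the reverse inclusion $[R] \subseteq f_\varphi([T])$.

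As a preliminary reduction, I would replace $\varphi$ by its continuous modification $\varphi'$ constructed in the discussion preceding Lemma~\ref{lem:char_exists-embeddings} (which agrees with $\varphi$ at non-limit levels and equals $\bigcup_{t \subsetneq s}\varphi(t)$ at nodes $s$ of limit length), after checking that $\varphi'$ remains an order embedding: strict order preservation follows by interpolating a successor node between $s \subsetneq t$ whenever $t$ has limit length, and preservation of incompatibility at limit levels reduces to the corresponding property at the first point of divergence. Since $f_{\varphi'} = f_\varphi$ (and $R$ is unchanged), I may then assume $\varphi$ itself is continuous at limits.

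Fix now $y \in [R]$, and consider the subtree $U_y := \{s \in T \mid \varphi(s) \subseteq y\}$. The heart of the argument is a transfinite recursion producing $\seq{s_\gamma}{\gamma<\kappa}$ with $s_\gamma \in U_y$ and $|s_\gamma| = \gamma$, from which $x := \bigcup_{\gamma<\kappa} s_\gamma$ satisfies $x \in [T]$ (since each $x\restriction\gamma = s_\gamma \in T$) and $f_\varphi(x) = \bigcup_\gamma \varphi(s_\gamma) \subseteq y$ is a length-$\kappa$ sequence by strict order preservation, hence equals $y$. At a successor stage, for each $\alpha > |\varphi(s_\gamma)|$ one fixes $s'_\alpha \in T$ with $y\restriction\alpha \subseteq \varphi(s'_\alpha)$; the order embedding properties, applied to $\varphi(s_\gamma) \subsetneq \varphi(s'_\alpha)$, force $s_\gamma \subsetneq s'_\alpha$, so $s'_\alpha(\gamma) \in \succ_T(s_\gamma)$. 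By $<\kappa$-splitting of $T$ and the regularity of $\kappa$, some $a \in \succ_T(s_\gamma)$ is attained by $s'_\alpha(\gamma)$ for a cofinal set of $\alpha$'s; taking any such $\alpha \geq |\varphi(s_\gamma^\frown\langle a\rangle)|$ shows $\varphi(s_\gamma^\frown\langle a\rangle) \subseteq y$, so $s_{\gamma+1} := s_\gamma^\frown\langle a\rangle$ lies in $U_y$.

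The main obstacle is the limit stage $\beta < \kappa$, where $s_\beta := \bigcup_{\gamma<\beta} s_\gamma$ may a priori fail to belong to $T$. I would resolve this by exploiting $y \in [R]$ once more at level $\ell + 1$, where $\ell := \sup_{\gamma<\beta}|\varphi(s_\gamma)|$ is $<\kappa$ by regularity: fix $s^* \in T$ with $y\restriction(\ell+1) \subseteq \varphi(s^*)$. For every $\gamma < \beta$, $\varphi(s_\gamma) \subseteq y\restriction\ell \subsetneq y\restriction(\ell+1) \subseteq \varphi(s^*)$, and the order embedding properties again force $s_\gamma \subsetneq s^*$. Hence $s_\beta \subseteq s^*$, and downward closure of $T$ gives $s_\beta \in T$. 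Continuity of $\varphi$ at $\beta$ then yields $\varphi(s_\beta) = \bigcup_\gamma \varphi(s_\gamma) \subseteq y$, so $s_\beta \in U_y$ and the recursion proceeds through all of $\kappa$.
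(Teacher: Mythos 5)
Your argument is correct, but it is packaged differently from the paper's. The paper proves directly that the complement of \(f_\varphi([T])\) is open: given \(x \notin f_\varphi([T])\), it forms the chain \(S = \{s \in T \mid \varphi(s) \subseteq x\}\), takes \(\bar s = \bigcup S\) (of length \({<}\kappa\)), and uses \({<}\kappa\)-splitting plus regularity of \(\kappa\) to produce a basic clopen neighbourhood of \(x\) missing the image; no continuity reduction is needed. You instead identify \(f_\varphi([T])\) with the body of the downward closure \(R\) of \(\varphi[T]\) and recover, for each \(y \in [R]\), a branch of \(T\) mapping to \(y\) by transfinite recursion along the chain \(U_y\). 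The combinatorial core is the same in both proofs (pigeonhole on \(\succ_T(s_\gamma)\) via \({<}\kappa\)-splitting and regularity), but your version buys the slightly stronger conclusion that the image is literally the set of branches of an explicitly described \({<}\kappa\)-closed-under-initial-segments tree, at the cost of the continuity reduction, which your limit stage genuinely requires (without it \(\varphi(s_\beta)\) could properly extend \(\bigcup_{\gamma<\beta}\varphi(s_\gamma)\) and fail to be an initial segment of \(y\)). Two small points to tidy up: the parenthetical claim that \(R\) is unchanged when passing to \(\varphi'\) is both unnecessary and not obviously true (a limit-length leaf of \(T\) can contribute nodes to \(R\) not below any element of \(\varphi'[T]\)); simply work with the downward closure of \(\varphi'[T]\) throughout, since \(f_{\varphi'} = f_\varphi\) is all you need. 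And the base case of the recursion (\(\varphi(\emptyset) \subseteq y\)) deserves a sentence: it follows because \(\varphi(\emptyset) \subseteq \varphi(s'_\alpha)\) trivially and \(\varphi(\emptyset)\), \(y \restriction \alpha\) are comparable initial segments of \(\varphi(s'_\alpha)\) for any \(\alpha > \leng{\varphi(\emptyset)}\).
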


\begin{proof}
We need to prove that \( \pre{\k}{A} \setminus f_\varphi([T]) \) is open, so pick any
\( x \in \pre{\k}{A} \setminus f_\varphi([T])\).
Let \( S = \{ s \in T \mid \varphi(s) \subseteq x \} \). Then \( S \) is a chain because \( \varphi \) preserves incompatibility, therefore we can set \( \bar{s} = \bigcup S \). Since \( \varphi \) is strict order preserving, we need to have \( \leng{\bar{s}} < \k\), as otherwise \( f_\varphi(\bar s) = x \), against our choice for \( x \). We distinguish two cases.

If \( \bar s \notin T \), then for all \( y \in [T] \) there is \( \a < \leng{\bar s} \) such that \( y \restriction \a \perp \bar s \restriction \a \), therefore \( \clopen{x \restriction \beta} \cap f_\varphi([T]) = \emptyset \) for \( \beta = \sup \{ \leng{\varphi(s)} \mid s \in S \} \).

Suppose now that \( \bar s \in T \). For each \( t \in \succ_T(\bar{s}) \) there is \( \b_t < \k \) such that \( \varphi(t) \restriction \b_t \perp x \restriction \b_t \), by the definition of $\bar s$. Since \( |\succ_T(\bar s) |<\k\), by regularity of \( \k \) we have \( \b = \sup \{ \b_t \mid t \in \succ_T(\bar s) \} < \k \). Then \( \clopen{x \restriction \b} \cap f_\varphi([T]) \neq \emptyset \).
\end{proof}

\begin{corollary} \label{cor: exists psp implies strong psp}
Let $T \subseteq \pre{<\kappa}{(\kappa \times W)}$ be a tree. If there is an $\exists$-perfect order embedding  $\varphi \colon \pre{<\kappa}{2} \to T$, then $\pi[T]$ contains a \( \kappa \)-perfect subset.
\end{corollary}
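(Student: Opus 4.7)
My plan is to extract a $\kappa$-perfect subset of $\pi[T]$ as the image of $\pre{\kappa}{2}$ under the map induced by the composition $\psi = \pi \circ \varphi$, after possibly replacing $\varphi$ by its continuous refinement. Recall that for any strict order preserving $\varphi$ there is a continuous $\varphi'$ with $f_\varphi = f_{\varphi'}$, and this refinement preserves all relevant properties, so without loss of generality we may assume $\varphi$ is itself continuous. By Lemma~\ref{lem:char_exists-embeddings}, the map $\psi \colon \pre{<\kappa}{2} \to \pi(T)$ is an order embedding, hence the associated map $f_\psi \colon \pre{\kappa}{2} \to \pre{\kappa}{\kappa}$ is a topological embedding.

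Set $P = f_\psi(\pre{\kappa}{2})$. I want to verify three things: $P \subseteq \pi[T]$, $P$ is closed in $\pre{\kappa}{\kappa}$, and $P$ is homeomorphic to $\pre{\kappa}{2}$. The last is immediate from $f_\psi$ being a topological embedding. For the first, fix $x \in \pre{\kappa}{2}$ and consider $y = f_\varphi(x) = \bigcup_{\alpha < \kappa} \varphi(x \restriction \alpha)$. Each $\varphi(x \restriction \alpha) \in T$, and since $\varphi$ is strict order preserving and $\kappa$ is regular, $\sup_{\alpha<\kappa} \leng{\varphi(x \restriction \alpha)} = \kappa$ (otherwise $\varphi(x \restriction \alpha)$ would stabilize, contradicting strict order preservation), so $y \in \pre{\kappa}{(\kappa \times W)}$, and every initial segment of $y$ is contained in some $\varphi(x \restriction \alpha) \in T$, so $y \in [T]$. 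Using continuity of $\varphi$ and the fact that $\pi$ commutes with unions of chains, $\pi(y) = \bigcup_{\alpha<\kappa} \pi(\varphi(x \restriction \alpha)) = f_\psi(x)$, proving $f_\psi(x) \in \pi[T]$.

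For closedness of $P$, I apply Lemma~\ref{lem:order_embeddings_closed_images} to the order embedding $\psi \colon \pre{<\kappa}{2} \to \pre{<\kappa}{\kappa}$, noting that the domain tree $\pre{<\kappa}{2}$ is $2$-splitting and hence ${<}\kappa$-splitting. The lemma then gives that $f_\psi([\pre{<\kappa}{2}]) = f_\psi(\pre{\kappa}{2}) = P$ is closed in $\pre{\kappa}{\kappa}$. Combining everything, $P$ is a closed subset of $\pi[T]$ homeomorphic to $\pre{\kappa}{2}$, i.e.\ a $\kappa$-perfect subset of $\pi[T]$.

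No step strikes me as a serious obstacle; the only subtlety is making sure that $f_\varphi(x)$ actually has length $\kappa$ (so that it lies in $[T]$), which uses the strict order preservation of $\varphi$ together with the regularity of $\kappa$ that is in force throughout this section.
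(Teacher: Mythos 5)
Your proof is correct and follows essentially the same route as the paper's: apply Lemma~\ref{lem:char_exists-embeddings} to see that $\pi\circ\varphi$ is an order embedding, use Lemma~\ref{lem:order_embeddings_closed_images} with the ${<}\kappa$-splitting of $\pre{<\kappa}{2}$ to get closedness of $P=f_{\pi\circ\varphi}(\pre{\kappa}{2})$, and deduce $P\subseteq\pi[T]$ from $f_\varphi(\pre{\kappa}{2})\subseteq[T]$ and $f_{\pi\circ\varphi}=f_\pi\circ f_\varphi$. The extra details you supply (the continuous refinement and the verification that $f_\varphi(x)$ has length $\kappa$, which in fact needs only that the lengths $\leng{\varphi(x\restriction\alpha)}$ are strictly increasing, not regularity of $\kappa$) are harmless elaborations of what the paper calls ``easily follows.''
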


\begin{proof}
By Lemma~\ref{lem:char_exists-embeddings}, the map \( \pi \circ \varphi \colon \pre{<\k}{2} \to \pi(T) \subseteq \pre{<\k}{\k}\) is an order embedding. Since \( \pre{<\k}{2}\) is obviously \( {<} \k \)-splitting, we get that \( P = f_{\pi \circ \varphi}(\pre{\k}{2}) \) is closed in \( \pre{\k}{\k}\) by Lemma~\ref{lem:order_embeddings_closed_images}. Moreover, \( f_{\pi \circ \varphi}\) witnesses that \( P \) is homeomorphic to \( \pre{\k}{2}\). Therefore it is enough to prove that the \( \k \)-perfect set \( P \) is contained in \( \pi[T] \). But this easily follows from \( f_\varphi(\pre{\k}{2}) \subseteq [T]\) and the fact that \( f_{\pi \circ \varphi} = f_\pi \circ f_{\varphi} \).
\end{proof}

As mentioned, L\"ucke observed that there is a tight connection between the possibility of adding new elements to a set of the form \( \pi[T] \) via a \( {<} \k \)-closed forcing notion, and the fact that there is an \( \exists \)-perfect order embedding into \( T \) (in which case \( \pi(T) \) is not \( \k \)-thin by Corollary~\ref{cor: exists psp implies strong psp}). 

\begin{theorem}[{{\cite[Lemma~7.6]{luckeSigma11Definability2012}}}] \label{th: imperfect equiv}
    Let $W$ be a set and $T \subseteq \pre{<\kappa}{(\kappa \times W)}$ a tree. Then the following are equivalent:
    \begin{enumerate-(1)}         \item Every ${<}\kappa$-closed forcing which adds a new element of \( \pre{\k}{2}\) adds a new element to $\pi[T]$. 
        \item There exists a ${<}\kappa$-closed forcing which adds a new element to $\pi[T]$. 
        \item There is an $\exists$-perfect order embedding into $T$.  
    \end{enumerate-(1)}
\end{theorem}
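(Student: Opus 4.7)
The plan is to establish the cyclic chain $(3) \Rightarrow (1) \Rightarrow (2) \Rightarrow (3)$, with the last implication being the real content of the theorem.

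For $(3) \Rightarrow (1)$, I would proceed as follows. Suppose $\varphi \colon \pre{<\kappa}{2} \to T$ is an $\exists$-perfect order embedding; after replacing $\varphi$ by its continuous version, $f_\varphi \colon \pre{\kappa}{2} \to [T]$ is well-defined. By Lemma~\ref{lem:char_exists-embeddings}, $\pi \circ \varphi$ is an order embedding, and hence $f_{\pi\circ\varphi} = f_\pi \circ f_\varphi \colon \pre{\kappa}{2} \to \pre{\kappa}{\kappa}$ is a continuous \emph{injection} whose image lies in $\pi[T]$. Now let $\bP$ be any ${<}\kappa$-closed forcing adding a new element $c \in \pre{\kappa}{2}$, and set $x = f_{\pi\circ\varphi}(c) \in V[G]$. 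Then $x \in \pi[T]^{V[G]}$, and if $x$ were in $V$ then by injectivity of $f_{\pi\circ\varphi}$ (which is an element of $V$) we would recover $c$ in $V$, a contradiction. Hence $x$ is a new element of $\pi[T]$.

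The implication $(1) \Rightarrow (2)$ is immediate by instantiating (1) with $\bP = \mathrm{Add}(\kappa,1) = \pre{<\kappa}{2}$, which is ${<}\kappa$-closed and adds a new element of $\pre{\kappa}{2}$ via its generic branch.

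The main work lies in $(2) \Rightarrow (3)$. Let $\bP$ be a ${<}\kappa$-closed forcing and let $\dot{x}, \dot{y}$ be names forced by some $p^* \in \bP$ to satisfy $(\dot{x},\dot{y}) \in [T]$ and $\dot{x} \notin V$. I would perform a standard fusion recursion, constructing conditions $\seq{p_s}{s \in \pre{<\kappa}{2}}$ below $p^*$ and ordinals $\alpha_s < \kappa$ with the following properties, for every $s,t \in \pre{<\kappa}{2}$:
\begin{itemizenew}
\item $s \subseteq t$ implies $p_t \leq p_s$ and $\alpha_s \leq \alpha_t$;
\item $p_s$ decides $(\dot{x}, \dot{y}) \restriction \alpha_s$, say as $\varphi(s) \in T$;
\item $\pi(\varphi(s \conc 0)) \perp \pi(\varphi(s \conc 1))$.
\end{itemizenew}
At successor stages, given $p_s$, I use that $p_s \not\Vdash \dot{x} \in V$: indeed, if $p_s$ decided $\dot{x} \restriction \alpha$ for every $\alpha < \kappa$, then $\dot{x}$ would be determined below $p_s$ by a single element of $V$, contradicting freshness. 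Hence there exists $\alpha < \kappa$ and $q_0, q_1 \leq p_s$ forcing incompatible values of $\dot{x} \restriction \alpha$; extending $q_0, q_1$ to decide $(\dot{x}, \dot{y})$ up to some common $\alpha_{s \conc i} > \alpha_s$ yields $p_{s \conc 0}, p_{s \conc 1}$ with the required splitting on the first coordinate. At limit stages $\lambda < \kappa$ and for any $z \in \pre{\lambda}{2}$, I invoke ${<}\kappa$-closedness to take a lower bound $p_z$ of the decreasing chain $\seq{p_{z \restriction \alpha}}{\alpha < \lambda}$, and set $\varphi(z) = \bigcup_{\alpha<\lambda} \varphi(z \restriction \alpha)$, $\alpha_z = \sup_{\alpha<\lambda} \alpha_{z\restriction \alpha}$.

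Finally, I would verify that the resulting map $\varphi \colon \pre{<\kappa}{2} \to T$ is an $\exists$-perfect order embedding. Strict order preservation is immediate since the $\alpha_s$ are strictly increasing along the tree and the decisions are coherent. For incompatibility of $\pi \circ \varphi$ on incompatible inputs $s \perp t$, I take $u = s \cap t$ and assume $u \conc 0 \subseteq s$, $u \conc 1 \subseteq t$; by construction, $\pi(\varphi(u \conc 0)) \perp \pi(\varphi(u \conc 1))$, and strict order preservation of $\pi \circ \varphi$ upward propagates this incompatibility to $\pi(\varphi(s))$ and $\pi(\varphi(t))$. The main obstacle in the whole argument is ensuring that at the splitting step, one can always find two conditions forcing incompatibility of $\dot{x}$ specifically (and not only of $(\dot{x}, \dot{y})$), which is exactly where the freshness of $\dot{x}$—as opposed to merely of the pair—is essential.
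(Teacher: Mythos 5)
Your proof is correct. Note that the paper does not actually prove this theorem---it is quoted from L\"ucke's Lemma~7.6 with a pointer to the classical argument in Kunen---and your write-up is essentially that same standard argument: the pullback of a fresh element of \( \pre{\kappa}{2} \) through \( f_{\pi\circ\varphi} \) for (3)\( \Rightarrow \)(1) (with \( c \) recoverable in \( V \) from \( x \) and the order embedding, since \( \{s \in \pre{<\kappa}{2} \mid (\pi\circ\varphi)(s) \subseteq x\} \) is a chain computable in \( V \)), and the fusion/splitting recursion driven by the freshness of \( \dot{x} \) for (2)\( \Rightarrow \)(3). The only point worth making explicit is that \( \varphi(z) \in T \) at limit stages is not automatic from \( T \) being a tree (trees need not be closed under unions of chains); it follows because \( p_z \) forces both \( (\dot{x},\dot{y}) \in [\check{T}] \) and \( (\dot{x},\dot{y}) \restriction \alpha_z = \check{\varphi}(z) \), so \( p_z \) forces the ground-model statement \( \check{\varphi}(z) \in \check{T} \).
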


When restricting the attention to \( \k^+ \)-Borel sets, there is a tension between two ways of coding them. On the one hand, every \( \k^+\)-Borel set \( B \) is \( \k \)-analytic, and thus it is of the form \( B = \pi[T]\) for some tree \( T \subseteq \pre{<\k}{(\k \times \k)}\); this point of view allows us to exploit Theorem~\ref{th: imperfect equiv} in order to determine whether \( B \) contains a \( \k \)-perfect subset. However,
this way of coding \( \k^+ \)-Borel sets is not very stable. Without a large enough fragment of absoluteness, we cannot even ensure that 
 for two trees $T, T' \subseteq \pre{<\k}{(\k \times \k)}$ with $\pi[T] = \pi[T']$, the same equality continue to hold in ${<}\kappa$-closed forcing extensions (because it is a $\Piii{1}{2}{\k}$ statement).
On the other hand, $\Siii{1}{1}{\k}$-absoluteness (Lemma~\ref{lemma:analyticabsoluteness}) guarantees that the statement \enquote{$\langle S_1, f_1 \rangle$ and $\langle S_2, f_2\rangle$ code the same $\kappa^+$-Borel set} is absolute for ${<}\kappa$-closed forcing extensions,
so it is worth stating Theorem~\ref{th: imperfect equiv} in a form that fits the setting of \( \k^+\)-Borel codes (Corollary~\ref{cor: thin borel set eq}).
      To this aim,
one can exploit the following construction, which is borrowed from
 L\"ucke and Schlicht's proof of the following result (see~\cite[Lemma~1.11]{lucke_continuous_2015}). 
For the reader's convenience, we recount the proof.

\begin{lemma} \label{lem: borel set has canonical tree}
For every (code for a) $\kappa^+$-Borel set $B \subseteq \pre{\kappa}{\kappa}$ there exists a tree $T \subseteq \pre{<\kappa}{(\kappa \times \kappa)}$ such that \( B = \pi[T] \), and moreover
\[
B^{V[G]} = (\pi[T])^{V[G]}
\]
holds in all ${<}\kappa$-closed forcing extensions \( V[G] \).
\end{lemma}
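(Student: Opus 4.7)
The plan is to build $T$ directly from a $\kappa^+$-Borel code $\langle S, f\rangle$ for $B$, so that $\pi[T]$ unfolds the recursive definition of $\mathcal{I}^f_S(\emptyset)$ into a single iterated projection. Concretely, I would define by recursion on $\rank_S(s)$ two trees $T^+_s, T^-_s \subseteq \pre{<\kappa}{(\kappa \times \kappa)}$ with the intended meanings $\pi[T^+_s] = \mathcal{I}^f_S(s)$ and $\pi[T^-_s] = \pre{\kappa}{\kappa} \setminus \mathcal{I}^f_S(s)$, and then take $T := T^+_\emptyset$.

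At a leaf $s \in S^0$ the set $\mathcal{I}^f_S(s) = \clopen{f(s)}$ is basic clopen, so one can take $T^+_s$ to consist of all pairs $(u,v)$ of equal length with $u$ compatible with $f(s)$, and $T^-_s$ of pairs with $u$ incompatible at some coordinate with $f(s)$, the second coordinate being unconstrained in both. At an internal node $s$ with children $\seq{s_i}{i<\lambda_s}$ (so $\lambda_s \leq |A| \leq \kappa$), we use
\[
\mathcal{I}^f_S(s) \;=\; \bigcap_{i<\lambda_s} \bigl(\pre{\kappa}{\kappa} \setminus \mathcal{I}^f_S(s_i)\bigr),
\]
and dually: fixing a bijection $p \colon \kappa \to \kappa \times \kappa$, define $T^+_s$ so that each branch $(x,y)$ through it decomposes, via $p$, into threads $(x, y_i)$ branching through $T^-_{s_i}$ for every $i < \lambda_s$; dually, let $T^-_s$ encode an existential choice of a single index $i < \lambda_s$ (for instance in the initial values of $y$) followed by a branch through $T^+_{s_i}$ on $x$. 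Closing both constructions under initial segments, with appropriate behaviour at limit levels, yields trees genuinely living in $\pre{<\kappa}{(\kappa \times \kappa)}$.

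The verification $\pi[T^+_s] = \mathcal{I}^f_S(s)$ (and the dual equality for $T^-_s$) is then carried out by induction on $\rank_S(s)$: the inclusion $\pi[T^+_s] \subseteq \mathcal{I}^f_S(s)$ is immediate by unwinding the construction, while the reverse inclusion uses $\mathsf{ZFC}$-choice to gather, for each $x \in \mathcal{I}^f_S(s)$, witnesses $y_i$ through the $T^-_{s_i}$ (supplied by the inductive hypothesis) and interleave them via $p$ to a single $y$ with $(x, y) \in [T^+_s]$; the argument for $T^-_s$ is analogous. Crucially, $S$ is a well-founded tree on $\pre{<\omega}{A}$, and any ${<}\kappa$-closed forcing is in particular $\sigma$-closed, hence adds no new $\omega$-sequences and preserves the well-foundedness of $S$ and the function $\rank_S$. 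Since the recursive definition of the $T^\pm_s$ and the inductive verification use only $\mathsf{ZFC}$-provable facts together with the well-foundedness of $S$, they transfer verbatim to every ${<}\kappa$-closed extension $V[G]$, giving $(\pi[T])^{V[G]} = \mathcal{I}^f_S(\emptyset)^{V[G]} = B^{V[G]}$.

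The main obstacle I foresee is the coding bookkeeping at internal nodes: one must arrange the interleaving so that $T^+_s$ and $T^-_s$ are genuine trees on $\pre{<\kappa}{(\kappa \times \kappa)}$, closed under initial segments and continuous at limit levels $\delta < \kappa$, so that the inductive verification of the projection equalities goes through uniformly rather than ad hoc at each level. Once this combinatorial setup is in place, the remainder of the argument is routine, and the preservation under ${<}\kappa$-closed forcing extensions is automatic, since the construction and verification invoke only well-foundedness of $S$ and ordinary choice.
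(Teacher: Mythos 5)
Your proposal is correct, but it takes a genuinely different route from the paper. You build the Suslin$_\kappa$ representation by recursion on the code tree $S$, producing at each node $s$ a pair of trees projecting to $\mathcal{I}^f_S(s)$ and its complement, and handling intersections by interleaving witnesses and complements (unions) by coding an existential choice of index into the second coordinate --- essentially the standard proof that Suslin$_\kappa$ sets are closed under $\kappa$-sized unions and intersections, run uniformly along the well-founded code. The paper instead follows L\"ucke--Schlicht: it introduces a \emph{single} auxiliary pair $(y,z) \in \pre{S}{2} \times \pre{S}{S}$ recording, all at once, the truth value of ``$x \in \mathcal{I}^f_S(s)$'' at every node $s$ together with a witness function $z$ that turns the existential clause ``some successor has $y(t)=1$'' into a closed condition; the set $C$ of triples $(x,y,z)$ satisfying this correctness condition plus the leaf condition is then closed in one step, and absoluteness is immediate from the closedness ($\Piii{1}{1}{\k}$-ness) of a single defining formula. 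What your approach buys is elementarity and independence from any absoluteness machinery: since the trees $T^\pm_s$ are fixed objects of $V$ and the verification $\pi[T^\pm_s] = \pm\mathcal{I}^f_S(s)$ is a ZFC argument by induction on $\rank_S$ (which survives to $V[G]$ because the rank function does), the equality $B^{V[G]} = (\pi[T])^{V[G]}$ holds in \emph{any} outer model with choice, not just ${<}\kappa$-closed extensions; the cost is the interleaving bookkeeping you correctly flag (arranging that each thread is read off cofinally often so that branches of $T^+_s$ decompose into genuine length-$\kappa$ branches of the $T^-_{s_i}$, and closing everything under initial segments at limit levels), which the paper's one-shot construction avoids entirely. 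Both arguments are sound; just make sure, when you write out the base case, that $T^-_s$ for a leaf also contains the short initial segments that have not yet diverged from $f(s)$, so that it is literally closed under restriction.
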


\begin{proof}
Let \( \langle S,f \rangle \) be the chosen code for \( B \),
and fix an enumeration $e \colon S \to \kappa$ such that $s \subsetneq t \implies e(s) < e(t)$. Let us call a pair $(y,z) \in \pre{S}{2} \times \pre{S}{S}$ \markdef{correct} if the following conditions hold:
\begin{enumerate-(1)}
\item \label{borel set has canonical tree-1}
 $y(\emptyset) = 1$, and for each $s \in S^{>0} $ we have $y(s) = 0 \IFF \exists t \in \succ_S(s) \, (y(t) = 1)$.
\item \label{borel set has canonical tree-2}
 for each $s \in s \in S^{>0} $ with $y(s) = 0$, we have $z(s) \in \succ_S(s)$ and $y(z(s)) = 1$; otherwise, $z(s) = \emptyset$.%
\footnote{It bears mentioning that this diverges slightly from L\" ucke and Schlicht's proof, which even implies that the projection $\pi \restriction [T]$ can be chosen to be injective. However, this is not relevant for our arguments.}
\end{enumerate-(1)}

Consider now the set
\begin{equation}
C = \left\{ (x,y,z) \in \pre{\kappa}{\kappa} \times \pre{S}{2} \times \pre{S}{S}\ \Big|\
\begin{gathered}
(y,z) \text{ is correct } \wedge  \\
\forall s \in S^0 \left( y(s) = 1 \IFF x \in \clopen{f(s)}  \right) 
\end{gathered}
\right\}. \label{eq: def C}
\end{equation}

If we identify $\pre{S}{2}$ and $\pre{S}{S}$ with $\pre{\kappa}{2}$ and $\pre{\kappa}{\kappa}$, respectively, via the bijection $e$, one can construe $C$ as a closed subset of $\pre{\kappa}{(\kappa \times (2 \times \kappa))}$, and hence also as a closed subset of \( \pre{\k}{(\k \times \k)} \). 
Indeed, condition~\ref{borel set has canonical tree-2} is clearly closed. As for~\ref{borel set has canonical tree-1}, as stated it seems more complicated because of the implication \( y(s) = 0 \Rightarrow \exists t \in \succ_S(s) \, (y(t) = 1) \): however, it is not hard to see that it is closed \emph{relatively to the closed set determined by~\ref{borel set has canonical tree-2}} because \( z \) can be used to extract a witness of the existential quantification appearing in the problematic implication. 
     
The first coordinate in a correct pair \( (y,z) \) models the interpretation function \( \mathcal{I}^f_S \) associated to the $\kappa^+$-Borel code \( \langle S,f \rangle \), hence $B = \pi[T]$ where \( T \subseteq \pre{<\k}{(\k \times \k)} \) is a tree such that \( C = [T] \). 
Moreover, the equality defining $C$ in~\eqref{eq: def C} is a \( \Piii{1}{1}{\k} \) statement, therefore \( C^{V[G]} = [T]^{V[G]}\) is still the set of triples such that \( (y,z) \) is correct and \( \forall s \in S^0 \left( y(s) = 1 \IFF x \in \clopen{f(s)}  \right) \) (in the sense of \( V[G] \)). But this entails \( B^{V[G]} = (\pi[T])^{V[G]} \), as desired. 
  \end{proof}

The codes for \( \k^+ \)-Borel sets can also be used to canonically re-interpret \( \k^+ \)-Borel measurable functions in \( {<}\k \)-closed forcing extentions. Indeed, suppose that \( T \subseteq \pre{<\k}{\k} \) is a tree, and that \( f \colon [T] \to \pre{\k}{\k}\) is \( \k^+ \)-Borel measurable. Suppose that \( \bP \) is a \( {<}\k\)-closed forcing notion, and that \( G \) is \( \bP \)-generic over \( V \). For each \( t \in \pre{<\k}{\k}\) fix a \( \k^+ \)-Borel code \( \langle S_t,f_t \rangle \) for \( B_t= f^{-1}(\clopen{t}) \), so that the latter can be re-interpreted in \( V[G] \) as \( B_t^{V[G]} = (\mathcal{I}^{f_t}_{S_t}(\emptyset))^{V[G]}\). (By \( \Siii{1}{1}{\k}\)-absoluteness, the actual choice of such codes is irrelevant.) Working in \( V[G] \), we can then canonically define a function \( f^{V[G]} \colon ([T])^{V[G]} \to  (\pre{\k}{\k})^{V[G]} \) by setting
for every \( x \in ([T])^{V[G]}\)
\[
f^{V[G]}(x) = \bigcup\left \{t \in \pre{<\kappa}{\kappa}  \bigm | x \in \left(B_t\right)^{V[G]} \right \}.
\]
Since the statement (the quantifier $\exists!$ means \enquote{there exists exactly one})
\[
 \forall x \in [T]\ \forall i < \kappa\ \exists! t \in \pre{i}{\kappa} \, (x \in B_t)
\]
is $\Piii{1}{1}{\k}$, hence absolute, $f^{V[G]}$ is a well-defined function. 
 We also note that injectivity of $f$ is $\Piii{1}{1}{\k}$.

\begin{corollary} \label{cor: thin borel set eq}
For every (code for a) \( \k^+ \)-Borel sets $B \subseteq \pre{\kappa}{\kappa}$, the following are equivalent:
\begin{enumerate-(1)} \item \label{enum: every new2}
Every ${<}\kappa$-closed forcing which adds a new element of \( \pre{\k}{2}\) adds a new element to $B$. 
\item \label{enum: some new2}
There exists a ${<}\kappa$-closed forcing which adds a new element to $B$. 
\item \label{enum: strong psp2}
$B$ contains a \( \kappa \)-perfect subset. 
\item \label{enum: weak psp2}
There is $\kappa^+$-Borel injection of $\ \pre{\kappa}{2}$ into $B$. 
\end{enumerate-(1)}
\end{corollary}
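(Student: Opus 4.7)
The plan is to prove the cycle \ref{enum: every new2}~$\Rightarrow$~\ref{enum: some new2}~$\Rightarrow$~\ref{enum: strong psp2}~$\Rightarrow$~\ref{enum: weak psp2}~$\Rightarrow$~\ref{enum: every new2}. The implications \ref{enum: every new2}~$\Rightarrow$~\ref{enum: some new2} and \ref{enum: strong psp2}~$\Rightarrow$~\ref{enum: weak psp2} should be essentially immediate: for the former, one applies~\ref{enum: every new2} to any standard ${<}\k$-closed forcing adding a Cohen subset of $\k$, such as $\mathrm{Add}(\k,1)$; for the latter, the homeomorphism $\pre{\k}{2} \to P$ witnessing $\k$-perfectness of a subset $P \subseteq B$ is continuous, and therefore a $\k^+$-Borel injection into $B$.

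For \ref{enum: some new2}~$\Rightarrow$~\ref{enum: strong psp2}, I would apply Lemma~\ref{lem: borel set has canonical tree} to obtain a tree $T \subseteq \pre{<\k}{(\k \times \k)}$ such that $B = \pi[T]$ and this representation is preserved by every ${<}\k$-closed forcing extension. Condition~\ref{enum: some new2} then matches exactly the second clause of Theorem~\ref{th: imperfect equiv}, yielding an $\exists$-perfect order embedding into $T$, and Corollary~\ref{cor: exists psp implies strong psp} then provides the desired $\k$-perfect subset of $B = \pi[T]$.

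The main step---and what I expect to be the hardest---is \ref{enum: weak psp2}~$\Rightarrow$~\ref{enum: every new2}, where the delicate issue is correctly transferring a $\k^+$-Borel injection across a generic extension. Given a $\k^+$-Borel injection $f \colon \pre{\k}{2} \to B$ in $V$ and a ${<}\k$-closed forcing $\bP$ adding some $x \in (\pre{\k}{2})^{V[G]} \setminus V$, I would use the canonical $\k^+$-Borel re-interpretation $f^{V[G]} \colon (\pre{\k}{2})^{V[G]} \to (\pre{\k}{\k})^{V[G]}$ described just before the statement of this corollary. This function is total, agrees with $f$ on $V \cap \pre{\k}{2}$, and its graph is the $V[G]$-reading of the chosen $\k^+$-Borel code for $\mathrm{Gr}(f)$. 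The statements \enquote{$\mathrm{ran}(f) \subseteq B$} and \enquote{$f$ is injective} are both $\Pi^1_1$ in $V$ (since $\mathrm{Gr}(f)$ and $B$ are $\k^+$-Borel, hence Suslin$_\k$), so by upward absoluteness (Lemma~\ref{lemma:analyticabsoluteness}), $f^{V[G]}$ remains an injection into $B^{V[G]}$.

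Setting $y := f^{V[G]}(x) \in B^{V[G]}$, it then remains to show $y \notin V$. If instead $y \in V$, then the $\Sigma^1_1$ statement $\exists z \, ((z,y) \in \mathrm{Gr}(f))$ holds in $V[G]$ (witnessed by $z = x$), hence by Lemma~\ref{lemma:analyticabsoluteness} it holds in $V$, yielding some $z \in V$ with $f(z) = y = f^{V[G]}(x)$. Injectivity of $f^{V[G]}$ then forces $z = x$, contradicting $x \notin V$. Thus $y$ is a new element of $B^{V[G]}$.
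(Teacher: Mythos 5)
Your proof is correct and follows essentially the same route as the paper: the same easy implications, the same use of Lemma~\ref{lem: borel set has canonical tree}, Theorem~\ref{th: imperfect equiv}, and Corollary~\ref{cor: exists psp implies strong psp} for \ref{enum: some new2}~$\Rightarrow$~\ref{enum: strong psp2}, and the same absoluteness argument via $f^{V[G]}$ for \ref{enum: weak psp2}~$\Rightarrow$~\ref{enum: every new2}. The only cosmetic difference is in the last step: you show $f^{V[G]}(x)\notin V$ by downward $\Siii{1}{1}{\k}$-absoluteness of \enquote{$y\in\ran(f)$}, whereas the paper shows $f^{V[G]}(\bar x)\notin B^V$ by treating the cases $\bar y\in\ran(f)$ and $\bar y\notin\ran(f)$ separately; both rest on Lemma~\ref{lemma:analyticabsoluteness} in the same way.
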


\begin{proof}
The implications \ref{enum: every new2}~$\Rightarrow$~\ref{enum: some new2} and \ref{enum: strong psp2}~$\Rightarrow$~\ref{enum: weak psp2} are trivial.
For the implication \ref{enum: some new2}~$\Rightarrow$~\ref{enum: strong psp2}, apply Theorem~\ref{th: imperfect equiv} to the tree $T$ constructed in Lemma~\ref{lem: borel set has canonical tree} to find an $\exists$-perfect embedding into $T$. By Corollary~\ref{cor: exists psp implies strong psp}, this implies \ref{enum: strong psp2}.

It remains to show \ref{enum: weak psp2}~$\Rightarrow$~\ref{enum: every new2}. Let \( f \colon \pre{\k}{2} \to B \) be a \( \k^+ \)-Borel injection.
Suppose that \( V[G] \) is a forcing extension obtained via a \( {<}\k\)-closed forcing, and that there is \( \bar x \in (\pre{\k}{2})^{V[G]} \setminus (\pre{\k}{2})^V \). Since the formula $\forall x \in \pre{\kappa}{2} \, ( f(x) \in B)$ is \( \Piii{1}{1}{\k} \), by \( \Siii{1}{1}{\k} \)-absoluteness we have \( f^{V[G]}(\bar x) \in B^{V[G]} \). Likewise, \( f^{V[G]} \) is injective because \( f \) is, therefore \( f^{V[G]}(\bar x) \notin \ran(f) \). Let \( \bar y \in (\pre{\k}{\k})^V \) be any element of \( B \setminus \ran(f) \). 
The statement \enquote{$\forall x \in \pre{\kappa}{2} \, ( f(x) \neq \bar y$)} is $\Piii{1}{1}{\k}$, hence we have \( f^{V[G]}(\bar x) \neq \bar y \). Summing up, \( f^{V[G]}(\bar x) \in B^{V[G]} \setminus B^V\), as desired.
\end{proof}

As a by-product, Corollary~\ref{cor: thin borel set eq} shows that all common variants of the \( \k \)-Perfect Set Property coincide on \( \k^+ \)-Borel sets. Indeed, \ref{enum: strong psp2} corresponds to the strongest form of the property (namely, the one explicitly considered in this paper), while~\ref{enum: weak psp2} corresponds to the weakest one (see also~\cite[Lemma~2.9]{lueckeHurewiczDichotomyGeneralized2016}).

From Corollary~\ref{cor: thin borel set eq}, the construction of a closed set $X \subseteq \pre{\kappa}{\kappa}$ with $|X| > \kappa$ and $\ord_{\k^+}(X) = 2$ can be achieved by a straightforward generalization of standard arguments involving an iteration of Solovay's almost disjoint forcing \cite{jensenApplicationsAlmostDisjoint1970}. A generalization of almost disjoint forcing for uncountable $\kappa$ has appeared in \cite{luckeSigma11Definability2012}, essentially providing an answer to Problem~\ref{q: large and ord 2} among other related results surrounding $\Siii{1}{1}{\k}$- and $\Deee{1}{1}{\k}$-definability.

\begin{corollary}\label{cor:closed_uncountable_order_2}
Consistently, there exists a closed subset $X \subseteq \pre{\kappa}{\kappa}$ with $|X| > \kappa$ and $\ord_{\k^+}(X) = 2$.
\end{corollary}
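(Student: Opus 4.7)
The plan is to apply a $\kappa^+$-length iteration of Lücke's \cite{luckeSigma11Definability2012} generalization of Solovay's almost disjoint coding forcing \cite{jensenApplicationsAlmostDisjoint1970} to collapse the entire $\kappa^+$-Borel hierarchy on a prescribed closed set down to level $2$. First, using Corollary~\ref{cor: closed set with full order} together with a preliminary small forcing if necessary, arrange that the ground model $V$ already contains a $\kappa$-thin closed set $X \subseteq \pre{\kappa}{\kappa}$ of size $\kappa^+$. In $V$, fix a continuously varying almost disjoint family $\{F_x \mid x \in X\}$ of cofinal subsets of $\kappa$ (each $F_x$ definable in a clopen way from $x$), together with a bookkeeping function which will enumerate all potential $\kappa^+$-Borel subsets of $X$ that arise during the iteration.

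Next, perform a ${<}\kappa$-supported iteration $\seq{\bP_\alpha, \name{\bQ}_\alpha}{\alpha < \kappa^+}$, where each $\name{\bQ}_\alpha$ is Lücke's generalized almost disjoint coding forcing associated with a nominated $\kappa^+$-Borel subset $\name{A}_\alpha \subseteq X$ (given by a $\kappa^+$-Borel code in the intermediate model $V[G_\alpha]$, produced by the bookkeeping). The generic for $\name{\bQ}_\alpha$ is a new $\name{r}_\alpha \in \pre{\kappa}{\kappa}$ satisfying, in $V[G_{\alpha+1}]$, the identity $\name{A}_\alpha = \{ x \in X \mid |\name{r}_\alpha \cap F_x| < \kappa \}$. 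Since the predicate $|\name{r}_\alpha \cap F_x| < \kappa$ is $\Sigma^0_2$ in $x$ with parameter $\name{r}_\alpha$, the set $\name{A}_\alpha$ becomes $\Sigma^0_2$ in $V[G_{\alpha+1}]$. Each $\name{\bQ}_\alpha$ is forced to be ${<}\kappa$-closed, well-met, and $\kappa$-linked, so by Fact~\ref{fact: iteration cc} the full iteration $\bP_{\kappa^+}$ is ${<}\kappa$-closed and $\kappa^+$-c.c.

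In the final extension $V[G]$, the set $X$ remains closed by absoluteness of tree bodies under ${<}\kappa$-closed forcing, and retains cardinality $\kappa^+$ since $\kappa$-thinness of $X$ together with Corollary~\ref{cor: thin borel set eq} ensures that $\bP_{\kappa^+}$ adds no new elements to $X$. By the $\kappa^+$-c.c., every $\kappa^+$-Borel code in $V[G]$ has a $\bP_{\kappa^+}$-name of hereditary size $\kappa$, and thus already belongs to some intermediate $V[G_\alpha]$ with $\alpha < \kappa^+$; the bookkeeping can therefore be arranged so that every $\kappa^+$-Borel subset $A$ of $X$ in $V[G]$ coincides with some $\name{A}_\beta$ at a suitable stage $\beta < \kappa^+$. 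The $\Sigma^0_2$ description supplied at stage $\beta+1$ persists into $V[G]$: the relevant closed sets are bodies of trees, and these are absolute across the remaining ${<}\kappa$-closed tail of the iteration. Hence $\ord_{\kappa^+}(X) \leq 2$ in $V[G]$. The reverse inequality is automatic, since $|X| > \kappa$ together with $\weight(X) \leq \kappa$ prevents $X$ from being discrete, producing non-open closed subsets.

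The main obstacle is the bookkeeping: since brand-new $\kappa^+$-Borel subsets of $X$ can appear at any stage, one must enumerate codes from all intermediate models carefully enough to address every eventual code, rather than just those present in $V$. The $\kappa^+$-c.c.\ from Fact~\ref{fact: iteration cc} is what makes this feasible, ensuring a reflecting argument works. A secondary technical point is verifying that Lücke's generalized almost disjoint coding forcing satisfies ${<}\kappa$-closure, well-metness, $\kappa$-linkedness, and the claimed $\Sigma^0_2$-coding property in the present setting; this is essentially the content of the relevant lemmas in \cite{luckeSigma11Definability2012}, and transfers directly to the $\kappa^+$-Borel framework developed in Sections~\ref{sec:preliminaries}--\ref{sec_borel_hierarchy}.
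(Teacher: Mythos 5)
Your proposal is correct in outline, but it takes a genuinely different route from the paper's official proof. The paper only \emph{mentions} the almost disjoint coding approach (attributing it to L\"ucke's generalization of Solovay's forcing) and explicitly declines to carry it out, instead deriving the corollary from the $\alpha$-forcing machinery of Section~\ref{sec: alpha-forcing}: Theorem~\ref{th: set order to n} with $n=2$ iterates $\aforc_2(\name{B}_\zeta, X\setminus\name{B}_\zeta, X)$ with bookkeeping to attach a $\Piii{0}{2}{\k^+}$ code to every $\kappa^+$-Borel subset of a $\kappa$-thin closed $X$ with $|X|>\kappa$, and Corollary~\ref{cor:final_7} uses thinness plus Corollary~\ref{cor: thin borel set eq} to keep $X$ unchanged. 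Your argument replaces the generic $\Piii{0}{2}{\k^+}$ code with an almost disjoint coding real $r_\alpha$ and the standard $\Sigma^0_2$ predicate $|r_\alpha\cap F_x|<\kappa$; structurally the two proofs are parallel (iterate a coding forcing, verify closure/linkedness/well-metness to invoke Fact~\ref{fact: iteration cc}, use thinness to prevent new points and hence preserve closedness and the lower bound $\ord_{\k^+}(X)\geq 2$). What the paper's approach buys is uniformity: the same machinery yields $\ord_{\k^+}(X)=n$ for every finite $n>1$, whereas almost disjoint coding is specific to level $2$. Your use of thinness is essential and correctly placed: without it $X$ would contain a copy of $\pre{\kappa}{2}$ and Theorem~\ref{thm:non-collpase_general_space} would force $\ord_{\k^+}(X)=\kappa^+$.

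One point needs repair. You run the iteration for $\kappa^+$ stages, but the number of $\kappa^+$-Borel codes arising in the intermediate models is $2^\kappa$, which may exceed $\kappa^+$; in that case no bookkeeping function on $\kappa^+$ can catch them all, and your reflection argument (every code appears in some $V[G_\alpha]$) does not by itself guarantee that every code gets \emph{handled}. The paper avoids this by taking the iteration to have length $\lambda=2^\kappa$. Your version is salvageable either by doing the same, or by assuming $2^\kappa=\kappa^+$ in the ground model (harmless for a consistency statement, and preserved by the ${<}\kappa$-closed, $\kappa^+$-c.c.\ iteration), but as written the length of the iteration is too short in general.
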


We do not provide an explicit proof of Corollary~\ref{cor:closed_uncountable_order_2} at this point, since the next section will develop a more general approach subsuming the use of almost disjoint forcing.

\subsection{A \( \kappa \)-thin closed set of order $\kappa^+$} \label{subsec: thin closed set}

The rest of this section is devoted to the existence of \( \kappa \)-thin definable sets. Note that by the results of Schlicht \cite{schlicht_perfect_2017}, it is consistent relative to an inaccessible above $\kappa$ that every set $X \subseteq \pre{\kappa}{\kappa}$ definable from a $\kappa$-sequence of ordinals has the \( \kappa \)-Perfect Set Property. 

Recall that a tree $T \subseteq \pre{<\kappa}{\kappa}$ is called \markdef{Kurepa}, if $|[T]|> \kappa$ and $|\mathcal{L}_\alpha(T)| \leq |\alpha|$ for stationary many $\alpha < \kappa$, where $\mathcal{L}_\alpha(T) = T \cap \pre{\alpha}{\kappa}$ denotes the $\alpha$-th level of the tree;
it is called \markdef{Jech-Kunen}, if $\kappa < |[T]| < 2^\kappa$.

From the analysis of Kurepa trees on $\omega_1$ it is known that the bodies of Kurepa and Jech-Kunen trees are typical examples of \( \kappa\)-thin closed sets \cite[Chapter~VIII,~Lemma~3.4]{kunen_set_1983}. The same is true in the generalized setting. For Jech-Kunen trees this is just a simple matter of cardinality; for Kurepa trees, it follows by an argument in \cite[Section 7]{luckeSigma11Definability2012}.

By a result of Hamkins, it is possible to turn the ground model generalized Baire space into a \( \kappa \)-thin closed set.
Notice that \( \k \)-thin sets constructed in this way differ from the ones discussed above, as in the situation described in Theorem~\ref{th: hamkins} the closed set $\pre{\kappa}{\kappa} \cap V$ is not the body of a Kurepa or a Jech-Kunen tree. 

\begin{theorem}[\cite{hamkins_gap_2001}] \label{th: hamkins}
Let $\bP$ be a forcing notion such that $|\bP| < \kappa$. Then in any forcing extension \( V[G] \) by $\bP$, the set $\pre{\kappa}{\kappa} \cap V$ is a \( \kappa \)-thin closed set.
\end{theorem}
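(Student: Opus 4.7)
The plan is to establish closedness and $\k$-thinness of $X = (\pre{\k}{\k})^V$ separately in the forcing extension $V[G]$.

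For closedness, I would take any $y \in (\pre{\k}{\k})^{V[G]}$ with $y \restriction \alpha \in V$ for every $\alpha < \k$, and show $y \in V$. Fix a $\bP$-name $\dot{y}$ for $y$; working in $V$, define
\[
A_\alpha = \left\{ t \in (\pre{\alpha}{\k})^V \mid \exists p \in \bP \, (p \Vdash \dot{y} \restriction \alpha = \check{t}) \right\}.
\]
Since distinct values of $t$ must be witnessed by pairwise incompatible conditions and $|\bP| < \k$, we have $|A_\alpha| \leq |\bP| < \k$. Hence $T := \bigcup_{\alpha < \k} A_\alpha \in V$ is a tree of height $\k$ with levels of size ${<}\k$, through which $y$ is a cofinal $\k$-branch in $V[G]$. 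Since $|\bP| < \k$ makes $\bP$ $\k$-cc, and $\k$-cc forcing cannot add new cofinal branches to trees of height $\k$ with ${<}\k$-sized levels (a standard result due essentially to Kunen), $y \in V$.

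For $\k$-thinness, I would invoke Corollary~\ref{cor: thin borel set eq} on the closed set $X$, which reduces the task to showing that no ${<}\k$-closed forcing $\bQ \in V[G]$ with generic $H$ adds a new element to $X$ when reinterpreted in $V[G][H]$. Using the canonical closed-set $\k^+$-Borel code for $X$, a putative new element $y \in V[G][H] \setminus V$ must fail to extend every $t \in \partial T^* := \{t \in V[G] \setminus V : t \restriction \beta \in V \text{ for all } \beta < |t|\}$, where $T^* = (\pre{<\k}{\k})^V$. Because $\bQ$ is ${<}\k$-closed, $y \restriction \alpha \in V[G]$ for every $\alpha < \k$; if some $y \restriction \alpha \notin V$, then taking the least such $\alpha$ would put $y \restriction \alpha$ itself in $\partial T^*$ — and $y$ trivially extends it — contradicting $y \in X^{V[G][H]}$. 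So $y \restriction \alpha \in V$ for every $\alpha$, and $y$ is a genuinely new cofinal $\k$-branch of $T^*$ introduced by $\bQ$ over $V[G]$.

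The main obstacle, and the step I expect to be hardest, is to rule out such a new branch. The approach would combine a fusion argument exploiting ${<}\k$-closedness of $\bQ$ with Hamkins's $\k$-approximation and $\k$-cover properties for the pair $(V, V[G])$, both of which follow from $|\bP| < \k$. Concretely, one builds in $V[G]$ a decision tree of conditions in $\bQ$ deciding successively longer initial segments of $\dot{y}$ to specific $V$-values, passing through limit stages below $\k$ by ${<}\k$-closedness; Hamkins's approximation property then forces the generic's choice of path to assemble into a $\k$-sequence already coded in $V[G]$, at which point the closedness step above delivers $y \in V$, the sought contradiction. The delicate combinatorial point is to verify that this fusion cooperates with approximation despite $|\bQ|$ being potentially unbounded: one must exploit the tightness imposed by $\dot{y}$ taking only $V$-values — together with Hamkins's ground-model definability of $V$ in $V[G]$ — to cut the decision tree down to a $V[G]$-object whose cofinal branches are controlled by the $\k$-cc of $\bP$. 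This interplay between closedness, fusion, and approximation is the heart of Hamkins's treatment in the cited paper.
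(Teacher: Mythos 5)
Your overall route is the paper's: reduce \( \kappa \)-thinness of \( X = \pre{\kappa}{\kappa}\cap V \) to the statement that no \( {<}\kappa \)-closed forcing over \( V[G] \) adds a fresh \( \kappa \)-branch to the tree \( T^* = (\pre{<\kappa}{\kappa})^V \) (you do this via Corollary~\ref{cor: thin borel set eq}, the paper via Theorem~\ref{th: imperfect equiv}; these are the same reduction), and then attribute that branch-preservation fact to Hamkins's Key Lemma in \cite{hamkins_gap_2001}. Since the theorem is itself cited from that paper, deferring the fusion/approximation argument is exactly what the paper does, so no complaint there. You also rightly notice that the closedness of \( \pre{\kappa}{\kappa}\cap V \) in \( V[G] \) is not automatic --- it is equivalent to \( \bP \) adding no fresh branch to \( T^* \), since \( \pre{\kappa}{\kappa}\cap V \) is dense in \( [T^*]^{V[G]} \) --- and the paper leaves this implicit.

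There is, however, one genuine error in your closedness step: the ``standard result'' you invoke is false. A \( \kappa \)-cc forcing can perfectly well add a cofinal branch to a tree of height \( \kappa \) with levels of size \( {<}\kappa \); a \( \kappa \)-Suslin tree, forced with itself, is the standard counterexample. What saves you is the much stronger hypothesis \( |\bP| < \kappa \), which you should use directly rather than passing through the chain condition. Concretely: for each \( p \in \bP \) let \( \delta_p \) be the supremum of those \( \alpha \) for which \( p \) decides \( \dot{y}\restriction\alpha \). If \( \delta_p < \kappa \) for every \( p \) in the generic filter \( G \), then \( \delta = \sup_{p\in G}\delta_p < \kappa \) because \( |G| \le |\bP| < \kappa \) and \( \kappa \) is regular; but \( y\restriction(\delta+1) \in V \) by freshness, so some \( p \in G \) forces \( \dot{y}\restriction(\delta+1) \) to equal its actual value, giving \( \delta_p > \delta \), a contradiction. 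Hence some \( p \in G \) decides all of \( \dot{y} \) and \( y \in V \). With this replacement the closedness argument is correct, and the rest of your proposal stands.
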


\begin{proof}
Work in \( V[G] \). 
By \cite[Key Lemma]{hamkins_gap_2001}, the tree $T = (\pre{<\k}{\k})^V = \pre{<\kappa}{\kappa} \cap V$ does not gain any new branches in the extension of \( V[G] \) by $\bQ$, where $\bQ$ is such that $V[G] \models {}$``\( \bQ \) is a ${<}\kappa$-closed forcing notion''. By Theorem \ref{th: imperfect equiv}, this means that \( \pre{\kappa}{\kappa} \cap V =[T]\) does not contain any \( \kappa \)-perfect subset.
\end{proof}

We conclude the section by giving a general method for constructing \( \kappa \)-thin closed sets $X \subseteq \pre{\kappa}{\kappa}$ on which the $\kappa^+$-Borel hierarchy does not collapse. This construction follows \cite[Theorem~14.3]{miller_descriptive_1995}.

The following auxiliary lemma is a straightforward generalization of \cite[Lemma~2]{kunen_borel_1983}.

\begin{lemma} \label{lem: kp is pi alpha}
Let $X \subseteq \pre{\kappa}{\kappa}$, $\bP$ be a forcing notion such that $|\bP| \leq \kappa$, and $\name{J}$ be a $\bP$-name for a set in $\Piii{0}{\alpha}{\k^+}(X)$, for some \( 1 \leq \a < \k^+ \). Then for any $p \in \bP$
\[
K_p(\name{J}) = \{x \in X \mid p \Vdash \check{x} \in \name{J}\}
\]
is $\Piii{0}{\alpha}{\k^+}(X)$ in the ground model.
\end{lemma}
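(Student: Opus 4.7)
My plan is a transfinite induction on $\alpha \geq 1$, following the classical argument of Kunen adapted to the generalized Borel hierarchy. As is standard, I may assume that $\name J$ comes with a $\Piii{0}{\alpha}{\k^+}$-code $\langle S, \name f \rangle$ whose underlying well-founded tree $S \in V$ has $|S| \leq \kappa$ and $\rank_S(\emptyset) = \alpha$; only the leaf-valuations $\name f(t)$ are genuine $\bP$-names for elements of $\pre{<\k}{\k}$. In particular $\succ_S(\emptyset) \in V$ and has size at most $\kappa$.

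Base case $\alpha = 1$: Every child of $\emptyset$ is a leaf, so $\name J = \bigcap_{t \in \succ_S(\emptyset)}(X \setminus [\name f(t)])$ and $X \setminus \name J$ is forced to be a union of basic open sets of $X$. I will show that $X \setminus K_p(\name J)$ is open. Given $x \in X \setminus K_p(\name J)$, fix $q \leq p$ forcing $\check x \in X \setminus \name J$. Since $\succ_S(\emptyset)$ is a ground-model set, by standard density arguments I can find $q' \leq q$, $t \in \succ_S(\emptyset)$, and $\xi < \kappa$ such that $q'$ decides the length of $\name f(t)$ to be $\check\xi$ and forces $\check x \in [\name f(t)]$; this pins down $q' \Vdash \name f(t) = \check{(x \restriction \xi)}$. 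Setting $s = x \restriction \xi \in V$, for every $y \in [s] \cap X$ we get $q' \Vdash \check y \in [\check s] \subseteq X \setminus \name J$, hence $y \notin K_p(\name J)$. Thus $[s] \cap X$ is a ground-model basic open neighborhood of $x$ contained in $X \setminus K_p(\name J)$, proving $K_p(\name J) \in \Piii{0}{1}{\k^+}(X)$.

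Inductive step $\alpha \geq 2$: For $t \in \succ_S(\emptyset)$, let $\name J_t = \mathcal{I}^{\name f}_S(t)$, a $\bP$-name for a set in $\Piii{0}{\b_t}{\k^+}(X)$ with $\b_t \in [1, \alpha)$ (a leaf contributes a clopen set, which we read as $\Piii{0}{1}{\k^+}$). Since $\succ_S(\emptyset) \in V$, unfolding $p \Vdash \check x \in \name J = \bigcap_t (X \setminus \name J_t)$ through the equivalence \enquote{$p \Vdash \check x \notin \name J_t$ iff no $q \leq p$ forces $\check x \in \name J_t$} yields
\[
K_p(\name J) = \bigcap_{t \in \succ_S(\emptyset)} \Bigl( X \setminus \bigcup_{q \leq p} K_q(\name J_t) \Bigr).
\]
By the inductive hypothesis each $K_q(\name J_t) \in \Piii{0}{\b_t}{\k^+}(X)$; since $|\bP| \leq \kappa$, the inner union is a $\kappa$-sized union of $\Piii{0}{\b_t}{\k^+}(X)$ sets, hence lies in $\Siii{0}{\b_t+1}{\k^+}(X)$, with complement in $\Piii{0}{\b_t+1}{\k^+}(X) \subseteq \Piii{0}{\alpha}{\k^+}(X)$ (because $\b_t + 1 \leq \alpha$ and the $\k^+$-Borel hierarchy on $X$ is increasing). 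Finally, $|\succ_S(\emptyset)| \leq \kappa$ and $\Piii{0}{\alpha}{\k^+}(X)$ is closed under intersections of length $\kappa$ by Proposition~\ref{k+_hierarchy_closure}\ref{k+_hierarchy_closure-2}, so $K_p(\name J) \in \Piii{0}{\alpha}{\k^+}(X)$.

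The one truly delicate point is the base case, where the task is to extract a \emph{ground-model} basic open neighborhood from a forced open-cover statement. The crux is that $x$ itself lies in $V$, so any $\bP$-name forced to be an initial segment of $\check x$ is determined, once a condition decides its length, by the ground-model sequence $x \restriction \xi$ of that length $\xi \in V$.
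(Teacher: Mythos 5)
Your proof is correct, and its inductive step is essentially the paper's argument: the same unfolding of $p \Vdash \check{x} \in \name{J}$ through \enquote{no $q \leq p$ forces the negation}, the same use of $|\bP| \leq \kappa$ to bound the union over conditions, and the same appeal to closure of $\Piii{0}{\a}{\k^+}(X)$ under intersections of length $\kappa$ (the paper phrases the result as a $\kappa$-sized intersection of the $\Siii{0}{\b_i}{\k^+}$ sets $X \setminus K_q(X \setminus \name{J}_i)$ obtained by applying the inductive hypothesis to the complements, which is the same computation). The only genuine divergence is the base case $\a = 1$: the paper shows $K_p(\name{J})$ is closed by taking a $\kappa$-sequence in $K_p(\name{J})$ converging to some $x \in X$ and using that convergence of a ground-model sequence to a ground-model point is absolute, together with the fact that $\name{J}$ names a closed set; you instead show the complement is open by decoding $\name{f}(t)$ into the ground-model initial segment $x \restriction \xi$ and exhibiting a basic neighborhood. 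Both work; yours is a bit longer but avoids invoking the $\kappa$-sequential characterization of closedness in subspaces of $\pre{\kappa}{\kappa}$, while the paper's is a two-line absoluteness argument. Your normalization of the code (ground-model tree with ground-model ranks for the children of the root) is the same harmless assumption the paper makes when it writes $\name{J} = \bigcap_{i<\kappa}\name{J}_i$ with ground-model ordinals $\b_i$.
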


\begin{proof}
We proceed by induction on $\alpha$. For $\alpha = 1$, suppose that $\seq{x_i}{i < \kappa}$ is a sequence of points in $K_p(\name{J})$ (for some $p \in \bP$) which converges to $x \in X$. By assumption, $\name{J}$ is a name for a closed set, so $p \Vdash \check{x} \in \name{J}$, hence \( x \in K_p(\name{J})\). This shows that $K_p(\name{J})$ is closed.

In the inductive step \( \a > 1 \), assume the statement is true for all $1 \leq \beta < \alpha$, and that $\name{J} = \bigcap_{i < \kappa} \name{J}_i$, where $\name{J}_{i}$ is a name for a set in $\Siii{0}{\beta_i}{\k^+}(X)$, for $1 \leq \beta_i < \alpha$. 
For any $p \in \bP$ and $x \in X$, we know that
\[
p \Vdash \check{x} \in \name{J} \IFF \forall i < \kappa\ \forall q \leq p \left( q \not \Vdash \check{x} \not \in \name{J}_i \right).
\]
By inductive hypothesis, the set 
\[
\{x \in X \mid q \not \Vdash \check{x} \notin \name{J}_i\} = X \minus K_q(X \minus \name{J}_i)
\]
belongs to $\Siii{0}{\beta_i}{\k^+}(X)$, and since we assumed $|\bP| \leq \kappa$, the set $K_p(\name{J})$ can be written as an intersection of at most $\kappa$-many sets from $\bigcup_{1 \leq \beta < \alpha} \Siii{0}{\beta}{\k^+}(X)$, hence it is $\Piii{0}{\alpha}{\k^+}(X)$.
\end{proof}

\begin{lemma} \label{lem: small forcing preserves order}
Let $X \subseteq \pre{\kappa}{\kappa}$ be such that $\ord_{\k^+}(X)\geq \alpha$ (for some \( 1 \leq \a < \k^+ \)), and let $\bP$ be a forcing notion such that $|\bP| \leq \kappa$. Then $\bP \Vdash \ord_{\k^+}(\check{X}) \geq  \alpha$. 

If moreover $\ord_{\k^+}(X) = \alpha$, then $\bP \Vdash \ord_{\k^+}(\check{X}) =  \alpha$.
\end{lemma}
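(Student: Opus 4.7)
A preliminary remark, crucial for relating complexity between $V$ and $V[G]$, is that the topology on $X \subseteq \pre{\k}{\k}$ is absolute: any new $s \in (\pre{<\k}{\k})^{V[G]} \setminus V$ must satisfy $\clopen{s} \cap X = \emptyset$, because every $x \in X \subseteq V$ has $x \restriction \leng{s} \in V$ and hence $x \restriction \leng{s} \neq s$. It follows that ground-model $\k^+$-Borel codes interpret to the same subsets of $X$ in $V$ and in $V[G]$, so $\Bor{\k^+}(X)^V \subseteq \Bor{\k^+}(X)^{V[G]}$ and analogously at every level of the hierarchy.

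For the first claim, I argue by contradiction. Suppose some $p \in \bP$ forces $\ord_{\k^+}(\check X) < \alpha$; by density we may strengthen $p$ and assume $p \Vdash \ord_{\k^+}(\check X) \leq \beta$ for a specific $\beta < \alpha$. Since $\ord_{\k^+}(X) \geq \alpha > \beta$ in $V$, pick $A \in \Bor{\k^+}(X) \setminus \Piii{0}{\beta}{\k^+}(X)$. By the absoluteness observation, $A$ remains a $\k^+$-Borel subset of $X$ in every extension, so $p \Vdash \check A \in \Piii{0}{\beta}{\k^+}(\check X)$. Passing to the restricted forcing $\bP_p = \{q \in \bP \mid q \leq p\}$ (still of size at most $\k$, and now forcing globally that $\check A$ is $\Piii{0}{\beta}{\k^+}$), Lemma~\ref{lem: kp is pi alpha} applied to the name $\check A$ yields $A = K_p(\check A) \in \Piii{0}{\beta}{\k^+}(X)$ in $V$, contradicting the choice of $A$.

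For the second claim, I additionally need $\bP \Vdash \ord_{\k^+}(\check X) \leq \alpha$. Fix $p \in \bP$ and a $\bP$-name $\name A$ with $p \Vdash \name A \in \Bor{\k^+}(\check X)$; it suffices to show $p \Vdash \name A \in \Siii{0}{\alpha}{\k^+}(\check X)$. The name for the least level below $\k^+$ at which $\name A$ sits is a $\bP_p$-name for an ordinal less than $\k^+$, so by the $\k^+$-c.c.\ (inherited from $|\bP| \leq \k$) it is bounded: there is $\beta_0 < \k^+$ with $p \Vdash \name A \in \Piii{0}{\beta_0}{\k^+}(\check X)$. Lemma~\ref{lem: kp is pi alpha} then gives, for every $q \leq p$, $K_q(\name A) \in \Piii{0}{\beta_0}{\k^+}(X)^V \subseteq \Bor{\k^+}(X)^V = \Siii{0}{\alpha}{\k^+}(X)^V$. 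For any generic $G$ through $p$, any $q \in G$ forcing $\check x \in \name A$ may be refined, by the filter property, to a common extension with $p$ still in $G$; hence
\[
(\name A)^G = \bigcup_{q \in G \cap \bP_p} K_q(\name A),
\]
which is a union of at most $|\bP| \leq \k$ many ground-model $\Siii{0}{\alpha}{\k^+}$-sets. By absoluteness each of these remains in $\Siii{0}{\alpha}{\k^+}(X)^{V[G]}$, and the latter class is closed under unions of length less than $\k^+$ by Remark~\ref{rmk:easyclosure} (applied in $V[G]$, where $\k^+$ is still regular thanks to the $\k^+$-c.c.). Hence $(\name A)^G \in \Siii{0}{\alpha}{\k^+}(X)^{V[G]}$, as required.

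The main substantive step in both parts is Lemma~\ref{lem: kp is pi alpha}, which transfers a $\Piii{0}{\beta}{\k^+}$-complexity bound from $V[G]$ back to $V$; the $\k^+$-c.c.\ boundedness of the complexity of $\name A$ in Part 2 is the key additional ingredient needed to apply the lemma uniformly, and then closure of $\Siii{0}{\alpha}{\k^+}$ under $\k$-unions (Remark~\ref{rmk:easyclosure}) reassembles the pieces in $V[G]$. The one subtle point is the absoluteness observation about the topology of $X$, without which $V$-Borel and $V[G]$-Borel complexities could not be meaningfully compared.
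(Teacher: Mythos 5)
Your proof is correct and follows essentially the same route as the paper's: both parts rest on Lemma~\ref{lem: kp is pi alpha}, the first by pulling a $\Piii{0}{\b}{\k^+}$ bound back onto a ground-model set chosen to violate it, the second by writing the generic interpretation as the $\kappa$-sized union $\bigcup_{q \in G} K_q(\name{A})$ of ground-model $\Siii{0}{\a}{\k^+}$ sets. The only differences are presentational — you fix $\b$ first and take any $A \in \Bor{\k^+}(X) \setminus \Piii{0}{\b}{\k^+}(X)$ where the paper invokes properness, and you make explicit the topology-absoluteness and $\kappa^+$-c.c.\ bounding steps the paper leaves implicit (just note that, as in the paper, one should pass from $\check{A}$ to an equivalent structurally $\Piii{0}{\b}{\k^+}$ name $\name{J}$ with $p \Vdash \name{J} = \check{A}$ before applying the lemma, since its proof proceeds by induction on the syntactic form of the name).
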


\begin{proof}
By Proposition~\ref{prop:proper_for_k+}, in the ground model there is a set \( A \in \Piii{0}{\a}{\k^+}(X) \setminus \bigcup_{1 \leq \b < \a} \Piii{0}{\b}{\k^+}(X) \) because we assumed \( \ord_{\k^+}(X) \geq \a \). Towards a contradiction, suppose that there are \( 1 \leq \b < \a \), a name $\name{J}$ for a $\Piii{0}{\beta}{\k^+}$ subset of \( X \), and \( p \in \bP \) such that $p \Vdash \check{A} = \name{J}$. By absoluteness, we know that
\[
\{x \in X \mid p \Vdash \check{x} \in \check{A}\} = A.
\]
But the set on the left equals \( K_p(\name{J}) \) by choice of \( p \), and it belongs to $\Piii{0}{\beta}{\k^+}(X)$ by Lemma~\ref{lem: kp is pi alpha}, contradicting the choice of \( A \).

Assume now that \( \ord_{\k^+}(X) = \a \), and let $\name{J}$ be any name for a $\kappa^+$-Borel subset of $X$. Then for any $\bP$-generic filter $G$, we have that 
\[
V[G] \models \name{J} = \bigcup_{p \in G} \check{K}_p(\name{J}).
\]
Each \( K_p(\name{J}) \) is \( \k^+ \)-Borel in the ground model because of Lemma~\ref{lem: kp is pi alpha}, and hence it belongs to $\Siii{0}{\alpha}{\k^+}(X)$ because we assumed $\ord_{\k^+}(X) = \alpha$.
Therefore the interpretation of $\name{J}$ in \( V[G]\) is a $\Siii{0}{\alpha}{\k^+}$ subset of $X$. Since this is true for all $\name{J}$ and $G$ as above, we have $\bP \Vdash \ord_{\k^+}(\check{X}) \leq \alpha$, which together with the first part of the lemma yields \( \bP \Vdash \ord_{\k^+}(\check{X}) = \a \).
\end{proof}

The most interesting forcing Lemma \ref{lem: small forcing preserves order} applies to is $\kappa$-Cohen forcing, which is the only forcing notion of size at most $\kappa$ that is simultaneously ${<}\kappa$-closed.

Finally, combining the results obtained so far we obtain our examples of \( \k \)-thin closed sets with non-collapsing \( \k^+ \)-Borel hierarchy.

\begin{corollary} \label{cor: closed set with full order}
Consistently, there exists a \( \kappa \)-thin closed subset $X \subseteq \pre{\kappa}{\kappa}$ such that the $\kappa^+$-Borel hierarchy does not collapse on $X$.
\end{corollary}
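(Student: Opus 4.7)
The strategy is to combine Hamkins's theorem on small forcing (Theorem~\ref{th: hamkins}) with the preservation result Lemma~\ref{lem: small forcing preserves order}, taking $X$ to be the ground model's generalized Baire space reinterpreted in a suitable forcing extension. The whole point of the construction is that the two ingredients pull in opposite directions on the space $(\pre{\kappa}{\kappa})^V$: forcing shrinks it topologically inside the new $\pre{\kappa}{\kappa}$ enough to destroy its $\kappa$-perfect subsets, yet preserves the ``definability complexity'' of its $\kappa^+$-Borel subsets encoded by ground model $\kappa^+$-Borel codes.

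Concretely, I would fix in $V$ any nontrivial forcing notion $\bP$ with $|\bP|<\kappa$ --- for instance, the ordinary Cohen forcing $\mathrm{Add}(\omega,1)$, which has size $\aleph_0<\kappa$. Since $|\bP|<\kappa$, the forcing has the $\kappa$-c.c., hence preserves all cardinals $\geq\kappa$ (in particular, $\kappa^+$ is the same in $V$ and in $V[G]$), and the standing hypothesis $2^{<\kappa}=\kappa$ continues to hold in $V[G]$, so the entire framework of the paper applies there. Let $G$ be $\bP$-generic over $V$ and set $X=(\pre{\kappa}{\kappa})^V$, viewed as a subspace of $(\pre{\kappa}{\kappa})^{V[G]}$.

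In $V[G]$, Theorem~\ref{th: hamkins} applied to $\bP$ delivers that $X$ is a closed and $\kappa$-thin subset of the generalized Baire space. It remains to check that the $\kappa^+$-Borel hierarchy on $X$ does not collapse in $V[G]$. Work for a moment in $V$: the natural inclusion $\pre{\kappa}{2}\hookrightarrow\pre{\kappa}{\kappa}$ is a topological embedding with closed image, and in particular a $\kappa^+$-Borel embedding. By Theorem~\ref{thm:non-collpase_general_space}, this yields $\ord_{\kappa^+}(\pre{\kappa}{\kappa})=\kappa^+$ in $V$, so $\ord_{\kappa^+}(\pre{\kappa}{\kappa})\geq\alpha$ for every $1\leq\alpha<\kappa^+$. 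Applying Lemma~\ref{lem: small forcing preserves order} to $\pre{\kappa}{\kappa}$ (the hypothesis $|\bP|\leq\kappa$ is satisfied since $|\bP|<\kappa$) separately for each such $\alpha$, we get $\bP\Vdash\ord_{\kappa^+}(\check{X})\geq\check\alpha$. Because $(\kappa^+)^V=(\kappa^+)^{V[G]}$ and $\ord_{\kappa^+}(X)\leq\kappa^+$ always holds, this forces $\ord_{\kappa^+}(X)=\kappa^+$ in $V[G]$, i.e.\ the $\kappa^+$-Borel hierarchy on $X$ does not collapse.

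The argument has no genuine obstacle: Theorem~\ref{th: hamkins} and Lemma~\ref{lem: small forcing preserves order} do essentially all the work, and the only point requiring care is matching their hypotheses by choosing $\bP$ of size strictly below $\kappa$ (Hamkins's theorem needs $|\bP|<\kappa$, while Lemma~\ref{lem: small forcing preserves order} tolerates $|\bP|\leq\kappa$). The conceptual content of the result sits entirely in those two preparatory theorems; the corollary itself is a clean synthesis confirming that Theorem~\ref{thm:non-collpase_general_space} gives a sufficient but not necessary condition for non-collapse.
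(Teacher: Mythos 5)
Your proof is correct and follows essentially the same route as the paper: apply Theorem~\ref{th: hamkins} to a forcing of size $<\kappa$ to make $X=(\pre{\kappa}{\kappa})^V$ a $\kappa$-thin closed set, and Lemma~\ref{lem: small forcing preserves order} (for each $\alpha<\kappa^+$ separately) to keep $\ord_{\kappa^+}(X)\geq\alpha$ in the extension. The extra bookkeeping you supply (cardinal preservation, the ground-model non-collapse via Theorem~\ref{thm:non-collpase_general_space}) is sound but the paper leaves it implicit.
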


\begin{proof}
Let \( V[G] \) be a forcing extension by a forcing of size smaller than \( \k \). By Theorem~\ref{th: hamkins}, the closed set \( X = \pre{\k}{\k} \cap V \) is \( \k \)-thin in \( V[G] \). Moreover, in the ground model we have that \( \ord_{\k^+}(\pre{\k}{\k}) \geq \a \) for every \( 1 \leq \a < \k^+ \), hence for any such \( \a \) we have
\[
V[G] \models  \ord_{\k^+}(\pre{\k}{\k} \cap V) \geq \a 
\] 
by Lemma~\ref{lem: small forcing preserves order}, that is, the \( \k^+ \)-Borel hierarchy on \( X = \pre{\k}{\k} \cap V \) does not collapse.
\end{proof}

\section{$\alpha$-forcing} \label{sec: alpha-forcing}

Throughout this section, fix a set $X \subseteq \pre{\kappa}{\kappa}$. All $\kappa^+$-Borel sets appearing are to be understood as relatively $\kappa^+$-Borel subsets of $X$. Our goal will be to modify $\ord_{\k^+}(X)$ in a suitable forcing extension to prove Theorem~\ref{th: set order to n} and Corollary~\ref{cor:final_7}.

For each $\alpha < \omega$,
let $T_\alpha = \pre{\leq \alpha}{\kappa} = \pre{< \a}{\k} \cup \pre{\a}{\k}$ be the canonical well-founded tree with \( \rank_{T_\a}(\emptyset) = \a \). The tree \( T_\a \) shall serve as a template for the \( \k^+ \)-Borel code of a generic $\Piii{0}{\alpha}{\k^+}$ set:
  indeed, it is easy to see that a set $B \subseteq X$ is $\Piii{0}{\a}{\k^+}$ if and only if there is some $f \colon T^0_\alpha \to \pre{<\kappa}{\kappa}$ \footnote{Recall that for a well-founded tree $T$, $T^0$ denotes the set of leaves of $T$, while \( T^{>0} = T \setminus T^0\).} 
such that $\langle T_\alpha, f\rangle$ codes $B$, i.e. $B = \mathcal{I}^f_{T_\alpha}(\emptyset) \cap X$.

\begin{remark}
In this section, we restrict our efforts only to finite ordinals, as for limit ordinals \( \a \) it is necessary to first find an appropriate definition for $T_\a$. Although this is not an issue when \( \cof(\a) = \k \), the advent of limit ordinals of cofinality less than $\kappa$ introduces nontrivial obstacles that are not present in the classical case. 
This explains why Miller was able to develop analogues of the results of this section in full generality for all ordinals $\alpha < \omega_1$ (see e.g.~\cite[Lemma~35~and~Lemma~36]{miller_length_1979}), while here we stop already at level \( \o \), which has cofinality strictly smaller than \( \k \). A method for working with ordinals $\alpha \geq \omega$ and nodes whose rank is a limit ordinal with small cofinality is in preparation and will appear in future work by the second author.
\end{remark}

We are now equipped to define (a generalization of) Miller's $\alpha$-forcing. 

\begin{definition} \label{def: alpha forcing}
For $A, B$ disjoint subsets of $X$ and a finite ordinal \( 1 < \a < \o \),
let $\aforc_\alpha(A, B, X)$ be the partially ordered set consisting of all pairs $p = \langle f_p , R_p \rangle$ such that: 
 \begin{enumerate}[label = {\upshape (\alph*)},leftmargin=2pc]
\item\label{def: alpha forcing-a} 
$f_p \colon T_\alpha^0 \to \pre{<\kappa}{\kappa}$ is a partial function with $|f_p| < \kappa$;
\item\label{def: alpha forcing-b} 
$R_p \subseteq T_\alpha^{>0} \times X$ and $|R_p| < \kappa$;
\item\label{def: alpha forcing-c} 
If $\langle t, x\rangle \in R_p$, then for all $t' \in \succ_{T_\a}(t)$ we have
\begin{itemizenew}
\item 
$\langle t', x\rangle \notin R_p$ if $t' \in T_\alpha^{>0}$;
\item 
$x \notin \clopen{f_p(t')}$ if $t' \in T_\alpha^0 \cap \dom(f_p)$.
\end{itemizenew}
\end{enumerate}
We also have two constraints involving the parameters $A$ and $B$:
\begin{enumerate}[resume*]
\item\label{def: alpha forcing-e} 
$\{x \in X \mid\exists t \in \succ_{T_\alpha}(\emptyset) \, (\langle t,x \rangle \in R_p)\} \cap A = \emptyset$;
\item\label{def: alpha forcing-d} 
$\{x \in X \mid\langle \emptyset, x \rangle \in R_p\} \cap B = \emptyset$.
\end{enumerate}
The ordering is given by $q \leq p$ if and only if $f_p \subseteq f_q$ and $R_p \subseteq R_q$.
\end{definition}

Since the space $X$ is fixed, we will often omit it from the notation and write \( \aforc_\a(A,B) \) instead of \( \aforc_\a(A,B,X) \).
Our main focus are going to be the forcings $\aforc_\alpha(\emptyset, \emptyset)$ and $\aforc_\alpha(A, X \minus A)$. These two kinds of forcings are fundamentally of a different flavor: while the first one adds a generic $\Piii{0}{\alpha}{\k^+}$ subset to the space $X$ (see Theorem \ref{th: switcheroo} for a formalization of the word \enquote{generic} in this instance), the latter will instead add a $\Piii{0}{\a}{\k^+}$ code for the set $A\subseteq X$ (see Corollary \ref{cor: collapse A}). 

Until further notice, from now on we fix a finite ordinal \( 1 < \a < \o \) and two disjoint sets \( A,B \subseteq X \).
If $G$ is $\aforc_\alpha(A,B)$-generic, then by a density argument $f_G = \bigcup_{p \in G} f_p$ is a total function from $T_\alpha^0$ to 
$\pre{<\kappa}{\kappa}$, thus one can interpret every $t \in T_\alpha$ as a $\kappa^+$-Borel subset $G_t = \mathcal{I}^{f_G}(t)$ of $X$. The semantic meaning of a pair $\langle t, x\rangle \in R_p$ is to ensure that $x$ is an element of $G_t$. The next lemmas show that this is sound.

\begin{lemma} \label{lem: D dense}
For $t \in T^{>0}_\alpha$ and $x\in X$, let \( D_{t,x} \) be defined by
\[
D_{t, x} =
\{p \in \aforc_\alpha(A,B) \mid \langle t , x \rangle \in R_p \vee \exists t' \in \succ_{T_\a}(t) \cap  \dom(f_p) \, ( x \in \clopen{f_p(t')})\}
\]
if \( \rank_{T_\a}(t)=1 \), and
\[
D_{t,x} = \{p \in \aforc_\alpha(A,B) \mid \langle t , x \rangle \in R_p \vee \exists t' \in \succ_{T_\a}(t) \, (\langle t' , x \rangle  \in R_p)\}
\]
if \( \rank_{T_\a}(t)>1 \).
Then \( D_{t,x} \) is dense in $\aforc_\alpha(A,B)$.
\end{lemma}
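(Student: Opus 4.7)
The approach is a standard density argument, split by the rank of $t$. Fix $p = \langle f_p, R_p\rangle \in \aforc_\a(A,B)$ with $p \notin D_{t,x}$; the task is to find $q \leq p$ in $D_{t,x}$. Semantically, the two disjuncts in the definition of $D_{t,x}$ correspond to a ``positive'' extension (adding $\langle t, x\rangle$ to $R_p$, forcing $x \in G_t$) and a ``negative'' extension (modifying $f_p$ or $R_p$ to witness $x \notin G_t$): which of these is available depends on $\rank_{T_\a}(t)$ and on whether $x$ lies in $A$ or $B$, and $A \cap B = \emptyset$ is what guarantees that at least one option is always feasible.

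If $\rank_{T_\a}(t) = 1$, all successors of $t$ are leaves, and I would take the negative extension through $f$. Pick $t' \in \succ_{T_\a}(t) \setminus \dom(f_p)$, available since $|f_p| < \k = |\succ_{T_\a}(t)|$. Since $\langle t, x\rangle \notin R_p$ (as $p \notin D_{t,x}$), every $y$ with $\langle t, y\rangle \in R_p$ differs from $x$; using $|R_p| < \k$ and the regularity of $\k$, choose $\gamma < \k$ so that $x \restriction \gamma \neq y \restriction \gamma$ for every such $y$. Setting $q = \langle f_p \cup \{(t', x \restriction \gamma)\}, R_p\rangle$, conditions (a), (b), (d), (e) are immediate (we have not modified $R_p$), and the only new instance of (c), involving the fresh leaf $t'$ and the pairs $\langle t, y\rangle \in R_p$, is handled by the choice of $\gamma$. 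Then $q \in D_{t,x}$.

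If $\rank_{T_\a}(t) > 1$, all successors of $t$ lie in $T_\a^{>0}$, and I would work through $R$. When $t \neq \emptyset$, pick $t' \in \succ_{T_\a}(t)$ such that no element of $\dom(f_p) \cup \pi_1[R_p]$ extends $t'$ (available since the obstructions number $< \k$ while $|\succ_{T_\a}(t)| = \k$), and set $q = \langle f_p, R_p \cup \{\langle t', x\rangle\}\rangle$. Condition (c) for the new pair is vacuous by the choice of $t'$, and for the old pairs the only new constraint comes from $\langle t, y\rangle \in R_p$ against $\langle t', x\rangle$ and requires $y \neq x$, which holds because $\langle t, x\rangle \notin R_p$; axioms (d) and (e) do not apply since $t' \neq \emptyset$ and $\leng{t'} \geq 2$. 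The delicate subcase is $t = \emptyset$, where we must add $\langle s, x\rangle$ for some $s \in \{\emptyset\} \cup \succ_{T_\a}(\emptyset)$, and conditions (d), (e) conflict: (d) forbids $s = \emptyset$ when $x \in B$, while (e) forbids $s \in \succ_{T_\a}(\emptyset)$ when $x \in A$. Here $A \cap B = \emptyset$ saves us: if $x \notin B$, take $s = \emptyset$ and verify all axioms directly; otherwise $x \notin A$, and we take $s = t'$ for a suitable $t' \in \succ_{T_\a}(\emptyset)$ chosen exactly as above.

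The only real subtlety is the clash between (d) and (e) in the $t = \emptyset$ subcase, which is resolved by the disjointness of $A$ and $B$; everything else amounts to routine bookkeeping, exploiting the $\k$-splitting of $T_\a$ against the cardinality bounds $|f_p|, |R_p| < \k$ from axioms (a) and (b).
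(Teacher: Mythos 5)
Your proof is correct and follows essentially the same strategy as the paper's: use the $\kappa$-splitting of $T_\alpha$ against the bounds $|f_p|,|R_p|<\kappa$ to find a fresh successor $t'$, separate $x$ from the ${<}\kappa$-many points already constrained at $t$ via a suitable initial segment (in the rank-$1$ case), and invoke $A\cap B=\emptyset$ to resolve the clash between conditions~\ref{def: alpha forcing-e} and~\ref{def: alpha forcing-d} at the root. The only difference is cosmetic: at $t=\emptyset$ the paper adds $\langle\emptyset,x\rangle$ exactly when $x\in A$ and otherwise works at a successor, whereas you add at the root whenever $x\notin B$; both case splits are valid.
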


\begin{proof}
Given $p\in \aforc_\alpha(A,B)$, we want to find $p' \leq p$ such that $p' \in D_{t, x}$.  
If $p \in D_{t, x}$ we are done, so assume $p \notin D_{t, x}$ (so that, in particular, $\langle t, x\rangle \notin R_p$).
Also, if $x\in A$ and $t=\emptyset$, then $p' = \langle f_p, R_p\cup \{ \langle \emptyset, x\rangle \} \rangle $
is a condition in $\aforc_\alpha(A,B)$ satisfying $p' \leq p$ and $p' \in D_{t, x}$. Thus, we can assume that $x\notin A$ or $t \neq \emptyset$.
    
Since $\kappa$ is regular, $|f_p|< \kappa$, $|R_p| < \kappa$, and $t \in T^{>0}_\a$ has $\kappa$-many successors, we can find a $t' \in \succ(t)$ such that $(\{t'\} \cup \succ_{T_\a}(t')) \cap (\dom(f_p) \cup \dom(R_p)) = \emptyset$ and an $s\in \pre{<\kappa}{\kappa}$ such that $x\in \clopen{s}$ and $\clopen{s}\cap \{y\mid  \langle t, y \rangle\in R_p\}=\emptyset$. But this means that 
\[
p' = 
\begin{cases}
\langle f_p \cup \{ \langle t', s\rangle \}, R_p \rangle
& \text{ if } \rank_{T_\a}(t)=1,\\
\langle f_p, R_p \cup \{ \langle t', x\rangle \} \rangle
& \text{ if }\rank_{T_\a}(t)>1
\end{cases}
\]
is a condition in $\aforc_\alpha(A,B)$ satisfying $p' \leq p$ and $p' \in D_{t, x}$.
\end{proof}

\begin{lemma} \label{lem: R interprets correctly}
Let $G$ be $\aforc_\alpha(A,B)$-generic, and set $f_G = \bigcup_{p \in G} f_p $ and $R_G = \bigcup_{p \in G} R_p$. Then for each $t \in T_\alpha^{>0}$ and $x \in X$, we have that 
\[ 
V[G] \models x \in G_t \IFF \langle t, x \rangle \in R_G .
\]
\end{lemma}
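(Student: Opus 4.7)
My plan is to prove the biconditional by induction on $\rank_{T_\alpha}(t)$, treating the two directions separately but in a unified induction. The base case will be $\rank_{T_\alpha}(t)=1$, where the immediate successors of $t$ are leaves and $G_t = \bigcap_{t' \in \succ(t)} (X \setminus \clopen{f_G(t')})$; the inductive step handles $t$ of rank $>1$, where the successors lie in $T_\alpha^{>0}$ and the recursive definition of $\mathcal{I}^{f_G}$ lets the hypothesis kick in.

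For the direction $\langle t, x\rangle \in R_G \Rightarrow x \in G_t$, I would exploit clause \ref{def: alpha forcing-c} of Definition~\ref{def: alpha forcing} together with the fact that $G$ is a filter. Namely, if $\langle t,x\rangle \in R_p$ for some $p \in G$ and $t' \in \succ_{T_\alpha}(t)$, then: in the base case one picks $p' \in G$ deciding $f_G(t')$, takes a common extension $q \leq p,p'$ in $G$, and reads off $x \notin \clopen{f_q(t')} = \clopen{f_G(t')}$ from~\ref{def: alpha forcing-c}; in the inductive step, a common extension of $p$ with any witness for $\langle t',x\rangle \in R_G$ would simultaneously contain $\langle t,x\rangle$ and $\langle t',x\rangle$ in its $R$-component, violating \ref{def: alpha forcing-c}, so $\langle t',x\rangle \notin R_G$, and induction gives $x \notin G_{t'}$. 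In both cases $x \notin G_{t'}$ for every $t' \in \succ_{T_\alpha}(t)$, hence $x \in G_t$.

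For the converse, I would argue by contrapositive using the density lemma (Lemma~\ref{lem: D dense}). Assume $\langle t, x\rangle \notin R_G$; by density there is $p \in G \cap D_{t,x}$ witnessing the second disjunct in the definition of $D_{t,x}$. This produces either a $t' \in \succ_{T_\alpha}(t) \cap \dom(f_p)$ with $x \in \clopen{f_p(t')} \subseteq \clopen{f_G(t')}$ (base case), or a $t' \in \succ_{T_\alpha}(t)$ with $\langle t',x\rangle \in R_p \subseteq R_G$ (inductive step). Either way, unfolding the definition of $G_{t'}$ directly in the base case and invoking the inductive hypothesis in the inductive step yields $x \in G_{t'}$, and hence $x \notin G_t$.

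The two directions together close the induction. There is no real obstacle: every step is immediate once one keeps careful track of which clauses of Definition~\ref{def: alpha forcing} apply to leaves versus interior nodes, and the only non-trivial moves are the routine \textquotedblleft take a common extension in $G$\textquotedblright\ arguments. The subtlest point is perhaps just the bookkeeping between $\rank_{T_\alpha}(t)=1$ and $\rank_{T_\alpha}(t)>1$, which is why I would phrase the induction so that both halves of the biconditional are proved simultaneously at each rank.
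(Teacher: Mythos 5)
Your proposal is correct and is essentially the paper's own argument: an induction on $\rank_{T_\alpha}(t)$ in which the direction $\langle t,x\rangle\in R_G\Rightarrow x\in G_t$ comes from clause~\ref{def: alpha forcing-c} of Definition~\ref{def: alpha forcing} together with the filter property of $G$, and the converse comes from genericity via the density of $D_{t,x}$ (Lemma~\ref{lem: D dense}), split into the rank-$1$ case (successors are leaves, handled through $f_G$) and the rank-$>1$ case (handled through the inductive hypothesis). The only cosmetic difference is that you phrase the second direction as a contrapositive while the paper packages the inductive step as the equivalence $\langle t,x\rangle\in R_G\IFF\forall t'\in\succ_{T_\alpha}(t)\,(\langle t',x\rangle\notin R_G)$; the content is identical.
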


\begin{proof}
We proceed by (finite) induction on \( \rank_{T_\a}(t) \).
First notice that, by definition, for every leaf $t' \in T_\alpha^0$ we have \( x \in G_{t'} \IFF x \in \clopen{f_G(t')} \). 

We begin with the basic case $\rank_{T_\a}(t) = 1$, in which case all \( t' \in \succ_{T_\a}(t) \) are in $T^0_\alpha$. If $x \in G_t$, then by the definition of the interpretation function \( \mathcal{I}^{f_G} \) we have that $x \notin G_{t'} = \clopen{f_G(t')}$ for every $t' \in \succ_{T_\a}(t)$. Since $D_{t, x}$ is dense by Lemma~\ref{lem: D dense}, then $G$ must meet $D_{t, x}$, and we can conclude $\langle t, x\rangle \in R_G$.
On the other hand, if $\langle t,x\rangle \in R_G$, then for each $p \in G$ and $t' \in \succ_{T_\a}(t) \cap \dom(f_p)$ we have $x \notin \clopen{f_p(t')}$, and therefore $x \in G_t = \bigcap_{t' \in \succ_{T_\a}(t)} X \minus \clopen{f_G(t')}$.

Suppose now that \( \rank_{T_\a}(t) > 1 \), and that the statement of the lemma holds for all $t' \in \succ_{T_\a}(t)$. Since the set \( D_{t,x} \)
 from Lemma~\ref{lem: D dense} is dense, we have 
\[
\langle t, x \rangle \in R_G \IFF \forall t' \in \succ_{T_\a}(t) \, ( \langle t', x \rangle \notin R_G).
\]
By inductive hypothesis, this means that
\begin{align*}
\langle t, x \rangle \in R_G & \IFF \forall t' \in \succ_{T_\a}(t) \, ( \langle t', x \rangle \notin R_G ) \\
& \IFF \forall t' \in \succ_{T_\a}(t) \, (x \notin G_{t'} ) \\
& \IFF x \in G_t  . \qedhere
\end{align*}
\end{proof}

By Lemma~\ref{lem: R interprets correctly} and conditions~\ref{def: alpha forcing-e} and~\ref{def: alpha forcing-d} in Definition~\ref{def: alpha forcing} (which take care of the inclusions \( A \subseteq G_\emptyset \) and \( G_\emptyset \subseteq X \setminus B \), respectively) we get:

\begin{corollary} \label{cor: collapse A}
Let $G$ be $\aforc_\alpha(A,B)$-generic. Then in the forcing extension \( V[G] \) the set \( G_\emptyset \) separates \( A \) from \( B \), i.e.\ $A \subseteq G_\emptyset \subseteq X \minus B$.
\end{corollary}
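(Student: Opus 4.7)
The plan is to read off the corollary directly from Lemma~\ref{lem: R interprets correctly} combined with the side conditions \ref{def: alpha forcing-e} and \ref{def: alpha forcing-d} from Definition~\ref{def: alpha forcing}. First, I would note that by definition of the interpretation $\mathcal{I}^{f_G}$, we have
\[
G_\emptyset = \bigcap_{t \in \succ_{T_\alpha}(\emptyset)} (X \setminus G_t),
\]
so $x \in G_\emptyset$ iff $x \notin G_t$ for all $t \in \succ_{T_\alpha}(\emptyset)$. Since $\alpha > 1$ every such $t$ lies in $T_\alpha^{>0}$, so Lemma~\ref{lem: R interprets correctly} applies and gives $x \in G_t \iff \langle t,x\rangle \in R_G$.

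For the inclusion $A \subseteq G_\emptyset$, I would fix $x \in A$ and an arbitrary $t \in \succ_{T_\alpha}(\emptyset)$. Condition~\ref{def: alpha forcing-e} of Definition~\ref{def: alpha forcing} states that no condition $p \in \aforc_\alpha(A,B)$ may contain a pair $\langle t,x\rangle$ with $t \in \succ_{T_\alpha}(\emptyset)$ and $x \in A$. In particular no $p \in G$ has $\langle t,x\rangle \in R_p$, so $\langle t,x\rangle \notin R_G$, and by Lemma~\ref{lem: R interprets correctly} we conclude $x \notin G_t$. Since $t$ was arbitrary, $x \in G_\emptyset$.

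For the inclusion $G_\emptyset \subseteq X \setminus B$, I would argue contrapositively: given $x \in B$, apply Lemma~\ref{lem: D dense} to the node $\emptyset$, whose rank in $T_\alpha$ is $\alpha > 1$, so the dense set $D_{\emptyset,x}$ has the form
\[
\{p \in \aforc_\alpha(A,B) \mid \langle \emptyset, x\rangle \in R_p \vee \exists t \in \succ_{T_\alpha}(\emptyset) \, (\langle t,x\rangle \in R_p)\}.
\]
Some $p \in G$ meets this set. The first alternative $\langle \emptyset, x\rangle \in R_p$ is forbidden by condition~\ref{def: alpha forcing-d} because $x \in B$, so the second alternative must occur: there is some $t \in \succ_{T_\alpha}(\emptyset)$ with $\langle t,x\rangle \in R_p \subseteq R_G$. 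By Lemma~\ref{lem: R interprets correctly} this yields $x \in G_t$, hence $x \notin G_\emptyset$, completing the proof.

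There is no real obstacle here; the whole argument is a careful bookkeeping exercise matching the semantic role of the relation $R$ (as enforced by Lemma~\ref{lem: R interprets correctly}) against the two boundary constraints built into the definition of $\aforc_\alpha(A,B)$. The only point worth flagging is that one must notice that conditions~\ref{def: alpha forcing-e} and~\ref{def: alpha forcing-d} are exactly the syntactic analogues of \textquotedblleft $A \subseteq G_\emptyset$\textquotedblright\ and \textquotedblleft$G_\emptyset \cap B = \emptyset$\textquotedblright, and that the density result of Lemma~\ref{lem: D dense} is strong enough to turn the second constraint into an actual inclusion in the generic extension.
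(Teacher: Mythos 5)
Your proof is correct and follows exactly the route the paper indicates (the paper gives only a one-line justification citing Lemma~\ref{lem: R interprets correctly} together with conditions~\ref{def: alpha forcing-e} and~\ref{def: alpha forcing-d}); your write-up simply fills in the details, and the use of the dense set $D_{\emptyset,x}$ for the inclusion $G_\emptyset \subseteq X \setminus B$ is equivalent to applying Lemma~\ref{lem: R interprets correctly} directly at the root.
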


In other words, forcing with $\aforc_\alpha(A, X \minus A)$ adds a $\Piii{0}{\a}{\k^+}$-code \( \langle T_\a, f_G \rangle \) for the set $A$, independently of its complexity in the ground model \( V \).

\subsection{Preservation of cardinals}

In the classical case, we are able to iterate any c.c.c.\ forcing without collapsing cardinals. In the generalized setting, preservation theorems are of a much more limited character, often requiring additional regularity assumptions. Fortunately, our forcing is structurally simple enough that preservation of cardinals is not a difficult issue. Recall Definition~\ref{def:forcinproperties}.

\begin{lemma}\label{lem: <kappa closure}
The forcing $\aforc_\alpha(A,B)$ is ${<}\kappa$-closed and well-met.       \end{lemma}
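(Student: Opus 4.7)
The plan is to verify both properties by taking componentwise unions as candidate lower bounds, and then checking that the defining conditions \ref{def: alpha forcing-a}--\ref{def: alpha forcing-d} survive this operation.

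For ${<}\kappa$-closure, given a decreasing sequence $\seq{p_i}{i < \delta}$ with $\delta < \kappa$, I would set $q = \langle \bigcup_{i<\delta} f_{p_i}, \bigcup_{i<\delta} R_{p_i} \rangle$ and check that $q \in \aforc_\alpha(A,B)$ and $q \leq p_i$ for every $i<\delta$. The chain of conditions $\seq{f_{p_i}}{i<\delta}$ is increasing (since $q \leq p$ means $f_p \subseteq f_q$), so its union is a partial function, and by regularity of $\kappa$ together with $\delta < \kappa$ and $|f_{p_i}|<\kappa$, this union has size $<\kappa$; the same argument works for $R$. The remaining conditions \ref{def: alpha forcing-c}--\ref{def: alpha forcing-d} are preserved: (d) and (e) only constrain which $x\in X$ may appear in $R$, so they automatically transfer to the union. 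For \ref{def: alpha forcing-c}, if $\langle t,x\rangle \in R_{p_j}$ and, for some $t'\in\succ_{T_\alpha}(t)\cap T_\alpha^{>0}$, also $\langle t',x\rangle \in R_{p_k}$, then taking $m \geq j,k$ both pairs lie in $R_{p_m}$, contradicting \ref{def: alpha forcing-c} for $p_m$; the argument for $t' \in T^0_\alpha \cap \dom(f_{p_m})$ is analogous using that the $f_{p_i}$'s agree on common domain.

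For the well-met property, suppose $p, q \in \aforc_\alpha(A,B)$ are compatible, witnessed by some $r' \leq p,q$. Define $r = \langle f_p \cup f_q, R_p \cup R_q \rangle$. Since $f_p, f_q \subseteq f_{r'}$, the union $f_p \cup f_q$ is a function (any conflict would obstruct the common extension $r'$); size and support bounds are immediate. The key observation is that conditions \ref{def: alpha forcing-c}--\ref{def: alpha forcing-d} are downward-hereditary in the sense that if they hold for $r'$, they hold for any pair $\langle f',R'\rangle$ with $f' \subseteq f_{r'}$ and $R' \subseteq R_{r'}$: condition \ref{def: alpha forcing-c} is a \emph{non-existence} statement about successors $t'$, which weakens when we shrink the second component (and likewise for the leaf clause, using that $f' \subseteq f_{r'}$ preserves values); conditions \ref{def: alpha forcing-e} and \ref{def: alpha forcing-d} only forbid certain $x$'s from appearing in $R$, so a subset of a valid $R$ is still valid. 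Hence $r \in \aforc_\alpha(A,B)$, and by construction $r\leq p,q$; if $r''\leq p,q$ is any lower bound, then $f_p\cup f_q \subseteq f_{r''}$ and $R_p\cup R_q \subseteq R_{r''}$, so $r'' \leq r$, making $r$ the greatest lower bound.

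The verification is essentially bookkeeping, and the only step that requires even a moment's care is the downward-heredity of \ref{def: alpha forcing-c} under the meet operation (the other clauses are manifestly inherited from subsets and subfunctions). Regularity of $\kappa$ does all the work for size bounds, and no interaction with the parameters $A, B$ beyond their role in \ref{def: alpha forcing-e} and \ref{def: alpha forcing-d} is needed, so the argument is uniform in $A, B$ and in the finite ordinal $\alpha$.
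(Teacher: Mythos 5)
Your proposal is correct and follows exactly the paper's approach: the paper also takes the componentwise unions $\langle \bigcup_i f_{p_i}, \bigcup_i R_{p_i}\rangle$ and $\langle f_p\cup f_q, R_p\cup R_q\rangle$ as the greatest lower bounds, merely asserting that the verification is easy. You have simply written out that verification, and your checks of conditions \ref{def: alpha forcing-a}--\ref{def: alpha forcing-d} (in particular the compatibility argument for clause \ref{def: alpha forcing-c} via a common refinement $p_m$, and the downward heredity of the constraints under the meet) are all sound.
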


\begin{proof}
Let $\seq{p_i}{i < \delta}$ for $\delta < \kappa$ be a decreasing sequence. It is easy to see that $q = \langle f_q, R_q \rangle$ with $f_q = \bigcup_{i<\delta} f_{p_i}$ and $R_q = \bigcup_{i < \delta} R_{p_i}$ is the greatest lower bound of the sequence. Likewise, for two compatible conditions $p,q$ the condition $r = \langle f_p \cup f_q, R_p \cup R_q \rangle$ is their greatest lower bound.
\end{proof}

To each pair of sets \( Z \) and \( W \) we can associate the forcing \( \mathrm{Fn}(Z,W,{<}\k) \) consisting of all partial functions from \( Z \) to \( W \) of size smaller than \( \k \), ordered by inclusion. 

\begin{lemma} \label{lem: kappa linked}
The forcing $\aforc_\alpha(A,B)$ is $\kappa$-linked.
\end{lemma}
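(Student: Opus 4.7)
My plan is as follows. First, I would use Lemma~\ref{lem: <kappa closure} (well-metness) to characterize compatibility: $p \parallel q$ if and only if $\langle f_p \cup f_q, R_p \cup R_q\rangle$ is itself a condition, since any common lower bound is necessarily a refinement of this componentwise union and no refinement can repair a conflict. Unpacking Definition~\ref{def: alpha forcing}, this reduces to three requirements: (i) $f_p$ and $f_q$ agree on $\dom(f_p) \cap \dom(f_q)$; (ii) there is no pair $\langle t, x\rangle \in R_p$ and $\langle t', x\rangle \in R_q$ with $t' \in \succ_{T_\alpha}(t) \cap T_\alpha^{>0}$, and symmetrically; (iii) there is no $\langle t, x\rangle \in R_p$ and $t' \in \succ_{T_\alpha}(t) \cap T_\alpha^0 \cap \dom(f_q)$ with $x \in \clopen{f_q(t')}$, and symmetrically. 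Crucially, once we restrict to conditions with a common $f$, (i) becomes trivial and (iii) reduces to Definition~\ref{def: alpha forcing}\ref{def: alpha forcing-c} applied to $p$ and $q$ separately; only the ``cross-level'' condition (ii) then remains.

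Second, I would index the partition by pairs $\langle f, \iota\rangle$, where $f \colon T_\alpha^0 \to \pre{<\kappa}{\kappa}$ is a partial function of size $<\kappa$ and $\iota \colon X \to \powset(T_\alpha^{>0})$ is the small-support map $x \mapsto T^p_x := \{t \in T_\alpha^{>0} : \langle t, x\rangle \in R_p\}$, each $T^p_x$ being automatically a strong antichain in $T_\alpha^{>0}$ by Definition~\ref{def: alpha forcing}\ref{def: alpha forcing-c}. Conditions sharing the same $\langle f, \iota\rangle$ coincide and are therefore trivially pairwise compatible. Using $\alpha < \omega$ and the standing assumption $2^{<\kappa} = \kappa$, one has $|T_\alpha^0| = |T_\alpha^{>0}| = \kappa$ and $|\pre{<\kappa}{\kappa}| = \kappa$, yielding at most $\kappa^{<\kappa} = \kappa$ admissible $f$'s; the number of admissible $\iota$'s is bounded by $|X|^{<\kappa} \cdot \kappa^{<\kappa}$. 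When $|X| \leq \kappa$, this bound is $\kappa$ and the proof is complete.

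The remaining case $|X| > \kappa$ is the main obstacle, since then $|X|^{<\kappa}$ exceeds $\kappa$ and the naive partition is too fine. To bring the count back down to $\kappa$, I would coarsen the invariant $\iota_p$ to its \emph{abstract isomorphism type}, namely the multiset $\{T^p_x : x \in \mathrm{supp}(p)\}$ of strong antichains with multiplicities, which takes only $\kappa^{<\kappa} = \kappa$ values. Conditions within a coarsened class are no longer automatically compatible, since two conditions may realize the same abstract type on overlapping supports in conflicting ways; to repair this I would fix in advance a well-ordering of $X$ and further canonicalize via a $\Delta$-system-style analysis on supports, ensuring that any two conditions in a common resulting piece either have disjoint supports off a fixed root (immediately satisfying (ii)) or are forced to agree on the intersection. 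Carrying out this canonicalization while keeping the total number of pieces at $\kappa$ is the principal technical hurdle.
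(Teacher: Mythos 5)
Your first two paragraphs are sound: the compatibility analysis via well-metness is correct, and the partition by pairs $\langle f,\iota\rangle$ does prove the lemma when $|X|\leq\kappa$. But the lemma must hold for arbitrary $X\subseteq\pre{\kappa}{\kappa}$, which can have size up to $2^\kappa$, so the third paragraph is where the actual content lies --- and there you only name the obstacle without overcoming it. The coarsening to abstract isomorphism types followed by a ``$\Delta$-system-style canonicalization'' is not a viable repair: a $\Delta$-system argument extracts, from any $\kappa^+$-sized family, a large subfamily with a common root, which is the right tool for the $\kappa^+$-c.c.\ but not for $\kappa$-linkedness. Linkedness requires a \emph{coloring} of the whole forcing into $\kappa$ classes each of which is pairwise compatible, and since the family of possible supports $[X]^{<\kappa}$ has size $|X|^{<\kappa}>\kappa$, no amount of canonicalizing the abstract type controls how two conditions in the same color class interact on the overlap of their supports. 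As stated, your proof has a genuine gap exactly in the case that matters.

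The missing idea is a separability (Hewitt--Marczewski--Pondiczery-type) theorem. The paper reduces each condition $p$ (after fixing $f_p$, of which there are only $\kappa$ many choices since $|\mathrm{Fn}(T^0_\alpha,\pre{<\kappa}{\kappa},{<}\kappa)|\leq\kappa$) to the partial function $g_p\colon x\mapsto\psi(h_{x,p})$ in $\mathrm{Fn}(X,\kappa,{<}\kappa)$, where $h_{x,p}=\{t\in T^{>0}_\alpha\mid\langle t,x\rangle\in R_p\}$ and $\psi$ codes small subsets of $T^{>0}_\alpha$ by ordinals; compatibility of $g_p$ and $g_q$ (plus $f_p=f_q$) implies compatibility of $p$ and $q$. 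It then proves that $\mathrm{Fn}(Z,W,{<}\kappa)$ is $\kappa$-linked whenever $|Z|\leq 2^\kappa$ and $W$ has a dense subset of size $\leq\kappa$: by the generalized Marczewski theorem the product $\pre{Z}{W}$ with the ${<}\kappa$-box topology has a dense set $\seq{b_i}{i<\kappa}$ of size $\kappa^{<\kappa}=\kappa$, and the classes $D_i=\{g\mid b_i\in\clopen{g}\}$ witness linkedness, since two basic open sets containing a common point intersect. This density argument is precisely what replaces your exact-invariant count when $|X|>\kappa$; without it (or an equivalent), the proof does not close. A minor additional inaccuracy: condition~\ref{def: alpha forcing-c} of Definition~\ref{def: alpha forcing} only forbids \emph{immediate} successors, so the sets $T^p_x$ need not be antichains in $T^{>0}_\alpha$, though this does not affect your counting.
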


\begin{proof}
We prove this by reducing our forcing to a $\kappa$-Cohen forcing.
For each $x \in X$ and $p \in \aforc_\alpha(A,B)$, let $h_{x,p} = \{t \in T_\alpha^{>0} \mid \langle t, x \rangle \in R_p\}$, and notice that \( |h_{x,p}| < \kappa\). Let $X_p=\{x\in X\mid h_{x,p}\neq\emptyset\}$. Let us also fix a bijection $\psi \colon \powset_{{\leq}\k}( T_\alpha^{>0})\to \kappa$, where \( \powset_{{\leq} \k}( T_\alpha^{>0}) \) denotes the collection of all subsets of $T_\alpha^{>0}$ of size smaller than \( \k \), and define $g_p \colon X_p \to \kappa $ by setting $g_p(x)=\psi(h_{x,p})$ for any $p \in \aforc_\alpha(A,B)$.
Then, the map $\phi(p) = g_p$ assigns to each $p \in \aforc_\alpha(A,B,X)$ a condition in 
\( \mathrm{Fn}(X, \kappa, {<}\kappa) \).
    
Given $p = \langle f_p, R_p\rangle$ and $ q = \langle f_q, R_q \rangle$, let 
\[
p \parallel^* q \IFF \langle \emptyset, R_p\rangle \parallel \langle \emptyset, R_q\rangle.
\]
In other words, the relation \( \parallel^* \) only checks if two conditions are compatible at inner nodes of the tree and forgets information about leaves. Note further that $p \parallel^* q$ and $f_p = f_q$ implies $p \parallel q$. It is now easy to see that $\phi(p) \parallel \phi(q)$ implies $p \parallel^* q$.%
\footnote{The reverse implication is not true, as the reduction is very coarse and forgets a lot of the information contained in $p$ and $q$.} 
    
The set $\F=\mathrm{Fn}(T_\alpha^0, \pre{<\kappa}{\kappa}, {<}\kappa)$ can be construed at the collection of all \( {<}\kappa \)-sized partial assignments of clopen sets to leaves of $T_\alpha$,  and clearly \( |\F| \leq \k \).
Therefore, if we can prove that $\mathrm{Fn}(X, \kappa, {<}\kappa)$ is $\kappa$-linked as witnessed by the equality $\mathrm{Fn}(X, \kappa, {<}\kappa) = \bigcup_{i<\kappa} \bP_i$, then \( \aforc_\alpha(A,B) \) is \( \k \)-linked as well because it can be written as
\[
\aforc_\alpha(A,B) = \bigcup_{i < \kappa, f \in \F} \{p \in \aforc_\alpha(A,B) \mid f_p = f \wedge \phi(p) \in \bP_i\}.
\]

\begin{claim}
If $|Z| \leq 2^\kappa$ and \( W \) is a topological space with a dense subset of size ${\leq}\kappa$, then $\mathrm{Fn}(Z, W, {<}\kappa)$ is $\kappa$-linked.
\end{claim}

\begin{proof}[Proof of the Claim]
In \cite{elser_density_2011}, a generalization of Marczewski's Separability Theorem is proven: if $|I| \leq 2^\kappa$ and $X_i$ has a dense subset of cardinality ${\leq}\kappa$ for each $i \in I$, then the product space $\prod_{i \in I} X_i$, endowed with the ${<}\kappa$-box topology, contains a dense subset of size $\kappa^{<\kappa} = \kappa$.

Let thus $\seq{b_i}{i < \kappa}$ be a dense subset of $\pre{Z}{W}$, seen as the product of \( |Z| \)-many copies of \( W \) and equipped with the ${<}\kappa$-box topology. Since each $f \in \mathrm{Fn}(Z, W, {<}\kappa)$ corresponds to the basic open set $\clopen{f} = \{x \in \pre{Z}{W}\mid f \subseteq x\}$ of \( \pre{Z}{W} \), two conditions $f_1, f_2 \in \mathrm{Fn}(Z, W, {<}\kappa)$ are compatible if and only if $\clopen{f_1} \cap \clopen{f_2} \neq \emptyset$. Therefore we can conclude that \( \mathrm{Fn}(Z, W, {<}\kappa) \) is \( \k \)-linked because it can be written as $\mathrm{Fn}(Z, W, {<}\kappa) = \bigcup_{i < \kappa} D_i$, where for every \( i < \k \) we let 
\[
D_i = \{f \in \mathrm{Fn}(Z, W, {<}\kappa) \mid b_i \in \clopen{f}\}. \qedhere
\]
\end{proof}

This concludes the proof of Lemma~\ref{lem: kappa linked}.
 \end{proof}

Combining Lemmas~\ref{lem: <kappa closure} and~\ref{lem: kappa linked} with Fact~\ref{fact: iteration cc}, we finally get:

\begin{corollary}
Any ${<}\kappa$-supported iteration of $\aforc_\alpha(A,B)$-forcings is ${<}\kappa$-closed and satisfies the $\kappa^+$-c.c. In particular, no cardinal is collapsed by such an iteration. 
\end{corollary}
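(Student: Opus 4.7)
The plan is to invoke Fact~\ref{fact: iteration cc} directly, after observing that each iterand satisfies all of its hypotheses. To this end, I would first note that by Lemma~\ref{lem: <kappa closure}, every $\aforc_\alpha(A,B)$ (regardless of the choice of $A$ and $B$ in the intermediate extensions) is forced to be ${<}\kappa$-closed and well-met, and by Lemma~\ref{lem: kappa linked} every such forcing is forced to be $\kappa$-linked. Here the only mild subtlety to flag is that in an iteration the parameters $A, B$ may be names for sets in the intermediate model; however, the proofs of Lemmas~\ref{lem: <kappa closure} and~\ref{lem: kappa linked} go through uniformly in $A, B$ and are readily relativized to arbitrary transitive models of $\ZFC$ containing the relevant parameters, so the hypotheses of Fact~\ref{fact: iteration cc} are met at each stage.

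Having verified the hypotheses, a direct application of Fact~\ref{fact: iteration cc} yields that the ${<}\kappa$-supported limit $\bP_{\gamma^*}$ is ${<}\kappa$-closed and satisfies the $\kappa^+$-c.c. I would then deduce the preservation of cardinals from these two properties by standard forcing folklore: ${<}\kappa$-closure implies that no sequences of ordinals of length $<\kappa$ are added, so all cardinals ${\leq}\kappa$ are preserved; the $\kappa^+$-c.c.\ implies that every cardinal $\geq \kappa^+$ is preserved (see e.g.\ \cite{kunen2013}). Hence no cardinal is collapsed.

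I do not foresee any real obstacle here, since the statement is essentially a packaging of Lemmas~\ref{lem: <kappa closure} and~\ref{lem: kappa linked} through Fact~\ref{fact: iteration cc}; the only thing worth being explicit about is the uniformity of those lemmas in the parameters $A, B$, so that they apply also when those parameters are interpreted in intermediate stages of the iteration.
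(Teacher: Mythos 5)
Your proposal is correct and is exactly the paper's argument: the corollary is stated as an immediate consequence of combining Lemma~\ref{lem: <kappa closure} (each iterand is ${<}\kappa$-closed and well-met), Lemma~\ref{lem: kappa linked} (each iterand is $\kappa$-linked), and Fact~\ref{fact: iteration cc}, with cardinal preservation following by the standard folklore you cite. Your remark on uniformity in the parameters $A,B$ is a sensible (if implicit in the paper) point, since both lemmas are proved for arbitrary disjoint $A,B\subseteq X$ and hence relativize to the intermediate extensions.
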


\subsection{A model for $\ord_{\k^+}(X) = n$}

The goal of this section is to show that, consistently, there can be even closed sets \( X \subseteq \pre{\k}{\k}\) such that \( \ord_{\k^+}(X) = n \), for any given natural number \( n \geq 2 \).
This will be achieved
 using an iteration of $\alpha$-forcings. 

For \( n > 2 \), the idea is to first force with $\aforc_{n-1}(\emptyset, \emptyset, X)$ to add a \enquote{fresh} $\Piii{0}{n-1}{\k^+}(X)$ subset of $X$, and then successively, using bookkeeping, force with $\aforc_n(\name{A}, X \minus \name{A},X)$ to add a $\Piii{0}{n}{\k^+}$-code for every $\kappa^+$-Borel subset of $X$; by closure under complements of \( \Bor{\k^+}(X) \), this entails \( \Bor{\k^+}(X) = \Deee{0}{n}{\k^+}(X) \), so that $\ord_{\k^+}(X) \leq n$. To prove that $\ord_{\k^+}(X) \geq n$, we will show that the first generic set we added will never become a $\Siii{0}{n-1}{\k^+}$ set in the iteration (Theorem \ref{th: switcheroo}); this is the heart and main difficulty of the construction.
Notice that by Corollary~\ref{cor:proper_for_k+_final_level}, we will automatically get that \( \Deee{0}{n}{\k^+}(X) \setminus (\Siii{0}{n-1}{\k^+}(X) \cup \Piii{0}{n-1}{\k^+}(X)) \neq \emptyset \).

For $n = 2$ the construction is much easier and we may skip the first step of forcing with $\aforc_{n-1}(\emptyset, \emptyset,X)$. The reader interested solely in the result for $n = 2$ may skip ahead up to the proof of Theorem \ref{th: set order to n}. The same result can also be achieved by using Solovay's almost disjoint forcing in place of $\aforc_2$, see \cite{luckeSigma11Definability2012}. 

Recall that we already fixed an arbitrary set \( X \subseteq \pre{\k}{\k}\).
Throughout the rest of this section, let us also fix a ${<}\kappa$-supported forcing iteration $\overline{\aforc} = \seq{\bP_\zeta, \name{\bQ}_\zeta}{\zeta < \zeta^*}$ such that 
\begin{itemizenew}
    \item $\bP_\zeta \Vdash \name{\bQ}_\zeta = \aforc_{\alpha_\zeta}(\name{A}_{\zeta}, \name{B}_{\zeta}, \check{X})$.
    \item $A_0 = B_0 = \emptyset$.
    \item $1 < \alpha_0 < \omega$ and $\alpha_\zeta = \alpha_0 + 1$ for $0 < \zeta < \zeta^*$.
\end{itemizenew}

A condition $p\in\overline{\aforc}$ is thus a tuple $p=\seq{\name{p}(\zeta)}{\zeta<\zeta^*}$ such that $p \restriction \zeta \Vdash \name{p}(\zeta) \in \aforc_{\alpha_\zeta}(\name{A}_{\zeta}, \name{B}_{\zeta}, \check{X})$ and, furthermore, $|\supp(p)|<\kappa$ for 
\[
\supp(p)=\{\zeta\in \zeta^*\mid p(\zeta)\neq\check{\mathbbm{1}}\}.
\]
For the sake of readability, in what follows we will often (but not always) write \( T(\g) \) instead of \( T_{\a_\g}\). 

While in general a coordinate $p(\zeta)$ can be just a name for a condition in the forcing $\aforc_{\alpha_\zeta}(\name{A}_{\zeta}, \name{B}_{\zeta}, \check{X})$, we may without loss of generality work with conditions of a nicer form.

\begin{definition}
A condition $p \in \overline{\aforc}$ is \markdef{good} if $p(\zeta) = \langle \check{f}_{p(\zeta)}, \check{R}_{p(\zeta)}\rangle$ for each $\zeta <\zeta^*$, i.e.\ all entries of the condition are already decided in the ground model.
\end{definition}

Every good condition can be identified with an element of the product forcing $\prod_{\zeta<\zeta^*} \aforc_{\alpha_\zeta}(\emptyset, \emptyset, X)$.
However,  the converse is not true, as for example if $\name{B}_1$ and $x\in X$ are such that $\aforc_{\alpha_0}(A_0, B_0, X) \not \Vdash \check{x}\in \name{B}_1$ and $\aforc_{\alpha_0}(A_0, B_0, X)\not \Vdash \check{x}\notin \name{B}_1$, then $p$ defined by 
\begin{equation}\label{eq:not_valid_condition}
p(\zeta)=
\begin{cases}
(\emptyset, \{\langle\emptyset,\check{x} \rangle\}) \qquad & \text{ if } \zeta=1, \\
\mathbbm{1} \qquad & \text{ otherwise} 
\end{cases}
\end{equation}
does not satisfy $p(0)\Vdash p(1)\in \aforc_{\alpha_1}(\name{A}_1, \name{B}_1, \check{X})$, and thus can not be a condition in $\overline{\aforc}$.

One of the key points in several proofs of this section will be to check that a sequence $p=\seq{p(\zeta)}{\zeta<\zeta^*} $ we defined explicitly is actually a condition in our forcing.
Notice that requiring $p \restriction \zeta \Vdash \name{p}(\zeta) \in \aforc_{\alpha_\zeta}(\name{A}_{\zeta}, \name{B}_{\zeta}, \check{X})$ amounts to checking that $p \restriction \zeta \Vdash \check{x} \notin \name{B}_\zeta$ for any $\langle \emptyset,x \rangle\in R_{p(\zeta)}$, and $p \restriction \zeta \Vdash \check{x} \notin \name{A}_\zeta$ for any $\langle t,x \rangle\in R_{p(\zeta)}$ with $t \in\succ_{T(\g)}(\emptyset)$.

A standard argument shows that the set of good conditions is dense in  $\overline{\aforc}$.

\begin{lemma}
    For every $p \in \overline{\aforc}$ there is $q \in \overline{\aforc}$ such that $q\leq p$ and $q$ is good.
\end{lemma}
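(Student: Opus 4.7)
The plan is a transfinite recursion along the support of $p$, strengthening one coordinate at a time until it is decided in the ground model. The crucial ingredient will be the ${<}\kappa$-closure of $\overline{\aforc}$ and of each of its initial segments $\bP_\zeta$, which follows from Lemmas~\ref{lem: <kappa closure} and~\ref{lem: kappa linked} together with Fact~\ref{fact: iteration cc}.

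First, I would enumerate $\supp(p) = \{\zeta_i \mid i < \delta\}$ in increasing order, with $\delta < \kappa$, and build a decreasing sequence $\seq{p_i}{i \leq \delta}$ in $\overline{\aforc}$ with $p_0 = p$, maintaining the invariant that for every $j < i$ the coordinate $p_i(\zeta_j)$ is already of the form $\langle \check{f}_j, \check{R}_j \rangle$ for some ground-model pair, while $p_i \restriction [\zeta_i, \zeta^*)$ agrees with $p$. Setting $q := p_\delta$ at the end will produce a good condition below $p$.

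The successor step $p_i \to p_{i+1}$ is where the real work sits. Here $\name{p_i}(\zeta_i)$ is a $\bP_{\zeta_i}$-name forced to be a pair $\langle \name{f}, \name{R}\rangle$ with $|\name{f}|, |\name{R}| < \kappa$ and values in ground-model sets (namely $T^0_{\alpha_{\zeta_i}}$, $\pre{<\kappa}{\kappa}$, $T^{>0}_{\alpha_{\zeta_i}}$, and $X$). Under ${<}\kappa$-closure, any such small ground-model-valued name can be fully decided: one first strengthens below $p_i \restriction \zeta_i$ to fix the cardinalities $|\name{f}| = \delta_f$ and $|\name{R}| = \delta_R$, and then iterates through enumerations of the domains of $\name{f}$ and $\name{R}$, deciding each entry by ${<}\kappa$-closure. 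This will produce an $r_i \leq p_i \restriction \zeta_i$ and a ground-model pair $\langle f_i, R_i \rangle$ with $r_i \Vdash \name{p_i}(\zeta_i) = \langle \check f_i, \check R_i \rangle$. I would then define $p_{i+1}$ by placing $r_i$ on the coordinates below $\zeta_i$, putting the good value $\langle \check{f}_i, \check{R}_i\rangle$ at $\zeta_i$, and leaving $p_i(\zeta)$ unchanged for $\zeta > \zeta_i$. Since $r_i$ forces the good pair to belong to the factor forcing $\name{\bQ}_{\zeta_i} = \aforc_{\alpha_{\zeta_i}}(\name{A}_{\zeta_i}, \name{B}_{\zeta_i}, \check X)$, the result is a genuine condition in $\overline{\aforc}$ that extends $p_i$ and makes the $\zeta_i$-th coordinate good.

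At a limit stage $i \leq \delta$, I would use the ${<}\kappa$-closure of $\overline{\aforc}$, together with the well-met-ness of each factor (Lemma~\ref{lem: <kappa closure}), to take the coordinate-wise greatest lower bound of $\seq{p_j}{j < i}$. This automatically preserves the good values already placed at coordinates $\zeta_j$ with $j < i$, and leaves the untouched tail intact; support-size bookkeeping is harmless since we are aggregating fewer than $\kappa$ many ${<}\kappa$-sized supports and $\kappa$ is regular. The hard part of the argument, if there is any, is solely the ability to decide small ground-model-valued names in a ${<}\kappa$-closed forcing, which is a routine consequence of the closure assumption; I therefore expect the proof to be straightforward bookkeeping with no genuine obstacle.
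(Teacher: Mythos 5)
Your overall strategy (decide coordinates one at a time using ${<}\kappa$-closure, glue at limits via well-met-ness) is the right general idea, but the specific recursion you describe has a genuine gap in the successor step. To decide the $\bP_{\zeta_i}$-name $\name{p_i}(\zeta_i)$ as a ground-model pair you must pass to some $r_i \leq p_i \restriction \zeta_i$ in $\bP_{\zeta_i}$, and this strengthening is uncontrolled on the coordinates below $\zeta_i$: for $j < i$ the new coordinate $r_i(\zeta_j)$ is merely some name forced to extend the check-name you had placed there, and need not itself be a check-name. So the invariant ``$p_i(\zeta_j)$ is good for all $j<i$'' is not preserved. Worse, $\supp(r_i)$ may contain coordinates below $\zeta_i$ that were not in $\supp(p)$ at all, and your sweep, which enumerates only the \emph{original} support $\supp(p)$ once and for all, never revisits or processes these newly activated coordinates. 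The final condition $p_\delta$ therefore need not be good.

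The paper avoids both problems by proving, by induction on $\zeta \leq \zeta^*$, that good conditions are \emph{dense in $\bP_\zeta$}. In the successor step one decides $\name{f}$ and $\name{R}$ at coordinate $\zeta$ by some $q \leq p \restriction \zeta$, and then invokes the inductive hypothesis to replace $q$ by a good condition in $\bP_\zeta$ before concatenating $q \conc \langle \check{f}, \check{R}\rangle$; this ``re-goodifies'' the entire initial segment after each decision, which is exactly the step missing from your argument. Your proof can be repaired by adopting the same scheme: do the recursion on the length $\zeta$ of the iteration rather than on an enumeration of $\supp(p)$, so that at each stage you have the density of good conditions in all shorter iterations available to clean up the damage done by the deciding step. (Your limit-stage argument, and your observation that deciding a ${<}\kappa$-sized ground-model-valued name is routine under ${<}\kappa$-closure, are both fine as stated.)
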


\begin{proof}
We prove by induction on \( \g \) that good conditions are dense in $\bP_\zeta$, for each $\zeta \leq \zeta^*$. For $\zeta = 0$ there is nothing to prove. In the successor step of the induction, we assume that good conditions are dense in $\bP_\zeta$, 
and consider an arbitrary $p \in \bP_{\zeta+1}$. We know that there are $\bP_\zeta$-names $\name{f}$ and $\name{R}$ such that \begin{multline*}
p \restriction \zeta \Vdash \name{p}(\zeta) = \langle \name{f}, \name{R} \rangle \in \aforc_{\alpha_\zeta}(\name{A},\name{B}, X) \ \wedge\  
\name{f} \colon T^0_{\alpha_\zeta} \to \pre{<\kappa}{\kappa} \text{ is a partial function} \wedge {} \\
|\name{f}| < \kappa \ \wedge\  \name{R} \subseteq T^0_{\alpha_\zeta} \times \check{X} \ \wedge\  |\name{R}| < \kappa.
\end{multline*}
Since the iteration is ${<}\kappa$-closed, both $\name{f}$ and $\name{R}$ can be decided as ground model objects $f$ and $R$, respectively, by a condition $q \leq p \restriction \zeta$. 
By inductive hypothesis, we can assume that $q$ is good, hence $q \conc \langle \check{f}, \check{R} \rangle \leq p$ is good as well.

Suppose now that $\zeta \leq \zeta^*$ is a limit. If $\cof(\zeta) \geq \kappa$, every condition in $\bP_\zeta$ is already in $\bP_\beta$ for some $\beta < \zeta$, hence we are done by the inductive hypothesis. 
If instead $\cof(\zeta) < \kappa$, let $\seq{\zeta_i}{i < \cof(\zeta)}$ be a cofinal sequence in $\zeta$ and $p \in \bP_\zeta$. We inductively define a sequence $\seq{q_i}{i < \cof(\zeta)}$, where
for each \( i < j < \cof(\g) \)
\begin{itemizenew}
\item 
$q_i \in \bP_{\zeta_i}$ is good and $q_i \leq p \restriction \zeta_i$;
\item 
$q_j \restriction \zeta_i \leq q_i$.
\end{itemizenew}
(In limit steps of this construction, we make use of the ${<}\kappa$-closure of $\bP_\zeta$.) 
Let $q = \bigcup_{i < \cof(\zeta)} q_i$ be the pointwise union of these conditions, it is easy to see that $q \leq p$ and $q$ is good, as desired.
\end{proof}

Therefore, we shall tacitly assume that all conditions appearing in further constructions are good.

\begin{definition}
For a set $H \subseteq X$ define the \markdef{rank function}
\[
\crank{p}{H} =  \sup \left\{\rank_{T(\g)}(t) \bigm | \g < \g^* \wedge t \in T(\g) \wedge  \langle t, x \rangle \in R_{p(\zeta)} \text{ for some } x \in X \minus H \right\} .
\]
\end{definition}

 Notice that for every $H \subseteq X$ and $p\in \overline{\aforc}$, we have $\crank{p}{H}\leq \alpha_0 + 1$.
Also, $\crank{p}{H}=0$ if and only if every $x$ appearing in $\bigcup_{\zeta < \zeta^*} R_{p(\zeta)}$ is in $H$.

\begin{definition}
Given $K \subseteq \zeta^*$ and $H \subseteq X$, let 
\[
\overline{\aforc}_{H,K,{\leq} \beta}=\{q\in \overline{\aforc}\mid \supp(q) \subseteq K, \crank{q}{H} \leq \beta \}.
\]
The set $\overline{\aforc}_{H,K,{<} \beta} \subseteq \overline{\aforc}$ is defined analogously.
\end{definition}

We also let $\overline{\aforc}_{H,K}=\overline{\aforc}_{H,K,{\leq} 0}$ be the set of conditions \( p \) with \( \supp(p) \subseteq K \) and $\crank{p}{H}=0$.
 
 For every $p\in \overline{\aforc}$, $H \subseteq X$, $K \subseteq \zeta^*$, and $\beta\leq \alpha_0$, define the restriction of $p$ to $(H,K,{\leq}\beta)$ as the tuple $p\restriction_{H,K,{\leq}\beta}=\seq{p\restriction_{H,K,{\leq}\beta}(\zeta)}{\zeta<\zeta^*}$, where
\[
p\restriction_{H,K,{\leq}\beta}(\zeta) = 
\begin{cases}
\langle f_{p(\zeta)}, R_{p(\zeta)}\restriction_{H,{\leq}\beta} \rangle & \text{ if } \zeta \in K \\
\mathbbm{1} & \text{ if } \zeta \notin K,
\end{cases}
\]
and for each $\zeta \in K$ 
\[
R_{p(\zeta)}\restriction_{H,{\leq}\beta} = \{ \langle t, x\rangle \in R_{p(\zeta)} \mid x \in H \vee \rank_{T(\g)}(t) \leq \beta\}.
\]
Notice that in general $p\restriction_{H,K,{\leq}\beta}$ is not a condition in $\overline{\aforc}$ (even if $p$ is), since by weakening an initial segment of the condition we can no longer guarantee that $\left(p \restriction_{H,K,{\leq} \beta}  \right) \restriction \zeta$ decides any statement of the form $\check{x} \notin \name{A}_\zeta$.

\begin{definition}
We say that $p\in \overline{\aforc}$ is \markdef{$(H,K)$-restrictable} if for any $\beta < \alpha_0$ we have that $p\restriction_{H,K,{\leq}\beta}$ is a well-defined condition in $\overline{\aforc}$.
\end{definition}

We will just say that $p$ is restrictable when $H$ and $K$ are clear from the context.
Notice also that the restriction of any good restrictable condition is still good.

\begin{definition} \label{def: appropriate}
A pair of sets $H \subseteq X$ and $K \subseteq \zeta^*$ is said to be \markdef{appropriate} if the set
\[
 \{p\in \overline{\aforc}\mid p \text{ is $(H,K)$-restrictable}\}
\]
is dense in $ \overline{\aforc}$.
Furthermore, given a $\overline{\aforc}$-name $\name{\tau}$  for an element of \( \pre{\k}{\k} \), we say that the pair $(H,K)$ is \markdef{appropriate for $\name{\tau}$} if, additionally,  for each $i < \kappa$ the set
\begin{equation}\label{eq:predense}
\{p \in \overline{\aforc}_{H,K} \mid p \text{ decides } \name{\tau} \restriction i\}
\end{equation} 
is predense%
\footnote{Recall that a set $D$ is predense if every condition in the forcing is compatible with a condition from $D$.} in \( \overline{\aforc} \).
\end{definition}

\begin{lemma} \label{lem: h,k exist}
For every name $\name{\tau}$ for an element of \( \pre{\k}{\k}\) and all sets $H' \subseteq X$ and $K' \subseteq \zeta^*$ of size at most $\kappa$, there are $H \supseteq H'$ and $K \supseteq K'$ of size at most $\kappa$ such that the pair \( (H,K) \) is appropriate for $\name{\tau}$.
\end{lemma}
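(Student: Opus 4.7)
The approach is a standard closure argument of length $\kappa$. Starting from $(H',K')$, I would build an increasing chain
\[
(H_0,K_0) \subseteq (H_1,K_1) \subseteq \ldots \subseteq (H_\xi,K_\xi) \subseteq \ldots\qquad (\xi < \kappa)
\]
of pairs, each of size at most $\kappa$, and take unions $H = \bigcup_{\xi < \kappa} H_\xi$, $K = \bigcup_{\xi < \kappa} K_\xi$ at the end. The ${<}\kappa$-closure of $\overline{\aforc}$, the bound $\kappa^{<\kappa} = \kappa$, and the fact that $|\overline{\aforc}_{H_\xi, K_\xi}| \leq \kappa$ whenever $|H_\xi|, |K_\xi| \leq \kappa$ will keep the cardinalities under control, so the crux is the bookkeeping at each successor stage.

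At stage $\xi+1$, I fix a bijection $\kappa \times \kappa \times \kappa \to \kappa$ and use it to interleave two tasks. First, for each pair $(p,i) \in \overline{\aforc}_{H_\xi, K_\xi} \times \kappa$, choose a maximal antichain $\mathcal{A}_{p,i}$ below $p$ of conditions deciding $\name{\tau}\restriction i$; by the $\kappa^+$-c.c.\ (Lemmas~\ref{lem: <kappa closure}--\ref{lem: kappa linked} and Fact~\ref{fact: iteration cc}), $|\mathcal{A}_{p,i}| \leq \kappa$, and I add the support of every $q \in \mathcal{A}_{p,i}$ to $K_{\xi+1}$ together with every $x$ appearing in the $R$-coordinates of any such $q$ to $H_{\xi+1}$. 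Second, for each $\zeta \in K_\xi$ and each $x \in H_\xi$, for each Boolean value $\epsilon \in \{0,1\}$ of the statements ``$\check{x} \in \name{A}_\zeta$'' and ``$\check{x} \in \name{B}_\zeta$'', I enumerate a maximal antichain of conditions forcing that value, record their supports and $R$-parameters in $K_{\xi+1}, H_{\xi+1}$, and extract from them enough information that any condition $r$ below stage $\xi$ which already decides these statements continues to decide them in the restriction $r \restriction_{H_{\xi+1}, K_{\xi+1}, \leq \beta}$. The predensity requirement~\eqref{eq:predense} will then follow because any $p \in \overline{\aforc}$ meets some $q \in \mathcal{A}_{p',i}$ for an appropriate $p' \in \overline{\aforc}_{H_\xi,K_\xi}$ compatible with $p$, and this $q$ lies in $\overline{\aforc}_{H,K}$ by the bookkeeping.

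For the density of $(H,K)$-restrictable conditions: given an arbitrary $p \in \overline{\aforc}$ I would pass first to a good extension and then, coordinate by coordinate along $\supp(p)$, split each $R_{p(\zeta)}$ into its ``inside'' part (pairs $\langle t,x \rangle$ with $x \in H$ or $\rank_{T(\gamma)}(t) \leq \beta$) and its ``outside'' part, and then strengthen $p$ on the remaining coordinates in $K$ so that the forcing statements needed to witness the constraints of Definition~\ref{def: alpha forcing}~\ref{def: alpha forcing-e}--\ref{def: alpha forcing-d} for the retained pairs are carried by the restricted condition itself. This strengthening is possible precisely because the closure operation in the previous paragraph has already placed, for every $\zeta \in K$ and $x \in H$, maximal antichains of conditions in $\overline{\aforc}_{H,K}$ deciding the membership of $x$ in $\name{A}_\zeta$ and $\name{B}_\zeta$; one refines $p$ below an appropriate element of each such antichain.

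The main obstacle, unsurprisingly, is verifying restrictability: weakening a condition can destroy the forcing statements ``$p\restriction\zeta \Vdash \check{x} \notin \name{A}_\zeta$'' that legitimized the surviving pairs in $R_{p(\zeta)}$ to begin with, so one has to arrange that the evidence for these statements is itself localized within $(H,K)$. The bookkeeping task in the second half of each successor stage above is designed precisely to make this localization possible; carrying it out requires a routine but delicate check that the two kinds of closure---for decisions of $\name{\tau}\restriction i$ and for decisions about $\name{A}_\zeta, \name{B}_\zeta$---can be performed simultaneously without inflating $|H_\xi|$ or $|K_\xi|$ beyond $\kappa$, which is where the regularity of $\kappa$ and the $\kappa^+$-c.c.\ of the iteration become essential.
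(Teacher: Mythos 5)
Your plan is correct and follows essentially the same route as the paper: a $\kappa$-length closure argument that (i) places entire maximal antichains deciding $\name{\tau}\restriction i$ inside $\overline{\aforc}_{H,K}$ to get predensity of the sets in~\eqref{eq:predense}, and (ii) closes $(H,K)$ under decisions of \enquote{$\check{x}\in\name{A}_\zeta$}, \enquote{$\check{x}\in\name{B}_\zeta$} so that, given $p$, one can amalgamate it with a compatible $\bar p\in\overline{\aforc}_{H,K}$ carrying all the evidence needed for the surviving top-level pairs (which, since $\rank_{T(\zeta)}(\emptyset)=\alpha_0+1>\beta$ and $\rank_{T(\zeta)}(t)=\alpha_0>\beta$ for $t\in\succ(\emptyset)$, all have $x\in H$), whence $q\restriction_{H,K,\leq\beta}\leq\bar p$ remains a condition. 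The one place your bookkeeping diverges is that you close under single pairs $(x,\zeta)$, whereas the paper closes under all blocks $(Y,Z)\in[H_i]^{<\kappa}\times[K_i]^{<\kappa}$ precisely so that a single antichain element decides all $\delta<\kappa$ relevant statements at once; your version still works, but you must add an explicit amalgamation step (a decreasing $\delta$-sequence of pointwise unions, using well-metness and ${<}\kappa$-closure, whose $\overline{\aforc}_{H,K}$-parts union to the desired $\bar p$) to produce that single witness.
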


\begin{proof}
We recursively define an increasing sequence $\seq{H_i, K_i}{i\leq\kappa}$ of sets of size at most $\kappa$ as follows.

If $i = 0$, for each $j < \kappa$ we choose a maximal antichain $C_j$ that decides $\name{\tau} \restriction j$. Keeping in mind that $\overline{\aforc}$ has the $\kappa^+$-c.c., we can find $H_0 \supseteq H'$ and $K_0 \supseteq K'$ of size at most $\kappa$ such that $\crank{p}{H_0} = 0$ and $\supp(p) \subseteq K_0$ for each $p \in \bigcup_{j < \kappa} C_j$.

For the successor step, suppose that \( H_i \) and \( K_i \) have already been defined.
For each $Y \in [H_i]^{<\kappa}$ and $Z \in [K_i]^{<\kappa}$, let
\[
D_{Y, Z} = \{p \in \overline{\aforc} \mid \forall y \in Y \, \forall \zeta \in Z \, (p \text{ decides } \check{y} \in \name{A}_\zeta \text{ and } \check{y} \in \name{B}_\zeta ) \}.
\]
Notice that $D_{Y, Z}$ is dense, since \( \overline{\aforc}\) is ${<}\kappa$-closed. Choose now maximal antichains $C_{Y, Z} \subseteq D_{Y, Z}$, for \( Y \) and \( Z \) as above. 
Collect all $x$ and $\zeta$ appearing in $\bigcup_{\zeta < \zeta^*} R_{p(\zeta)}$ and $\supp(p)$, respectively, where \( p \) varies in the set
\[
\bigcup \left\{ C_{Y, Z} \mid Y \in [H_i]^{<\kappa} \wedge Z \in [K_i]^{<\kappa} \right \},
\]
and include them in $H_{i+1}, K_{i+1}$.

For \( i \) limit,  we just set $H_i = \bigcup_{j < i} H_j$ and $K_i = \bigcup_{j < i} K_j$.

We claim that $H=H_\kappa$ and $K=K_\kappa$ are as desired. First, notice that $|H|, |K| \leq \kappa$, and that we took care of the predensity of the sets from~\eqref{eq:predense}
   in the first step of the construction. It remains to see that the pair $(H,K)$ is appropriate.
	
Let $p$ be a condition in $\overline{\aforc}$, and let $\seq{(\zeta_i,t_i,x_i)}{i < \delta}$ with $\delta < \kappa$ be an enumeration of all triples $(\zeta, t, x)$ with $\langle t, x\rangle \in R_{p(\zeta)}$, $\zeta \in \supp(p) \cap K$, $x \in H$, and $t \in \{\emptyset\} \cup \succ_{T(\g)}(\emptyset)$. By construction of $H$ and $K$, we can find a condition $\bar{p} \in \overline{\aforc}_{H,K}$ with $\bar{p} \parallel p$ such that \( \bar{p} \) decides the statements \enquote{$\check{x}_i \in \name{A}_{\zeta_i}$} and \enquote{$\check{x}_i \in \name{B}_{\zeta_i}$}, for all $i < \delta$. Let $q = p \cup \bar{p}$ be the pointwise union of \( p \) and \( \bar{p} \).
 It is easy to see that $q$ is a condition such that $q \leq p, \bar{p}$. We claim that $q$ is \( (H,K) \)-restrictable, as wanted.

Fix $\beta < \alpha_0$, and let $q'=q\restriction_{H,K,{\leq}\beta}$. Note that since $\bar{p} \in \overline{\aforc}_{H,K}$, we have $q' \leq \bar{p}$.
We prove by induction that $q' \restriction \zeta$ is a condition in $\bP_\zeta$, for each $\zeta \leq \zeta^*$. For $\zeta$ limit and $\zeta=0$, this is trivial. 
Assume now that the statement holds up to some $\zeta$, and let us show that it is true for $\zeta + 1$ as well. If $\zeta \notin K$, we know that $q'(\zeta) = \mathbbm{1}$ and there is nothing to do. 
If $\zeta \in K$, instead, we need to check that $q' \restriction \zeta \Vdash \check{x}\notin \name{B}_\zeta$ for any \( x \in X \) such that $\langle \emptyset,x \rangle\in R_{q'(\zeta)}$ and $q' \restriction \zeta \Vdash \check{x}\notin \name{A}_\zeta$ for any \( x \in X \) such that  $\langle t,x \rangle\in R_{q'(\zeta)}$ for some $t\in\succ_{T(\g)}(\emptyset)$.
	
Consider $\langle \emptyset, x \rangle \in R_{q'(\zeta)}$. If $\langle \emptyset, x \rangle \in R_{\bar{p}(\zeta)}$, then $\bar{p} \restriction \zeta \Vdash \check{x} \notin \name{B}_{{\zeta}}$, so $q' \leq \bar{p}$ forces this as well. Otherwise, $\langle \emptyset, x \rangle \in R_{p(\zeta)}$. This implies $x \in H$, since 
\begin{equation} \label{eq: star}
\rank_{T(\g)}(\emptyset) = \alpha_\zeta = \alpha_0 + 1 > \beta.  \end{equation}
 Hence $(\zeta, \emptyset, x)$ is one of the triples we took care of at the beginning of the proof, and $\bar{p} \restriction \zeta$ decides \enquote{$\check{x} \in \name{B}_{\zeta}$}. But since $\bar{p}$ is compatible with $p$, it must be the case that $\bar{p} \restriction \zeta \Vdash \check{x} \notin \name{B}_{\zeta}$, and so $q' \restriction \zeta \Vdash \check{x} \notin \name{B}_{\zeta}$ as wanted.
Any pair $\langle t, x\rangle \in R_{q'(\zeta)}$ with $t \in \succ_{T(\g)}(\emptyset)$ can be treated analogously, with~\eqref{eq: star} now reading \enquote{$\rank_{T(\g)}(t) = \alpha_\zeta - 1 = \alpha_0 > \beta$}. 

Thus we can conclude that $q' \restriction \zeta \Vdash \name{q}'(\zeta) \in \name{\bQ}_\zeta$ and we are done. 
\end{proof}

\begin{lemma} \label{lem: projection}
Let $0 < \beta < \alpha_0$, $H \subseteq X$, $K \subseteq \zeta^*$, and $p \in \overline{\aforc}$ be $(H,K)$-restrictable. Then for every $r\in \overline{\aforc}_{H,K,{<}\beta}$, we have that if \( {p\restriction_{H,K,\leq \beta}} \parallel r \), then \( p \parallel r\).
\end{lemma}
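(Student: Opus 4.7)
The plan is to exhibit an explicit common extension $q' \leq p, r$ built out of $p$ and $r$ alone. Let $q$ be any common extension of $p\restriction_{H,K,\leq\beta}$ and $r$, as guaranteed by hypothesis. I will define
\[
q'(\zeta) := \begin{cases} \bigl\langle f_{p(\zeta)} \cup f_{r(\zeta)},\, R_{p(\zeta)} \cup R_{r(\zeta)}\bigr\rangle, & \zeta \in K, \\ p(\zeta), & \zeta \notin K. \end{cases}
\]
The role of $q$ in the argument is purely as a certificate: for $\zeta \in K$, both $f_{p(\zeta)}$ and $f_{r(\zeta)}$ are contained in $f_{q(\zeta)}$, so their union is a well-defined partial function, and any potential clash between $R_{p(\zeta)}$ and $R_{r(\zeta)}$ in their ``small-rank'' part already appears inside $R_{q(\zeta)}$.

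I will then prove by induction on $\zeta \leq \zeta^*$ that $q'\restriction\zeta \in \bP_\zeta$ and that $q'\restriction\zeta \leq p\restriction\zeta,\, r\restriction\zeta$. The extension relations are immediate from the definition; the burden is to check $q'\restriction\zeta \Vdash q'(\zeta) \in \bQ_\zeta$, i.e., to verify properties~(a)--(e) of Definition~\ref{def: alpha forcing}. Items~(a) and~(b) reduce to cardinal arithmetic; at $\zeta \notin K$ the coordinate is just $p(\zeta)$ and the statement reduces to $p\restriction\zeta \Vdash p(\zeta) \in \bQ_\zeta$, which is already given; items~(d) and~(e) at $\zeta \in K$ follow by transferring the forced exclusions $\check{x} \notin \name{A}_\zeta$ and $\check{x} \notin \name{B}_\zeta$ from $p$ and $r$ through the inductive extension relations, since every pair in $R_{q'(\zeta)}$ with $t \in \{\emptyset\} \cup \succ_{T(\zeta)}(\emptyset)$ arises from either $R_{p(\zeta)}$ or $R_{r(\zeta)}$.

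The crux is property~(c) at $\zeta \in K$, which I will handle by case analysis on the origins of the competing data in $R_{q'(\zeta)}$ and $\dom(f_{q'(\zeta)})$. The key is a rank dichotomy: every pair $\langle t, x\rangle \in R_{p(\zeta)} \setminus R_{p\restriction_{H,K,\leq\beta}(\zeta)}$ satisfies $x \notin H$ and $\rank_{T(\zeta)}(t) > \beta$, while every pair $\langle t, x\rangle \in R_{r(\zeta)}$ with $x \notin H$ satisfies $\rank_{T(\zeta)}(t) < \beta$ by virtue of $r \in \overline{\aforc}_{H,K,{<}\beta}$. Since $\beta \geq 1$, the children $t'$ of a ``$p$-extra'' node $t$ still satisfy $\rank_{T(\zeta)}(t') \geq \beta \geq 1$, so $t' \notin T_{\alpha_\zeta}^0$ (ruling out leaf conflicts in this sub-case) and, should a conflicting pair $\langle t', x\rangle$ come from $R_{r(\zeta)}$, the identity $x \notin H$ would force $\rank_{T(\zeta)}(t') < \beta$, a contradiction. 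Symmetrically, an $r$-pair at the starting node rules out a $p$-extra conflict at a child by the same arithmetic. The remaining configurations, in which both competing pairs live inside $R_{p\restriction_{H,K,\leq\beta}(\zeta)} \cup R_{r(\zeta)} \subseteq R_{q(\zeta)}$, are ruled out directly by~(c) for $q$; analogous considerations using $\dom(f_{q'(\zeta)}) \subseteq \dom(f_{q(\zeta)})$ dispose of the leaf cases. The main obstacle is purely the bookkeeping: once the rank dichotomy is isolated, every potential violation of~(c) is eliminated either by the original conditions $p$, $r$, or by the witness $q$.
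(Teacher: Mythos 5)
Your proposal is correct and follows essentially the same route as the paper: both arguments take the pointwise union $p \cup r$ and show it is a condition, using exactly the rank dichotomy you isolate (pairs of $p$ surviving the restriction versus pairs of $r$ with $x \notin H$ having rank ${<}\beta$) together with a common extension of $p\restriction_{H,K,\leq\beta}$ and $r$ to dispose of the low-rank and leaf conflicts. The only difference is presentational — you verify directly by induction that the union is a condition, while the paper argues by contradiction at the first failing coordinate — so there is nothing substantive to add.
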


\begin{proof}
Let $q = p\restriction_{H,K,{\leq}\beta}$. Fix $r \in \overline{\aforc}_{H,K, <\beta}$, and assume, towards a contradiction, that $q \parallel r$, but $p \perp r$. As in the previous lemma, let $\bar{r} = p \cup r$ be the pointwise union of \( p \) and \( r \), and notice that $\bar{r}$ is not a valid condition because we assumed \( p \perp r \).
Let $\zeta < \zeta^*$ be such that $\bar{r} \restriction \zeta \in \bP_\zeta$ but $\bar{r} \restriction \zeta \not \Vdash \name{\bar{r}}(\zeta) \in \bQ_\zeta$, so that in particular $\zeta \in K$. This state of affairs can only happen for one of the following reasons, each of which we are going to exclude:
\begin{itemizenew}
\item 
$f_{p(\zeta)} \perp f_{r(\zeta)}$. This cannot happen, since $f_{p(\zeta)} = f_{q(\zeta)}$ and $q \parallel r$.
\item 
There are \( x \in X\), \( t \in T(\g)\), and \( t' \in \succ_{T(\g)}(t) \cap \dom(f_{r(\zeta)})\) such that \( \rank_{T(\g)}(t) = 1 \) (so that \( \rank_{T(\g)}(t') = 0 \)), $\langle t, x \rangle \in R_{p(\zeta)}$, and 
 $x \in \clopen{f_{r(\zeta)}(t')}$. This is not possible: since $\rank_{T(\g)}(t) = 1$ and $\beta > 0$, we must have $\langle t, x \rangle \in R_{q(\zeta)}$.
\item 
There are \( x \in X\), \( t \in T(\g)\), and \( t' \in \succ_{T(\g)}(t) \cap \dom(f_{p(\zeta)}) \) such that \( \rank_{T(\g)}(t) = 1 \) (so that \( \rank_{T(\g)}(t') = 0 \)),
 $\langle t, x \rangle \in R_{r(\zeta)}$, and 
 $x \in \clopen{f_{p(\zeta)}(t')}$. This is not possible, since $f_{q(\zeta)} = f_{p(\zeta)}$.
\item 
There are \( x \in X\), $t \in T(\g)$ and \( t' \in \succ_{T(\g)}(t) \) such that $\langle t, x \rangle \in R_{r(\zeta)}$ and $\langle t', x \rangle \in R_{p(\zeta)}$. Since $\rank_{T(\g)}(t') < \rank_{T(\g)}(t)$, we know that $\langle t, x \rangle \in R_{q(\zeta)}$, which is a contradiction.
\item 
There are \( x \in X \), $t \in T(\g)$, and \( t' \in \succ_{T(\g)}(t) \) such that $\langle t, x \rangle \in R_{p(\zeta)}$ and $\langle t', x \rangle \in R_{r(\zeta)}$. If $x \notin H$, it follows that $\beta > \rank_{T(\g)}(t')$, and since $\rank_{T(\g)}(t) = \rank_{T(\g)}(t') + 1$ (because of the specific definition of \( T(\g) \)), we have $\langle t, x \rangle \in R_{q(\zeta)}$. If instead $x \in H$, we also immediately get $\langle t, x \rangle \in R_{q(\zeta)}$. This contradicts $q \parallel r$. \qedhere
\end{itemizenew}
\end{proof}

\begin{theorem} \label{th: rank argument}
Suppose that $\alpha \leq \alpha_0$, $t \in T_\alpha^{>0}$, $\name{f}$ is a $\overline{\aforc}$-name for an assignment $T_\alpha^0 \to \pre{<\kappa}{\kappa}$, $x \in X$, and $(H,K)$ is an appropriate pair for (a coding of) $\name{f}$. Then, for every $p \in \overline{\aforc}$ such that $p \Vdash \check{x} \notin \mathcal{I}^{\name{f}}(t)$ there exists a $q \in \overline{\aforc}_{H,K, {<\beta}}$ such that \( q \parallel p \) and $q \Vdash \check{x} \notin \mathcal{I}^{\name{f}}(t)$.
\end{theorem}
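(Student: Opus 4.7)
The plan is to prove the theorem by induction on $\beta \coloneqq \rank_{T_\alpha}(t)$, interpreting the conclusion as $q \in \overline{\aforc}_{H,K,{<}\beta}$. The key identity exploited throughout is that for $t \in T_\alpha^{>0}$, the statement $\check{x} \notin \mathcal{I}^{\name{f}}(t)$ is equivalent to $\exists t' \in \succ_{T_\alpha}(t) \, (\check{x} \in \mathcal{I}^{\name{f}}(t'))$, which when $t'$ is not a leaf further unfolds into $\forall t'' \in \succ_{T_\alpha}(t') \, (\check{x} \notin \mathcal{I}^{\name{f}}(t''))$, keeping the induction alternating between the ``in'' and ``not in'' flavors.

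For the base case $\beta = 1$, every successor of $t$ is a leaf, so the hypothesis $p \Vdash \check{x} \notin \mathcal{I}^{\name{f}}(t)$ reads $p \Vdash \exists t' \in \succ_{T_\alpha}(t) \, (\check{x} \in \clopen{\name{f}(t')})$. Using the ${<}\kappa$-closure of $\overline{\aforc}$ (Lemma~\ref{lem: <kappa closure} together with Fact~\ref{fact: iteration cc}), first strengthen $p$ to $p^* \leq p$ deciding both a specific $t' \in \succ_{T_\alpha}(t)$ and the value $\name{f}(t') = \check{s}$ for some $s \in \pre{<\kappa}{\kappa}$ with $x \in \clopen{s}$. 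Since $(H,K)$ is appropriate for the coding $\name{\tau}$ of $\name{f}$, the predensity in Definition~\ref{def: appropriate} at an index $i < \kappa$ large enough for $\name{\tau} \restriction i$ to pin down $\name{f}(t')$ yields $q \in \overline{\aforc}_{H,K} = \overline{\aforc}_{H,K,{<}1}$ compatible with $p^*$ and deciding $\name{\tau} \restriction i$. Compatibility with $p^*$ forces $q$ to decide $\name{f}(t') = \check{s}$, hence $q \Vdash \check{x} \in \clopen{\name{f}(t')}$ and so $q \Vdash \check{x} \notin \mathcal{I}^{\name{f}}(t)$; compatibility $q \parallel p$ follows from $q \parallel p^* \leq p$.

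For the inductive step $\beta > 1$, use appropriateness of $(H,K)$ together with Lemma~\ref{lem: h,k exist} and ${<}\kappa$-closure to strengthen $p$ to an $(H,K)$-restrictable $p^* \leq p$ that additionally decides a specific $t' \in \succ_{T_\alpha}(t)$ (so $\rank_{T_\alpha}(t') = \beta - 1$) with $p^* \Vdash \check{x} \in \mathcal{I}^{\name{f}}(t')$. Set $q \coloneqq p^* \restriction_{H,K,{\leq}\beta-1}$; by restrictability this is a well-defined condition lying in $\overline{\aforc}_{H,K,{<}\beta}$ and satisfying $q \geq p^*$, so $q \parallel p$. The main obstacle is verifying $q \Vdash \check{x} \notin \mathcal{I}^{\name{f}}(t)$. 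Suppose toward a contradiction that $q^+ \leq q$ forces $\check{x} \in \mathcal{I}^{\name{f}}(t)$; then $q^+ \Vdash \check{x} \notin \mathcal{I}^{\name{f}}(t')$ for the chosen $t'$. The inductive hypothesis applied at $t'$ produces $q^{++} \in \overline{\aforc}_{H,K,{<}\beta-1}$ compatible with $q^+$ and forcing $\check{x} \notin \mathcal{I}^{\name{f}}(t')$. Since $q^+ \leq q = p^* \restriction_{H,K,{\leq}\beta-1}$, the compatibility of $q^{++}$ with $q^+$ transfers to compatibility of $q^{++}$ with $p^* \restriction_{H,K,{\leq}\beta-1}$; invoking Lemma~\ref{lem: projection} with its ``$\beta$'' instantiated to our $\beta-1$ then upgrades this to $q^{++} \parallel p^*$. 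Any common extension $r$ of $q^{++}$ and $p^*$ would simultaneously force $\check{x} \in \mathcal{I}^{\name{f}}(t')$ (inherited from $p^*$) and $\check{x} \notin \mathcal{I}^{\name{f}}(t')$ (inherited from $q^{++}$), the desired contradiction.
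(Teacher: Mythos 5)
Your proof is correct and follows essentially the same route as the paper's: induction on $\beta=\rank_{T_\alpha}(t)$, using the predensity clause of appropriateness plus compatibility to pin down $\name{f}(t')$ in the base case, and in the inductive step passing to a restrictable extension, taking its $(H,K,{\leq}\beta-1)$-restriction as the witness, and deriving the contradiction via the inductive hypothesis together with Lemma~\ref{lem: projection}. The only (immaterial) difference is that you argue the restriction forces $\check{x}\notin\mathcal{I}^{\name{f}}(t)$ directly, while the paper shows the slightly stronger claim that it forces $\check{x}\in\mathcal{I}^{\name{f}}(t')$.
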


\begin{proof}
We proceed by induction on $\rank_{T_\a}(t)$. 
Let 
\[
p \Vdash \check{x} \notin \mathcal{I}^{\name{f}}(t) = \bigcap_{t' \in \succ_{T_\a}{t}} X \setminus \mathcal{I}^{\name{f}}(t'),
\]  
and fix $p'\leq p$ and $t' \in \succ_{T_\a}(t)$ such that $p' \Vdash \check{x} \in \mathcal{I}^{\name{f}}(t')$.
    
Suppose first that $\rank_{T_\a}(t)=1$.
Since $(H,K)$ is appropriate for $\name{f}$, we can find $q \in \overline{\aforc}_{H,K} = \overline{\aforc}_{H,K, {<}1}$ such that $q \parallel p'$ and $q$ decides $\name{f}(t')$ as $s \in \pre{<\kappa}{\kappa}$. Since $q$ is compatible with $p'$, this implies that $x \in \clopen{s}$, and thus $q \Vdash \check{x} \in \mathcal{I}^{\name{f}}(t') \subseteq X \setminus \mathcal{I}^{\name{f}}(t)$.
    
Assume now that $\rank_{T_\a}(t) > 1$. 
Since $(H,K)$ is appropriate, we may choose an $(H,K)$-restrictable $q\leq p'$. We claim that ${q\restriction_{H,K,{\leq}\beta - 1}} \Vdash \check{x} \in \mathcal{I}^{\name{f}}(t')$. 
If this were not the case, we could find an $r \leq q\restriction_{H,K,{\leq}\beta - 1}$ with $r \Vdash \check{x} \notin \mathcal{I}^{\name{f}}(t')$.  The inductive hypothesis now yields a $\bar{r}\in \overline{\aforc}_{H,K,{<\beta - 1}}$ such that $\bar{r} \parallel r$ and $\bar{r} \Vdash \check{x} \notin \mathcal{I}^{\name{f}}(t')$. But $\bar{r} \parallel q\restriction_{H,K,{\leq}\beta - 1}$ implies $\bar{r} \parallel q$  by Lemma \ref{lem: projection}, and hence we reach a contradiction.
\end{proof}

Note that for any name $\name{J}$ for a set in $\Siii{0}{\a}{\k^+}(X)$ there exists an $\name{f} \colon T_\alpha^0 \to \pre{<\kappa}{\kappa}$ as in the above lemma with $\mathbbm{1} \Vdash \name{J} = \check{X} \setminus \mathcal{I}^{\name{f}}(\emptyset)$.

The following argument is the heart of the proof, affectionately dubbed \enquote{the old switcheroo} by Miller.

\begin{theorem} \label{th: switcheroo}
Let $G$ be $\overline{\aforc}$-generic, and let $G^0$ be the projection of $G$ onto the $0$-th coordinate, so that \( G^0 \) is \( \aforc_{\a_0}(\emptyset,\emptyset,X) \)-generic. 
If $|X| > \kappa$, then 
\[
V[G] \models G^0_\emptyset \in \Piii{0}{\alpha_0}{\k^+}(X) \minus \Siii{0}{\alpha_0}{\k^+}(X).
\]
\end{theorem}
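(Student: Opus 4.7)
The plan is to proceed by contradiction: assume there exist a condition $p \in \overline{\aforc}$ and a $\overline{\aforc}$-name $\name{f}$ for a function $T_{\alpha_0}^0 \to \pre{<\kappa}{\kappa}$ such that $p \Vdash G^0_\emptyset = \check X \setminus \mathcal{I}^{\name{f}}(\emptyset)$. The goal is to exhibit a single condition that simultaneously forces $\check x \in \mathcal{I}^{\name{f}}(\emptyset)$ and $\check x \notin \mathcal{I}^{\name{f}}(\emptyset)$ for a cleverly chosen $x \in X$. First, I would invoke Lemma~\ref{lem: h,k exist} to produce a pair $(H,K)$ appropriate for $\name{f}$ with $|H|, |K| \leq \kappa$ and $\supp(p) \cup \{0\} \subseteq K$. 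Because $|X| > \kappa$ and fewer than $\kappa$-many points of $X$ appear inside $\bigcup_{\zeta \in \supp(p)} R_{p(\zeta)}$, I can pick $x \in X \setminus H$ avoiding all such points.

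Next, I would extend $p$ to $p_1$ by adjoining the pair $\langle \emptyset, x\rangle$ to $R_{p(0)}$. The choice of $x$ makes this a valid extension: clause~\ref{def: alpha forcing-c} of Definition~\ref{def: alpha forcing} is unaffected because $x$ does not occur in any $R_{p(\zeta)}$ and $\alpha_0 \geq 2$ guarantees that every element of $\succ_{T_{\alpha_0}}(\emptyset)$ is a non-leaf, while~\ref{def: alpha forcing-d} is vacuous since $B_0 = \emptyset$. Lemma~\ref{lem: R interprets correctly} applied at $t = \emptyset$ now yields $p_1 \Vdash \check x \in G^0_\emptyset$, hence $p_1 \Vdash \check x \notin \mathcal{I}^{\name{f}}(\emptyset)$. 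I would then apply Theorem~\ref{th: rank argument} with $t = \emptyset$ (whose rank in $T_{\alpha_0}$ is $\alpha_0$) to obtain a condition $q \in \overline{\aforc}_{H, K, <\alpha_0}$ compatible with $p_1$ and still forcing $\check x \notin \mathcal{I}^{\name{f}}(\emptyset)$. The crucial feature of $q$ is that, since $x \notin H$ and $\crank{q}{H} < \alpha_0 = \rank_{T_{\alpha_0}}(\emptyset)$, no pair of the form $\langle \emptyset, x \rangle$ lies in $R_{q(0)}$: any pair involving $x$ must sit at a node of strictly smaller rank.

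To finish, I would form the pointwise union $\bar q = q \vee p_1$, which is a well-defined condition by the well-metness of $\overline{\aforc}$ (Lemma~\ref{lem: <kappa closure}). Since $|R_{\bar q(0)}|, |f_{\bar q(0)}| < \kappa$ while $|\succ_{T_{\alpha_0}}(\emptyset)| = \kappa$, a cardinality argument produces $t_0 \in \succ_{T_{\alpha_0}}(\emptyset)$ whose successor set meets neither $\dom(f_{\bar q(0)})$ nor the set of $t''$ for which $\langle t'', x\rangle \in R_{\bar q(0)}$. Enlarging $\bar q$ by adjoining $\langle t_0, x\rangle$ to $R_{\bar q(0)}$ then yields a valid condition $\bar q' \leq \bar q$: clause~\ref{def: alpha forcing-c} is secured by the choice of $t_0$, and~\ref{def: alpha forcing-e} is vacuous because $A_0 = \emptyset$. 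Lemma~\ref{lem: R interprets correctly} again gives $\bar q' \Vdash \check x \in G^0_{t_0}$, so $\bar q' \Vdash \check x \notin G^0_\emptyset$; since $\bar q' \leq p$, this forces $\check x \in \mathcal{I}^{\name{f}}(\emptyset)$, contradicting $\bar q' \leq q \Vdash \check x \notin \mathcal{I}^{\name{f}}(\emptyset)$. The main obstacle is ensuring that both the extension producing $p_1$ and the final enlargement yielding $\bar q'$ are legitimate conditions; this is where the rank bound $\crank{q}{H} < \alpha_0$ delivered by Theorem~\ref{th: rank argument} does the decisive work, because it is precisely what keeps $\langle \emptyset, x\rangle$ out of $R_{q(0)}$ and so unlocks the final ``switcheroo''.
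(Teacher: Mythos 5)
Your argument tracks the paper's proof closely up to the construction of $q$ via Theorem~\ref{th: rank argument}, but the final ``switcheroo'' step contains a genuine error: you merge $q$ with $p_1$. Since $\langle \emptyset, x\rangle \in R_{p_1(0)}$, the pointwise union $\bar q = q \vee p_1$ also has $\langle \emptyset, x\rangle \in R_{\bar q(0)}$, and then Definition~\ref{def: alpha forcing}\ref{def: alpha forcing-c} (applied at $t=\emptyset$) forbids adjoining $\langle t_0, x\rangle$ for \emph{any} $t_0 \in \succ_{T_{\alpha_0}}(\emptyset)$; your cardinality argument only arranges matters below $t_0$ and above it among the leaves, but the obstruction sits at $\emptyset$ itself. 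So $\bar q'$ is not a condition, and the contradiction you derive from it is vacuous. This is not a repairable technicality of the same construction: no valid condition can force both $\check x \in G^0_\emptyset$ and $\check x \notin G^0_\emptyset$, so any condition below $p_1$ is useless for the endgame.

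The fix — and this is what the paper does — is to discard $p_1$ after it has served its purpose of triggering Theorem~\ref{th: rank argument}, and instead merge $q$ with the original $p$. Choose $(H,K)$ so that $\crank{p}{H}=0$ (Lemma~\ref{lem: h,k exist} permits seeding $H$ with the ${<}\kappa$-many points occurring in $\bigcup_\zeta R_{p(\zeta)}$); then $p$ mentions no point outside $H$, and since $\crank{q}{H}<\alpha_0=\rank_{T_{\alpha_0}}(\emptyset)$ and $x\notin H$, the union $q\cup p$ contains no pair $\langle\emptyset,x\rangle$. This union still forces $\check x\notin\mathcal{I}^{\name f}(\emptyset)$ (inherited from $q$), hence, being below $p$, forces $\check x\in G^0_\emptyset$; now your cardinality argument legitimately produces $t_0\in\succ_{T_{\alpha_0}}(\emptyset)$ so that adding $\langle t_0,x\rangle$ yields a valid condition forcing $\check x\notin G^0_\emptyset$, and the contradiction goes through. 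Everything else in your write-up (the choice of $x$, the validity of $p_1$, the application of Lemma~\ref{lem: R interprets correctly} and of the rank theorem at $t=\emptyset$) is correct.
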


\begin{proof}
The fact that \( V[G] \models G^0_\emptyset \in \Piii{0}{\a_0}{\k^+}(X) \) easily follows from the fact that \( \langle T_{\a_0}, f_{G^0} \rangle \) is a \( \Piii{0}{\a_0}{\k^+} \)-code for \( G^0_\emptyset \).

Towards a contradiction, assume that \( V[G] \models G^0_\emptyset \in \Siii{0}{\a_0}{\k^+}(X) \).
Fix a condition $p \in \overline{\aforc}$ and a name $\name{J}$ for a set in $\Siii{0}{\alpha_0}{\k^+}(X)$ such that 
\[
p \Vdash G^0_\emptyset = \name{J}.
\]
Then there exists a $\overline{\aforc}$-name $\name{f}$ for an assignment $T_\alpha^0 \to \pre{<\kappa}{\kappa}$ such that $p \Vdash \check{X}\setminus \name{J}=\mathcal{I}^{\name{f}}(\emptyset)$.
Lemma \ref{lem: h,k exist}, find an appropriate pair $(H,K)$ for (a coding of) $\name{f}$ such that $\crank{p}{H} = 0$ and $\supp(p) \subseteq K$. For the rest of the proof, fix a $ y \in X \minus H$. 
 
Define $p'(0) = \langle f_{p(0)}, R_{p(0)} \cup \{ \langle\emptyset,  y\rangle \} \rangle$ and $p'(\zeta) = p(\zeta)$ for $\zeta > 0$, and notice that $p'\in\overline{\aforc}$ is a condition since $A_0=B_0=\emptyset$. Since $p' \Vdash \check{y} \in G^0_\emptyset = \name{J}$ (i.e.\ $p' \Vdash \check{y}\notin\mathcal{I}^{\name{f}}(\emptyset)$), by Theorem~\ref{th: rank argument} there exists a $q \parallel p'$ such that $\crank{q}{H} < \alpha_0$, $\supp(q) \subseteq K$, and $q \Vdash \check{y} \in \name{J}$. As $q \parallel p$ and $p \in \overline{\aforc}_{H,K}$, we may without loss of generality assume $q \leq p$ (by passing to the pointwise union $q \cup p$ if necessary). Thus we have $q \Vdash G^0_\emptyset = \name{J}$, too.
    
Since $\crank{q}{H} < \alpha_0$, we know that $\langle \emptyset, y \rangle \notin R_{q(0)}$, and thus there is $t \in \succ_{T(0)}(\emptyset)$ such that if $q'$ is defined by $q'(0) = \langle f_{q(0)}, R_{q(0)} \cup \{ \langle t, y\rangle \} \rangle$ and $q'(\zeta) = q(\zeta)$ for $\zeta > 0$, then \( q' \) is a condition.  Note that $q' \Vdash \check{y} \notin G^0_\emptyset$, since $\langle t, y\rangle\in R_{q'(0)}$. However, we also have $q' \Vdash \check{y} \in \name{J} = G^0_\emptyset$ because $q'\leq p$. This is a contradiction, which concludes our proof. 
\end{proof}

We are now ready to state the main results of this section.

\begin{theorem} \label{th: set order to n}
Let $X \subseteq \pre{\kappa}{\kappa}$ be such that $|X| > \kappa$, and let $1 < n < \omega$. Then there exists a ${<}\kappa$-closed, $\kappa^+$-c.c.\ forcing notion \( \bP \) such that
$\mathbbm{1}_{\bP} \Vdash \ord_{\k^+}(\check{X}) = n$.
\end{theorem}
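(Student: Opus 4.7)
The plan is to imitate Miller's construction via a ${<}\kappa$-supported iteration $\bP = \bP_{\kappa^+}$ of length $\kappa^+$ that fits precisely the framework of $\overline{\aforc}$ introduced in this section. If $n > 2$, set $\alpha_0 = n - 1$ and force at stage $0$ with $\aforc_{n-1}(\emptyset, \emptyset, \check{X})$; at every subsequent stage $\zeta > 0$, set $\alpha_\zeta = n$ and force with $\aforc_n(\name{A}_\zeta, \check{X} \minus \name{A}_\zeta, \check{X})$, where $\name{A}_\zeta$ ranges, via a standard bookkeeping function, over (nice names for) the $\kappa^+$-Borel subsets of $X$ appearing in the intermediate models. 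If $n = 2$, skip the initial step and iterate $\aforc_2(\name{A}_\zeta, \check{X} \minus \name{A}_\zeta, \check{X})$ throughout. By Lemmas~\ref{lem: <kappa closure} and~\ref{lem: kappa linked} combined with Fact~\ref{fact: iteration cc}, $\bP$ is ${<}\kappa$-closed and $\kappa^+$-c.c., so no cardinal is collapsed, and since $2^\kappa = \kappa^+$ is preserved, at most $\kappa^+$-many $\kappa^+$-Borel subsets of $X$ ever appear in the extension. A routine reflection argument then lets us arrange the bookkeeping so that every $\kappa^+$-Borel subset of $X$ in the final model $V[G]$ is caught at some stage $\zeta < \kappa^+$.

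For the upper bound $\ord_{\kappa^+}(X) \leq n$, fix an arbitrary $\kappa^+$-Borel set $A \subseteq X$ in $V[G]$ and a stage $\zeta$ where bookkeeping handles $A$ via $\aforc_n(\name{A}_\zeta, \check{X} \minus \name{A}_\zeta, \check{X})$. By Corollary~\ref{cor: collapse A}, the generic at that stage produces a $\Piii{0}{n}{\k^+}$-code for $A$, so $A \in \Piii{0}{n}{\k^+}(X)$ in $V[G]$. Since $A$ was arbitrary and $\Bor{\kappa^+}(X)$ is closed under complements, we obtain $\Bor{\kappa^+}(X) \subseteq \Deee{0}{n}{\k^+}(X)$, whence $\ord_{\kappa^+}(X) \leq n$.

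For the lower bound $\ord_{\kappa^+}(X) \geq n$, the case $n = 2$ is immediate: since $\weight(X) \leq \kappa$ and $|X| > \kappa$, $X$ is not discrete; this is absolute between $V$ and $V[G]$ because the ${<}\kappa$-closed forcing adds no new elements of $\pre{<\kappa}{\kappa}$ and hence no new basic open sets of $X$. For $n > 2$, the main obstacle (and the heart of the whole argument) is exactly the content of Theorem~\ref{th: switcheroo}: applied to our iteration, which has the required shape $\alpha_\zeta = \alpha_0 + 1$ for $\zeta > 0$ and $A_0 = B_0 = \emptyset$, it yields that the first-coordinate generic set $G^0_\emptyset$ belongs to $\Piii{0}{n-1}{\k^+}(X) \minus \Siii{0}{n-1}{\k^+}(X)$ in $V[G]$. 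By Proposition~\ref{prop:conditions_collapse}, this prevents the $\kappa^+$-Borel hierarchy from collapsing at level $n - 1$, giving $\ord_{\kappa^+}(X) \geq n$. Combining the two bounds yields $V[G] \models \ord_{\kappa^+}(X) = n$, completing the proof.
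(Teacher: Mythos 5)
Your argument follows the paper's proof almost step for step: the same two-case setup (an initial $\aforc_{n-1}(\emptyset,\emptyset,\check X)$ stage for $n>2$, omitted for $n=2$), the same use of Corollary~\ref{cor: collapse A} to get $\Bor{\k^+}(X)\subseteq\Deee{0}{n}{\k^+}(X)$, the same appeal to Theorem~\ref{th: switcheroo} together with Proposition~\ref{prop:conditions_collapse} for the lower bound when $n>2$, and the same non-discreteness argument for $n=2$. The only substantive divergence is the length of the iteration, and there you have a genuine gap.

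You take the iteration to have length $\kappa^+$ and justify the bookkeeping by asserting that ``$2^\kappa=\kappa^+$ is preserved, [so] at most $\kappa^+$-many $\kappa^+$-Borel subsets of $X$ ever appear.'' But the standing hypothesis of the paper is only $2^{<\kappa}=\kappa$; nothing forces $2^\kappa=\kappa^+$, and in general $|\Bor{\k^+}(X)|$ can be as large as $2^\kappa>\kappa^+$ already in the ground model (there are up to $2^\kappa$ distinct relatively open subsets of $X$ alone). An iteration of length $\kappa^+$ that handles one Borel set per stage therefore cannot exhaust $\Bor{\k^+}(X)^{V[G]}$, and your upper bound $\ord_{\k^+}(X)\leq n$ breaks down. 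Your reflection argument is not the issue — with $\kappa^+$-c.c.\ and ${<}\kappa$-supports every name for a $\kappa^+$-Borel set does reflect to an initial segment of any iteration whose length has cofinality greater than $\kappa$ — the problem is purely one of counting. The fix is exactly what the paper does: run the iteration for $\lambda=2^\kappa$ stages (the $\kappa^+$-c.c.\ and the size bound on each iterand keep $2^\kappa$ from growing, so $\lambda$ stages suffice to catch every $\kappa^+$-Borel subset of $X$ in the final model). With that one change your proof is correct and coincides with the paper's.
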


\begin{proof}
Let $\lambda = 2^\kappa$.
For $n > 2$, consider the ${<}\kappa$-supported forcing iteration $\bP = \langle \bP_\zeta, \name{\bQ}_\zeta \mid \zeta < \lambda \rangle$ with $\bQ_0 = \aforc_{n-1}(\emptyset, \emptyset,X)$ and $\bP_\zeta \Vdash \name{\bQ}_\zeta = \aforc_n(\name{B}_{\zeta}, \check{X} \minus \name{B}_{\zeta}, \check{X})$. Using a bookkeeping argument, we can ensure that for every \( \bP \)-generic \( G \), the family of names $\name{B}_\zeta$ satisfies
\[
V[G] \models \forall B \in \Bor{\k^+}(X)\, \exists \zeta < \l \, (B = \name{B}_{\zeta}).
\]
By Theorem~\ref{th: switcheroo}, we know that $V[G] \models \ord_{\k^+}(X) \geq n$, and the rest of the iteration ensures $V[G] \models \ord_{\k^+}(X) \leq n$ (see also Corollary \ref{cor: collapse A}).

For $n=2$, consider instead the ${<}\kappa$-supported forcing iteration $\bP = \langle \bP_\zeta, \name{\bQ}_\zeta \mid \zeta < \lambda \rangle$ with $\bP_\zeta \Vdash \name{\bQ}_\zeta = \aforc_2(\name{B}_{\zeta}, \check{X} \minus \name{B}_{\zeta}, \check{X})$. Using a bookkeeping argument, ensure that for every \( \bP \)-generic \( G \)
\[
V[G] \models \forall B \in \Bor{\k^+}(X)\, \exists \zeta < \l \, (B = \name{B}_{\zeta}).
\]
Then we get that $V[G] \models \ord_{\k^+}(X) \leq 2$. On the other hand, every Hausdorff space \( Y \) with $\ord_{\k^+}(Y) = 1$ is discrete, and since every discrete subspace of $\pre{\kappa}{\kappa}$ has size at most $\kappa$, we also get $V[G] \models \ord_{\k^+}(X) \geq 2$ because we assumed \( |X| > \k \) (and cardinals are not collapsed by \( \bP \)).
\end{proof}

Theorem~\ref{th: set order to n} can of course be applied, in particular, to \( \k^+ \)-Borel sets \( X \subseteq \pre{ \k}{\k}\). In this case, once we fix a \( \k^+ \)-Borel code for it, the \( \k^+ \)-Borel set \( X \) can also be naturally re-interpreted in \( V[G] \) as a set \( X^{V[G]}\): it is thus natural to ask whether the \( \k^+ \)-Borel hierarchy is collapsed also on the latter space. The next corollary shows that under suitable assumptions on \( X \), we can combine Theorem~\ref{th: set order to n} with Corollary~\ref{cor: thin borel set eq} and get that no new element is added to the \( \k^+ \)-Borel set at hand, so that \( \ord_{\k^+}(X^{V[G]}) = n \) as well, for the chosen \( 1 < n < \o \).
Since the complexity of both \( X \) and \( X^{V[G]} \) is determined by the very same \( \k^+ \)-Borel code, the side effect of the equality \( X = X^{V[G]} \) is that \( X \) will maintain its \( \k^+ \)-Borel complexity in the forcing extension \( V[G] \); in particular, if \( X \) were closed in the ground model \( V \), then it will stay closed also in \( V[G] \). This shows that, consistently, there may be nice Polish-like spaces whose order attains intermediate values strictly between \( 2 \) and \( \k^+ \).

\begin{corollary}\label{cor:final_7}
Let $X \in \Bor{\k^+}( \pre{\kappa}{\kappa} )$ be a \( \kappa \)-thin set such that $|X| > \kappa$, and let $1 < n < \omega$. Then there is a ${<}\kappa$-closed, $\kappa^+$-c.c.\ forcing extension $V[G]$ of \( V \) such that $V[G] \models {X^{V[G]} = X}  \wedge {\ord_{\k^+}(X) = n}$.
\end{corollary}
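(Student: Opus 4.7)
The plan is to combine directly Theorem~\ref{th: set order to n} and Corollary~\ref{cor: thin borel set eq}, with a brief check that the order statement obtained in the extension transfers from the ground-model set $\check X$ to the set $X^{V[G]}$ obtained by re-interpreting a fixed $\kappa^+$-Borel code for $X$.

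First I would apply Theorem~\ref{th: set order to n} to the set $X$, using the hypothesis $|X| > \kappa$. This yields a ${<}\kappa$-closed, $\kappa^+$-c.c.\ forcing notion $\bP$ such that
\[
\mathbbm{1}_{\bP} \Vdash \ord_{\kappa^+}(\check X) = n,
\]
where $\check X$ is the canonical name for the ground-model set $X$. Let $G$ be $\bP$-generic over $V$. At this point $V[G]$ sees $\check X^G = X \subseteq \pre{\kappa}{\kappa}$ as a subspace, and the $\kappa^+$-Borel hierarchy on that subspace has order exactly $n$.

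Next I would verify that $X^{V[G]} = X$ as subsets of $(\pre{\kappa}{\kappa})^{V[G]}$, where $X^{V[G]}$ denotes the re-interpretation in $V[G]$ of a chosen (ground-model) $\kappa^+$-Borel code for $X$. The inclusion $X \subseteq X^{V[G]}$ is automatic: by induction on the rank of the coding tree, for every $x \in X = \mathcal{I}^{f}_{S}(\emptyset)^{V}$ the statement $x \in \mathcal{I}^{f}_{S}(\emptyset)$ is $\Piii{1}{1}{\kappa}$ in the code and absolute by Lemma~\ref{lemma:analyticabsoluteness}. For the reverse inclusion $X^{V[G]} \subseteq X$, I invoke Corollary~\ref{cor: thin borel set eq} applied to $X$ in $V$: since $X$ is $\kappa$-thin and $\kappa^+$-Borel in $V$, condition~\ref{enum: strong psp2} fails, hence the equivalent condition~\ref{enum: some new2} fails too, i.e.\ no ${<}\kappa$-closed forcing adds a new element to $X$. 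Because $\bP$ is ${<}\kappa$-closed, this yields $X^{V[G]} \setminus X = \emptyset$, and thus $X^{V[G]} = X$.

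Finally, since $X^{V[G]}$ and $\check X^G$ are the same subset of $\pre{\kappa}{\kappa}$ in $V[G]$, they carry identical subspace topologies and therefore identical $\kappa^+$-Borel hierarchies. From $V[G] \models \ord_{\kappa^+}(\check X) = n$ I conclude $V[G] \models \ord_{\kappa^+}(X^{V[G]}) = n$, as desired. The only delicate point of the argument is invoking Corollary~\ref{cor: thin borel set eq} correctly, and I expect no genuine obstacle: the hypotheses of $\kappa$-thinness and $\kappa^+$-Borelness of $X$ are built into the statement of the corollary precisely to secure absoluteness of the set under ${<}\kappa$-closed forcing, and the $\kappa^+$-c.c.\ portion of Fact~\ref{fact: iteration cc} for $\bP$ guarantees that no cardinals are collapsed, so the notation $\ord_{\kappa^+}$ unambiguously refers to the same ordinal bound in $V$ and $V[G]$.
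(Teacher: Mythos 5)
Your proposal is correct and follows exactly the route the paper intends: apply Theorem~\ref{th: set order to n} to get the forcing, use Corollary~\ref{cor: thin borel set eq} (failure of condition~\ref{enum: strong psp2}, hence of~\ref{enum: some new2}) together with absoluteness of pointwise membership in a coded set to conclude $X^{V[G]}=X$, and transfer the order statement. Your added checks (cardinal preservation via the $\kappa^+$-c.c., and the two inclusions for $X^{V[G]}=X$) are exactly the details the paper leaves implicit.
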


Another natural  concern is whether for a given space \( X \) with \( |X| > \k \),%
\footnote{Obviously, \( \Bor{\k^+}(X) =\powset(X) \) for every Hausdorff space \( X \) with \( |X| \leq \k \).}
the collection \( \Bor{\k^+}(X) \) of \( \k^+ \)-Borel sets coincide with the whole powerset \( \powset(X) \) of \( X \). This is often not the case, for simple cardinality reasons.
Indeed, if $\weight(X) \leq \k$ then $|\Bor{\k^+}(X)|\leq 2^\k$ by~\cite[Lemma 2.3]{AMR19}. Hence, if we further have that $|X|=\l$ for some $\l$ with $2^\l > 2^\k$, then \(\Bor{\k^+}(X) \neq \powset(X)\); in particular, this applies to any space \( X \) with \( \weight(X) \leq \k \) and $|X| = 2^\k$, like the generalized Baire space \( \pre{\k}{\k} \). 
However, the situation might be different for spaces \( X \) with \( \k < |X| < 2^\k \). By using a longer iteration in Theorem~\ref{th: set order to n}, we can arrange the construction in order to add the requirement
\[
V[G] \models \Bor{\k^+}(X) = \powset(X)
\]
to its conclusion: it is enough to make sure, using a suitable bookkeeping procedure, that each subset of $X$ appears as a $\name{B}_\zeta$ along the iteration. Thus we obtain:

\begin{prop}
Consistently, there is a closed subspace $X\subseteq \pre{\k}{\k}$ of size greater than $\k$ and such that $\Bor{\k^+}(X) = \powset(X)$.    
\end{prop}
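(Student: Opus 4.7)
I would re-run the construction from Theorem~\ref{th: set order to n} in the case $n = 2$, extending the length of the iteration and strengthening the bookkeeping so as to enumerate \emph{every} subset of $X$ in the final model rather than only the $\k^+$-Borel ones. Start in a ground model $V$ satisfying $2^\k = 2^{\k^+}$, and fix a closed, $\k$-thin set $X \subseteq \pre{\k}{\k}$ with $|X| = \k^+$; such an $X$ exists, for example, as the body of a suitable generalized Kurepa tree, or by combining Theorem~\ref{th: hamkins} with a preliminary small forcing as in Corollary~\ref{cor: closed set with full order}. The equality $2^\k = 2^{\k^+}$ is essentially forced on us by the fact that $|\Bor{\k^+}(X)| \leq 2^\k$ in any extension (see~\cite[Lemma 2.3]{AMR19}), so $\powset(X) = \Bor{\k^+}(X)$ can hold only if $|\powset(X)| = 2^{\k^+} \leq 2^\k$.

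\textbf{The iteration.} Perform a ${<}\k$-supported iteration $\overline{\aforc} = \seq{\bP_\zeta, \name{\bQ}_\zeta}{\zeta < \lambda}$ with $\bP_\zeta \Vdash \name{\bQ}_\zeta = \aforc_2(\name{B}_\zeta, \check X \setminus \name{B}_\zeta, \check X)$, where the names $\seq{\name{B}_\zeta}{\zeta < \lambda}$ are supplied by a bookkeeping enumeration of all nice $\bP_\lambda$-names for subsets of $X$, and $\lambda$ is a sufficiently large cardinal (say $\lambda = 2^\k$ under the arithmetic above). Lemmas~\ref{lem: <kappa closure} and~\ref{lem: kappa linked} combined with Fact~\ref{fact: iteration cc} guarantee that $\bP_\lambda$ is ${<}\k$-closed and $\k^+$-c.c., so cardinals and $2^{<\k} = \k$ are preserved and $|X| > \k$ continues to hold in $V[G]$. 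Since $X$ is $\k$-thin and closed, Corollary~\ref{cor: thin borel set eq} yields the key stability property $X^{V[G]} = X^V$, ensuring that the $\Piii{0}{2}{\k^+}$-code added at any stage $\zeta$ by Corollary~\ref{cor: collapse A} interprets to the same subset $\name{B}_\zeta^{G_\zeta}$ in every further extension, including $V[G]$. Given any $B \in \powset(X)^{V[G]}$, the $\k^+$-c.c.\ supplies a nice $\bP_\lambda$-name for $B$ whose value is already determined by $G \restriction \zeta$ for some $\zeta < \lambda$ of cofinality $\geq \k^+$; the bookkeeping then catches this name at some stage $\zeta' \geq \zeta$, and Corollary~\ref{cor: collapse A} produces a $\k^+$-Borel code for $B$ in $V[G_{\zeta'+1}]$, hence in $V[G]$. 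Therefore $\powset(X) \subseteq \Bor{\k^+}(X)$ in $V[G]$, and equality follows.

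\textbf{Main obstacle.} The principal difficulty is the interplay between cardinal arithmetic and bookkeeping. On the one hand, one needs $|\powset(X)|^{V[G]} \leq 2^\k$ in $V[G]$ for the enumeration to exhaust every subset; on the other hand, each $\aforc_2(\name A, \name B, \check X)$ has size depending on $|X|^{<\k}$, and a ${<}\k$-supported iteration of length $2^\k$ a priori threatens to inflate $2^\k$ and $2^{\k^+}$. Verifying that $2^\k = 2^{\k^+}$ can be arranged in $V$ (for instance by first forcing to make $2^\k = 2^{\k^+} = \k^{++}$ over a model of GCH) and then preserved throughout the iteration — via a standard nice-name counting argument for ${<}\k$-closed, $\k^+$-c.c.\ iterations — is the technical crux. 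Once the counting is in place, the bookkeeping itself proceeds by the usual diagonal enumeration with reflection at stages of cofinality $\k^+$, exactly as for catching $\k^+$-Borel subsets in the proof of Theorem~\ref{th: set order to n}.
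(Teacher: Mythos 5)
Your proposal is correct and follows essentially the same route as the paper, whose proof is precisely the remark that one runs a longer iteration of the $\aforc_2$-forcings from Theorem~\ref{th: set order to n} with bookkeeping arranged so that every subset of $X$ appears as some $\name{B}_\zeta$. The additional points you make explicit — that $2^{|X|}\leq 2^\kappa$ is necessary (so one should take, e.g., $|X|=\kappa^+$ with $2^\kappa=2^{\kappa^+}$), that $\kappa$-thinness together with Corollary~\ref{cor: thin borel set eq} keeps $X$ closed and unchanged in the extension, and that a nice-name count controls $2^\kappa$ along the iteration — are exactly the details the paper leaves to the surrounding discussion, and they are handled correctly.
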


We conclude this section by observing that the our results also provide some information related to Question 8.1 from~\cite{MRP25}. 
Indeed,~\cite[Corollary 6.4]{MRP25} shows that when \( \k \) is regular, then the closure under \( \k \)-limits of the class of continuous functions \( \mathcal{M}_1(X,Y) \), under suitable assumptions on \( X \) and \( Y \), coincides with \( \bigcup_{1 \leq n < \omega} \mathcal{M}_n(X,Y) \). Combined with Proposition~\ref{prop:stratificationofBorelfunctions} and Corollary~\ref{cor:final_7}, this gives consistent examples of closed spaces \( X \subseteq \pre{\k}{\k}\) with \( |X| > \k \) such that all \( \k^+ \)-Borel measurable functions on \( X \) can be generated from the continuous functions through \( \k \)-limits.

\begin{corollary}
Consistently, there is a \( \k \)-Polish space \( X \) (equivalently: $X\subseteq \pre{\k}{\k}$ closed) of size greater than $\k$ such that for every spherically complete \( \k \)-metrizable space \( Y \), such as \( Y = \pre{\k}{\k} \) or \( Y = \pre{\k}{2}\), the collection of all \( \k^+ \)-Borel measurable functions from \( X \) to \( Y \) coincides with the closure of continuous functions under \( \k \)-limits.
\end{corollary}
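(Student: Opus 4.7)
The plan is to assemble the corollary by combining three ingredients: the consistency of a \( \kappa \)-thin closed set \( X \subseteq \pre{\k}{\k}\) with \( |X| > \kappa \), the forcing construction from Corollary~\ref{cor:final_7} that makes \( \ord_{\k^+}(X) \) finite while preserving \( X \), and the connection between the order of the Borel hierarchy of sets and the stratification of \( \k^+ \)-Borel measurable functions given by Proposition~\ref{prop:stratificationofBorelfunctions}.

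First I would fix any \( 1 < n < \omega \) (e.g.\ \( n = 2 \)). In a preliminary forcing extension of \( V \) by a forcing notion of size strictly less than \( \k \), Theorem~\ref{th: hamkins} guarantees that \( X := \pre{\k}{\k} \cap V \) is a \( \k \)-thin closed subset of the new generalized Baire space; since \( |X| = |\pre{\k}{\k}|^V > \k \), the set \( X \) satisfies the hypotheses of Corollary~\ref{cor:final_7}. Applying that corollary, I pass to a further \( {<}\k \)-closed, \( \k^+ \)-c.c.\ extension \( V[G] \) in which \( X^{V[G]} = X \) and \( \ord_{\k^+}(X) = n \). Because \( X \) has the same \( \k^+ \)-Borel code before and after forcing, it remains closed in \( \pre{\k}{\k}\), and hence \( X \) is a \( \k \)-Polish space in \( V[G] \) of size greater than \( \k \).

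Now I work in \( V[G] \) with an arbitrary spherically complete \( \k \)-metrizable space \( Y \); in particular, \( Y \) is Hausdorff, and we may assume \( |Y| \geq 2 \) (the case \( |Y|\leq 1 \) is trivial). Proposition~\ref{prop:stratificationofBorelfunctions} gives
\[
\ord^{\mathrm{Fun}}_{\k^+}(X,Y) \;=\; \ord_{\k^+}(X) \;=\; n,
\]
so every \( \k^+ \)-Borel measurable function from \( X \) to \( Y \) lies in \( \mathcal{M}_n(X,Y) \), and in particular in \( \bigcup_{1\leq m<\omega}\mathcal{M}_m(X,Y) \). The referenced Corollary~6.4 of~\cite{MRP25} applies here since \( X \) is \( \k \)-Polish and \( Y \) is spherically complete \( \k \)-metrizable, yielding that \( \bigcup_{1\leq m<\omega}\mathcal{M}_m(X,Y) \) is exactly the closure of \( \mathcal{M}_1(X,Y) \) under \( \k \)-limits. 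Chaining the inclusions
\[
\mathcal{B}(X,Y) \;=\; \mathcal{M}_n(X,Y) \;\subseteq\; \bigcup_{1\leq m<\omega}\mathcal{M}_m(X,Y) \;\subseteq\; \mathcal{B}(X,Y)
\]
gives the desired equality.

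The only real work is ensuring all three external inputs apply simultaneously: one needs that \( X \) as constructed in the intermediate model is still \( \k \)-thin in \( V[G] \) (so that Corollary~\ref{cor:final_7} is applicable), and that after the \( \alpha \)-forcing iteration \( X \) remains a \emph{closed} subspace rather than merely a \( \k^+ \)-Borel one, so that the \( \k \)-Polish hypothesis needed by the \cite{MRP25} result is satisfied. Both points are already embedded in Corollary~\ref{cor:final_7} and the surrounding discussion (specifically in the remark that the \( \k^+ \)-Borel code of \( X \) is reinterpreted to the same set, preserving the original complexity class), so no extra argument is needed. The result follows.
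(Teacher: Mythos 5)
Your proposal is correct and follows essentially the same route as the paper: obtain a $\kappa$-thin closed $X$ with $|X|>\kappa$ (via Theorem~\ref{th: hamkins}), apply Corollary~\ref{cor:final_7} to make $\ord_{\k^+}(X)=n<\omega$ while preserving $X$ and its closedness, then combine Proposition~\ref{prop:stratificationofBorelfunctions} with \cite[Corollary 6.4]{MRP25} to identify $\mathcal{B}(X,Y)=\mathcal{M}_n(X,Y)\subseteq\bigcup_{1\leq m<\omega}\mathcal{M}_m(X,Y)$ with the closure of the continuous functions under $\kappa$-limits. This is exactly the intended argument.
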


\printbibliography

\end{document}